\documentclass[11pt]{article}
\title{The Heavy-tailed Frog Model}
\author{
    Omer Angel
    \thanks{University of British Columbia, Vancouver, Canada.  Research was supported in part by NSERC, SLMath Clay senior scholarship, and Magdalen college, Oxford. Email: angel@math.ubc.ca } \and 
    Jonathan Hermon 
    \thanks{University of British Columbia, Vancouver, Canada. Research was supported in part by NSERC. Email: jhermon@math.ubc.ca } \and 
    Yuliang Shi
    \thanks{University of British Columbia, Vancouver, Canada. Research was supported in part by NSERC. Email: yuliang@math.ubc.ca }}

\usepackage{amsmath,amssymb,amsfonts,amsthm,bbm,mathabx,mathtools,aliascnt}
\usepackage{tikz}
\usetikzlibrary{arrows.meta,decorations.pathmorphing,calc}
\usepackage{enumitem}
\usepackage{graphicx}
\usepackage{color}
\usepackage[margin=3cm]{geometry}
\usepackage[protrusion=true,expansion=true]{microtype}
\usepackage[font=sf, labelfont={sf,bf}, margin=0.5cm]{caption}
\usepackage[
  colorlinks=true,
  linkcolor=black,
  citecolor=black,
  urlcolor=blue,
  pdfborder={0 0 0}
]{hyperref}
\usepackage[nameinlink]{cleveref}
\usepackage{hyperref}

\newtheorem{thm}{Theorem}[section]

\newaliascnt{proposition}{thm}
\newaliascnt{lemma}{thm}
\newaliascnt{corollary}{thm}
\newaliascnt{claim}{thm}
\newaliascnt{conjecture}{thm}
\newaliascnt{definition}{thm}
\newaliascnt{example}{thm}
\newaliascnt{remark}{thm}

\newtheorem{theorem}[thm]{Theorem}
\newtheorem{proposition}[proposition]{Proposition}
\newtheorem{lemma}[lemma]{Lemma}
\newtheorem{corollary}[corollary]{Corollary}
\newtheorem{claim}[claim]{Claim}
\newtheorem{conjecture}[conjecture]{Conjecture}

\theoremstyle{definition}
\newtheorem{definition}[definition]{Definition}

\theoremstyle{remark}
\newtheorem{remark}[remark]{Remark}

\aliascntresetthe{proposition}
\aliascntresetthe{lemma}
\aliascntresetthe{corollary}
\aliascntresetthe{claim}
\aliascntresetthe{conjecture}
\aliascntresetthe{definition}
\aliascntresetthe{example}
\aliascntresetthe{remark}

\crefname{thm}{Theorem}{Theorems}
\crefname{theorem}{Theorem}{Theorems}
\crefname{proposition}{Proposition}{Propositions}
\crefname{lemma}{Lemma}{Lemmas}
\crefname{corollary}{Corollary}{Corollaries}
\crefname{claim}{Claim}{Claims}
\crefname{definition}{Definition}{Definitions}
\crefname{example}{Example}{Examples}
\crefname{remark}{Remark}{Remarks}
\crefname{conjecture}{Conjecture}{Conjectures}

\Crefname{thm}{Theorem}{Theorems}
\Crefname{theorem}{Theorem}{Theorems}
\Crefname{proposition}{Proposition}{Propositions}
\Crefname{lemma}{Lemma}{Lemmas}
\Crefname{corollary}{Corollary}{Corollaries}
\Crefname{claim}{Claim}{Claims}
\Crefname{definition}{Definition}{Definitions}
\Crefname{example}{Example}{Examples}
\Crefname{remark}{Remark}{Remarks}
\Crefname{conjecture}{Conjecture}{Conjectures}

\crefname{section}{Section}{Sections}
\crefname{equation}{equation}{equations}
\crefname{figure}{Figure}{Figures}
\crefname{table}{Table}{Tables}

\renewcommand{\P}{\mathbb P}
\newcommand{\Z}{\mathbb Z}
\newcommand{\E}{\mathbb E}
\newcommand{\R}{\mathbb R}
\newcommand{\N}{\mathbb N}

\DeclareMathOperator{\Bern}{Bernoulli}

\DeclareMathOperator{\Poi}{Poi}

\newtheorem*{lemma*}{Lemma}

\newcommand{\bfP}{\mathbf P}

\newcommand{\bP}{\mathbb P}
\newcommand{\bQ}{\mathbb Q}
\newcommand{\bE}{\mathbb E}
\newcommand{\bfE}{\mathbf E}

\newcommand{\T}{\mathbb{T}}

\newcommand{\cW}{\mathcal{W}}

\newcommand{\cU}{\mathcal{U}}
\newcommand{\cV}{\mathcal{V}}
\newcommand{\cR}{\mathcal{R}}

\newcommand{\cF}{\mathcal{F}}
\newcommand{\1}{\mathbbm{1}}
\newcommand{\Pois}{\text{Pois}}

\renewcommand{\epsilon}{\varepsilon}

\begin{document}
\maketitle

\begin{abstract}
We study the frog model on $\Z^d$ and on the discrete tori $\T_L^d$, $d\ge 2$, with a symmetric, translation-invariant, and heavy-tailed transition kernel satisfying
\[
Q(x,y)\asymp |x-y|^{-(d+\alpha)},
\qquad \alpha>0.
\]
Starting from an i.i.d.\ Poisson$(\lambda)$ number of sleeping particles per site and one active particle at the origin. Active particles perform independent $Q$-random walks and activate the particles they encounter.

We first determine the timescale for activating distant vertices. 
When $\alpha\in(0,d)$, the time required to activate all vertices within distance $L$ of the origin is, with high probability,
\[
(\log L)^{\Delta+o(1)},
\qquad
\Delta^{-1}:=\log_2\left(\frac{2d}{d+\alpha}\right),
\]
as $L\to\infty$. This polylogarithmic spreading contrasts sharply with the linear spreading of the classical frog model driven by simple random walks; see \cite{MR1910638, MR2060478}. When $\alpha>d$, we recover this classical linear behavior by proving matching linear upper and lower bounds; at $\alpha=d$, we prove a linear upper bound.

Finally, we consider the finite-lifespan model on $\T_L^d$, in which each particle is removed after taking $\ell$ steps. We show that the cover lifespan, defined as the smallest $\ell$ for which the torus is entirely activated, is asymptotic to the cover time of a Poisson$(\lambda L^d)$ cloud of independent stationary random walkers.
\end{abstract}

\newpage

\setcounter{tocdepth}{2}
\tableofcontents

\newpage

%%%%%%%%%%%%%%%%%%%%%%%%%%%%%%%%%%%%%%%%%%%%%%%%%%%%%%%%%%%%%%%%%%
\section{Introduction}

This paper focuses on a specific type of reaction-diffusion system, known as the \textbf{frog model}, which can be categorized as a special case of the ``$A + B \rightarrow 2B$'' model (see \cite{MR2184100, MR2247840, MR2415386}). 
Such models provide a framework for analyzing the spread of infectious diseases and the diffusion of information across populations.
The frog model on $\Z^d$ consists of particles sitting on the vertices of $\Z^d$, which can be either active or inactive. 
Active particles perform random walks on $\Z^d$, while inactive particles remain stationary. 
When an active particle encounters an inactive particle, the inactive particle becomes active. 
The frog model starts with Poisson($\lambda$) inactive particles independently at each vertex, where $\lambda$ is called the \textbf{particle density}. 
At time zero, only particles at the origin are activated. 
Sometimes, we include an active \textbf{planted particle} at the origin $o = (0, \ldots, 0)$ to avoid the possibility of immediate extinction. 
This paper focuses on two versions of the frog model: the \textbf{SIR} (Susceptible, Infected, and Removed) frog model with finite \textbf{particle lifespan} and the \textbf{SI} frog model with infinite lifespan. 
The lifespan $\ell > 0$ is the number of steps an active particle takes before being \textbf{removed} from the lattice. 
The lifespan $\ell = \infty$ means that the active particles are never removed, which is the case for the SI frog model.

Classical studies of the frog model have focused primarily on frogs performing simple random walks (see \cite{MR1943889,MR1910638,MR2042394,MR2060478,MR1742145}), with a few papers considering asymmetric walks (see \cite{MR3283610,MR3668379}).
The novelty of this paper lies in the introduction of heavy-tailed walks into the frog model, characterized by a power-law transition kernel. This enables particles to traverse long distances in just a few steps. One of the main contributions of this paper is to demonstrate that the introduction of heavy-tailed walks significantly accelerates the spread of active particles on the hypercubic lattice. In the senario of infection spreading, this implies that, under the influence of long distance traveller, the infection can spread much faster than the simple random walk case.

To quantify the result, we introduce the \textbf{activation time} of a vertex $x \in \Z^d$, denoted by $\mathrm{AT}(x)$, which represents the minimal time needed for an active particle to visit site $x$. If site $x$ remains unvisited before the process extinguishes, then $\mathrm{AT}(x)$ is deemed infinite. 
The main question is how $\mathrm{AT}(x)$ scales with the graph distance from the origin. 
The concept of activation time, also known as passage time in some literature, has been extensively studied in works such as \cite{MR1893139}, \cite{10.1214/18-EJP144}, and \cite{MR4031108}. In particular, results from \cite{MR1910638,MR1893139} established a shape theorem for the simple random walk frog model on $\Z^d$, demonstrating that the activation time grows linearly with the graph distance from the origin.
Let $|\cdot|$ denote a norm on $\Z^d$. Itai Benjamini conjectured in private communications that the heavy-tailed frog model on $\Z^d$ with random walk one-step transition kernel $Q(x, y) \asymp |x - y|^{-(d + \alpha)}$ for $\alpha \in (0, d)$ would exhibit a wildly different behavior. Specifically, he proposed that the activation time of a vertex $x$ would grow as a polylogarithmic function of the norm of $x$. This paper confirms Benjamini's conjecture.
This conjecture was motivated by established results in long-range percolation literature on $\Z^d$ (see \cite{MR2850269, MR2663638}). In long-range percolation with power-law exponent $s \in (d, 2d)$, the chemical distance between vertices $x$ and $y$ is proportional to $(\log |x - y|)^{\Delta}$, where $\Delta > 0$ is a constant determined by the dimension $d$ and the power-law exponent $s$.
The frog model can be viewed as a directed long-range percolation model with dependencies, where a directed edge exists from $x$ to $y$ if a particle starting at $x$ visits $y$. 
While this coupling with dependent long-range percolation is helpful conceptually, the frog model presents additional complexity due to the unbounded range of dependency among directed edges. The dependency becomes more substantial and complex when the particle lifespan is large or infinite. 

A parallel line of research in this paper focuses on the study of the \textbf{cover lifespan} on finite graphs. The cover lifespan represents the minimum particle lifespan needed for active particles to visit every vertex. On finite graphs with irreducible transition kernels, the cover lifespan is an almost surely finite random variable. A rigorous definition is provided in Section \ref{sec:preliminary}. In epidemiological contexts, the cover lifespan represents the minimum duration an infected individual must remain infectious to ensure the disease spreads throughout the entire population. In the frog model literature, this quantity is also known as ``Susceptibility". For instance, \cite{MR4068310} studies the susceptibility of the frog model on tori with simple random walks and demonstrates that the cover lifespan is asymptotically equivalent to the minimum time needed for all vertices to be visited when all frogs are initially active. This paper extends these findings by showing that this equivalence holds even when simple random walks are replaced by heavy-tailed walks.

\vspace{2mm}

\textbf{Notation. \,}
The following notation is needed to discuss the main results. We also use them throughout the paper.
\begin{itemize}[label={}, left=0.2cm, itemsep=3pt]
\item[-] $\mathrm{AT}(o, x)$ is an integer-valued random variable, which denotes the smallest time at which vertex $x$ is visited by an active particle. Let
\begin{align*}
	\mathcal{A}(k) := \{ x : \mathrm{AT}(o, x) < k \},
\end{align*}
denote the set of vertices activated by time $k \in [1, +\infty]$.
% \item[-] Let $\Z^d$ be the $d$-dimensional integer lattice.
% \item[-] Let $B_x(L) := \{ y \in \Z^d : x_i - L/2 \leq y_i < x_i + L/2, i = 1, ..., d \}$. Let $B(L) := B_o(L)$.
\item[-] Let $\T^d_L := \Z^d / L\Z^d$ be the $d$-dimensional discrete torus with side length $L$. 
\item[-] Let $\bP$ and $\bP_L$ be the probability measures for the frog model on $\Z^d$ and $\T^d_L$. We suppress the dependency on $\lambda$, $\ell$, and $Q$ since they will be clear from the context. 
\item[-] Let $\bP^+$ and $\bP_L^+$ be the same measures but with the modification that, at time 0, we add one extra particle, i.e. the \textbf{planted particle}, at the origin. 
\item[-] Let $\bfP_x$ be the measure for a single discrete-time random walk starting at $x$ on $\Z^d$ whose transition kernel will be clear from the context. 
\item[-] For $x, y \in \Z^d$ or $\T^d_L$, $|x-y|$ denotes their $\ell_\infty$-distance.\footnote{On $\T^d_L$, we define
\[
|u-v| := \max_{1\le i\le d} \min\{ |u_i-v_i|,\; L-|u_i-v_i| \}.
\]
} We use the $\ell_\infty$-metric only for technical convenience.
\item[-] For functions $f, g$ on $\R$:
\begin{itemize}[label={}, left=0.2cm, itemsep=3pt]
    \item[-] we write $f(x) = O(g(x))$ if there exist $C > 0$ and $x_0$ such that for any $x \geq x_0$: 
    $$0 \leq f(x) \leq Cg(x).$$
    \item[-] we write $f(x) = \Omega(g(x))$ if there exist $c > 0$ and $x_0$ such that for any $x \geq x_0$: 
    $$f(x) \geq cg(x) \geq 0.$$
    \item[-] we write $f(x) = \Theta(g(x))$ if $f(x) = O(g(x))$ and $f(x) = \Omega(g(x))$.
\end{itemize}
\item[-] For functions $f, g$ on $\Z^d \times \Z^d$: we write $f(x, y) = O(g(x, y))$ if there exist $C > 0$ and $r_0$ such that $0 \leq f(x, y) \leq Cg(x, y)$ for any $x, y$ with $|x - y| \geq r_0.$
\item[-] For functions $f, g$ on $\R$ or $\Z^d \times \Z^d$, we write $f(x) \asymp g(x)$ if we have both $f(x) = O(g(x))$ and $g(x) = O(f(x))$ as $x \rightarrow \infty$. Moreover, $\asymp_{\beta}$ means that the constants $C, c$ depend on $\beta$.  
\item[-] Throughout this paper, $C, c$, $C_0, c_0$, $C_1, c_1, \dots$ denote positive constants that generally depend on $\lambda$ and $Q$ unless specified otherwise. The exact values of these constants may change from one expression to another, even within the same proof.
\end{itemize}

%%%%%%%%%%%%%%%%%%%%%%%%%%%%%%%%%%%%%%%%%%%%%%%%%%%%%%%%%%%%%%%%%%
\section{Main Results}

In this section, we present our main results concerning the \textbf{activation time} and the \textbf{cover lifespan} for a frog model where particles execute heavy-tailed random walks on a $d$-dimensional hypercubic lattice. 

Throughout this paper, the particles are assumed to perform discrete-time random walk. And, the particle lifespan is always taken as a positive integer.
Let \( Q(\cdot, \cdot) \) denote a symmetric translation-invariant random walk transition kernel on \( \mathbb{Z}^d \times \mathbb{Z}^d \), i.e. for any $x, y \in \mathbb{Z}^d$, we have \( Q(x, y) = Q(y, x) = Q(0, x{-}y) \). We say $Q$ is heavy-tailed with \textbf{exponent} \( \alpha \) if there exists a constant \( K > 0 \) such that
\begin{align}
    \frac{K^{-1}}{|x - y|^{d + \alpha}} \le Q(x, y) \le \frac{K}{|x - y|^{d + \alpha}}
\end{align}
holds for all \( x, y \in \mathbb{Z}^d \).
Throughout this paper, our primary assumption is that $Q$ is a heavy-tailed transition matrix with positive exponent $\alpha$.

Let $L$ be a positive integer. Aside from $\Z^d$, we also consider the frog model on the $d$-dimensional discrete torus $\T^d_L$. Let $Q_{L}$ denote the standard projection of $Q$ onto the discrete torus $\T^d_L$. Since $Q$ is translation-invariant, $Q_L$ is well-defined. 

\subsection{Bounds on the Activation Time}

Our study of the activation time begins with the case where the particle lifespan is a fixed constant $\ell \in \mathbb{N}$. For any $\rho \in (0, 1)$, we investigate how long it takes for the frog model to activate a $\rho$ fraction of vertices on $\T^d_L$ as $L \to \infty$. We discover two distinct regimes based on the exponent $\alpha$. When $\alpha \in (0, d)$ and $\ell$ is sufficiently large, we establish that the activation time is of order $O((\log L)^{\Delta + o(1)})$ for some $\Delta > 0$ that depends only on $\alpha$ and $d$. When $\alpha > d$, the activation time is asymptotically linear in $L$. In the critical case where $\alpha = d$, we prove a linear upper bound and conjecture sub-linear polynomial growth. 

We now proceed to state our results and conjectures precisely. Define $\Delta > 0$ as
\begin{align*}
\Delta^{-1} := \log_2(2d/(\alpha + d)).
\end{align*}

\begin{theorem}
  \label{Mainthm: activate_constant_fraction}
Let $d \ge 2$. Assume $Q$ is a symmetric, translation-invariant, and heavy-tailed transition kernel on $\Z^d$ with exponent $\alpha > 0$, and let $Q_L$ be its standard projection onto $\T^d_L$. Consider the frog model on $\T^d_L$ equipped with $Q_L$. Assume the lifespan $\ell_o$ of the planted particle diverges as $L$ approaches infinity.
\begin{enumerate}[itemsep=5pt, left=0.3cm]
\item If $\alpha \in (0, d)$, then for any particle density $\lambda > 0$, $\rho \in (0, 1)$, and $\epsilon > 0$, there exists a positive constant $C_0$ such that
  \begin{align*}
    \lim_{L \to \infty} \bP_L^+\left(\frac{|\mathcal{A}\left((\log L)^{\Delta + \epsilon}\right)|}{L^d} \geq \rho\right) = 1,
  \end{align*}
  for any particle lifespan $\ell > C_0$.

\item If $\alpha \ge d$, then for any $\rho \in (0, 1)$, there exist positive constants $C$ and $C_0$ such that
  \begin{align*}
    \lim_{L \to \infty} \bP_L^+\left(\frac{|\mathcal{A}(CL)|}{L^d} \geq \rho\right) = 1
  \end{align*}
  for any particle lifespan $\ell > C_0$.
\end{enumerate}
\end{theorem}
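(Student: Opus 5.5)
We will prove both parts by a block renormalization and a comparison with a (dependent) long-range percolation structure, exploiting that the lifespan $\ell$ is a large constant, so each particle's trajectory is a short heavy-tailed walk with essentially independent jumps.

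\textbf{Step 1: local seeding.} Partition $\T^d_L$ into boxes of a fixed large side $r$. Call a box \emph{good} if it contains at least $\lambda r^d/2$ particles; by Poisson concentration, all but a small fraction of boxes are good, independently across boxes. We first show that once one particle is active in a good box $B$, then within $O(\ell)$ time a constant fraction of the particles of $B$ (and of the boxes within distance $O(r)$) are active. This holds with probability close to $1$ when $\ell > C_0$, since a walk with $\ell$ steps started in $B$ visits many sites of $B$ with positive probability. The planted particle has lifespan $\ell_o\to\infty$, so it reaches a good box with high probability, and the process survives.

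\textbf{Step 2: multiscale doubling, case $\alpha<d$.} Following the long-range percolation argument of Biskup, define scales $n_{k+1}=n_k^{\gamma}$ with $\gamma<2d/(d+\alpha)$. The inductive claim is this: if a box of side $n_k$ has a positive fraction of its particles active, then within time $T_k\approx 2T_{k-1}$ a positive fraction of the particles of a box of side $n_{k+1}$ containing it are active. Given two sub-boxes $B_1,B_2$ of side $n_k$ at distance $\le n_{k+1}$, the number of active particles in $B_1$ whose single jump lands in $B_2$ has mean of order
\[
n_k^{2d}\, n_{k+1}^{-(d+\alpha)} \gg 1 \quad\text{when } \gamma<2d/(d+\alpha).
\]
Moreover, distinct particles move independently given the configuration, so a jump occurs with high probability. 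The bridge links $B_1$ to $B_2$, and we then use the induction hypothesis inside $B_2$. This gives $T_k\le 2T_{k-1}+O(1)$. Reaching scale $L$ needs $k\approx \log\log L/\log\gamma$, hence
\[
T\approx 2^k=(\log L)^{\log 2/\log\gamma}\to(\log L)^{\Delta+\epsilon}
\]
as $\gamma\uparrow 2d/(d+\alpha)$. To cover a $\rho$-fraction, we take a union bound over sub-boxes at each scale, with failure probabilities summable in $k$. This requires stretched-exponential bounds, obtained from independence of particles and Poisson thinning.

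\textbf{Step 3: case $\alpha\ge d$.} Here we only need nearest-neighbour-type spreading. Since $Q(0,e)>0$ for unit vectors, the good-box seeding of Step 1 dominates a supercritical oriented site percolation on the renormalized lattice of boxes, each step taking $O(\ell)$ time. Standard results then give that, with high probability, a $\rho$-fraction of boxes is reached within time $CL$, with $\rho$ close to the density of the infinite cluster. Pushing $\rho$ near $1$ requires making the box parameter large.

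\textbf{Main obstacle.} The key difficulty is \emph{dependence}. Active particles used to bridge at scale $k$ are the same particles counted at scale $k-1$, and the activated set depends on trajectories. To handle this, we plan to use sprinkling: split each site's Poisson($\lambda$) particles into independent thinned copies, one reserved for each level. There are $O(\log\log L)$ levels, and the reserved densities $\lambda_k$ are chosen with $\sum_k\lambda_k\le\lambda$, decaying slowly so the needed counts remain large. Fresh particles are then used at each scale, which makes the bridging events conditionally independent. We must also check that the torus wraparound and the finite lifespan do not spoil the one-jump estimates, which is routine for $\ell$ constant.
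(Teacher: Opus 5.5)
Your architecture is the paper's: fixed boxes of side $r$ with $\ell=\ell(r)$ giving a renormalized site percolation; single first-step jumps between boxes giving a directed long-range percolation with $\beta_1\to\infty$; Biskup-type doubling for $(\log L)^{\Delta+\epsilon}$; and a giant cluster with linear chemical distance for $\alpha\ge d$. However, the step everything rests on is not established.

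\textbf{Step~1 needs success probability tending to $1$.} It must succeed with probability tending to $1$ as $r\to\infty$. That is what gives site-percolation density $p\to1$, which you need for arbitrary $\rho$ and for Theorem~\ref{thm: percolation_reference}. It is also what sends the unavailability probability to $0$, so that $\beta_1\to\infty$ in Lemma~\ref{lemma: long-range-percolation-coupling}.

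Your reason is that one $\ell$-step walk visits many sites of $B$ with positive probability. That gives success probability only at least a constant $c_0$, and requiring ``at least $\lambda r^d/2$ particles in $B$'' does not improve it.

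The paper's key estimate is Proposition~\ref{prop: BernPerc_internal_1}. An exploration process with increments dominating $c_1f(r)\cdot\mathrm{Bernoulli}(1-e^{-c_0\lambda})$ shows that a seed set $A$ contains no good vertex (one activating a quarter of its box) with probability at most $Ce^{-c\lambda|A|}$. The proof then arranges that every seeding event involves many sites or a pre-certified good vertex:
\begin{itemize}
\item Proposition~\ref{prop: BernPerc_internal_2} upgrades a quarter of a box to full coverage of the box and its $2d$ neighbours, so adjacent boxes are seeded by whole boxes.
\item The planted particle is used only in fantastic boxes, where it visits $\phi(s)$ sites.
\item A long-range edge counts only if the jump lands in $G_v=\mathrm{Good}^{\mathrm{int}}_{B_v}$. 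This set has size at least $r^d/(C\log r)$ except with probability $O(r^{-d})$ (Corollary~\ref{cor: almost_constant_fraction_good_vertex}).
\end{itemize}
Your bridges land at arbitrary sites of $B_2$, from which your induction hypothesis is unavailable. Also, to give $T_k\le 2T_{k-1}+O(1)$, that hypothesis must be point-to-box, not the box-to-box statement you wrote. The $\alpha\ge d$ part inherits the same gap, since the near-$1$ claim of Step~1 is still unjustified there.

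\textbf{The density claim is not handled.} An induction preserving only ``a positive fraction'' loses a factor per scale. A union bound over all sub-boxes is impossible at small scales. Two far-apart sub-boxes of a side-$n_k$ box are unlinked with probability at least $\exp(-Cn_k^{2d-s})$, where $s=d+\alpha$. So among the $(L/n_k)^d$ independent scale-$k$ boxes, some fail with high probability until $n_k\gtrsim(\log L)^{1/(2d-s)}$. Summability in $k$ does not address this multiplicity.

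The paper lets the multiscale scheme produce only a sparse backbone $S$ (Proposition~\ref{prop: S_emerge}): tolerance $\delta$ at coarse scales, density $(\log\log m)^{-C}$, homogeneous at scale $(\log m)^{1/(2d-s)+\epsilon}$. It then:
\begin{itemize}
\item takes the $\rho$-fraction from the giant cluster of the dense site percolation built from the other half of the particles;
\item shows that $S\cap\mathrm{GC}$ meets every box of side $(\log m)^{\Delta}$;
\item pays $O((\log m)^\Delta)$ for travel inside $\mathrm{GC}$, which is affordable precisely because $\Delta>1/(2d-s)$.
\end{itemize}
Within your framework you would instead need tolerances $\delta_k$ summable in $k$, together with a concentration argument for the fraction of sites lying in nice boxes at every scale.

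\textbf{Dependence.} Your per-level sprinkling is workable but unnecessary. Using only first-step jumps of a separate Poisson half makes all edge indicators exactly independent by Poisson thinning. The multiscale analysis then runs on a genuinely independent long-range percolation after a single split.
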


The proof strategy consists of two steps while each step uses one half of the particles. The first step constructs a supercritical Bernoulli site percolation on a renormalized torus, which is used to obtain (2) in Theorem \ref{Mainthm: activate_constant_fraction} for all $\alpha > 0$. The second step constructs a directed long-range percolation on $\T^d_L$. The long-range percolation is used to reduce the activation time to a poly-logarithmic function of the side length when $\alpha \in (0, d)$, which is the content of (1) in Theorem \ref{Mainthm: activate_constant_fraction}. Those two parts are presented in detail in Section \ref{Sec: BernPerc} and Section \ref{Sec: Backbone} respectively.

We conclude with two important remarks regarding Theorem \ref{Mainthm: activate_constant_fraction}. First, our strategy only works when $d \ge 2$: while the second part of the proof extends to the case $d=1$, the first part relies on the existence of an infinite cluster from a Bernoulli site percolation. This fails when $d=1$. Second, our construction of a supercritical Bernoulli site percolation on $\Z^d$ necessitates choosing a sufficiently large particle lifespan $\ell$. In essence, our result lies in the ``supercritical" regime of the frog model. For a comprehensive analysis of phase transitions in the frog model on $\Z^d$ and general vertex-transitive graphs, we refer readers to \cite{MR1943889, angel2025existencesharpnessphasetransition}. We would like to point out that the renormalization argument is somewhat robust and in \cite{angel2025existencesharpnessphasetransition}, we managed to extend it to any vertex-transitive graphs with polynomial growth and establish the existence of a non-trivial phase transition for the frog model on such graphs. 

Our results on the activation time are inspired by existing literature on the metric property of the long-range percolation clusters on $\Z^d$. Assume the connection probability between two vertices of distance $r$ is proportional to $r^{-s}$. Then the graph-theoretical (a.k.a. chemical) distance between $o$ and $x$ exhibits five regimes: 
\begin{itemize}[left=0.3cm, itemsep=3pt]
\item[1.] $s < d$: the chemical distance approaches to a deterministic number $\lceil \frac{d}{d - s} \rceil$ as the Euclidean distance between $o$ and $x$, denoted by $|x|$, goes to infinity (see \cite{MR2123930});
\item[2.] $s = d$: the chemical distance grows as $\log |x| / \log \log |x|$ (see \cite{MR1913075});
\item[3.] $d < s < 2d$: the chemical distance is asymptotic to $\phi(|x|) (\log |x|)^{\Delta}$, where the function $\phi$ is not a constant and it describes the oscillation of the ratio between the chemical distance and $(\log |x|)^{\Delta}$ (see \cite{MR4757496});
\item[4.] $s = 2d$: the chemical distance grows like $\Theta(|x|^{\theta})$ as $|x| \rightarrow \infty$ for some $\theta \in (0, 1)$ (see \cite{MR4677584, MR4975504}); 
\item[5.] $s > 2d$: the chemical distance divided by $|x|$ converges to a constant (see \cite{berger2004lower}). 
\end{itemize}

In the frog model, the parameter comparable to $s$ is $\alpha + d$. The intuition comes from the following observation: we can think of the frog model as a dependent directed long-range percolation model, where we say there is a directed edge going from $x$ to $y$ if a particle initially sitting on $x$ visits $y$ in its lifespan $\ell$. By Poisson thinning, the number of particles starting from $x$ and visiting $y$ in their lifespan $\ell$ is a Poisson random variable with mean $\lambda \bfP_{x}(\tau_y \le \ell)$, where $\tau_y$ is the hitting time of $y$. Let $f(x, y) := \bfP_{x}(\tau_y \le \ell)$, then for fixed $\ell$, it is straightforward to verify that $f(x, y) \asymp_{\ell, K} |x - y|^{-(\alpha + d)}$. Since $\alpha$ can only be positive, the relevant cases for us would be the cases 3., 4., and 5. 

Note that in Theorem~\ref{Mainthm: activate_constant_fraction} we require the particle lifespan $\ell$ to be sufficiently large. We further conjecture that the same conclusions hold for any lifespan $\ell$ for which the frog model is supercritical; that is, without the planted particle, the process survives with positive probability (equivalently, with positive probability there is at least one active particle for all $t \ge 0$). More precisely, consider the SIR frog model on $\Z^d$ in which each frog performs a continuous-time random walk with jump rate $1$ and has lifespan $\ell \in (0, \infty)$. It follows from part (2) of Theorem 1.5 and Remark 1.6 in \cite{angel2025existencesharpnessphasetransition} that there exists a critical lifespan $\ell_c = \ell_c(\lambda, Q) \in (0, \infty)$ such that, for any $\ell > \ell_c$, the frog model on $\Z^d$ (without the planted particle) is supercritical, whereas for any $\ell < \ell_c$, the active particles die out almost surely in finite time. Motivated by this, we propose the following conjecture.

\begin{conjecture} \label{Conj: activate_constant_fraction}
Let $d \ge 2$, $\lambda > 0$, and let $Q$ be a symmetric, translation-invariant, heavy-tailed transition kernel on $\Z^d$ with exponent $\alpha > 0$. For any particle lifespan $\ell > \ell_c(\lambda, Q)$, the conclusions of Theorem~\ref{Mainthm: activate_constant_fraction} hold for some $\rho \in (0, 1)$.
\end{conjecture}

We further consider the case where particles have infinite lifespan, i.e. SI frog model,
a scenario particularly relevant when modeling the spread of information or rumors where active particles remain perpetually active. In this case, we can strengthen the result of Theorem \ref{Mainthm: activate_constant_fraction}. Specifically, rather than merely activating a constant fraction of vertices, we prove that the maximal activation time over all vertices on a torus is bounded from above by the same functions as established in Theorem \ref{Mainthm: activate_constant_fraction} for both regimes $\alpha \in (0, d)$ and $\alpha \ge d$.

\begin{theorem} \label{MainThm: infinite_l_activation_time}
Let $d \ge 2$. Assume $Q$ is a symmetric, translation-invariant, and heavy-tailed transition kernel on $\Z^d$ with exponent $\alpha > 0$, and let $Q_L$ be its standard projection onto $\T^d_L$. Consider the frog model on $\T^d_L$ equipped with $Q_L$. 
Assume that all particles have infinite lifespan. 
\begin{enumerate}[left=0.3cm]
\item If $\alpha \in (0, d)$, then for any $\lambda > 0$ and $\epsilon > 0$,
    \begin{align*}
        \lim_{L \rightarrow \infty} \bP_L^+\left(\mathcal{A}((\log L)^{\Delta + \epsilon}) = \T^d_L\right) = 1.
    \end{align*}
\item If $\alpha \ge d$, then for any $\lambda > 0$, there exists a positive constant $C$ such that
\begin{align*}
\lim_{L \rightarrow \infty} \bP_L^+\left(\mathcal{A}(C L) = \T^d_L\right) = 1.
\end{align*}
\end{enumerate}
\end{theorem}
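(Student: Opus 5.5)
Our plan is to reduce Theorem~\ref{MainThm: infinite_l_activation_time} to Theorem~\ref{Mainthm: activate_constant_fraction} by monotonicity, and then add a covering step that turns a positive fraction of activated sites into full activation.

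\textbf{Monotonicity and initial phase.} Since all particles have infinite lifespan, we may couple the SI model with the finite-lifespan model by truncation: each particle simply follows its walk for $\ell$ steps (with $\ell>C_0$ fixed as in Theorem~\ref{Mainthm: activate_constant_fraction}), and the planted particle for $\ell_o\to\infty$ steps. Every vertex activated in the truncated model is activated no later in the SI model. We split the Poisson$(\lambda)$ particles at each site into two independent Poisson$(\lambda/2)$ families. The first family, together with the planted particle, is used in Theorem~\ref{Mainthm: activate_constant_fraction} with density $\lambda/2$ and some $\rho\in(0,1)$, say $\rho=1/2$. With high probability, at time $T_1=(\log L)^{\Delta+\epsilon/2}$ (respectively $T_1=CL$) the set $\mathcal A(T_1)$ then has at least $\rho L^d$ vertices. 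The second family at those vertices is activated by time $T_1$. Conditionally on $\mathcal A(T_1)$, which is measurable with respect to the first family and the planted particle, the numbers of second-family particles at these vertices are i.i.d.\ Poisson$(\lambda/2)$. Hence there are at least $c\lambda L^d$ independent active walkers, each started by time $T_1$ from a set of density $\rho$.

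\textbf{Covering phase.} We then show that these $\asymp L^d$ walkers visit every vertex of $\T^d_L$ within $T_2$ additional steps, with $T_2=(\log L)^{1+\delta}$ in either regime. Fix $y\in\T^d_L$. For a walker started at $x$, the chance of hitting $y$ within $t$ steps is at least $\sum_{s\le t}Q_L^s(x,y)\big/\sum_{s\le t}Q_L^s(0,0)$. For $\alpha<2$ the heat kernel satisfies $Q^s(0,0)\asymp s^{-d/\alpha}$, and for $\alpha\ge2$ it satisfies $Q^s(0,0)\asymp s^{-d/2}$ up to logs. Since $d\ge2$ and $\alpha\le d$ in the relevant cases, the Green function up to time $t$ is $O(\log t)$ or $O(1)$. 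Summing over the starting points, $\sum_x Q_L^s(x,y)=1$ by double stochasticity. The expected number of distinct walkers hitting $y$ within $T_2$ steps is therefore at least of order $\lambda\rho\, T_2/\log T_2$, provided the starting set is spread out. It is spread out because it has density $\rho$ and $Q_L^s$ is nearly uniform once $s\gtrsim L^\alpha$. For shorter times we use the local bound $Q_L^s(x,y)\ge c s^{-d/\alpha}$ on a ball of radius $s^{1/\alpha}$, together with the fact that a density-$\rho$ set has a good fraction in most such balls.

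To avoid the density issue we would instead apply Theorem~\ref{Mainthm: activate_constant_fraction} with $\rho$ close to $1$. In that case any ball of radius $r\ge1$ contains at least $(1-2\cdot 2^{-d})$ of its points activated, except for a tiny fraction of balls. Walkers from distinct sites are independent, so the number hitting $y$ is Poisson-dominated with mean $\ge c(\log L)^{1+\delta}/\log\log L\gg d\log L$. This gives $\bP(y\text{ unvisited})\le L^{-2d}$, and a union bound over the $L^d$ sites finishes. In the regime $\alpha\ge d$ we have $T_1+T_2\le 2CL$. In the regime $\alpha<d$ we have $T_1+T_2\le(\log L)^{\Delta+\epsilon}$, because $\Delta>1$: indeed $2d/(d+\alpha)<2$.

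\textbf{Main obstacle.} The delicate step is the uniform-over-$y$ lower bound on the mean number of hitting walkers. It requires the activated set at time $T_1$ to be dense near \emph{every} $y$, whereas Theorem~\ref{Mainthm: activate_constant_fraction} only controls the global fraction. Our first attempt would be to use the translation invariance of the torus and the heavy tail. Starting from arbitrary positions, the second-family walkers run for $T_2$ steps and hit $y$ with probability at least $\sum_{s\le T_2}Q_L^s(x,y)/G$. Summing this over an arbitrary set $A$ of size $\rho L^d$ and averaging over $s\le T_2$ gives $\ge c\rho\, T_2\cdot$(mass of $A$ under a smoothed kernel). The mass of $A$ under this kernel is bounded below via $Q_L^{s}(x,y)\ge c|x-y|^{-(d+\alpha)}$ over all of the torus, using the single heavy-tailed jump. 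This gives each $x$ a hitting probability $\gtrsim L^{-(d+\alpha)}$ per step, which is too weak by a polynomial factor. So we would instead use the heat kernel lower bound $Q_L^s(x,y)\ge cL^{-d}$ for $s\ge L^{\alpha\wedge2}$. This yields mean $\gtrsim\rho T/\log T$ only for $T\gg L^{\alpha\wedge 2}$, which suffices when $\alpha\ge d\ge2$ since then $T=CL$ is not enough. This gap is the real difficulty. We would resolve it by proving a local density version of Theorem~\ref{Mainthm: activate_constant_fraction}, namely that every box of side $(\log L)^{C}$ is $\rho$-activated by time $T_1$, using the renormalized Bernoulli percolation of Section~\ref{Sec: BernPerc}, whose giant cluster meets every mesoscopic box with high probability.
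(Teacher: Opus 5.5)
Your architecture is the paper's. The SI statement is derived (through the diverging-lifespan Theorem~\ref{thm: infinite_l_activation_time}) by thinning into two Poisson$(\lambda/2)$ families. The first family, with a fixed large lifespan, runs together with the planted particle. The second family then covers $\T^d_L$ via Lemma~\ref{lemma: cover_to_green}, the heat-kernel lower bound, and a union bound over sites. The ``local density version'' you end up asking for is exactly what Propositions~\ref{prop: activate_constant_fraction} and~\ref{prop: activate_constant_fraction_alpha>=d} prove: the activated set is $\mathrm{Hom}(\lfloor C_1(\log L)^{1/(d-1)}\rfloor,\rho)$, with the scale coming from the giant-cluster density estimate in Theorem~\ref{thm: percolation_reference}(i). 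Your intermediate route does not work, as you yourself recognize. A global fraction $\rho$ close to $1$ says nothing about the neighbourhood of a \emph{given} $y$, and density in ``all but a tiny fraction of balls'' is useless for a union bound over all $L^d$ sites. The mixing-time fallback fails in both regimes, since $L^{\alpha\wedge 2}$ exceeds $(\log L)^{\Delta+\epsilon}$ and, for $\alpha\ge d\ge 2$, also $CL$; your sentence claiming it ``suffices when $\alpha\ge d\ge 2$'' contradicts itself.

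The genuine gap is that you never match the homogeneity scale $h$ to the covering time $T_2$, and your explicit choice can fail. The covering step needs the activated set to fill a constant fraction of the ball of radius $r(T_2)$ (the typical displacement of a $T_2$-step walk) around every $y$, i.e.\ $h\ll r(T_2)$. For $\alpha<d$ the time budget forces $T_2\le(\log L)^{\Delta+\epsilon}$, so your justification via $\Delta>1$ is the wrong comparison. Take $d=2$, $\alpha\in(1,2)$, $h=\log L$ and $T_2=(\log L)^{1+\delta}$ with small $\delta$. Then $r(T_2)=(\log L)^{(1+\delta)/\alpha}\ll h$, and the activated sites near $y$ may all lie at distance $\asymp h$. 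The expected number of second-family walkers hitting $y$ is then only $O(T_2^2h^{-\alpha})=O((\log L)^{2+2\delta-\alpha})$. This is $o(\log L)$ once $\delta<(\alpha-1)/2$, far too small for the union bound.

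The paper closes this with the elementary Lemma~\ref{lemma: Delta_g_1/d-1}: $\Delta>\alpha/(d-1)$ for $\alpha<2$ and $\Delta>2/(d-1)$ for $\alpha\ge 2$. This permits a covering time $\ell(L)\le(\log L)^{\Delta}$ with both $r(\ell(L))\gg(\log L)^{1/(d-1)}$ and $\ell(L)\gg\log L\log\log L$. Lemma~\ref{apx_lemma: random_walk_expected_hitting_size_lower_bound} then gives mean $\gg\log L$ at every $y$. For $\alpha\ge d$ the same mismatch occurs when $d=2$, where you need roughly $T_2\gg(\log L)^2$, but there it is harmless because the budget is $CL$. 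With $T_2$ chosen this way instead of $(\log L)^{1+\delta}$, your argument goes through.
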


We now turn to establishing matching lower bounds for the activation time. The following theorem addresses the case where $\alpha \in (0, d)$ and should be compared with part (1) of Theorem \ref{Mainthm: activate_constant_fraction}. Together, these results provide matching upper and lower bounds for the activation time in this regime.

\begin{theorem} \label{MainThm: LowerBound_Smaller_d}
Let $d \ge 1$. Assume $Q$ is a symmetric, translation-invariant, and heavy-tailed transition matrix on $\Z^d$ with exponent $\alpha \in (0, d)$. Consider the frog model on \( \Z^d \) equipped with $Q$. Fix any particle lifespan \( \ell \in \mathbb{N} \) and particle density \( \lambda > 0 \). Then, there exists a positive constant $c = c(Q, \lambda, \ell)$ such that $\bP$-almost surely $\mathcal{A}(c(\log L)^\Delta) \subseteq B(L)$ for all large $L$ and 
\begin{align*}
	\lim_{L \rightarrow \infty} \frac{|\mathcal{A}(c(\log L)^\Delta)|}{L^d} = 0,
\end{align*}
where $B(L) := [-L, L]^d \cap \Z^d$.
\end{theorem}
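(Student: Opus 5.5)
We reduce the statement to a lower bound on chemical distance in a dominating long-range percolation model, and then run the hierarchical (Biskup-type) argument for long-range percolation lower bounds. Since $\ell$ is fixed, every particle that is ever activated moves for at most $\ell$ steps. So the set of vertices it visits is contained in the $\ell$ positions of its trajectory.

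\textbf{Domination.} Place a directed edge $x\to y$ whenever some particle initially at $x$ (including the planted one) visits $y$ within its lifespan. This is defined for all $x$, regardless of activation. If $\mathrm{AT}(o,y)<k$, there is a directed path from $o$ to $y$ of at most $k$ edges, because each activation step uses one particle trajectory. Hence it suffices to show two things: a.s.\ for large $L$, every vertex within directed chemical distance $c(\log L)^\Delta$ of $o$ lies in $B(L)$; and the volume of that chemical ball is $o(L^d)$.

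The edges are dependent only through the trajectories of particles sharing a starting site. To control this, I would work with the trajectories directly. A path of $n$ edges in this graph is a concatenation of $n$ walk segments of length at most $\ell$. A segment of $\le\ell$ steps with displacement $r$ contains a single jump of size $\ge r/\ell$, and this has probability $\asymp_\ell r^{-(d+\alpha)}$. Using Poisson thinning, the expected number of paths can be bounded by the corresponding quantity for independent long-range percolation with $s=d+\alpha\in(d,2d)$ and connection probability $\min(1,C r^{-s})$. Distinct particles are independent, and a single particle contributes at most $\ell$ visited sites. So in a first-moment count, each step uses either a fresh Poisson particle, costing $\lambda\,\mathbf P_x(\tau_y\le\ell)\le C|x-y|^{-s}$, or a reused particle. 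Repeated use of the same particle only multiplies the count by a bounded combinatorial factor depending on $\ell$.

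\textbf{Hierarchical first moment.} Let $\gamma=s/(2d)\in(1/2,1)$, so that $\Delta^{-1}=\log_2(1/\gamma)$. Following Biskup's argument, I would show by induction on the number of edges that if $o$ reaches some $y$ with $|y|\ge N$ in $n$ edges, then there is a hierarchy of distinguished "long" edges. The top level is a single edge spanning a gap of order $N$. Its two remaining pieces each span at least a distance of order $N^{\gamma'}$ for any $\gamma'<\gamma$, recursively. Summing the first-moment bound over these hierarchical structures, as in Biskup's Proposition on $D(x,y)\ge(\log|x-y|)^{\Delta-o(1)}$, gives
\[\P(\exists y,\ |y|\ge N,\ \text{$o\to y$ in} \le n \text{ edges})\le N^{-\delta}\]
whenever $n\le c(\log N)^{\Delta}$. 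The sharp exponent $\Delta$ (rather than $\Delta-\epsilon$) with a constant $c$ should follow from the refined version of that argument.

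\textbf{Conclusion.} Applying the bound along $L=2^j$ and Borel–Cantelli gives $\mathcal A(c(\log L)^\Delta)\subseteq B(L)$ for large $L$. Interpolating between dyadic scales only changes $c$. For the volume, apply the same estimate at scale $L^{1-\eta}$ with $c$ small enough that $c(\log L)^\Delta\le c'(\log L^{1-\eta})^\Delta$. This gives $\mathcal A\subseteq B(L^{1-\eta})$, so $|\mathcal A|/L^d\to0$.

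The main obstacle is the domination step: making the first-moment path counting rigorous despite the dependence between the edges out of a common site. I would handle this with a BK-type disjointness argument on particle trajectories, counting the witness particles.
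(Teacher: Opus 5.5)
\textbf{The gap is the sharp exponent.} Your reduction to directed chemical distance, the independence of edges out of distinct source sites, and the Borel--Cantelli and volume steps at the end are fine and match the paper. The problem is the central estimate $\P(\exists y,\ |y|\ge N,\ \mathrm{D}_\ell(o,y)\le n)\le N^{-\delta}$ for $n\le c(\log N)^\Delta$. You never derive it. By your own description, the hierarchy argument you invoke is the one giving $D(x,y)\ge(\log|x-y|)^{\Delta-o(1)}$, i.e.\ exponent $\Delta-\epsilon$. That only yields $\mathcal{A}((\log L)^{\Delta-\epsilon})\subseteq B(L)$. This is strictly weaker than the theorem, because $c(\log L)^\Delta\gg(\log L)^{\Delta-\epsilon}$ for every $c>0$. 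So ``the sharp exponent should follow from the refined version'' is exactly the missing step. Separately, the structure you describe is wrong: it is not true that both remaining pieces must span distance at least $N^{\gamma'}$. One piece can carry almost all the displacement. What is forced, in first-moment terms, is only that the product of the two pieces' volumes beats $(N/n)^{s}$.

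\textbf{What the paper does instead.} Following Trapman and Biskup's later work, it uses a recursion on expected ball volumes rather than a count of hierarchies. Any $m$-edge path from $o$ to $x$ contains an edge $y\to z$ with $|y-z|\ge |x|/m$. Decompose at the first such edge. Then apply Reimer's inequality on the finite product space whose coordinate at each site $u$ is the whole range set $\mathcal{R}_{u,\lambda}(\ell)\cap B(L_0)$. This is also how the within-site dependence you worried about is handled: all edges out of $u$ live in a single coordinate. The result is
\[
\P(\mathrm{D}_\ell(o,x)\le m)\le C\lambda\Bigl(\frac{m}{|x|}\Bigr)^{d+\alpha}\sum_{j=0}^{m-1}\E|B_j|\,\E|B_{m-j-1}|.
\]
The induction then closes in three steps:
\begin{enumerate}
\item A tail bound $(M/|x|)^{s}$ converts into $\E|B_k|\le C_0M^d$.
\item Biskup's function $M(m)=C_3^{-1}(m+1)^{-p}e^{C'm^{1/\Delta}}$ satisfies $\sum_j M(j)^dM(m-j)^d\le (m+1)^{-s}M(m+1)^s$.
\item Together these give $\P(\mathrm{D}_\ell(o,x)\le m)\le C_1|x|^{-s}e^{C_2m^{1/\Delta}}$.
\end{enumerate}
Taking $m=c(\log|x|)^\Delta$ with $C_2c^{1/\Delta}<\alpha$ makes this summable over $x$. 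That is where the constant $c$ in front of $(\log L)^\Delta$ comes from. Without this argument or an equivalent one, your proposal proves only the $\Delta-\epsilon$ version.
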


Similar almost sure statement is also valid for the frog model on $\T^d_L$ equipped with $Q_L$ when sending $L$ to infinity: since $Q_L$ is the standard projection of $Q$ on to $\T^d_L$, we can define the frog model on $\T^d_L$ from the frog model on $\Z^d$ and thus couple frog model on $\T^d_L$ for all $L \in \N$ on the same probability space. Therefore, the almost sure statement makes sense in the coupled probability space as that for $\Z^d$. 
Additionally, the proof for Theorem \ref{MainThm: LowerBound_Smaller_d} extends to the torus setting with minor modifications.
We state the result on $\Z^d$ to avoiding devoting too much effort on defining the coupling in detail. The next result concerns with the case when $\alpha > d$.

\begin{theorem} \label{MainThm: lower_bound_alpha_ge_d}
Let $d \ge 1$. Assume $Q$ is a symmetric, translation-invariant, and heavy-tailed transition matrix on $\Z^d$ with exponent $\alpha > d$. Consider the frog model on \( \Z^d \) equipped with $Q$. For any $\epsilon \in (0, 1)$, there exists a positive constant $c = c(Q, \lambda, \ell, \epsilon)$ such that 
\begin{align*}
    \bP\left( \text{for } \forall L > 0, \mathcal{A}(cL) \subseteq B(L), \frac{|\mathcal{A}(cL)|}{L^d} < \epsilon \right) \ge 1 - \epsilon,
\end{align*}
where $B(L) := [-L, L]^d \cap \Z^d$.
\end{theorem}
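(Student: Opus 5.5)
We compare the frog model with a directed long-range percolation, as the paper suggests. Put a directed edge from $x$ to $y$ whenever some particle initially at $x$ visits $y$ within its lifespan $\ell$. By Poisson thinning, the edges out of different vertices are independent. For a fixed starting vertex $x$, the number of particles from $x$ visiting a given set is Poisson, and the edge $x\to y$ is present with probability at most $\lambda\bfP_x(\tau_y\le\ell)\le C_\ell |x-y|^{-(d+\alpha)}$. This bound comes from a union bound over the $\ell$ steps together with the heavy-tail estimate: the $\ell$-step kernel is still $O(|x-y|^{-(d+\alpha)})$ for fixed $\ell$, since $\alpha>0$ makes the jump sizes summable in the relevant sense.

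In time $k$, activation moves only along chains. Each frog that becomes active at time $t$ from its starting site $x$ visits only sites reachable in at most $\ell$ of its own steps. So $\mathcal{A}(k)$ is contained in the set of vertices reachable from $o$ by a chain of at most $k$ such directed edges, where each edge corresponds to one frog's trajectory. The dependence is only among the edges out of a single vertex, since they come from the same frogs. This is harmless for a first-moment path count because we use only the marginal bound on each edge.

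The main step is a displacement estimate. For $\alpha>d$ we have $s=d+\alpha>2d$, and the expected total displacement of a frog trajectory has finite first moment, because $\sum_z |z|\,|z|^{-(d+\alpha)}<\infty$ iff $\alpha>1$. This is not guaranteed in general, so we instead run a Peierls / first-moment count on chains with multiscale control, in the spirit of Berger's lower bound for $s>2d$. Fix $\delta>0$ small and count self-avoiding sequences $o=x_0,x_1,\dots,x_k$ of sites with $k\le cL$ and $|x_k|\ge L$. Call a chain step a \emph{long edge} if its length exceeds some large $M$. The expected number of chains is bounded by
\[
\sum_{x_1,\dots,x_k}\prod_{i}\lambda C_\ell|x_i-x_{i-1}|^{-(d+\alpha)}\Big/\!\!\!\!\!\!\quad ,
\]
but the uncut sum grows like $(C')^k$, so we must exploit that frogs at $x_i$ must actually be present. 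Concretely, we bound the probability that the cluster reaches distance $L$ within $cL$ generations. Each vertex has a finite expected out-degree $D:=\lambda\sum_y\bfP_o(\tau_y\le\ell)\le\lambda(\ell+1)$. The generation-$j$ exploration tree is therefore dominated by a branching random walk with displacement kernel $\mu(z)\le C|z|^{-(d+\alpha)}$ and mean offspring $D$. Since $\alpha>d$ gives $\sum|z|^{\beta}\mu(z)<\infty$ for some $\beta>0$ (indeed for $\beta<\alpha$), the expected number of generation-$j$ particles at distance $\ge L$ is $D^j\,\mu^{*j}(|z|\ge L)$, which is at most $D^j$ times a polynomial tail. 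This alone does not give linear speed. So the harder point is to show that $\mathcal{A}(cL)\subseteq B(L)$ with speed linear rather than exponential in the number of generations. For this, time must be used: a frog activated at time $t$ has moved only $t$ steps, and $\mathcal{A}(k)$ involves at most $k$ total frog steps along any activation path. Hence we count paths of \emph{steps}, namely sequences of single jumps $z_1,\dots,z_k$ along which frogs are handed off, weighted by the probability that handoffs occur. Each step carries a factor $Q$ and a site-occupancy factor, giving expected counts $\le\big(\sum_z Q(0,z)\cdot(1+\lambda')\big)^k$ restricted to $\sum z_i\ge L$. The tail of a sum of $k$ heavy-tailed jumps with exponent $\alpha>d\ge1$ is governed by one big jump, of probability $\approx k L^{-\alpha}$, or by $\ge L/(Ak)$ moderate jumps. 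Combining this with a Chernoff bound on $A^k$ should give probability $\le e^{-c'L}+ C^{cL}kL^{-\alpha}$. I expect the main obstacle to be here: the polynomial single-big-jump term competes with the exponential combinatorial factor $C^{cL}$. To overcome it, I would use a multiscale (renormalization) argument as in \cite{berger2004lower}. Call a box of side $R_n$ bad if it contains a frog edge of length $\ge R_n^{1-\eta}$ starting inside it; the probability of this is $R_n^{d}R_n^{-(1-\eta)\alpha}$, which is summable over scales since $\alpha>d$. Then show inductively that crossing good boxes at scale $n$ requires time at least $c_n R_n$ with $\prod(1-\eta_n)$ bounded below, and finally take a union over $L$ and a Borel–Cantelli bound to get probability $\ge 1-\epsilon$ simultaneously for all $L$.

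Finally, the density claim $|\mathcal{A}(cL)|<\epsilon L^d$ follows by shrinking $c$. Applying the containment at scale $\epsilon' L$ gives $\mathcal{A}(cL)\subseteq B(\epsilon^{1/d}L/2)$ once $c$ is replaced by $c\epsilon^{1/d}/2$. The event in the statement then holds for all $L$ simultaneously, and the failure probability is controlled by the multiscale estimate uniformly in $L$.
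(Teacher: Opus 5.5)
Your overall architecture matches the paper's. You pass to the directed long-range percolation in which edges out of distinct sites are independent and $\bP(x\to y)\le C\lambda|x-y|^{-(d+\alpha)}$ (Lemma~\ref{lemma: hitting_probability_upper_bound_on_Z}), use $\mathrm{AT}_\ell\ge \mathrm{D}_\ell$, run Berger's multiscale argument, apply Borel--Cantelli, and shrink $c$ for the density clause. The first-moment, branching and step-counting attempts in your third paragraph are all abandoned, so they play no role (incidentally, $\alpha>1$ is automatic since $\alpha>d\ge 1$).

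The gap is in the one step that carries the proof. Your bad event at scale $n$ is non-recursive: it only forbids an edge of length at least $R_n^{1-\eta}$ starting in the box. From that alone you can conclude only that a crossing of the box uses at least about $R_n^{\eta}$ edges, not $c_nR_n$. The induction ``crossing a good scale-$n$ box costs at least $c_nR_n$'' needs the crossing path to traverse many scale-$(n-1)$ sub-boxes that are good \emph{in the inductive sense}, and your definition says nothing about sub-boxes. You also cannot simply require every sub-box at every lower scale to be good. The expected number of bad scale-$j$ sub-boxes is of order $(R_n/R_j)^dR_j^{d-(1-\eta)\alpha}$, which for $j=0$ diverges as $n\to\infty$. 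So your estimate $R_n^{d-(1-\eta)\alpha}$ bounds the wrong event, and the factor $\prod(1-\eta_n)$ has no mechanism behind it.

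The missing idea is a hierarchical definition that tolerates a single bad child, together with the quadratic recursion it produces. This is exactly what the paper supplies in Definition~\ref{def: Berger_good_box_definition} and Lemma~\ref{lemma: sum_faliure_finite}, before invoking Berger's deterministic lemmas verbatim. A $k$-block is good if three conditions hold:
(a) it contains no edge longer than $A_{k-1}/100$, a small fraction of the \emph{child} side length, so paths cannot jump over children;
(b) all but at most one child is good;
(c) the same holds for each half-shifted inner block $Q^{\mathrm{int}}+v$, $v\in\{0,\pm A_{k-1}/2\}^d$, which handles paths running along children's boundaries.
Failure of (b) or (c) requires two bad children. Their goodness depends on frogs from disjoint sets of starting sites and is therefore independent, which gives
\[
P_k\le \lambda C A_k^{2d}\Bigl(\frac{100}{A_{k-1}}\Bigr)^{d+\alpha}+(3^d+1)\binom{C_k^d}{2}P_{k-1}^2 .
\]
Take $A_k=M(k!)^2$, so $C_k=k^2$ grows slowly enough that the first term is $O(M^{d-\alpha}e^{-3k/2})$ when $\alpha>d$. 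The recursion then closes as $P_k\le C'e^{-k}$, which is summable, and Berger's argument yields linear crossing costs for good blocks.

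A smaller point concerns your density step. There $|B(\epsilon^{1/d}L/2)|=(\epsilon^{1/d}L+1)^d>\epsilon L^d$, so you need a smaller fraction (the paper uses $\epsilon^{1/d}/3$) and $L$ bounded below. Since $o\in\mathcal{A}(cL)$ always, the density clause cannot hold for $L<\epsilon^{-1/d}$; the paper's proof shares this caveat.
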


Inspired by \cite{MR4677584}, we put forward the following conjecture regarding the case when $\alpha = d$.

\begin{conjecture} \label{Conj: alpha_equals_d}
    Let $d \ge 1$. Assume that $Q$ is a symmetric, translation-invariant, 
    heavy-tailed transition matrix on $\Z^d$ with exponent $\alpha = d$. 
    Consider the frog model on $\Z^d$ equipped with $Q$.
    For every particle density $\lambda > 0$ and particle lifespan 
    $\ell \in \N$, there exists $\theta = \theta(Q,\lambda,\ell) \in (0,1)$
    such that the following statement holds: for every $\epsilon \in (0,1)$, 
    there exists $c = c(d,Q,\lambda,\ell,\epsilon)>0$ such that
    \begin{align*}
        \bP\left( \mathrm{AT}(o,x) \ge c |x|^\theta \right) 
        \ge 1-\epsilon,
        \qquad \text{for all } x \in \Z^d .
    \end{align*}
    Moreover, if $\ell > \ell_c(\lambda,Q)$, then the same exponent $\theta$ 
    also satisfies the following statement: for every 
    $\epsilon,\rho \in (0,1)$, there exists 
    $C = C(d,Q,\lambda,\ell,\epsilon,\rho)>0$ such that, for all sufficiently 
    large $L$,
    \begin{align*}
        \bP^+\left(
            \left|\mathcal{A}\left(C L^\theta\right) \cap B(L)\right|
            \ge \rho |B(L)|
        \right) 
        \ge 1-\epsilon .
    \end{align*}
\end{conjecture}

\subsection{Concentration Inequalities for Cover Lifespan}

Aside from the activation time, we also investigate another random quantity called the \textbf{cover lifespan} on $\T^d_L$ with $d \ge 2$. Given a particle density $\lambda$ and a random walk transition kernel $Q$, we independently sample $\mathrm{Pois}(\lambda)$ (and $\mathrm{Pois}(\lambda) + 1$ for $o$ if we include a planted particle) infinite discrete-time trajectories for each vertex on the graph. Let the cover lifespan be the smallest (random) positive integer $\mathcal{L}$ such that every vertex is visited by at least one active particle if we start the frog model dynamics from $o$ and each active particle has lifespan $\mathcal{L}$. That is, $\mathcal{L}$ is the minimal lifespan so that the frog model activates all vertices on the graph. We refer the reader to Section \ref{Sec: formal_construction} for a formal definition of the cover lifespan.

Let $Q$ be a symmetric, translation-invariant, and heavy-tailed transition kernel on $\Z^d$ with exponent $\alpha$. Let $Q_L$ be the standard projection of $Q$ on to $\T^d_L$. Let $\lambda > 0$ and define $\ell^*$ as the smallest $\ell \in \N$ such that
\begin{align} \label{Maindef: cover_time}
    \frac{\lambda \ell}{G_{\ell}(o, o)} \ge d \log L,
\end{align} 
where $G_\ell(o, o) := \sum_{n = 0}^{\ell} Q^n(o, o)$. We prove that $\mathcal{L}$ is concentrated around $\ell^*$, i.e. 
$$\frac{\mathcal{L}}{\ell^*} \xrightarrow[]{d} 1 $$ 
as $L$, the side length of the tori, tends to infinity. Moreover, we also prove that, as presented in Theorem~\ref{MainThm: Susceptibility}, $\mathcal{L}$ is asymptotically equivalent to a type of ``cover time". Let $\tau_{\mathrm{cov}}$ be the first time all vertices are visited if we start from all particles being active and without the presence of the planted particle. An equivalent way to think of this quantity is that $\tau_{\mathrm{cov}}$ is the first time all vertices are visited if we start with Pois$(\lambda L^d)$ active independent walkers evolving according to $Q$ whose initial distribution is the uniform distribution on $\T^d_L$. We call $\tau_{\mathrm{cov}}$ the \textbf{cover time}. 
We would like to point out that, in \cite{MR4987417}, the cover time of $k$ independent stationary random walks on graphs is investigated.
Note that $\tau_{\mathrm{cov}}$ can be coupled with the frog model on the same probability space by not using the planted particle at the origin.
Through a coupling argument, it follows that $\tau_{\mathrm{cov}}$ serves as a lower bound for $\mathcal{L}$. We show that, for any $\epsilon > 0$, with overwhelming probability, 
$$(1-\epsilon)\ell^* \le \tau_{\mathrm{cov}} \le (1+\epsilon)\ell^*,$$
which naturally proves the lower bound for $\mathcal{L}$. The asymptotic equivalence between $\mathcal{L}$ and $\tau_{\mathrm{cov}}$ has been proved for frog model on tori with simple random walk transition kernel in \cite{MR4068310}. Our result extend it to heavy-tailed random walk transition kernels, which is summarized as follows.

\begin{theorem} \label{MainThm: Susceptibility}
Let $d \ge 2$, $\alpha > 0$, and $\lambda > 0$. Consider the corresponding frog model on $\T^d_L$. Then, for any $\epsilon>0$, we have
\begin{align*}
	&\lim_{L \rightarrow \infty} \bP_L^+((1-\epsilon)\ell^* \le \tau_{\mathrm{cov}} \le \mathcal{L} \le (1+\epsilon)\ell^*) = 1.
\end{align*}
\end{theorem}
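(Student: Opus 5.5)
We split the statement into three parts: the lower bound $(1-\epsilon)\ell^*\le\tau_{\mathrm{cov}}$, the deterministic comparison $\tau_{\mathrm{cov}}\le\mathcal L$, and the upper bound $\mathcal L\le(1+\epsilon)\ell^*$. The comparison is a coupling. Use the same trajectories for both processes and ignore the planted particle. If the frog model with lifespan $\ell$ activates every vertex, then every vertex $x$ is visited by some trajectory within its first $\ell$ steps, since the activating particle has taken at most $\ell$ steps. Hence all vertices are visited by time $\ell$ when all particles start active, so $\tau_{\mathrm{cov}}\le\mathcal L$.

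For the lower bound we use a second-moment argument on the number $N_\ell$ of vertices not visited by time $\ell=(1-\epsilon)\ell^*$ in the all-active system.

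\emph{First moment.} By Poisson thinning and translation invariance,
\[
\bP(x\text{ unvisited})=\exp\bigl(-\lambda\,\E[\text{range of walk up to time }\ell]\bigr).
\]
The key estimate is $\E|\mathcal R_\ell|\approx \ell/G_\ell(o,o)$ up to $1+o(1)$ factors, in the regime $\ell\asymp\ell^*$. Here $\ell^*$ is of order $\log L$ in the transient case and $\log L\log\log L$ in the recurrent case $d=\alpha$, in any event $\ell^*\ll L^{\min(\alpha,2)}$. We would obtain this from the last-exit decomposition $\sum_k \bfP(\tau_y=k)G_{\ell-k}(o,o)=\sum_{n\le\ell}Q^n(o,y)$, summed over $y$. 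Since $G_\ell$ is slowly varying in $\ell$, this gives $\E|\mathcal R_\ell|\,G_\ell\ge \ell+1$ and the matching upper bound up to $1+o(1)$. The walk on $\T^d_L$ behaves like the walk on $\Z^d$ for $\ell\ll L^{\min(\alpha,2)}$, so the torus correction is negligible. It follows that $\E N_\ell=L^d\exp(-(1-\epsilon+o(1))d\log L)\to\infty$.

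\emph{Second moment.} For $x,y$ at distance $r$,
\[
\bP(x,y\text{ unvisited})=\exp\bigl(-\lambda(\E|\mathcal R\ni x|+\E|\mathcal R\ni y|-\E[\text{hits both}])\bigr).
\]
The correction term is $\lambda\ell\,\bfP(\text{walk from }x\text{ hits }y\text{ within }\ell)\lesssim \ell\cdot\ell\, r^{-(d+\alpha)}$ plus a near-diagonal contribution. Split pairs into $r\le L^{\delta}$ and $r>L^\delta$. Far pairs have correction $o(1)$. Near pairs number $L^{d+d\delta}$ and have probability at most $\E N_\ell/L^d$ times a constant. Together these give $\mathrm{Var}N_\ell=o((\E N_\ell)^2)$, and Chebyshev yields $N_\ell>0$ with high probability.

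The upper bound $\mathcal L\le(1+\epsilon)\ell^*$ is the main obstacle. Set $\ell=(1+\epsilon)\ell^*$ and split it as $\ell=\ell_1+\ell_2$ with $\ell_1=\epsilon\ell^*/2$. Also split the particles by thinning into a small fraction $\epsilon'\lambda$ and the rest.

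\emph{Step 1 (almost all particles activated).} Using the $\epsilon'\lambda$ particles, apply \Cref{Mainthm: activate_constant_fraction} or \Cref{MainThm: infinite_l_activation_time}-type arguments with lifespan $\ell_1\to\infty$, on a renormalized scale, to show the following: within polylog (or $O(L)$) generations, all but a negligible set of sites, and hence all but an $L^{-c}$-fraction of the remaining particles, get activated. Since only lifespan matters for $\mathcal L$, not time, the generation count is irrelevant. The planted particle has lifespan $\ell$, which is fine.

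\emph{Step 2 (cover).} The remaining $(1-\epsilon')\lambda$ particles are then active with their full lifespan, and they essentially constitute the all-active system of density $(1-\epsilon')\lambda$ run for $(1+\epsilon)\ell^*$ steps. By the first-moment bound with a union bound, $\E N\le L^d\exp(-(1+\epsilon)(1-\epsilon')d\log L)\to0$, so every site is visited.

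The delicate point is Step 1. One must ensure that the unactivated set is small and spread out enough that the Step 2 particles are effectively Poisson with intensity close to $(1-\epsilon')\lambda$ at every location. The difficulty is that activation status is correlated with trajectories. We would handle this by determining activation using only the $\epsilon'$-particles (independent of the rest), so that the Step 2 particles are activated exactly at sites in an activated set independent of their trajectories. We would then show that with high probability the unactivated set has fewer than $L^{d-c}$ sites. Its contribution to the expected hitting mass of any fixed $x$ then reduces the exponent by only $o(\log L)$.
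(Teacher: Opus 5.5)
Your coupling for $\tau_{\mathrm{cov}}\le\mathcal L$ and your second-moment lower bound are correct, and they are essentially the paper's argument (Proposition~\ref{prop: expected_uncovered}, Lemma~\ref{lemma: variance_uncovered}). Splitting pairs at distance $L^{\delta}$, with $\delta$ small relative to $\epsilon$, works as well as the paper's split by hitting probability. You do need a bound of the form $\bfP_x(\tau_y\le\ell)\le \mathrm{poly}(\ell)\,r^{-(d+\alpha)}$ for this; it follows from the largest-jump estimate $Q^n(o,y)\le K n^{1+d+\alpha}r^{-(d+\alpha)}$.

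The upper bound $\mathcal L\le(1+\epsilon)\ell^*$, however, has a genuine gap, and it sits in Step~1.

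\emph{The target is too weak.} A global bound $|U|<L^{d-c}$ on the unactivated set does not control the loss $\sum_{y\in U}\bfP_y(\tau_x\le\ell)$ at a given $x$. That sum is carried by $y$ within distance $O(r^*)$ of $x$, and $r^*=r(\ell^*)$ is only polylogarithmic. Suppose $U$ contains the $r^*$-neighbourhood of a single $x$. This is only polylogarithmically many sites, so it is compatible with $|U|<L^{d-c}$. Then that $x$ loses a constant fraction of its hitting mass, and the union bound in Step~2 fails.

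What Step~2 actually needs is the hypothesis of Lemma~\ref{lemma: cover_from_high_density}. The activated set must have density at least $1-\delta$ in \emph{every} box of side $\asymp r^*$, simultaneously for all $\asymp L^d/(r^*)^d$ boxes. That forces a per-box failure probability of $o(L^{-d})$.

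\emph{The cited results do not supply this.} Theorem~\ref{Mainthm: activate_constant_fraction}, via Propositions~\ref{prop: activate_constant_fraction} and~\ref{prop: activate_constant_fraction_alpha>=d}, gives homogeneity only at scale $C_1(\log L)^{1/(d-1)}$, inherited from a giant percolation component. For $d=2$ and $\alpha>1$ this scale is much larger than $r^*$; for instance $r^*\asymp(\log L\log\log L)^{1/2}$ when $\alpha>2$. Theorem~\ref{thm: infinite_l_activation_time} requires lifespan $\gg\log L\log\log L$ and uses a non-sharp constant in the exponent. You only have density $\epsilon'\lambda$ and lifespan $\epsilon\ell^*/2$, so its union bound over $L^d$ sites fails.

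\emph{The missing idea.} You need a mechanism by which a \emph{small} density of particles with lifespan $O(\ell^*)$ pushes a $(1-\delta)$-activated box of side $r^*$ to a $(1-\delta)$-activated neighbouring box, with failure probability $o(L^{-d})$. The paper does this in Corollary~\ref{cor: Susc_internal_2}:
\begin{enumerate}
\item Any $\delta$-dense subset of such a box contains $M$ points with pairwise $G_{\ell^*}(x,y)\le uG_{\ell^*}$ (Lemma~\ref{lemma: good_set_intersect_high_density_set}).
\item The hitting mass of such an $M$-set from the activated set is at least $c'M\ell^*/((uM+1)G_{\ell^*})$.
\item For small $u$ and large $M$, this makes the probability of missing the $M$-set at most $L^{-2d}$.
\item A union bound over the polylogarithmically many $M$-subsets finishes the step.
\end{enumerate}
The paper combines this with good vertices at scale $r^*$ (Corollary~\ref{cor: Susc_internal}) and the planted particle reaching one of them (Lemma~\ref{lemma: Susc_planted_to_fantastic}). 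An exploration over all boxes of $\T^d_m$ then produces the uniformly dense set $A^*$, after which your Step~2 is exactly Lemma~\ref{lemma: cover_from_high_density}. Without a step of this kind, your Step~1 does not deliver what Step~2 requires.
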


\section{Preliminaries}
\label{sec:preliminary}

\subsection{The Frog Model, Activation Time, and Cover Lifespan} \label{subsection: FrogModelFormal} \label{Sec: formal_construction}

We first describe the frog model dynamics and then give a formal construction of the probability space with which we will be working.
For a graph $G = (V, E)$ with either a finite or countable vertex set, we fix a one-step random walk transition kernel $Q$ on $V \times V$.

Fix $\lambda > 0$ and $\ell \in \N \cup \{+\infty\}$.
Choose a vertex $o \in V$ as the \textbf{origin}.
Initially, each site contains independently $\mathrm{Pois}(\lambda)$ particles.
To avoid immediate extinction, we sometimes place an additional particle at the origin, called the \textbf{planted particle}. 

The process evolves in discrete time.
Each particle exists in one of three states: \textbf{active}, \textbf{inactive}, or \textbf{removed}.
A particle can only be in one state at any given time.
Their movements and interactions are governed by the following rules:
\begin{itemize}[left=10pt]
\item Active particles:
    \begin{itemize}[left=4pt, rightmargin=65pt]
    \item perform independent discrete-time random walks governed by transition kernel $Q$;
    \item change to the removed state after completing exactly $\ell$ steps.
    \end{itemize}

\item Inactive particles are immobile until activated.

\item When an active particle visits a vertex containing inactive particles:
    \begin{itemize}[left=4pt, rightmargin=65pt]
    \item all inactive particles at that vertex become active instantly and start performing independent random walks;
    \item the active particle continues its random walk unaffected if it made less than $\ell$ steps so far.
    \end{itemize}

\item Removed particles no longer participate in the dynamics of the model.
\end{itemize}

At time 0, only particles at the origin are active while all other particles are inactive.
The process continues until all particles are either removed or inactive, at which point we say the process has \textbf{died out}.
We refer to the dynamics described above as the \textbf{frog model} starting from $o$ with particle density $\lambda$ and lifespan $\ell$. 

We now provide a formal construction of a probability space that couples frog models with varying particle densities $\lambda$ and different origins.

For each $v \in V$, let $\cW_v := \{ \omega_i^v \}_{i = 0}^{\infty}$ be a countably infinite collection of particles. We say $v$ is the \textbf{initial position} for particles in $\cW_v$. 
Each particle independently samples an infinite random walk trajectory by performing a discrete-time random walk starting from its initial position, governed by $Q$. For a particle $\omega$, we use $\omega(k)$ to denote the position of $\omega$ after $k$ steps. 
For $\ell \in \mathbb{N}$, let $\mathrm{R}_\omega(\ell) := \{ \omega(i) : i = 0, 1, \ldots, \ell \}$ denote the set of vertices visited by $\omega$ within the first $\ell$ steps. 

Let $\{ \mathcal{N}_v^{\lambda} \}_{v \in V}$ be a collection of i.i.d. Poisson random variables with mean $\lambda$, which are independent of the random walk trajectories. Let $\cW_v^\lambda := \{ \omega_i^v \}_{i = 1}^{\mathcal{N}_v^{\lambda}}$ be the collection of particles initially sitting at $v$. Note that the particle with index $0$ is not included in $\cW_v^\lambda$, as this index is reserved for the planted particle. Moreover, for $\ell \in \N$, we define
\begin{align} \label{def: union_of_ranges_single_vertex}
    \mathcal{R}_{v, \lambda}(\ell) := \bigcup_{\omega \in \cW_v^\lambda} \mathrm{R}_\omega(\ell).
\end{align}
For $\ell \in \mathbb{N}$ and $v, u \in V$, we write $v \xrightarrow{\lambda, \ell} u$ if $u \in \mathcal{R}_{v, \lambda}(\ell)$. Whenever it is clear from the context, we simply write $v \rightarrow u$. Let
\begin{align} \label{def: T_lambda}
    \mathrm{T}_\lambda(v, u) &:= \inf \left\{ \ell \ge 0 : v \xrightarrow{\lambda, \ell} u \right\}.
\end{align}
We say a finite sequence of vertices $\left( v_i \right)_{i=0}^m$ is a \textbf{$(\lambda, \ell)$-chain} from $v$ to $u$ if $v = v_0$, $u = v_m$, and $\mathrm{T}_\lambda(v_{i-1}, v_i) \le \ell$ for all $1 \le i \le m$. In this case, we say the $(\lambda, \ell)$-chain has length $m$ ($m \ge 0$).

We define and are interested in the following related quantities: 
\begin{align} 
    \mathrm{D}_{\lambda, \ell}(v, u) &:= \inf \Bigg\{ m \ge 0 : ( v_i )_{i=0}^m \text{ is a $(\lambda, \ell)$-chain from } v \text{ to } u \Bigg\}, \label{def: infection_distance} \\
    \mathrm{AT}_{\lambda, \ell}(v, u) &:= \inf \left\{ \sum_{i = 1}^{m} \mathrm{T}_\lambda(v_{i-1}, v_{i}) : ( v_i )_{i=0}^m \text{ is a $(\lambda, \ell)$-chain from } v \text{ to } u \right\}, \label{def: activation_time}
\end{align}
where we follow the convention that $\inf \emptyset = +\infty$.
The quantity $\mathrm{AT}_{\lambda, \ell}(v, u)$ is called the \textbf{activation time} from $v$ to $u$.
It represents the minimal time required for the frog model starting from $v$ with particle density $\lambda$ and lifespan $\ell$ to activate vertex $u$.
The quantity $\mathrm{D}_{\lambda, \ell}(v, u)$ is called the \textbf{infection distance} from $v$ to $u$, representing the length of the shortest $(\lambda, \ell)$-chain connecting $v$ to $u$.
There is a straightforward and useful relation between these two quantities:
\begin{align} \label{eq: relation_between_AT_and_D}
    \mathrm{D}_{\lambda, \ell}(v, u) \le \mathrm{AT}_{\lambda, \ell}(v, u) \le \ell \cdot \mathrm{D}_{\lambda, \ell}(v, u),
\end{align} 
which holds for all $v, u \in V$ and $\ell \in \mathbb{N}$.
Note though, that the chains attaining $\mathrm{D}_{\lambda, \ell}(v,u)$ and $\mathrm{AT}_{\lambda, \ell}(v,u)$ may differ.

Let $n \in \N$, we define the \textbf{activated set} by time $n$ as
\begin{align} \label{def: activated_set}
  \mathcal{A}^{\lambda, \ell}_v(n) := \{ u \in V: \mathrm{AT}_{\lambda, \ell}(v, u) \leq n \} 
\end{align}
which is the set of vertices visited by an active particle within time $n$ when the frog model starts from $v$ with particle density $\lambda$ and lifespan $\ell$.
We shall use $\mathcal{A}^{\lambda, \ell}_v(\infty)$ to denote the set of vertices that are activated at some finite time by the frog model.

For a finite graph $G = (V, E)$, we define an integer-valued random variable $\mathcal{L}_{v, \lambda}$, called the \textbf{cover lifespan}, as 
\begin{align*}
    \mathcal{L}_{v, \lambda} := \inf \left\{ \ell \in \mathbb{N} : \mathcal{A}^{\lambda, \ell}_v(\infty) = V \right\}.
\end{align*} 
In words, $\mathcal{L}_{v, \lambda}$ is the minimum lifespan required for the frog model, starting from vertex $v$ with particle density $\lambda$, to eventually activate all vertices in $G$. Since $G$ is finite, the cover lifespan is almost surely finite whenever the transition kernel $Q$ is irreducible.

Let us fix a vertex $o \in V$ as the origin.
Sometimes, we want to include one extra particle at the origin, called a \textbf{planted particle}.
It is sometimes useful for the planted particle to have a different lifespan  $\ell_o \in \mathbb{N}$. To accommodate this case, we define
\begin{align*}
  \mathcal{R}_{o, \lambda}^{+}(k, \ell_o) := \left( \bigcup_{w \in \mathcal{W}_o^\lambda} \mathrm{R}_w(k) \right) \cup \mathrm{R}_{w_0^o}(\ell_o),
\end{align*}
where $w_0^o$ denotes the planted particle. When a planted particle is present, we write $o \xrightarrow{\lambda, k, \ell_o} v$ (abbreviated as $o \xrightarrow{\lambda, k} v$ or simply $o \rightarrow v$) if $v \in \mathcal{R}_{o, \lambda}^{+}(k, \ell_o)$.
This naturally introduces additional directed connections originating from $o$.
For $v \neq o$, we continue to use $\mathcal{R}_{v, \lambda}(k)$ to define the directed connections. Consequently, we define $\mathrm{T}_\lambda(v, u)$, $\mathrm{AT}_{\lambda, \ell}(v, u)$, $\mathrm{D}_{\lambda, \ell}(v, u)$, $\mathcal{A}_v^{\lambda, \ell}(n)$, and $\mathcal{L}_{v, \lambda}$ in the same way as before, with the dependency on $\ell_o$ being implicit. We will explicitly indicate the inclusion of a planted particle whenever it is used in our analysis.

When the particle density $\lambda$ and lifespan $\ell$ are clear from context, we may drop them from notations.
For the special case where $v = o$, we further simplify notations by writing $\mathrm{AT}(o, u)$ as $\mathrm{AT}(u)$ and $\mathcal{A}_o(n)$ as $\mathcal{A}(n)$.

Let $(\Omega, \mathcal{F}, \P_G)$ denote the product probability space generated by the random walk trajectories $\cup_{v \in V} \cW_v$ and $\{ \mathcal{N}_v(\cdot) \}_{v \in V}$, where each $\omega \in \cup_{v \in V} \cW_v$ represents an independent infinite random walk trajectory starting from its initial position.

In this paper we focus on two graphs:
the $d$-dimensional integer lattice $\Z^d$ and the $d$-dimensional discrete torus with side length $L \in \N^+$, denoted by $\T^d_L := \Z^d / L\Z^d$.
We use $\P := \P_{\Z^d}$ and $\P_L := \P_{\T^d_L}$ to represent the corresponding probability measures for the frog model on these graphs.
Unless specified otherwise, we designate $o = (0,\dots,0)$ as the origin.
When a planted particle is present at the origin, we denote the respective probability measures as $\P^+$ for the integer lattice and $\P_L^+$ for the torus.

\subsection{Heavy-Tailed Random Walks} \label{subsection: HTRW}

In this section, we specify the type of random walk transition kernel $Q$ that we will be working with.
Let $Q$ be a transition kernel on $\Z^d$.
We say $Q$ is \textbf{symmetric} if $Q(x,y) = Q(y,x)$ for all $x,y$.
We say $Q$ is \textbf{translation-invariant} if for any $x,y$ we have $Q(x,y) = Q(0,y-x)$.
Moreover, we say $Q$ is \textbf{heavy-tailed with parameters} $\alpha$ and $K$ if for any $x\neq y \in \Z^d$, the following inequality holds:
\begin{align}
  \frac{K^{-1}}{|x - y|^{d + \alpha}} \le Q(x, y) \le \frac{K}{|x - y|^{d + \alpha}},
\end{align}
where $|\cdot|$ is a norm on $\Z^d$.
For convenience, we usually take $|\cdot|$ to be the $\ell^\infty$-norm throughout this paper.
For $\alpha, K > 0$, we define the set of kernels of interest to us:
\begin{align} \label{def: H_alpha}
\mathcal{H}_{\alpha, K} := \left\{ Q : Q \begin{array}{l} \text{ is symmetric, translation-invariant, and} \\ \text{ heavy-tailed with parameters } \alpha \text{ and } K. \end{array} \right\}.
\end{align}

For the torus $\T^d_L$, we define a random walk by projecting the random walk from $\Z^d$ through the canonical projection to the torus.
For a kernel $Q$ on $\Z^d$, we get the resulting kernel on $\T^d_L$ given by
\begin{align}
  Q_{L}(u,v) := \sum_{x \in \pi^{-1}(v-u)} Q(o,x). \label{def: Q_L}
\end{align}

We use $\bfP_x$ and $\bfP^L_x$ to denote the measures associated with a single random walker starting from vertex $x$ in $\Z^d$ and the torus, respectively, omitting the dependency on $Q$ and $Q_{L}$ for brevity.
For $A \subset \Z^d$, the \textbf{hitting time} of $A$, denoted by $\tau_A$, is defined as
\begin{align*}
  \tau_A := \inf \{n \ge 0: X_n \in A\}.
\end{align*}
Furthermore, we define:
\begin{align*}
    \mathrm{R}(k) := \{ X_i: i = 0, 1, \ldots, k\},
\end{align*}
where we omit the subscript $w$ that appeared in the related quantity in the frog model, which specifies a particle.

%%%%%%%%%%%%%%%%%%%%%%%%%%%%%%%%%%%%%%%%%%%%%%%%%%%%%%%%%%%%%%%%%%
\section{Upper Bounds on Activation Time}
\label{sec:activation_UB}

This section contains the proofs of Theorem~\ref{Mainthm: activate_constant_fraction} and \ref{MainThm: infinite_l_activation_time}.
The proofs is based on the renormalization method, relating the frog model to long-range percolation on a rescaled lattice.
The two main ingredients are:
(1) the frog model with sufficiently large particle lifespan stochastically dominates an auxiliary supercritical Bernoulli site percolation on a renormalized torus, and
(2) a coupling between the frog model and a long-range percolation model with the power-law decay exponent given by $s = \alpha + d$.
Section~\ref{Sec: BernPerc} provides the tools to construct this Bernoulli percolation.
Section~\ref{Sec: Backbone} presents the coupling.
Finally, Theorem~\ref{Mainthm: activate_constant_fraction} and \ref{MainThm: infinite_l_activation_time} are proved in Section~\ref{Sec: upperbound_finite_lifespan} and \ref{Sec: punchline} respectively.

\subsection{Bernoulli Site Percolation on a Renormalized Torus}
\label{Sec: BernPerc}

The main goal of this section is to present two intermediate results: Proposition~\ref{prop: BernPerc_internal_1} and \ref{prop: BernPerc_internal_2}.
These propositions will enable us to construct an auxiliary supercritical Bernoulli site percolation on a renormalized torus. 

Let $\alpha > 0$ and let $Q$ be a symmetric, translation-invariant, and heavy-tailed random walk transition kernel on $\Z^d$ with exponent $\alpha$, i.e. $Q \in \mathcal{H}_{\alpha, K}$ (see \eqref{def: H_alpha}).
For every $L \in \N^{+}$, let $Q_{L}$ be the standard projection of $Q$ onto $\T^d_L$.
We first define a renormalization structure.
Let $r$ be a positive integer, which will be our scale-0.
We will partition $\T^d_L$ into boxes of sidelength $r$, except that the last box in each coordinate may have sidelenth up to $2r$ depending on divisibility.
Formally, let
\begin{align}
  m &= m(L,r) := \lfloor L/r \rfloor. \label{def: m}
\end{align}
and let $I_k$ be the interval
\[
  I_k = \begin{cases}
    [kr,(k+1)r) & k<m-1, \\
    [kr,L) & k=m-1.
  \end{cases}
\]
We partition $\T^d_L$ into disjoint boxes:
\begin{align}
  \mathcal{P}_{L, r} := &\{ B_v : v \in \T^d_m \}, \hspace{3mm} \text{where} \label{def: partition_const_level} \\
  B_v := &\prod_{i=1}^d I_{v_i}.  \label{def: box_const_level}
\end{align}
We refer to $\T^d_m$ as the renormalized torus.
Here, $v_i$ denotes the $i$-th coordinate of $v$.
This construction yields $\Theta(m^d)$ boxes of uniform side length $r$ and $O(m^{d-1})$ boxes with potentially uneven side lengths. For the latter, their side lengths range between $r$ and $2r$ by construction.
The 0-scale $r$ will be related to the lifetime of the frogs as follows:
\begin{align}
  \ell(r) &:= \begin{cases} 
    \lceil r^\alpha \rceil & \text{if } \alpha \in (0, 2), \\
    r^2 & \text{if } \alpha > 2, \\
    \lceil r^2 / \log r \rceil & \text{if } \alpha = 2.
  \end{cases} \label{def: s(r)}
\end{align}
The idea is that $\ell(r)$ is the typical time needed for a random walker with kernel $Q$ to travel a distance $r$.
Later on, we need to choose $r$ larger than some constant $R_0 = R_0(\lambda, K, \alpha, d)$ in order to ensure the validity of Proposition~\ref{prop: BernPerc_internal_1} and \ref{prop: BernPerc_internal_2}, which will also restrict how short the lifespan of frogs can be.

To facilitate further analysis, we define
\begin{align} \label{def: activated_set_restricted}
    \widehat{\mathcal{A}}^{\lambda, \ell, B}_v &:= \left\{  u \in V \; :  \;
        \begin{array}{l}
        \text{ there exists a $(\lambda, \ell)$-chain\footnotemark } \left( v_i \right)_{i=0}^{m}\\
        \text{from }  v \text{ to } u \text{ such that } ( v_i )_{i=0}^{m} \subseteq B
        \end{array} \right\},
\end{align}\footnotetext{Recall that we say a finite sequence of vertices $\left( v_i \right)_{i=0}^m$ forms a \textit{$(\lambda, \ell)$-chain} from $v$ to $u$ if $v = v_0$, $u = v_m$, and $\mathrm{T}_\lambda(v_{i-1}, v_i) \le \ell$ for all $1 \le i \le m$, where $\mathrm{T}_\lambda(v, u)$ is the smallest number of steps needed for a particle in $\cW_v^\lambda$ to reach $u$. We refer the reader to \eqref{def: union_of_ranges_single_vertex}-\eqref{def: T_lambda} for more precise definitions.}representing the set of vertices that will eventually get activated, if we first label all particles outside of $B$ as removed and start the frog model dynamics from vertex $v$ with particle density $\lambda$ and lifespan $\ell$. When the parameters $\lambda$, $\ell$, and the set $B$ are clear from context, we simply write $\widehat{\mathcal{A}}_v$. Note that, we always have $\widehat{\mathcal{A}}^{\lambda, \ell, B}_v \subseteq B$.

\begin{definition} \label{def: good_vertex}
Let $B$ be a finite subset of $\Z^d$ or $\T^d_L$ and $x \in B$. We say $x$ is a \textbf{good} (or $(B, \lambda, \ell)$-\textbf{good}) vertex in $B$ if
\begin{align*}
    |\widehat{\mathcal{A}}_x^{\lambda, \ell, B}| \ge \frac{|B|}{4}.
\end{align*}
We denote the set of $(B, \lambda, \ell)$-good vertices by $\mathrm{Good}_{B}^{\lambda, \ell}$.
\end{definition}

Let $H(r)$ be a $d$-dimensional box on $\Z^d$ or $\T^d_L$ such that its side lengths are between $r$ and $2r$. We first study the existence of $\left(H(r), \lambda, \ell(r)\right)$-good vertices. The following proposition establishes that the probability of not finding a good vertex in a subset $A \subseteq H(r)$ decreases exponentially with respect to the size of $A$. This proposition is of independent interest.

\begin{proposition} \label{prop: BernPerc_internal_1}
Let $d \geq 2$, $\lambda > 0$, and $\alpha > 0$. Let $Q \in \mathcal{H}_{\alpha, K}$ (see \eqref{def: H_alpha}). Then, there exist an absolute constant $C > 0$, positive constants $c = c(K, \alpha, d)$ and $R_0 = R_0(\lambda, K, \alpha, d)$ such that for any positive integers $r$ with $r \ge R_0$ and any subset $A \subseteq H(r)$, we have
\begin{align}
    \P \left( \mathrm{Good}_{H(r)}^{\lambda, \ell(r)} \cap A = \emptyset \right) &\leq C \exp(-c \lambda |A|), \label{ineq: int-dynamic_uppper_bound}
\end{align}
where $\ell(r)$ is defined in \eqref{def: s(r)}.
\end{proposition}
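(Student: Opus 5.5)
The plan is to build, inside $H=H(r)$, an exploration process started from a vertex of $A$ that either reaches size $|H|/4$ or, at each failed attempt, leaves fresh (unexplored) particle randomness at many vertices of $A$. Failure of all attempts then costs a factor $e^{-c\lambda}$ per vertex of $A$.

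\textbf{One-step estimate.} First I would prove a single-walker estimate. For $Q\in\mathcal H_{\alpha,K}$ and $\ell=\ell(r)$, a walker started anywhere in $H$ and confined to $H$ for $\ell$ steps has expected number of distinct visited sites in $H$ of order $\ell/G_\ell(o,o)$. The quantity $\ell/G_\ell$ is of order $\ell(r)$ when $d\ge2$ and $\alpha<d$, and of order $\ell$ (or $\ell/\log$) in the remaining cases. The scaling in \eqref{def: s(r)} is chosen exactly so that the walk stays in $H$ with probability bounded below and spreads over distance $\asymp r$ in $\ell(r)$ steps. I would get this from standard heat-kernel bounds $Q^n(o,x)\lesssim \min(n^{-d/\gamma}, n|x|^{-d-\alpha})$ with $\gamma=\min(\alpha,2)$. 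These give a lower bound of the form
\[
\mathbf E|\mathrm R(\ell)\cap H|\ge c_1\,\ell(r)\ge c_1 r^{\min(\alpha,2)}/\log r,
\]
together with second-moment control on the range.

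\textbf{Renormalized growth.} Second, I would show that the restricted activated set $\widehat{\mathcal A}_x$ grows to size $|H|/4$ with probability at least $1-C e^{-c\lambda}$, given that the Poisson clock at $x$ fires. The idea is a branching comparison. Each activated vertex carries $\mathrm{Pois}(\lambda)$ particles, each with range of order $\ell(r)$ inside $H$. Since ranges from distinct activated vertices overlap only when the walks are at distance $\lesssim r$, a sequential exploration with Poisson thinning can be used. Whenever the explored set $S$ satisfies $|S|<|H|/4$, the expected number of new vertices added by the particles at a not-yet-used activated vertex is at least $c\lambda\,\ell(r)(1-|S|/|H|)$. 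Taking $r\ge R_0(\lambda,\ldots)$ makes this at least $2$, so the exploration is supercritical. Standard large-deviation bounds on the number of explored vertices then show it survives to $|H|/4$ except with probability $e^{-c\lambda}$, on the event that $x$ has at least one particle. For $\lambda$ small, I would use instead that the probability a given vertex has a particle is of order $\lambda$.

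\textbf{Amplification over $A$.} Third, I would iterate over $A$. Enumerate $A=\{a_1,\dots,a_n\}$ and explore from $a_1$. If the exploration fails, it revealed only a set $S_1$ with $|S_1|<|H|/4$. Moreover, because the failed cluster is small and the previous step gives exponential tails, $S_1\cap A$ has size $O(1)$ in expectation. Restart from the first $a_j\notin S_1$, whose particles are still unrevealed. Each restart uses fresh randomness and succeeds with probability at least $1-e^{-c'\lambda}$ conditionally on the past. Note that a vertex $a\in A$ absorbed into a failed cluster has $\widehat{\mathcal A}_a\supseteq$ that cluster but is not yet known to be good, so its small cluster just contributes one failed trial. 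The number of independent trials is therefore at least $c|A|$, which gives the bound $C\exp(-c\lambda|A|)$.

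\textbf{Main obstacle.} The hard step is making the supercritical exploration rigorous uniformly in $A$ while the conditioning on failed clusters degrades the environment. Accumulated failed clusters could occupy a large part of $H$. I would handle this by capping the total revealed volume at $|H|/8$. If this cap is exceeded, the union of failed clusters is itself large, and then some exploration in fact had large reach; to rule this out I would need to argue via monotonicity that merging explorations gives a good vertex. Failing that, I would truncate to $|A|\le c|H|/\ell(r)$ trials and use the trivial bound for larger $A$.
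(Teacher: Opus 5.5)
\textbf{There is a genuine gap, and it is the step you flag as the main obstacle.} Your amplification step needs something specific. Conditionally on any history of failed explorations, a fresh trial must succeed with probability $1-e^{-c\lambda}$. That requires the already-explored region to keep density bounded away from $1$, so that the range estimate still produces new vertices. Neither of your proposed fixes delivers this.

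The ``merging via monotonicity'' idea is false. Many small failed clusters rooted at different vertices of $A$ can have a large union without any single $\widehat{\mathcal{A}}_x$ reaching size $|H|/4$, because goodness is a property of one starting vertex.

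The fallback of running only $c|H|/\ell(r)$ trials gives at best $C\exp(-c\lambda\min\{|A|,c|H|/\ell(r)\})$. This is strictly weaker than the claim once $|A|\gg|H|/\ell(r)$, since $c=c(K,\alpha,d)$ may not depend on $r$. Passing to a subset of size $\lfloor|H|/4\rfloor$ costs only a constant factor in the exponent; your truncation costs a factor $\ell(r)\to\infty$.

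Separately, your count of at least $c|A|$ trials rests on ``$|S_1\cap A|=O(1)$ in expectation'', which cannot give an exponential bound. The absorption issue is also self-inflicted: you can simply exclude $A$ from all increments.

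\textbf{The missing idea} is to make the failed volume part of the same large-deviation event; this is how the paper argues.
\begin{enumerate}
\item Put all of $A$ into the initial explored set.
\item Run a single exploration that reveals one vertex per stage. It prefers unrevealed descendants of the current root and moves to the next root in $A$ only when the current tree is exhausted.
\item Stop when the explored set exceeds $3|H|/4$. Let $S_k$ be the number of newly explored vertices after $k$ stages.
\end{enumerate}
An exhausted tree has all its vertices revealed, so its volume is charged to the stage count. Hence, if $S_k\ge k$ for every $k\in\{|A|,\dots,\tau\}$, three things follow. The tree alive at stage $|A|$ is never exhausted. The earlier failed trees have total volume at most $|A|\le|H|/4$. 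On reaching $3|H|/4$, the current root $x^\star$ satisfies $|\widehat{\mathcal{A}}_{x^\star}|>3|H|/4-2|A|\ge|H|/4$.

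Up to $\tau$ the explored set has density at most $3/4$. So the range lemma and Poisson thinning give $\Delta_k\succeq c_1f(r)\,\mathrm{Bernoulli}(1-e^{-c_0\lambda})$ conditionally on the past. A Chernoff bound then bounds the failure probability by $\mathbb{Q}(\inf_{k\ge|A|}T_k/k<1)\le Ce^{-c\lambda|A|}$, once $r$ is large enough that $c_1f(r)(1-e^{-c_0\lambda})\ge2$.

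In your language: if every root in $A$ fails with total failed volume $V$, then $S_{|A|+V}=V<|A|+V$. This is a downward deviation of the same drifted walk at a time $\ge|A|$. Stopping at density $3/4$ guarantees the stochastic domination holds up to that time, so no separate cap argument is needed.
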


Recall that $Q_L$ is the standard projection of $Q$ onto $\T^d_L$ defined in \eqref{def: Q_L}.
Note that by coupling radom walks on $\Z^d$ and walks on $\T^d_L$, Proposition~\ref{prop: BernPerc_internal_1}, which is stated on $\Z^d$, implies that the same result holds for the frog model on the torus $\T^d_L$ with transition kernel $Q_L$ (by replacing $\P$ with $\P_L$ and $r \ge R_0$ with $L \ge r \ge R_0$).
Therefore, in the sequel, when we are working with the frog model on $\T^d_L$, we will apply Proposition~\ref{prop: BernPerc_internal_1} to the torus $\T^d_L$ with the transition kernel $Q_L$ without explicitly mentioning this subtlety. 
% , we will apply Proposition~\ref{prop: BernPerc_internal_1} to the torus $\T^d_L$ with the transition kernel $Q_L$ without explicitly mentioning it. 
% we only show Proposition~\ref{prop: BernPerc_internal_1} for $\Z^d$ while later on we will apply it to the torus $\T^d_L$.
To prove it, we need a lemma about the range of a random walk. Recall that $\mathrm{R}(k)$ denotes the set of vertices visited by a random walk up to time $k$, and $\bfP_x$ denotes the probability measure associated with the random walk starting from vertex $x \in \Z^d$.

\begin{lemma} \label{lemma: range_estimates_main_text}
    Consider a random walk on $\Z^d$ driven by $Q$. Let $D \subseteq H(r)$ with $|D|/|H(r)| \leq 3/4$. Define $\widetilde{\mathrm{R}}(k) := \mathrm{R}(k) \cap (H(r) \setminus D)$. Then, there exist positive constants $c_0, c_1$ depending on $Q$ such that for any $r \in \N^+$ and any $x \in H(r)$:
    \begin{align} %\label{ineq: range_est_case1}
        \bfP_x \left( \, \left|\widetilde{\mathrm{R}}\left(\ell(r)\right) \right| \geq c_1 f(r) \right) \geq c_0,
    \end{align}
    where $f(r)$ is defined as follows with $\ell(r)$ defined in \eqref{def: s(r)}:
    \begin{align*}
        f(r) := \begin{cases}
            \ell(r) & \text{if } d \ge 3 \text{ or } (d = 2 \text{ and } \alpha < 2), \\
            \ell(r) / \log \log \ell(r) & \text{if } d = 2 \text{ and } \alpha = 2, \\
            \ell(r) / \log \ell(r) & \text{if } d = 2 \text{ and } \alpha > 2.
        \end{cases}
    \end{align*}
\end{lemma}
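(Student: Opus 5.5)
We argue by a second-moment (Paley--Zygmund) estimate on $Y := |\widetilde{\mathrm{R}}(\ell(r))|$, the number of distinct sites of $H(r)\setminus D$ visited by time $\ell := \ell(r)$. Write
\[
Y=\sum_{y\in H(r)\setminus D}\1\{\tau_y\le \ell\}.
\]
It suffices to show $\bfE_x[Y]\ge c f(r)$ and $\bfE_x[Y^2]\le C f(r)^2$. Paley--Zygmund then gives $\bfP_x(Y\ge \tfrac12 \bfE_x Y)\ge (\bfE_x Y)^2/(4\bfE_x Y^2)\ge c_0$, and we take $c_1=c/2$.

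\emph{Step 1: the walk spends a positive fraction of $[0,\ell]$ in $H(r)$, spread out.} By the choice of $\ell(r)$ in \eqref{def: s(r)}, $\ell(r)$ is the time scale on which the walk (stable-like for $\alpha<2$, diffusive for $\alpha>2$, with the log correction at $\alpha=2$) moves distance of order $r$. The local limit / heat kernel bounds for $Q\in\mathcal H_{\alpha,K}$ give $Q^n(x,y)\asymp$ (scale at time $n$)$^{-d}$ whenever $|x-y|$ is at most that scale. Hence for $n\in[\ell/2,\ell]$ the law of $X_n$, started from any $x\in H(r)$, has density $\ge c\, r^{-d}$ on all of $H(r)$; this holds uniformly in $x$, including near the corners of the box. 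Since $|H(r)\setminus D|\ge |H(r)|/4$, we get
\[
\bfE_x\bigl[\#\{n\in[\ell/2,\ell]:X_n\in H(r)\setminus D\}\bigr]\ge c\,\ell .
\]
At the same time the density is at most $C r^{-d}$, so the time spent at any single site is controlled.

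\emph{Step 2: convert occupation time into range.} For each $y$, $\bfP_x(\tau_y\le\ell)\ge \bfE_x[\text{visits to }y\text{ in }[0,\ell]]/G_\ell(o,o)$, where $G_\ell(o,o)=\sum_{n\le \ell}Q^n(o,o)$. Summing over $y\in H(r)\setminus D$ gives $\bfE_x Y\ge c\,\ell/G_\ell(o,o)$. Estimating $G_\ell(o,o)$ with $Q^n(o,o)\asymp n^{-d/\alpha}$ (or $n^{-d/2}$, with logs at $\alpha=2$) yields the three cases. If $d>\alpha\wedge 2$ the walk is transient and $G_\ell=O(1)$, so $f=\ell$; this covers $d\ge3$ and $d=2,\alpha<2$. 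For $d=2,\alpha>2$, $G_\ell\asymp\log\ell$. For $d=2,\alpha=2$, $Q^n(o,o)\asymp 1/(n\log n)$ and $G_\ell\asymp\log\log\ell$. This matches $f(r)$ exactly.

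\emph{Step 3: the second moment, which is the main obstacle.} Trivially $Y\le \ell+1$, so $\bfE Y^2\le C\ell^2$, and this already suffices in the transient cases. In the recurrent cases we need $\bfE Y^2\le C(\ell/G_\ell)^2$. We expand
\[
\bfE Y^2=\sum_{y,z}\bfP(\tau_y\le\ell,\tau_z\le\ell)
\]
and bound each term by $\bfP(\tau_y\le\ell)\sup_{u}\bfP_u(\tau_z\le \ell)+(y\leftrightarrow z)$, using the strong Markov property at the first of the two hitting times. We then need the uniform bound $\sum_z \sup_u\bfP_u(\tau_z\le\ell)$ over $z\in H(r)$, together with the estimate $\sup_u \bfP_u(\tau_z\le\ell)\le C\, r^{-d}\ell/G_\ell$. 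To prove this, use the last-exit decomposition $\bfP_u(\tau_z\le \ell)\le \sum_{n\le 2\ell}Q^n(u,z)/G_\ell(o,o)$, combined with the upper heat kernel bound from Step 1 summed over $n$. For small $n$, the bound $\sum_{n\le\ell}Q^n(u,z)$ may pick up a near-diagonal contribution of order $G_\ell$ when $u$ is close to $z$. That term gives $O(1)$ per $z$, and summed over $z$ it contributes only $O(\ell\cdot)$ to the bound, which is lower order after multiplying by the first factor's sum $\bfE Y$. Establishing these heat kernel bounds for general $Q\in\mathcal H_{\alpha,K}$, in particular at $\alpha=2$ where the truncated second moment diverges logarithmically, is where we expect the real work to lie. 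We would obtain them from a standard truncation of the jump kernel at scale $r$ plus a Nash/Carne-type or characteristic-function estimate, rather than trying to use a sharp local limit theorem.
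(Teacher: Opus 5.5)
Your first-moment argument (Steps 1--2), the Paley--Zygmund framework, and the use of $Y\le \ell+1$ in the transient cases are sound and match the paper. The gap is in Step 3 for the recurrent cases $d=2$, $\alpha\ge 2$, which is exactly where a nontrivial second-moment bound is needed.

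After applying the strong Markov property at $\tau_y$, you replace $\bfP_y(\tau_z\le\ell)$ by $\sup_u\bfP_u(\tau_z\le\ell)$ and only then sum over $z$. But $\sup_u\bfP_u(\tau_z\le\ell)=1$ (take $u=z$). So the proposed estimate $\sup_u\bfP_u(\tau_z\le\ell)\le C r^{-d}\ell/G_\ell$ is false, and $\sum_z \sup_u\bfP_u(\tau_z\le\ell)=|H(r)\setminus D|\asymp r^d$. The ``near-diagonal'' term is therefore not lower order; it dominates. In $d=2$ we have $r^d\asymp \ell$ for $\alpha>2$ and $r^d\asymp \ell\log\ell$ for $\alpha=2$, which exceeds the required $\ell/G_\ell$ by a factor of at least order $G_\ell\to\infty$. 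Your route only gives $\bfE_xY^2\lesssim \bfE_xY\cdot r^d$. Paley--Zygmund then yields a lower bound of order $\ell/(G_\ell r^d)\to 0$, not a constant $c_0$.

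The repair is to take the supremum over the starting point \emph{after} summing over targets. Since the walk sits at $y$ at time $\tau_y$,
\[ \bfE_xY^2\le 2\sum_{y}\bfP_x(\tau_y\le\ell)\sum_{z}\bfP_y(\tau_z\le\ell)\le 2\,\bfE_xY\cdot\sup_{y}\sum_{z}\bfP_y(\tau_z\le\ell). \]
For each $y$ one has $G_\ell\,\bfP_y(\tau_z\le\ell)\le G_{2\ell}(y,z)$, because after hitting $z$ the walk makes $G_\ell$ expected visits to $z$ in the next $\ell$ steps. Hence $\sum_z\bfP_y(\tau_z\le\ell)\le \sum_{z\in\Z^d}G_{2\ell}(y,z)/G_\ell=(2\ell+1)/G_\ell$. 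Combined with $\bfE_xY\ge c\,\ell/G_\ell$, the Paley--Zygmund ratio is bounded below by a constant.

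This step needs no heat-kernel upper bound at all, so the ``real work'' you anticipate in Step 3 disappears. The heat-kernel input is confined to Steps 1--2: the near-diagonal lower bound and the on-diagonal asymptotics of $G_\ell$, which the paper takes from Theorem~\ref{thm: heat_kernel_typical} and Lemma~\ref{lemma: green_function_on_Z}.

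The paper uses the same uniform bound $\sup_y\bfE_yY\le Cf(r)=:b$ in a slightly different way. Markov's inequality gives $\bfP_y(Y\ge 2b)\le 1/2$ for every starting point in the box. The strong Markov property, applied at the time the number of distinct visited sites reaches $2k_1b$, gives $a(k_1+k_2)\le a(k_1)a(k_2)$. This yields geometric tails for $Y$ and hence $\bfE_xY^2\le C_0b^2$.
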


The proof of Lemma~\ref{lemma: range_estimates_main_text} is deferred to the supplementary material (see Lemma~\ref{lemma: range_estimates}). 

\begin{proof}[Proof of Proposition~\ref{prop: BernPerc_internal_1}]

Note that it suffices to prove inequality \eqref{ineq: int-dynamic_uppper_bound} for non-empty sets $A$ satisfying $|A| < |H(r)|/4$. For sets with $|A| \geq |H(r)|/4$, we can simply apply the proposition to a subset of $A$ containing exactly $\lfloor|H(r)|/4\rfloor$ vertices. The inequality \eqref{ineq: int-dynamic_uppper_bound} then holds for all $A \subseteq H(r)$ with the constant $c$ being replaced by $c/5$, provided that $r \geq R_0$ and $|H(r)| \geq 20$. 

Our goal is to find a vertex $x$ in $A$ such that 
\begin{align*}
    \left| \widehat{\mathcal{A}}_x^{\lambda, \ell(r), H(r)} \right| \geq \frac{|H(r)|}{4},
\end{align*}
which would make $x$ a good vertex according to Definition~\ref{def: good_vertex}. We abbreviate $\widehat{\mathcal{A}}_x^{\lambda, \ell(r), H(r)}$ as $\widehat{\mathcal{A}}_x$ since the parameters are clear from context.

We now give an overview of the proof. To search for good vertices, we construct an exploration process that systematically reveals $\cup_{x \in A} \widehat{\mathcal{A}}_x$. The key idea is as follows: In each step of the exploration, we select a new vertex $v$ from the current explored set, expose $\cR_{v, \lambda}(\ell(r)) \setminus A$ (recall that $\cR_{v, \lambda}(\ell)$ is the union of $\ell$-step ranges of the $\Poi(\lambda)$ particles innitalized on $v$ as introduced by \eqref{def: union_of_ranges_single_vertex}), and add these newly explored vertices to the explored set. We will show that the increment in the size of the explored set stochastically dominates a Bernoulli random variable multiplied by a positive constant as long as there are still at least $|H(r)|/4$ unexplored vertices in $H(r) \setminus A$. The parameter of this Bernoulli random variable depends only on $\lambda$ and $Q$, while the multiplying constant can be made arbitrarily large by choosing $r$ sufficiently large. Therefore, for fixed $\lambda$, if $r$ is large enough, the mean increment can be made at least $2$. This observation is crucial for proving that the failure to find a good vertex in $A$ is a large deviation event when $|A|$ is sufficiently large.

Fix an order on the vertices in $H(r)$. In each stage, the process recursively defines four vertex sets: 
\begin{itemize}[left=5pt, itemsep=0.1em, topsep=0.5em, rightmargin=20pt]
    \item[-] $A_i$: the \textbf{explored} set, containing all vertices reached by the exploration process upon the completion of stage $i$;
    \item[-] $\cV_i$: the \textbf{revealed} set, containing vertices from $A_i$ whose particles' trajectories have been examined;
    \item[-] $\cU_i$: the \textbf{unrevealed} set, containing vertices from $A_i$ whose particles' trajectories have not yet been examined, where $\cU_i = A_i \setminus \cV_i$;
    \item[-] $\mathcal{I}_i$: the \textbf{incremental} set, containing vertices newly added to the explored set at stage $i$.
\end{itemize}
The subscript $i \in \{0, 1, \ldots\}$ denotes the stage index of the exploration process. At stage 0, let 
$$A_0 = A, \, \cV_0 = \emptyset, \, \cU_0 = A, \textup{ and } \, \mathcal{I}_0 = \emptyset.$$ 
Suppose we have completed the $k$-th stage of the exploration process and have constructed the four types of sets $A_j$, $\cV_j$, $\cU_j$, and $\mathcal{I}_{j}$ for $j = 0, \ldots, k$. If $\cU_k = \emptyset$, then the process terminates after the $k$-th stage is completed. Otherwise, the process proceeds to the $(k{+}1)$-th stage: First, we pick $x_{k+1} \in \cU_k$ according to the following rules:
\begin{itemize}[left=15pt, itemsep=0.5em, topsep=1em, rightmargin=25pt]
	\item[(1)] if $\sqcup_{j = 0}^{k} \mathcal{I}_{j} \cap \cU_{k} = \emptyset$, choose the vertex in $A \cap \cU_k$ with the smallest order; 
	\item[(2)] if $\sqcup_{j = 0}^{k} \mathcal{I}_{j} \cap \cU_{k} \neq \emptyset$, choose the vertex in $\sqcup_{j = 0}^{k} \mathcal{I}_{j} \cap \cU_k$ with the smallest order,
\end{itemize}
where $\sqcup$ denotes disjoint union. Note that, in case (2), the vertex $x_{k+1}$ does not belong to $A$. Then, we complete the $(k{+}1)$-th stage by defining the following sets:
\begin{align*}
    &\mathcal{I}_{k+1} := H(r) \cap \left( \cR_{x_{k+1}, \lambda} \left( \ell(r) \right) \, \setminus \, A_k \right),  \\
    A_{k+1} := A_k \, \sqcup \, &\mathcal{I}_{k+1}, \hspace{3mm} \cV_{k+1} := \cV_k \sqcup \{ x_{k+1} \}, \hspace{3mm} \cU_{k+1} := A_{k+1} \setminus \cV_{k+1}. 
\end{align*}
We summarize some straightforward properties of the exploration process: for any $k \ge 1$,
\begin{itemize}[left=15pt, itemsep=0.3em, topsep=1em, rightmargin=25pt]
    \item[(i)] $A_k = \cU_k \sqcup \cV_k = A \sqcup \left( \sqcup_{j=1}^k \mathcal{I}_j \right)$;
    \item[(ii)] there exists a (random) vertex $x = x(k) \in A$ such that $\mathcal{U}_k \cap \bigcup_{j=1}^k \mathcal{I}_j \subseteq \widehat{\mathcal{A}}_x$.
\end{itemize}
Property (i) follows directly from the definitions. Property (ii) can be proven by induction using rules (1) and (2) for selecting $x_{k+1}$. We omit the detailed proof here.

Let $\cF_0$ be the trivial $\sigma$-field and, for $k \ge 1$, let $\cF_k := \sigma(\{ \mathcal{I}_{j} \}_{j = 1}^{k})$. Thus, $( \cF_k )_{k \ge 0}$ forms a filtration. We define two stopping times with respect to $( \cF_k )_{k \ge 0}$:
\begin{align*}
    \tau_1 := \inf \{ k: \cU_k = \emptyset \}, \hspace{1mm} \text{and} \hspace{2mm}
    \tau_2 := \inf \{ k: \left| A_k \right| > \frac{3 |H(r)|}{4} \}.
\end{align*}
Let $\tau := \min \{ \tau_1, \tau_2 \}$. Let $\Delta_k := \left| \mathcal{I}_{k} \right| = \left| A_{k} \setminus A_{k-1} \right|$, which is the size of the $k$-th incremental set. Moreover, let $S_k := \sum_{i=1}^{k \wedge \tau_1} \Delta_i$ be the total size of the incremental sets up to stage $k$. We now provide a sufficient condition for the existence of a good vertex in $A$. 

\begin{claim} \label{claim: internal_exploration}
    Assume $A$ is non-empty and $S_k \ge k$ for all $k \in \{ |A|, |A|{+}1, \ldots, \tau \}$. Then, there exists a vertex $x \in A$ such that $| \widehat{\mathcal{A}}_x | \ge |H(r)|/4$.
\end{claim}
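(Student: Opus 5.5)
The idea is to read the exploration process as a sequence of \emph{epochs}. An epoch starts whenever rule (1) is applied, i.e.\ a fresh root $x_s\in A$ is selected. It lasts until no unrevealed incremental vertex remains, and during it only rule (2) is used. I will first show that the hypothesis forces $\tau=\tau_2<\tau_1$. I will then show that the last epoch before $\tau$ started with very few incremental vertices already explored, so that this epoch alone contributes more than $|H(r)|/4$ vertices to $\widehat{\mathcal{A}}_{x_s}$. As in the proof of Proposition~\ref{prop: BernPerc_internal_1}, I assume $1\le |A|<|H(r)|/4$.

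\textbf{Step 1: $\tau_1>\tau$.} Each stage reveals exactly one vertex, so $|\cV_k|=k$ for $k\le\tau_1$. By property (i), $|\cU_k|=|A_k|-k=|A|+S_k-k$. For $k<|A|$ this is at least $|A|-k>0$. For $|A|\le k\le\tau$ it is at least $|A|>0$, by the hypothesis $S_k\ge k$. Hence $\cU_k\neq\emptyset$ for every $k\le \tau$, so $\tau_1\ne\tau$ and $\tau=\tau_2$. The process must terminate, since $\tau_1\le |H(r)|$ (each vertex of $H(r)$ is revealed at most once). Therefore $\tau_2<\tau_1$ is finite, and $|A_\tau|>3|H(r)|/4$.

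\textbf{Step 2: the last epoch started early.} Let $s\le\tau$ be the last stage at which rule (1) was used; stage $1$ qualifies, so $s$ exists. At stage $s-1$ we have $\sqcup_{j\le s-1}\mathcal I_j\cap\cU_{s-1}=\emptyset$, so every incremental vertex found so far has been revealed. Counting revealed vertices gives
\[
s-1=|\cV_{s-1}|=S_{s-1}+\#\{\text{revealed vertices of }A\}.
\]
The second term is at least $1$ whenever $s\ge2$, because stage $1$ reveals a vertex of $A$. Suppose $s-1\ge|A|$. Then the hypothesis gives $S_{s-1}\ge s-1$, which forces the second term to be $0$, a contradiction. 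Hence $s-1<|A|$, and so $S_{s-1}\le s-1<|A|$.

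\textbf{Step 3: the last epoch lies in one activated set.} Let $x=x_s\in A$. Every vertex of $\mathcal I_j$ for $s\le j\le\tau$ is hit, within $\ell(r)$ steps, by a particle started at some $x_j$. Here $x_j$ is either $x$ itself or, by rule (2), a vertex of $\mathcal I_i$ for some $s\le i<j$; it cannot lie in $\mathcal I_i$ with $i<s$, since those were all revealed before stage $s$. Induction on $j$ then shows that each such vertex is joined to $x$ by a $(\lambda,\ell(r))$-chain that stays in $H(r)$, because all incremental sets are intersected with $H(r)$. So $\sqcup_{j=s}^{\tau}\mathcal I_j\subseteq\widehat{\mathcal A}_x$. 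Combining with Steps 1 and 2,
\[
\Bigl|\bigsqcup_{j=s}^{\tau}\mathcal I_j\Bigr|=S_\tau-S_{s-1}=|A_\tau|-|A|-S_{s-1}>\tfrac34|H(r)|-2|A|>\tfrac14|H(r)|,
\]
using $|A|<|H(r)|/4$. Thus $x$ is good.

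The main obstacle I expect is Step 3. Property (ii) as stated only controls the \emph{currently unrevealed} incremental vertices, so it is not enough. I need the stronger invariant that every incremental vertex found during an epoch descends from that epoch's root. This requires checking carefully that rule (2) never picks an incremental vertex from an earlier epoch, which holds because such vertices are all revealed by the time the epoch begins.
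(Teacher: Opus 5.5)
Your proof is correct and follows essentially the paper's route. Both arguments first get $\tau=\tau_2<\tau_1$ from $|\cU_k|=|A|+S_k-k$, then find a single root in $A$ whose restricted activated set contains every explored vertex except at most $2|A|$ of them, and finish with $|A_\tau|>\tfrac34|H(r)|$. The difference is only in bookkeeping. The paper anchors at stage $|A|$: it takes $x^\star$ from property (ii), inducts forward, and discards $\cV_{|A|}\cup A$. You instead anchor at the last fresh-root stage $s$, bound $S_{s-1}<|A|$ by counting, and prove the epoch invariant directly. This avoids relying on the unproved property (ii). It also makes explicit that the revealed root $x_1\in A$ is what forces an unrevealed incremental vertex to exist, a point the paper's induction step uses only tacitly.
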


\begin{proof}[Proof of Claim~\ref{claim: internal_exploration}]
    We first show that, under the assumption of Claim~\ref{claim: internal_exploration}, we must have $\tau = \tau_2$, which implies 
    \begin{align*}
        |A_\tau| > \frac{3|H(r)|}{4}.
    \end{align*}
    
    Since $A$ is non-empty, the exploration process does not terminate immediately. Moreover, since we reveal exactly one vertex in each stage, we must have $\tau_1 \geq |A|$, meaning the exploration process must complete at least $|A|$ stages. 
    
    Now, for any $k \geq |A|$, if $S_k \geq k$, then we have
    \begin{align*}
        |\mathcal{U}_k| = |A_k| - k = |A| + S_k - k \geq (S_k - k) + 1 \geq 1,
    \end{align*}
    which ensures that the process completes the $(k{+}1)$-th stage. Therefore, under our assumption that $S_k \geq k$ for all $k \in \{|A|, |A|+1, \ldots, \tau\}$, we must have $\tau_1 > \tau$. This means $\tau = \tau_2$.

    Next, we prove that there exists some $x^\star \in A$ such that
    \begin{align*}
        A_\tau \setminus \Bigl( \cV_{|A|} \cup A \Bigr) \subseteq \widehat{\mathcal{A}}_{x^\star}.
    \end{align*}
    This is sufficient to conclude the proof, because $|A_\tau| > 3|H(r)|/4$ implies
    $$|\widehat{\mathcal{A}}_{x^\star}| > 3|H(r)|/4 - 2|A| \geq |H(r)|/4.$$ 
    Indeed, it follows from two facts: (1) $\cV_{k} = \{ x_1, \ldots, x_k \}$, in particular $|\cV_{k}| = k$, and (2) $|A| \leq |H(r)|/4$, making $x^\star$ a good vertex.

    By property (ii), for any $k$, there exists some (random) $x \in A$ such that
    \[
        \mathcal{U}_{k} \cap \bigcup_{i=1}^{k} \mathcal{I}_{i} \subseteq \widehat{\mathcal{A}}_x.
    \]
    By property (i), we can rewrite the left-hand side as:
    \begin{align*}
        \mathcal{U}_{k} \cap \bigcup_{i=1}^{k} \mathcal{I}_{i} = \left(\bigcup_{i=1}^{k} \mathcal{I}_{i}\right) \, \setminus \, \mathcal{V}_k = A_k \, \setminus \, \bigg(\cV_k \cup A\bigg) \subseteq \widehat{\mathcal{A}}_x,
    \end{align*}
    Let $k = |A|$ and let $x^\star \in A$ be such that $A_{|A|} \setminus (\mathcal{V}_{|A|} \cup A) \subseteq \widehat{\mathcal{A}}_{x^\star}$. Under the assumption of Claim~\ref{claim: internal_exploration}, we now prove by induction that 
    $$A_k \setminus (\mathcal{V}_{|A|} \cup A) \subseteq \widehat{\mathcal{A}}_{x^\star} \text{ for all } k \in \{|A|, |A|+1, \ldots, \tau\}.$$ 
    The base case $k = |A|$ is already established.
    For the inductive step, assume $A_k \setminus (\mathcal{V}_{|A|} \cup A) \subseteq \widehat{\mathcal{A}}_{x^\star}$ for some $k \geq |A|$ with $S_k \geq k$. Since $|\bigcup_{i=1}^{k} \mathcal{I}_{i}| = S_k \geq k$, we know that $\bigcup_{i=1}^{k} \mathcal{I}_{i} \cap \mathcal{U}_{k}$ is non-empty, and by rule (2),
    $$x_{k+1} \in \bigcup_{i=1}^k \mathcal{I}_{i} \cap \mathcal{U}_{k} = A_k \, \setminus \, (\mathcal{V}_k \cup A) \subseteq \widehat{\mathcal{A}}_{x^\star}.$$
    Since $x_{k+1} \in \widehat{\mathcal{A}}_{x^\star}$, we have $\mathcal{I}_{k+1} \subseteq \widehat{\mathcal{A}}_{x^\star}$. Therefore,
    \begin{align*}
        A_{k+1} \, \setminus \, (\mathcal{V}_{|A|} \cup A) &= (A_k \cup \mathcal{I}_{k+1}) \, \setminus \, (\mathcal{V}_{|A|} \cup A) \subseteq \widehat{\mathcal{A}}_{x^\star},
    \end{align*}
    which completes the induction and thus the proof of Claim~\ref{claim: internal_exploration}.
\end{proof}

We continue the proof of Proposition~\ref{prop: BernPerc_internal_1}. Recall that we defined in Lemma~\ref{lemma: range_estimates_main_text}:
\begin{align*}
    f(r) := \begin{cases}
        \ell(r) & \text{if } d \ge 3 \text{ or } (d = 2 \text{ and } \alpha < 2), \\
        \ell(r) / \log \log \ell(r) & \text{if } d = 2 \text{ and } \alpha = 2, \\
        \ell(r) / \log \ell(r) & \text{if } d = 2 \text{ and } \alpha > 2.
    \end{cases}
\end{align*}
By Lemma~\ref{lemma: range_estimates_main_text} and Poisson thinning, on the event $\{ \tau \ge k \} \in \cF_{k-1}$, we have that
\begin{align}
    \bP_L \left( \Delta_{k} \ge c_1 f(r) \, | \cF_{k-1} \right) \ge 1 - \exp(-c_0 \lambda) \text{ a.s. }, \label{ineq: single_exploration_step_increment_bound}
\end{align}
where $c_0$ and $c_1$ are constants depending on $Q$ (the same constants as in Lemma~\ref{lemma: range_estimates_main_text}). Let $q := 1 - \exp(-c_0 \lambda)$ and let $( X_i )_{i=1}^\infty$ be a sequence of i.i.d. Bernoulli random variables with mean $q$ which are defined on a probability space equipped with measure $\mathbb{Q}$. Therefore, on the event $\tau \ge k$, we have that $\Delta_{k}$ stochastically dominates $c_1 f(r) X_1$. Define $T_k := c_1 f(r) \sum_{i=1}^{k} X_i$.
It then follows from Claim~\ref{claim: internal_exploration} that
\begin{align}
	\bP_L \left( \mathrm{Good}_{H(r)}^{\lambda, \ell(r)} \cap A = \emptyset \right)  \le \; \bP_L \bigg( \min_{k: |A| \le k \le \tau} \frac{S_k}{k} < 1  \bigg) \le \, \mathbb{Q}\left(\inf_{k: k \ge |A|} \frac{T_k}{k} < 1 \right) \label{eq: S_k_stoch_dom_T_k}
\end{align}
where the second inequality is a direct consequence of the stochastic domination. 
We now proceed in two cases. First, assume that $c_0 \lambda \le 1$. By choosing $r$ sufficiently large (depending on $\lambda$ and $Q$) such that 
$$c_1 f(r) (1 - \exp(-c_0 \lambda)) \ge 2,$$ 
we can apply \eqref{ineq: large_deviation_Bin_4} in Lemma~\ref{lemma: large_deviation_Bern} to \eqref{eq: S_k_stoch_dom_T_k}:
\begin{align}
    \mathbb{Q}\left(\inf_{k: k \ge |A|} \frac{T_k}{k} < 1 \right) \le \mathbb{Q} \left( \inf_{k: k \ge |A|} \frac{\sum_{i=1}^{k} X_i}{k} < \frac{q}{2} \right) \le 32 \exp\left(- \frac{|A| q }{16}\right),
\end{align}
where $q = 1 - \exp(-c_0 \lambda) \ge c_0 \lambda /2$, using $1 - e^{-x} \ge x/2$ for $x \in [0,1]$.

For the second case, assume that $c_0 \lambda > 1$. By possibly further enlarging $r$ such that $f(2) \ge 1/2$, we can ensure that
$$1 - \frac{1}{c_1 f(r)} \ge e^{-1} + \frac{1}{2} \ge \exp(-c_0 \lambda) \left( 1 + \frac{\exp(c_0 \lambda)}{2} \right).$$
We now rewrite \eqref{eq: S_k_stoch_dom_T_k} and apply \eqref{ineq: large_deviation_Bin_3} in Lemma~\ref{lemma: large_deviation_Bern}:
\begin{align*}
    \mathbb{Q}\left(\inf_{k: k \ge |A|} \frac{T_k}{k} < 1 \right) = \mathbb{Q} \left( \sup_{k: k \ge |A|} \frac{\sum_{i=1}^{k} Y_i}{k} > 1 - \frac{1}{c_1 f(r)} \right) \le C_1 \exp\left(- c' c_0 |A| \lambda \right),
\end{align*}
where $Y_i := 1 - X_i$, and $C_1$ and $c'$ are absolute constants. These constants are obtained through direct calculation following \eqref{ineq: large_deviation_Bin_3} with $\delta = \exp(c_0 \lambda) / 2$ and $p = \exp(-c_0 \lambda)$ together with the assumption that $c_0 \lambda > 1$.
Combining the two cases, we conclude the proof of Proposition~\ref{prop: BernPerc_internal_1}. 
\end{proof}

The existence of a single good vertex in a box $H(r)$ is insufficient for our purposes. We need to establish that with high probability, the set of good vertices in $H(r)$ takes ``almost" a constant fraction of $H(r)$, which is the content of the next corollary.

\begin{corollary} \label{cor: almost_constant_fraction_good_vertex}
Let $d \geq 2$, $\lambda > 0$, $\alpha > 0$, and $Q \in \mathcal{H}_{\alpha, K}$. Then, there exists positive constants $C_1 = C_1(K, \alpha, d, \lambda)$ and $C_2 = C_2(d)$ such that for any $r \ge R_0$,
\begin{align*}
    \bP \left( \, \left| \mathrm{Good}_{H(r)}^{\lambda, \ell(r)} \right| < \frac{r^d}{C_1 \log r} \, \right) \le C_2 \, r^{-d},
\end{align*}
where \( \ell(r) \) is defined in \eqref{def: s(r)} and \( R_0 \) is the same constant as in Proposition~\ref{prop: BernPerc_internal_1}.
\end{corollary}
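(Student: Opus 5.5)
The plan is to derive the corollary from Proposition~\ref{prop: BernPerc_internal_1} by a covering and union-bound argument. If the set of good vertices is small, then some reasonably large subset $A$ of $H(r)$ contains no good vertex, and the proposition makes each such event exponentially unlikely in $|A|$. The difficulty is that a union over all subsets $A$ of a given size costs $\binom{|H(r)|}{|A|}$, which is too much. So I would use a deterministic family of test sets rather than all subsets.

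Fix $k := \lceil C_3 \log r \rceil$ for a large constant $C_3$, and partition $H(r)$ deterministically into disjoint sets $A_1,\dots,A_N$, each of size between $k$ and $2k$. For instance, take sub-boxes of side length about $(\log r)^{1/d}$, or simply consecutive blocks in a fixed ordering. Then $N \asymp |H(r)|/\log r \asymp r^d/\log r$.

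\begin{itemize}
\item If $|\mathrm{Good}_{H(r)}^{\lambda,\ell(r)}| < N/2$, then at least $N/2$ of the blocks $A_j$ contain no good vertex. In particular, at least one block does.
\item By Proposition~\ref{prop: BernPerc_internal_1} and a union bound,
\[
\P\bigl(\exists j:\ \mathrm{Good}\cap A_j=\emptyset\bigr) \le N\, C\exp(-c\lambda k) \le C' r^d \exp(-c\lambda C_3\log r).
\]
\item Choosing $C_3 = C_3(\lambda,K,\alpha,d)$ with $c\lambda C_3 \ge 2d$ makes this at most $C' r^{-d}$. The constant $C'$ depends only on $d$, because $N \le |H(r)| \le (2r)^d$ and $C$ is absolute.
\end{itemize}

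Since $N/2 \ge r^d/(C_1\log r)$ for a suitable $C_1 = C_1(K,\alpha,d,\lambda)$, this gives the corollary with $C_2 = C'$.

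In fact the argument gives more: every block contains a good vertex, so there are at least $N$ good vertices. We only need the weaker count.

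The main care point is the bookkeeping of constants. The dependence on $\lambda$ must go into $C_1$ through $C_3$, so that $C_2$ depends only on $d$. The rate $c$ from Proposition~\ref{prop: BernPerc_internal_1} is independent of $\lambda$, so the exponent $c\lambda k$ scales correctly. Finally, for small $r \ge R_0$ where $k > |H(r)|$, we can enlarge $C_2$, or note that the bound $C_2 r^{-d} \ge 1$ holds trivially there.
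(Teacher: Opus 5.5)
Your proposal is correct and is essentially the paper's proof. The paper tiles $H(r)$ into sub-boxes of side length between $(C^\star\log r)^{1/d}$ and $2(C^\star\log r)^{1/d}$, with $C^\star = 2d/(c\lambda)$. It applies Proposition~\ref{prop: BernPerc_internal_1} to each sub-box, takes a union bound over at most $(2r)^d$ pieces to get $C2^d r^{-d}$, and counts one good vertex per piece.

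Using arbitrary blocks of size between $k$ and $2k$ instead of boxes is harmless, since the proposition holds for any $A\subseteq H(r)$. The small-$r$ edge case you flag is skipped in the paper in the same way.
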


\begin{proof}
We tile $H(r)$ with disjoint sub-boxes, each having side lengths between $(C^\star \log r)^{1/d}$ and $2 (C^\star \log r)^{1/d}$, where $C^\star := \frac{2 d}{c \lambda}$ and $c = c(Q)$ is the same constant in Proposition~\ref{prop: BernPerc_internal_1}.  

By Proposition~\ref{prop: BernPerc_internal_1}, for each sub-box $H'$, the probability that $H'$ contains none of the good vertices is 
\begin{align*}
\P(\mathrm{Good}_{H(r)}^{\lambda, \ell(r)} \cap H' = \emptyset) \leq C \exp(-c \lambda |H'|) \leq C r^{-2d},
\end{align*}
where $C > 0$ is the absolute constant that appears in Proposition~\ref{prop: BernPerc_internal_1}.

Let $F$ denote the event that there exists at least one sub-box that does not contain a good vertex. Applying a union bound over all sub-boxes, we obtain
\begin{align*}
\P(F) \leq (2r)^d \cdot C r^{-2d} \leq C_2 r^{-d},
\end{align*}
where $C_2 := C 2^d$. Moreover, on the complement of $F$, each sub-box contains at least one good vertex, implying that
\begin{align*}
\left|\mathrm{Good}_{H(r)}^{\lambda, \ell(r)}\right| \geq \frac{r^d}{2^d C^\star \log r} = \frac{r^d}{C_1 \log r},
\end{align*}
where $C_1 := 2^d C^\star$, which completes the proof.
\end{proof}

Similar to the discussion following Proposition~\ref{prop: BernPerc_internal_1}, although Corollary~\ref{cor: almost_constant_fraction_good_vertex} is stated for boxes on $\Z^d$, it actually implies the same upper bound holds for $H(r)$ on $\T^d_L$ for all $L \ge r$. From now on, we will consistently work with boxes on the torus $\T^d_L$.

To motivate our next result, we provide an overview of our strategy for constructing an auxiliary Bernoulli site percolation on $\T^d_m$.

The stochastic domination is constructed in two steps, each utilizing half of the particles. For each vertex $x$, recall that $\mathcal{W}^\lambda_x$ denotes the set of particles initially positioned at $x$ at time 0, with $|\mathcal{W}^\lambda_x| \sim \mathrm{Pois}(\lambda)$. Using Poisson thinning (independent of everything else), we split $\mathcal{W}^\lambda_x$ into two disjoint sets, $\mathcal{W}^{\mathrm{int}}_x$ and $\cW^{\mathrm{nbhd}}_x$, such that 
\begin{align*}
    |\cW^{\mathrm{int}}_x|, |\cW^{\mathrm{nbhd}}_x| \sim \Pois(\lambda/2) \text{ and are independent}.
\end{align*}

We denote $\cW_{B}^{\mathrm{int}} := \cup_{x \in B} \cW_{x}^{\mathrm{int}}$ and $\cW_{B}^{\mathrm{nbhd}} := \cup_{x \in B} \cW_{x}^{\mathrm{nbhd}}$. In the first step, we apply Corollary~\ref{cor: almost_constant_fraction_good_vertex} to each box $B_v$ in $\mathcal{P}_{L, r}$, utilizing particles in $\cW_{B_v}^{\mathrm{int}}$. Once at least a quarter of the vertices in $B_v$ are activated using particles in $\cW_{B_v}^{\mathrm{int}}$, we then apply the next proposition to the second set of particles $\cW_{B_v}^{\mathrm{nbhd}}$. Proposition~\ref{prop: BernPerc_internal_2} demonstrates that, with high probability, the second step activates all vertices in $B_v$ as well as all $2d$ neighboring boxes. Define 
\begin{align*}
    \widehat{B}_v := B_v \, \cup \, \left( \bigcup_{i = 1}^d \left( B_{v + e_i} \cup B_{v - e_i} \right) \right)
\end{align*}
where $v \in \T^d_m$. Additionally, for any $D \subseteq V$ and $k \in \N^+$, define
\begin{align*}
    \mathcal{R}_{D, \lambda}(k) := \bigcup_{v \in D} \mathcal{R}_{v, \lambda}(k),
\end{align*}
which represents the set of vertices activated by a particle in $D$ after $k$ steps. 

\begin{proposition} \label{prop: BernPerc_internal_2}
Let $\lambda > 0$, $d \geq 2$, and $\alpha > 0$.
Let $Q \in \mathcal{H}_{\alpha, K}$ and let $Q_L$ be defined as in \eqref{def: Q_L}. 
There exist positive constants $R = R(\lambda, K, \alpha, d)$, $C = C(d)$, and $c = c(K, \alpha, d)$, such that for 
any $r, L$ with $L \ge r \ge R$, 
any $v \in \T^d_m$, and 
any $D \subseteq B_v$ with $|D|/|B_v| \ge 1/4$, we have 
\begin{align} \label{eq: BernPerc_internal_2_result}
    \bP_L \left( \widehat{B}_v \not\subseteq \mathcal{R}_{D, \lambda}(\ell(r)) \, \right) \leq C \exp{\left( - c \lambda f(r) \right)},
\end{align}
where $ \ell(r) $ is defined in \eqref{def: s(r)} and $f(r)$ is defined as in Lemma~\ref{lemma: range_estimates_main_text}.
\end{proposition}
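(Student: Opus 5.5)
The plan is to fix a target vertex $y\in\widehat B_v$, bound the probability that no particle of $\cW_D^\lambda$ hits $y$ within $\ell(r)$ steps, and then take a union bound over the $|\widehat B_v|\le (2d+1)(2r)^d$ choices of $y$. By Poisson thinning, the number of particles started in $D$ that visit $y$ within $\ell=\ell(r)$ steps is Poisson with mean
\[
\mu_y:=\lambda\sum_{x\in D}\bfP^L_x(\tau_y\le \ell).
\]
Since $Q_L$ is symmetric and translation-invariant, the reversed walk $-X$ has the same law as $X$. Hence $\bfP^L_x(\tau_y\le\ell)=\bfP^L_y(\tau_x\le \ell)$, and therefore
\[
\mu_y=\lambda\,\bfE^L_y\bigl[|\mathrm R(\ell)\cap D|\bigr].
\]
Consequently $\bP_L\bigl(y\notin\mathcal R_{D,\lambda}(\ell)\bigr)=e^{-\mu_y}$. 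The whole proposition therefore reduces to the deterministic estimate
\[
\inf_{y\in\widehat B_v}\ \inf_{D\subseteq B_v,\ |D|\ge |B_v|/4}\ \bfE^L_y\bigl[|\mathrm R(\ell(r))\cap D|\bigr]\ \ge\ c\, f(r).
\]

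To prove this, I would compare the expected range with the occupation time via the standard last-exit/first-entrance identity. Write $N_D:=\sum_{n=0}^{\ell}\1\{X_n\in D\}$. The strong Markov property at the first visit to each $z\in D$ gives
\[
\bfE_y[N_D]\le \bfE_y\bigl[|\mathrm R(\ell)\cap D|\bigr]\cdot \max_z G_\ell(z,z)=\bfE_y\bigl[|\mathrm R(\ell)\cap D|\bigr]\, G_\ell(o,o).
\]
This leaves two estimates.
\begin{enumerate}
\item A lower bound $\bfE_y[N_D]\ge c\,\ell(r)$. I would obtain it from a heat-kernel lower bound: for $n\in[\ell/2,\ell]$ and $|z-y|\le 5r$, one has $Q^n(y,z)\ge c\,r^{-d}$. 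This is because $\ell(r)$ is exactly the time scale on which the walk spreads to distance $r$ (the stable scaling $n^{1/\alpha}$ for $\alpha<2$, and diffusive scaling, up to the logarithmic correction at $\alpha=2$, for $\alpha\ge2$). Summing over $z\in D$, which is a set of size at least $c r^d$ within distance $5r$ of $y$, then gives $\bfE_y[N_D]\ge c\,\ell$.
\item An upper bound $G_\ell(o,o)\le C\,\ell/f(r)$. This comes from the on-diagonal bound $Q^n(o,o)\le C n^{-d/\min(\alpha,2)}$ (with the appropriate logarithmic correction when $\alpha=2$). Summing in $n$ gives $O(1)$ when $d>\min(\alpha,2)$, $O(\log\ell)$ when $d=2<\alpha$, and $O(\log\log\ell)$ in the case $d=\alpha=2$, which matches $\ell/f(r)$ in every case.
\end{enumerate}
These are essentially the inputs behind Lemma~\ref{lemma: range_estimates_main_text}, which I would take from the supplementary material. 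I expect the main obstacle to be item 1, the uniform heat-kernel lower bound. It must hold on the torus, for all $\alpha>0$ including the boundary cases, and at distances up to a constant multiple of $r$. I would handle it by using $L\ge r$ and the fact that projecting to the torus only increases transition probabilities, so it suffices to prove the bound on $\Z^d$. Combining the two items gives $\mu_y\ge c\lambda f(r)$.

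Finally, I would use the union bound:
\[
\bP_L\bigl(\widehat B_v\not\subseteq\mathcal R_{D,\lambda}(\ell)\bigr)\le (2d+1)2^d r^d e^{-c\lambda f(r)}.
\]
Since $f(r)\ge r^{\min(\alpha,2)}/\log r$ grows polynomially, choosing $R=R(\lambda,K,\alpha,d)$ so large that $r^d\le e^{c\lambda f(r)/2}$ for all $r\ge R$ yields the bound $C e^{-(c/2)\lambda f(r)}$ with $C=C(d)$. Renaming $c$ completes the proof.
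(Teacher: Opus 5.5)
Your proof is correct and is essentially the paper's argument. It uses a union bound over targets in $\widehat B_v$, Poisson thinning, the strong-Markov comparison $\sum_{x\in D}\bfP_x(\tau_y\le\ell)\ge G_\ell(y,D)/G_\ell(o,o)$ (the paper's Lemma~\ref{lemma: cover_to_green}, which you rederive after reversing time), a heat-kernel lower bound giving $G_\ell(y,D)\ge c\,\ell(r)$, and the asymptotics $G_{\ell(r)}(o,o)\asymp \ell(r)/f(r)$. One thing to tighten is the torus step: lift the whole estimate to $\Z^d$ at the outset via $\bfP^L_x(\tau_y\le\ell)\ge\bfP_{\widecheck x}(\tau_{\widecheck y}\le\ell)$, as the paper does, because run on $\T^d_L$ the strong-Markov step puts $G^{(L)}_\ell(o,o)\ge G_\ell(o,o)$ in the denominator, and your item~2 (a $\Z^d$ on-diagonal bound) does not control that quantity.
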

In order to prove Proposition~\ref{prop: BernPerc_internal_2}, we need the following random walk result. Let $B_x(r)$ denote the hypercube of side-length $2r$ centered at $x$ in $\T^d_L$. For a vertex $x$, let $\tau_x$ represent the hitting time of vertex $x$, and let $\bfP^L_x$ denote the probability measure associated with a single random walk on $\T^d_L$ starting from vertex $x$. The proof of the following lemma is deferred to Lemma~\ref{apx_lemma: random_walk_expected_hitting_size_lower_bound} in the supplementary material.

\begin{lemma} \label{lemma: random_walk_expected_hitting_size_main_text}
    Consider a random walk on $\Z^d$ driven by $Q \in \mathcal{H}_{\alpha, K}$ starting from $x \in \Z^d$.
    Then, there exists a constant $c' = c'(K, \alpha, d)$ such that for any integer $r > 0$ and any $D \subseteq B_x(4r)$ with $|D| \ge r^d/4$, we have
    \begin{align}
        \sum_{y \in D} \bfP_y \left( \tau_x \le \ell(r) \right) \ge c' f(r),
    \end{align}
    where $ \ell(r) $ is defined in \eqref{def: s(r)} and $f(r)$ is defined as in Lemma~\ref{lemma: range_estimates_main_text}.
\end{lemma}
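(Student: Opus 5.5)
We need to prove Lemma~\ref{lemma: random_walk_expected_hitting_size_main_text}: for $D\subseteq B_x(4r)$ with $|D|\ge r^d/4$, $\sum_{y\in D}\bfP_y(\tau_x\le \ell(r))\ge c' f(r)$.

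The plan is to use symmetry to turn the sum into the expected number of distinct points of $D$ visited by a single walk started at $x$, and then compare that to the range-type estimate already available in Lemma~\ref{lemma: range_estimates_main_text}. Since $Q$ is symmetric and translation invariant, time reversal of a path $y\to x$ of length $n$ gives a path $x\to y$ of equal probability. In particular, $\bfP_y(\tau_x\le \ell)$ equals the probability that a walk from $x$ hits $y$ by time $\ell$. To see this, note that the event $\{x\in \mathrm{R}(\ell)\}$ under $\bfP_y$ is the event that the increment walk $S$ has some partial sum $S_n=x-y$ with $n\le\ell$. Under $\bfP_x$, the event $\{y\in\mathrm{R}(\ell)\}$ asks for $S_n=y-x$, and since $-S$ has the same law as $S$, the two probabilities coincide. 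Hence
\[
\sum_{y\in D}\bfP_y(\tau_x\le\ell(r))=\bfE_x\bigl|\mathrm{R}(\ell(r))\cap D\bigr| ,
\]
and it suffices to show that a walk from $x$ visits at least $c f(r)$ distinct points of $D$ with probability bounded below.

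For the lower bound, the cleanest route is a second-moment (Paley--Zygmund type) argument on local times. Let $N=\sum_{n\le \ell}\1\{X_n\in D\}$ be the time spent in $D$. Then $|\mathrm{R}(\ell)\cap D|\ge N/\max_{z}(\text{local time at } z)$, but it is better to use the standard inequality
\[
\bfE_x|\mathrm{R}(\ell)\cap D|\ \ge\ \frac{\bfE_x N}{\max_{z\in D}\bfE_z[\#\{n\le \ell: X_n=z\}]}=\frac{\bfE_x N}{G_\ell(o,o)} ,
\]
which follows by decomposing $N$ at the first hitting time of each $z\in D$ and applying the strong Markov property. It therefore remains to establish two estimates.

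First, $\bfE_x N\ge c\,\ell(r)$. By the definition of $\ell(r)$ as the time scale to travel distance $r$, the heat-kernel bounds for stable-like walks (local limit/on-diagonal estimates, as in the supplementary random-walk lemmas) give, for $n\in[\ell/2,\ell]$, $Q^n(x,z)\ge c r^{-d}$ for all $z\in B_x(4r)$. Summing over $z\in D$ with $|D|\ge r^d/4$ and over these $n$ yields $\bfE_x N\ge c\ell(r)$.

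Second, $G_{\ell(r)}(o,o)\le C\,\ell(r)/f(r)$. This uses $Q^n(o,o)\asymp n^{-d/\min(\alpha,2)}$, up to a logarithmic correction when $\alpha=2$. In the transient-type cases ($d\ge3$, or $d=2$ with $\alpha<2$) the sum is $O(1)$. For $d=2$ and $\alpha>2$ the sum is $O(\log\ell)$. For $d=2$ and $\alpha=2$ we have $Q^n(o,o)\asymp 1/(n\log n)$, giving $O(\log\log \ell)$. Each case matches $\ell/f$. Combining the two estimates gives the claim.

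The main obstacle is the uniform heat-kernel lower bound $Q^n(x,z)\ge c r^{-d}$ for $|z-x|\le 4r$ and $n\asymp \ell(r)$, particularly at $\alpha=2$ with its logarithmic normalisation. I would first try to obtain it by truncating jumps at scale $r$ and applying a local CLT to the truncated walk, or else cite the known two-sided heat-kernel estimates for symmetric stable-like walks, adjusting the constant in $\ell(r)$ if needed.
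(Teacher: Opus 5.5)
Your argument is correct and is essentially the paper's proof. The paper applies Lemma~\ref{lemma: cover_to_green} with $A=\{x\}$ and $B=D$ to get $\sum_{y\in D}\bfP_y(\tau_x\le\ell(r))\ge G_{\ell(r)}(x,D)/G_{\ell(r)}$; it decomposes at the hitting time of $x$ for walks started in $D$, so it needs no reversal step. It then uses the heat-kernel lower bound of Theorem~\ref{thm: heat_kernel_typical} for $n\in[\ell(r)/2,\ell(r)]$ and the Green's function asymptotics of Lemma~\ref{lemma: green_function_on_Z}, exactly as you do. One caution: your opening idea of comparing with Lemma~\ref{lemma: range_estimates_main_text} would be circular, since the paper proves that range estimate from this lemma, but your actual argument never uses it.
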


\begin{proof}[Proof of Proposition~\ref{prop: BernPerc_internal_2}]
Observe that, for any $x \in \widehat{B}_v$, we have $D \subseteq B_x(4r)$ with $|D| \ge r^d/4$. Therefore, $D$ satisfies the assumptions in Lemma~\ref{lemma: random_walk_expected_hitting_size_main_text} for each $x \in \widehat{B}_v$. By Poisson thinning and a union bound, the probability on the left-hand side of \eqref{eq: BernPerc_internal_2_result} can be upper bounded by
    \begin{align}
        \sum_{x \in \widehat{B}_v} \exp \left(- \lambda \sum_{y \in D} \bfP^L_y(\tau_x \le \ell(r) ) \right) \leq (2d + 1) (2r)^d \exp \left( - c' \lambda f(r) \right),
    \end{align}
where we applied Lemma~\ref{lemma: random_walk_expected_hitting_size_main_text} with the observation that $\bfP^L_y(\tau_x \le \ell(r) ) \ge \bfP_y(\tau_x \le \ell(r) )$ where we abuse the notation by identifying $y$ and $x$ as vertices in both $\T^d_L$ and $\Z^d$.
It is straightforward to verify that if we choose $r$ sufficiently large such that $c' \lambda f(r) \ge 2d\log r$, then the above expression is bounded by $(2d+1) 2^d \exp(-c'\lambda f(r)/2)$, which concludes the proof of Proposition~\ref{prop: BernPerc_internal_2}.
\end{proof}

\subsection{From Long-Range Percolation to a Quickly Activated Set} \label{Sec: Backbone}
In this section, we construct an auxiliary long-range percolation model on $\T^d_m$ that is coupled with the frog model. 
For convenience, we use the same renormalization structure as in Section~\ref{Sec: BernPerc} (see definitions in \eqref{def: partition_const_level}-\eqref{def: box_const_level}) for scale-0 boxes, 
where each vertex on $\mathbb{T}^d_m$ corresponds to a box with side lengths between $r$ and $2r$ in the partition $\mathcal{P}_{L, r}$ of $\mathbb{T}^d_L$. 

Using the coupled long-range percolation clusters, 
we prove that when $\alpha \in (0, d)$ the frog model produces a random vertex set $S$ that is ``spatially homogeneous" and such that, once one vertex in $S$ is activated, every other vertex in $S$ becomes activated within time $O((\log L)^{\Delta + o(1)})$ with high probability as $L \to \infty$, where
\begin{align*}
    \Delta^{-1} := \log_2 \left(\frac{2d}{\alpha + d}\right).
\end{align*}

Our analysis builds upon a multi-scale argument developed in the proof of Proposition 4.4 in \cite{MR2850269}, which we adapt to our specific requirements. We point out a few important differences between our case and the original setup in \cite{MR2850269}: (1) we do not retain the original nearest-neighbor edges on the torus $\T^d_m$; instead, we consider a directed long-range percolation with probability close to 1 such that nearest-neighbor edges are present, and (2) we begin with a Bernoulli site percolation on $\T^d_m$ with density close to 1, and we only allow long-range connections between two open vertices. In essence, the coupled long-range percolation we construct is built upon supercritical Bernoulli percolation clusters.

Fix a positive integer $r$. Recall that $m = m(L) = \lfloor L/r \rfloor$. Let $\T^{[L]} := \T^d_{m}$ be the renormalized torus for $\T^d_L$.
We now define a directed Bernoulli long-range percolation on the renormalized torus $\T^{[L]}$ with the frog model on $\T^d_L$. While there are many ways to perform this coupling, the approach described below is particularly convenient for our purposes.

Let $g: \N \rightarrow (1, +\infty)$ be a function such that $\log g(r) / \log r \to 0$ as $r \to \infty$. 
Let $p: \N \rightarrow (0, 1)$ be a function such that $p(r) \to 0$ as $r \to \infty$.
The long-range percolation model will be constructed on $\T^d_m$ and will depend on the choice of $g$ and $p$.
For each vertex $v \in \T^d_m$, we declare $v$ to be \textbf{available} with probability at least $1 - p(r)$ and \textbf{unavailable} otherwise, independently for each $v$.
Let $X_v^{[L]}$ be the indicator that $v$ is available.
For each available vertex $v \in \T^{[L]}$, fix a subset $G_v \subseteq B_v$ such that 
\begin{align} \label{ineq: G_v_condition}
    |G_v| \ge \frac{r^d}{g(r)}.
\end{align}
In the next section, we will specifically choose $g(r) = C_1 \log r$ (where $C_1$ is the same constant as in Corollary~\ref{cor: almost_constant_fraction_good_vertex}) and set $G_v$ to be the set of good vertices in $B_v$, whenever the collection of good vertices satisfies condition \eqref{ineq: G_v_condition}. For now, we proceed with our construction of the long-range percolation model under the assumption \eqref{ineq: G_v_condition} without specifying $g$, $f$, and $G_v$.

% Let $B_u^{\circ}$ denote the collection of vertices in $B_u$ such that each vertex is at least $r/4$ distance away from $B_u^c$.
For $L \ge r$ and for $u, v \in \T^{[L]}$ with $u \neq v$, define
\begin{align*}
    Y_{(u, v)}^{[L]} := 
    \begin{cases}
        \1_{\left\{\sum_{x \in B_u} \sum_{y \in G_v} |\cW_{x,y}^{\lambda}| \ge 1\right\}} & \text{if } X_v^{[L]} = 1, \\
        0 & \text{if } X_v^{[L]} = 0,
    \end{cases}
\end{align*}
where $\cW_{x,y}^{\lambda}$ denotes the set of particles initially at $x$ that jump to $y$ in their first step. 
When $Y_{(u, v)}^{[L]} = 1$, we write $u \xRightarrow{} v$. 

\begin{lemma}
\label{lemma: long-range-percolation-coupling}
    Let $d\ge1$, $\alpha \in (0, d)$, and $Q \in \mathcal{H}_{\alpha, K}$. 
    Fix $r \in \N^+$ and $\lambda > 0$. 
    Then, $\{ Y_{(u, v)}^{[L]} \}_{u \neq v}$ are mutually independent for each $L \ge r$.
    Moreover, there exist $\beta_1$ with $\beta_1 \to \infty$ as $r \to \infty$,
    such that, for any $L \ge r$ and any distinct $u, v \in \T^{[L]}$, we have
    \begin{align} \label{eq: long-range-percolation-coupling}
        \P_{L}\left(Y_{(u, v)}^{[L]} = 1\right) \ge 1 - \exp\left(- \beta_1 |u - v|^{-s}\right),
    \end{align}
    where $|u - v|$ denotes the $\ell_\infty$-distance between $u$ and $v$ on $\T^{[L]}$ and $s := \alpha + d$.
\end{lemma}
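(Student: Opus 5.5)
The plan is to read everything off from Poisson thinning of the first steps. The site data $\{X_v^{[L]}, G_v\}$ are treated as fixed: the construction says "for each available vertex $v$, fix a subset $G_v$", and availability is produced independently of the first-step particle data. The bound \eqref{eq: long-range-percolation-coupling} is taken at available $v$. I flag one caveat openly: if the $X_v^{[L]}$ were genuinely random, then $Y_{(u,v)}$ and $Y_{(u',v)}$ would share $X_v^{[L]}$ and would not be unconditionally independent, and the bound would fail at $v$ with $X_v^{[L]}=0$. So the argument below proves the statement with the site data held fixed and is not, as written, unconditional in them. All remaining randomness is in the first-step sets $\cW^\lambda_{x,y}$.

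\textbf{Step 1: independence.} For each $x\in\T^d_L$, the particles in $\cW^\lambda_x$ are $\Pois(\lambda)$ many, and each independently chooses its first step $y$ with probability $Q_L(x,y)$. By Poisson thinning, the family $\{|\cW^\lambda_{x,y}|\}_{x,y\in\T^d_L}$ consists of independent Poisson variables with means $\lambda Q_L(x,y)$. The variable $Y^{[L]}_{(u,v)}$ is a function of $\{|\cW^\lambda_{x,y}| : x\in B_u,\ y\in G_v\}$ alone; when $X_v^{[L]}=0$ it is the constant $0$. The boxes $\{B_w\}$ partition $\T^d_L$ and $G_v\subseteq B_v$. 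Hence for distinct ordered pairs $(u,v)\neq(u',v')$ the index sets $B_u\times G_v$ and $B_{u'}\times G_{v'}$ are disjoint, since either $u\neq u'$ or $v\neq v'$. Functions of disjoint blocks of an independent family are mutually independent, which gives the first claim.

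\textbf{Step 2: the probability bound.} For available $v$, the sum $\sum_{x\in B_u}\sum_{y\in G_v}|\cW^\lambda_{x,y}|$ is Poisson, so
\[
\P_L\bigl(Y^{[L]}_{(u,v)}=1\bigr)=1-\exp\Bigl(-\lambda\sum_{x\in B_u}\sum_{y\in G_v}Q_L(x,y)\Bigr).
\]
It therefore suffices to lower bound the double sum.

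\begin{itemize}
\item Use $Q_L(x,y)\ge Q(o,\tilde y-\tilde x)$ for the lift $\tilde y-\tilde x$ realizing the torus distance, together with the heavy-tail lower bound $Q\ge K^{-1}|\cdot|^{-(d+\alpha)}$.
\item Boxes have side lengths in $[r,2r)$, and $u\neq v$ means $|u-v|\ge1$. So for $x\in B_u$ and $y\in G_v$ the $\ell_\infty$ torus distance satisfies $|x-y|\le 2r(|u-v|+1)\le 4r|u-v|$.
\item We have $|B_u|\ge r^d$, and $|G_v|\ge r^d/g(r)$ by \eqref{ineq: G_v_condition}.
\end{itemize}

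Combining these,
\[
\sum_{x\in B_u}\sum_{y\in G_v}Q_L(x,y)\ \ge\ \frac{r^{2d}}{g(r)}\cdot\frac{K^{-1}}{(4r)^{s}}\,|u-v|^{-s}=\frac{r^{d-\alpha}}{4^{s}K\,g(r)}\,|u-v|^{-s}.
\]
Set $\beta_1:=\lambda r^{d-\alpha}/(4^sK g(r))$. Since $\alpha<d$ and $\log g(r)/\log r\to0$, we have $g(r)=r^{o(1)}$, so $\beta_1\to\infty$ as $r\to\infty$. This gives \eqref{eq: long-range-percolation-coupling}.

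\textbf{Main obstacle.} The only delicate point is Step 1, specifically the disjointness of the index blocks, which relies on the $B_w$ partitioning the torus and on $G_v\subseteq B_v$. The distance bookkeeping on the torus in Step 2, including the uneven boxes of side up to $2r$, is routine.
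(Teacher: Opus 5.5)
Your proof is correct and is the paper's argument. Poisson thinning makes the first-step counts $|\cW^\lambda_{x,y}|$ independent $\Pois(\lambda Q_L(x,y))$ variables, and the edge indicators are functions of disjoint blocks $B_u\times G_v$. The lower bound then comes from $|x-y|\lesssim r|u-v|$, the heavy-tail lower bound on $Q_L$, and $|B_u|\,|G_v|\ge r^{2d}/g(r)$, which give a rate of order $\lambda r^{d-\alpha}/g(r)\to\infty$.

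The only divergence is how availability is handled, and there your caveat is justified.

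\emph{Independence.} The paper derives unconditional mutual independence ``from Poisson thinning together with the independence of the $X_v^{[L]}$'s''. But $Y^{[L]}_{(u,v)}$ and $Y^{[L]}_{(u',v)}$ share the factor $X^{[L]}_v$. They are therefore positively correlated whenever $0<\P_L(X^{[L]}_v=0)<1$.

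\emph{The bound.} The paper absorbs $p=p(r)$ through $p+e^{-\beta_0 t}\le e^{-\beta_1 t}$, with $\beta_1=\min\{-\log p,\beta_0\}$ and $t=|u-v|^{-s}$. This step is not valid, since the left side exceeds $1$ for small $t$.

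Holding the site data fixed, as you do, is also what the later application supports. There $X^{[L]}_v$ and $G_v$ are built from $\cW^{(1)}$ and the planted particle, while the edges use $\cW^{(2)}$.

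What your version does not literally give is the unconditional marginal bound, but it can be recovered from your conditional one. Set $\beta_0:=\lambda r^{d-\alpha}/(4^sKg(r))$, your rate. If availability is independent of the first-step counts, then $\P_L(Y^{[L]}_{(u,v)}=0)\le p+(1-p)e^{-\beta_0 t}$. The function $\phi(t):=-\log\bigl(p+(1-p)e^{-\beta_0 t}\bigr)$ is concave with $\phi(0)=0$, so $\phi(t)\ge t\,\phi(1)$ on $(0,1]$. Hence $\beta_1:=\phi(1)$ works, and $\phi(1)\ge\min\{\log\tfrac{1}{2p},\,\beta_0-\log 2\}\to\infty$.
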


\begin{proof}
    First, note that the independence of $\{ Y_{(u, v)}^{[L]} \}_{u \neq v}$ follows from Poisson thinning together with the independence of $X_v^{[L]}$'s. 
    Indeed, the collection $\{ |\cW_{x,y}^{\lambda}| : x, y \in \T^d_L \}$ are mutually independent Poisson random variables with parameters $\lambda Q_L(x,y)$, where $Q_L$ is the standard projection of $Q$ onto $\T^d_L$ as defined in \eqref{def: Q_L}.
        
    Next, observe that for any $u, v \in \T^{[L]}$ with $u \neq v$ and any $x \in B_u$, $y \in B_v$, we have
    \begin{align}
        |x - y| \le 2 r ( |u - v| + 2 ), \label{ineq: distance_equivalence}
    \end{align}
    where $|u - v|$ and $|x - y|$ denote the $\ell_\infty$-distances between $u$ and $v$ on $\T^{[L]}$ and between $x$ and $y$ on $\T^d_L$. Therefore, for such $x, y, u, v$, we have $|x - y| \asymp_d r |u - v|$ uniformly for all $L \ge r$. 
    
    Therefore, for any $L$ and any distinct $u, v \in \T^{[L]}$, we have the following lower bound:
    \begin{align*}
        &\bP_L\left(Y_{(u, v)}^{[L]} = 0\right) \le p(r) + \exp\left(- \lambda \sum_{x \in B_u} \sum_{y \in G_v} Q_L(x,y) \right) \\
        \le \; &p(r) + \exp\left(- \beta_0(r) |u - v|^{-s} \right) \le \exp\left(- \beta_1(r) |u - v|^{-s} \right),
    \end{align*}
    where $ \beta_0(r) := c_0 \lambda r^{2d-s} / g(r) $ with $c_0$ being a positive constant depending on $K,\alpha$, and $d$. 
    Moreover, $\beta_1(r) := \min\left\{ -\log(p(r)), \beta_0(r) \right\}$. 
    The inequality above uses the lower bound from Lemma~\ref{lemma: one-step_Z_to_T} and the fact that $|x - y| \asymp_d r |u - v|$. 
    Finally, since $\log g(r) / \log r \to 0$ as $r \to \infty$ and $p(r) \to 0$ as $r \to \infty$, it follows that both $\beta_1$ diverges to infinity as $r \to \infty$ when $s < 2d$. This completes the proof.
\end{proof}

Throughout the rest of Section~\ref{Sec: Backbone}, we will work directly with the directed Bernoulli long-range percolation model.
Let $\hat{\bP}_{m, \beta_1}$ denote the probability measure associated with an independent directed Bernoulli long-range percolation on $\T^d_m$ satisfying \eqref{eq: long-range-percolation-coupling} with parameters $\beta_1$. 
% When there is no ambiguity, we simply write $\hat{\bP}_m$ to replace $\hat{\bP}_{m, \beta_1}$. 
Note that, as pointed out in Lemma~\ref{lemma: long-range-percolation-coupling}, whenever we need to take $\beta_1$ sufficiently large for the coupled percolation model, we can do so by taking $r$ sufficiently large.

A \textbf{directed path} from vertex $x$ to vertex $y$ is a sequence of vertices $(v_0, v_1, \ldots, v_k)$ where $v_0 = x$, $v_k = y$, and there exists a directed edge from $v_{i-1}$ to $v_i$ (i.e., $v_{i-1} \Rightarrow v_i$) for each $i = 1, 2, \ldots, k$. The \textbf{length} of such a path is $k$, which represents the number of edges traversed.

Define the \textbf{chemical distance} $D(x,y)$ as the length of the shortest directed path from vertex $x$ to vertex $y$ in the long-range percolation cluster. If no such directed path exists, we set $D(x,y) = +\infty$.

\begin{definition}[Spatial Homogeneity] \label{def: Spatial_hom}
Let $\ell > 0$ and $\rho \in (0,1]$. A subset $S \subseteq \T^d_m$ is said to be \textup{Hom}$(\ell,\rho)$ if every hypercube $H \subseteq \T^d_m$ with side length $\ell$ contains at least a $\rho$-fraction of its vertices from $S$, i.e.,
\[
    |S \cap H| \geq \rho \, |H|.
\]
\end{definition}

\begin{definition}[Quick Connectivity] \label{def: good_connectivity}
Let $a>0$. A subset $S \subseteq \T^d_m$ is \textup{QC}$(a)$ (quickly connected) if
\[
    \max_{x,y\in S} D(x,y)\ \le\ (\log m)^{a}.
\]
Equivalently, for every ordered pair $x,y\in S$ there exists a directed path from $x$ to $y$ of length at most $(\log m)^a$. (In particular, this implies $D(x,y)<\infty$ for all $x,y\in S$.)
\end{definition}

We are now ready to state the main proposition in Section~\ref{Sec: Backbone}:

\begin{proposition} \label{prop: S_emerge} Let $d \ge 1$ and $s \in (d, 2d)$. Then, for any $\epsilon > 0$, there exist $\beta_0, C > 0$ depending on $\epsilon, d, s$ such that $\beta_1 > \beta_0$ implies
    \begin{align*}
        \lim_{m \rightarrow \infty} \hat{\bP}_{m,\beta_1} \left( \exists \, A \subseteq \T^d_m \text{ s.t. } A \text{ is } \textup{Hom$\left(\left\lfloor(\log m)^{\frac{1}{2d - s} + \epsilon}\right\rfloor, \rho_m\right)$} \text{ and } \textup{QC$\big(\Delta + \epsilon\big)$} \right) = 1
    \end{align*}
where $\rho_m = (\frac{1}{\log \log m})^{C}$.
\end{proposition}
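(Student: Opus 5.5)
We adapt the hierarchical (multi-scale) construction from the proof of Proposition~4.4 in \cite{MR2850269} to the directed model $\hat{\bP}_{m,\beta_1}$. Fix $\gamma\in(s/(2d),1)$ close to $s/(2d)$; the exponent $\Delta$ arises from $\log_2(1/\gamma)\to\Delta^{-1}$. We build a nested sequence of scales $N_0<N_1<\dots<N_K$ with $N_{k+1}=N_k^{1/\gamma}$, so that $K\approx \log_{1/\gamma}(\log m/\log N_0)$. The base scale is $N_0\asymp(\log m)^{1/(2d-s)+\epsilon}$, and the top scale is $N_K\asymp m$.

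A box $\Lambda$ of side $N_{k+1}$ is called \emph{$k{+}1$-good} if it contains a set $A_\Lambda$ and two disjoint good sub-boxes $\Lambda',\Lambda''$ of side $N_k$ such that four things hold. First, $A_\Lambda\supseteq A_{\Lambda'}\cup A_{\Lambda''}$. Second, there are directed edges $\Lambda'\Rightarrow\Lambda''$ and $\Lambda''\Rightarrow\Lambda'$, with endpoints lying in $A_{\Lambda'}$ and $A_{\Lambda''}$. Third, every other good sub-box has edges in both directions to and from $A_{\Lambda'}$. Fourth, any two points of $A_\Lambda$ are joined by a directed path inside $\Lambda$ of length at most $D_{k+1}\le 2D_k+O(1)$. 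Iterating $D_{k+1}\le 2D_k+3$ gives
\[
D_K\lesssim 2^K D_0=\exp\Big(\log 2\cdot\log_{1/\gamma}\tfrac{\log m}{\log N_0}\Big)D_0\le(\log m)^{\Delta+\epsilon}
\]
once $\gamma$ is chosen close to $s/(2d)$.

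The edge count drives the induction. Between two sets of size at least $\rho N_k^d$ at distance at most $N_{k+1}$, the expected number of edges is at least $\beta_1\rho^2N_k^{2d}N_{k+1}^{-s}$. This equals $\beta_1\rho^2 N_k^{2d-s/\gamma}$, which is large because $\gamma>s/(2d)$, so such a pair is linked with probability $1-\exp(-cN_k^{\delta})$.

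The base case uses the base scale $N_0$. Nearest-neighbour edges are present with probability at least $1-e^{-\beta_1}$, and for $\beta_1$ large this is a supercritical (in fact near-$1$) Bernoulli bond percolation. Since $N_0$ exceeds $(\log m)^{1/(2d-s)}$, with overwhelming probability every $N_0$-box either contains a cluster of density close to $1$ or receives enough long edges. Alternatively, one can base-start the scheme at side $N_0$ using long-range edges alone: there the probability that two sub-boxes of side $N_0^{\gamma}$ fail to connect is at most $\exp(-cN_0^{\delta})$, which is $o(m^{-d})$ by the choice of exponent $1/(2d-s)+\epsilon$. A union bound over the $m^d$ base boxes then makes all of them good.

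For the inductive step, the density $\rho_k$ degrades only through the fraction of bad sub-boxes, by a factor $(1-\eta_k)$ per level, and also by a fixed fraction per level. This gives $\rho_K\ge c^K$ with $K=O(\log\log m)$, which is $(\log\log m)^{-C}$ and matches $\rho_m$. At higher scales, bad sub-boxes are rare with super-polynomial probability, so a union bound again works.

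Finally, set $A=A_{\T^d_m}$. QC follows from $D_K$ together with one extra pair of connections at the top level, since the torus is one box. Hom holds because every hypercube of side $\lfloor N_0\rfloor$ contains a base box, and the $A$ restricted to it has density at least $\rho_m$. Directedness is harmless, since each required edge is requested independently in both directions.

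The main obstacle is the bookkeeping that keeps the density loss to only $c^{K}$ while ensuring all failure probabilities are summable over $m^d$ boxes. This is especially delicate at the lowest scales, where $N_0$ is only polylogarithmic. If the base case is tight, I would first try a two-stage start: short scales use nearest-neighbour edges (Peierls argument), and long-range edges take over from then on.
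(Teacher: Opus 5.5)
Your proposal has a genuine gap at the low scales. The top-level path concatenates $2^K\approx(\log m)^{\log_{1/\gamma}2}$ pieces, and $\log_{1/\gamma}2>\Delta$. Each piece runs inside a base box of side $N_0\asymp(\log m)^{1/(2d-s)+\epsilon}$. So you need \emph{every} base box simultaneously to have internal directed diameter $D_0=(\log m)^{o(1)}$, each with failure probability $o(m^{-d})$. Neither of your base cases gives this.
\begin{itemize}
\item Nearest-neighbour percolation only gives $D_0\asymp N_0$, which pushes the exponent to at least $\Delta+\frac{1}{2d-s}$.
\item In the long-range-only alternative the estimate is wrong. The only available bound on two full sub-boxes of side $N_0^{\gamma}$ not being linked is $\exp(-c\beta_1N_0^{2d\gamma-s})$. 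For $\gamma$ near $s/(2d)$, the exponent $(2d\gamma-s)(\frac{1}{2d-s}+\epsilon)$ of $\log m$ is below $1$, so this is not $o(m^{-d})$. The threshold $1/(2d-s)$ belongs to sub-boxes of side $N_0^{\zeta}$ with $\zeta\to1$, not to ratio $\gamma$.
\item The sub-boxes' own connectivity forces a recursion down to bounded scales. There failure probabilities are of order one and no union bound is possible.
\item The same problem persists above $N_0$. With the single ratio $N_{k+1}=N_k^{1/\gamma}$, the link failure $\exp(-cN_k^{2d-s/\gamma})$ is $o(m^{-d})$ only once $N_k\gg(\log m)^{1/(2d-s/\gamma)}$. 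Since $1/(2d-s/\gamma)\to\infty$ as $\gamma\downarrow s/(2d)$, ``a union bound again works'' fails for the first several levels.
\end{itemize}

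The missing idea is the paper's two-regime scale sequence.
\begin{itemize}
\item From $m_0=2$ up to $m_{k_0}\approx(\log m)^{\eta}$, with $\eta$ slightly above $1/(2d-s)$, it uses ratio $\zeta$ close to $1$. A box is nice if at least a $(1-\delta)$-fraction of its sub-boxes are nice and every pair of nice sub-boxes is bidirectionally linked.
\item A box has $\asymp m_k^{d(1-\zeta)}$ sub-boxes, so the Chernoff step amplifies and gives $a_k\le\exp(-\beta_1c^{k+1}m_k^{\zeta(2d\zeta-s)})$. This is $o(m^{-d})$ at $k_0$ because $\eta\zeta(2d\zeta-s)>1$; this is where $1/(2d-s)$ really comes from.
\item These $k_0=O(\log\log\log m)$ levels cost only $2^{k_0}=(\log\log m)^{O(1)}$ in path length and $(1-\delta)^{k_0}=(\log\log m)^{-C}$ in density.
\item A bounded number of further $\zeta$-levels reach $(\log m)^{\theta}$ with $\theta(2d\gamma-s)>1$. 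Only then does ratio $\gamma$ take over, with all sub-boxes required nice and a union bound at each level.
\end{itemize}

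Two smaller errors remain.
\begin{itemize}
\item Your density count $c^K$ with $K\asymp\log\log m$ equals $(\log m)^{-C'}$, not $(\log\log m)^{-C}$. Density may only be lost on the $O(\log\log\log m)$ coarse levels.
\item Routing non-hub sub-boxes through $\Lambda'$ gives only $D_{k+1}\le 3D_k+2$, i.e.\ exponent $\log_{1/\gamma}3$. You need direct bidirectional links between every pair of good sub-boxes to get $2D_k+1$.
\end{itemize}
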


We make the following choices of parameters for the proof of Proposition~\ref{prop: S_emerge}: Recall that $\Delta^{-1} = \log_2(2d/s)$. We first choose $\epsilon$, $\gamma \in (s/2d, 1)$, and $\zeta \in (\gamma, 1)$ to satisfy
\begin{align}
    \frac{1}{2d - s} + \epsilon < \Delta < \frac{1}{2d\gamma - s}, \quad \frac{\log 2}{\log 1/\gamma} < \Delta + \epsilon, \quad \Delta >\frac{1}{2d\zeta - s}. \label{def: epsilon_gamma_zeta} 
\end{align}
We want to choose $\gamma$ close to $s/2d$ and $\zeta$ close to $1$. 
This is indeed possible since we have $\Delta > 1/(2d - s)$ for any $s \in (d, 2d)$. This can be verified by taking the derivative of $f(s) := \log_2 (2d/s) - (2d - s)$ and showing that $f(s) < 0$ for any $s \in (d, 2d)$.
Next, we choose $\theta$ and $\eta$ such that
\begin{align}
    \theta > \frac{1}{2d\gamma - s}, \quad \frac{1}{\zeta(2d\zeta - s)} < \eta < (\frac{1}{2d-s} + \epsilon) \cdot \zeta \label{def: theta_eta}
\end{align}
We want to choose $\eta$ close to $1/(2d - s)$ and $\theta$ turns out to be large as a consequence of $\zeta$ being close to $1$ and $\gamma$ being close to $s/2d$.

We now define a sequence of integer scales $\{ m_k \}_k$ that grow faster than exponential. Let $m_0 := 2$ and define 
\begin{align}
    k_0 &:= \inf \{ k \ge 1: 2^{\zeta^{-k}} \ge (\log m)^\eta \},  \label{def: k_0} \\[0.3em] 
    k_1 &:= \sup \{ k \ge 1: 2^{\zeta^{-k}} \le (\log m)^\theta \}, \label{def: k_1}\\[0.3em]
    m_k &:= \begin{cases}
        \hspace{3mm} \left\lfloor 2^{\zeta^{-k}}/m_{k-1} \right\rfloor \cdot m_{k-1}, & \hspace{2mm} k = 1, \ldots, k_1, \\[1.0em]
        \left( \, \left\lfloor 2^{\zeta^{-k_1}\gamma^{-(k - k_1)}}/m_{k-1} \right\rfloor \cdot m_{k-1}  \, \right) \wedge m, & \hspace{2mm} k > k_1,
    \end{cases} \label{def: L_k} \\[0.3em]
    k_2 &:= \inf \{ k \ge 1: m_k = m \}. \label{def: k_2}
\end{align}

The following lemma provides bounds on $m_k$ that will be convenient for later use. The lemma can be proved by straightforward induction, and we omit its proof.

\begin{lemma} \label{lemma: L_k asympto}
Let $\zeta \in (0, 1)$. Then for any $k \in \{0, \ldots, k_1 \}$, we have
    \begin{align}
        c_\zeta 2^{\zeta^{-k}} \le m_k \le 2^{\zeta^{-k}},
    \end{align}
where $c_\zeta := 1 - 2^{\zeta - 1}$. Moreover, for any $k \in \{ k_1 + 1, \ldots, k_2 - 1 \}$, we have
    \begin{align}
        c_{\gamma} 2^{\zeta^{-k_1}\gamma^{-(k-k_1)}} \le m_k \le 2^{\zeta^{-k_1}\gamma^{-(k-k_1)}},
    \end{align}
where $c_{\gamma} := 1 - 2^{\gamma - 1}$.
\end{lemma}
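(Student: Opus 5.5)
The plan is a direct induction on $k$. It rests on the elementary inequalities
\[
a-b\;<\;\lfloor a/b\rfloor\, b\;\le\; a \qquad (a,b>0),
\]
together with an upper bound on $m_{k-1}$ carried along the induction. Only the upper bound on $m_{k-1}$ is needed to get the lower bound on $m_k$. Write $E_k := \zeta^{-k}$ for $k\le k_1$ and $E_k := \zeta^{-k_1}\gamma^{-(k-k_1)}$ for $k>k_1$, and set $a_k := 2^{E_k}$. Then $m_k=\lfloor a_k/m_{k-1}\rfloor m_{k-1}$ for $1\le k\le k_1$. The same formula holds for $k_1<k<k_2$: by definition of $k_2$ we have $m_k\neq m$ there, so the truncation ``$\wedge\, m$'' is not active.

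\textbf{Upper bound.} For $k=0$ we have $m_0=2=2^{\zeta^{0}}$. For $k\ge1$ the right inequality above gives $m_k\le a_k$ immediately. Hence $m_{k-1}\le a_{k-1}$ for every $k$ in the range considered. At the transition $k-1=k_1$ this reads $m_{k_1}\le 2^{\zeta^{-k_1}}=a_{k_1}$, which is consistent with both formulas for $E_{k_1}$.

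\textbf{Lower bound.} For $k=0$ it holds since $c_\zeta<1$. For $k\ge1$, using $m_{k-1}\le a_{k-1}$,
\[
m_k\;\ge\; a_k-m_{k-1}\;\ge\; a_k\bigl(1-2^{E_{k-1}-E_k}\bigr).
\]
It therefore suffices to show $E_{k-1}-E_k\le \zeta-1$ for $1\le k\le k_1$, and $E_{k-1}-E_k\le\gamma-1$ for $k_1<k<k_2$.
\begin{itemize}
\item In the first range, $E_{k-1}-E_k=\zeta^{-k}(\zeta-1)$. Since $\zeta^{-k}\ge1$ and $\zeta-1<0$, this is at most $\zeta-1$.
\item In the second range, $E_{k-1}=\gamma E_k$, so $E_{k-1}-E_k=E_k(\gamma-1)$. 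Since $E_k\ge \zeta^{-k_1}\gamma^{-1}\ge1$, this is at most $\gamma-1$.
\end{itemize}
This gives $m_k\ge c_\zeta a_k$ in the first range and $m_k\ge c_\gamma a_k$ in the second. The same computation shows $a_k/m_{k-1}\ge 2^{E_k-E_{k-1}}>1$, so the floor is at least $1$ and each $m_k$ is a positive multiple of $m_{k-1}$; in particular the recursion is well defined.

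There is no real obstacle here. The only point requiring care is the bookkeeping at the switch $k=k_1\to k_1+1$, and checking that for $k<k_2$ the minimum with $m$ is inactive. Both were handled above.
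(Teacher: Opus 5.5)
Your proof is correct and is the ``straightforward induction'' the paper has in mind; the paper omits the details. You combine $a-b<\lfloor a/b\rfloor b\le a$ with $m_{k-1}\le 2^{E_{k-1}}$ and the exponent bounds $E_{k-1}-E_k\le\zeta-1$ (resp.\ $\le\gamma-1$), and you correctly handle the two bookkeeping points: the switch at $k_1$ and the inactive truncation $\wedge\, m$ for $k<k_2$.
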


We take the following partitions on each scale of the torus. For $k \le k_2 - 1$, define
\begin{align}
    \mathcal{P}_{m, m_k} &:= \hspace{1mm} \{ B_v : v \in \T^d_{\lfloor m/m_k \rfloor} \}, \hspace{3mm} \text{where} \label{def: partitions_all_scale_1} \\[0.5em]
    B_v := \prod_{i = 1}^d I_{v_i}^{(k)} \text{ and } I_i^{(k)} &:= 
    \begin{cases}
        [i m_k, (i+1) m_k), & \text{if } i < \lfloor m/m_k \rfloor - 1, \\[0.3em]
        [i m_k, m), & \text{if } i = \lfloor m/m_k \rfloor - 1,
    \end{cases} \label{def: partitions_all_scale}
\end{align}
Then, we say a hyperbox on $\T^d_m$ is an \textbf{$m_k$-box} if it belongs to the partition $\mathcal{P}_{m, m_k}$.
In \eqref{def: L_k}, we deliberately define $m_k$ as an integer multiple of $m_{k-1}$ to avoid divisibility problems, which ensures that each $m_{k-1}$-box in $\mathcal{P}_{m, m_{k-1}}$ is contained within exactly one $m_k$-box in $\mathcal{P}_{m, m_k}$. Definition \eqref{def: partitions_all_scale} partitions $\T^d_m$ into disjoint $m_k$-boxes. For boxes indexed by $v \in \T^d_{\lfloor m/m_k \rfloor}$ with $v_i = \lfloor m/m_k \rfloor - 1$ for some $i \in \{ 1, \ldots, d \}$, they may have uneven side lengths. However, the side lengths of those boxes are constrained to lie between $m_k$ and $2m_k$.

Fix $\delta \in (0,1/2)$. We define \textbf{nice} (or $\delta$-nice) boxes inductively:

\begin{definition}\label{def: nice_box}
For $k \in \{0,1,\ldots,k_2\}$, an $m_k$-box is called \textbf{nice} if it satisfies the following:

\noindent\textup{(1)} Base case $(k=0)$: An $m_0$-box is \textbf{nice} if and only if every ordered pair of distinct vertices in the box is connected by a directed edge.

\noindent\textup{(2)} Coarse scales $(k \in \{1,\ldots,k_0\})$: An $m_k$-box is \textbf{nice} if and only if both of the following hold:
\begin{enumerate}
    \item[\textup{I}.] At least a $(1{-}\delta)$-fraction of its $m_{k-1}$-sub-boxes are nice.
    \item[\textup{II}.] For any two nice $m_{k-1}$-sub-boxes $H,H'$ contained in it, there exist vertices $u_1, u_2 \in H$ and $v_1, v_2 \in H'$ such that $u_1 \Rightarrow v_1$ and $v_2 \Rightarrow u_2$ ($H$ and $H'$ are bidirectionally connected). Moreover, we require that the endpoints $u_1, u_2, v_1, v_2$ all lie in sub-boxes that are nice at every scale $j \in \{0,\ldots,k-1\}$.
\end{enumerate}

\noindent\textup{(3)} Dense scales $(k \ge k_0{+}1)$: An $m_k$-box is \textbf{nice} if and only if both of the following hold:
\begin{enumerate}
    \item[\textup{I}.] All of its $m_{k-1}$-sub-boxes are nice.
    \item[\textup{II}.] For any two $m_{k-1}$-sub-boxes $H,H'$ contained in it, there exist vertices $u_1, u_2 \in H$ and $v_1, v_2 \in H'$ such that $u_1 \Rightarrow v_1$ and $v_2 \Rightarrow u_2$, with endpoints $u_1, u_2, v_1, v_2$ lying in sub-boxes that are nice at every scale $j \in \{0,\ldots,k-1\}$.
\end{enumerate}
\end{definition}

Let $a_k$ denote the maximum over all $m_k$-boxes of the probability of failing to be nice under \(\hat{\bP}_{m, \beta_1}\). The following lemma upper bounds $a_k$ for all the coarse scales.

\begin{lemma} \label{lemma: recursion}
Let parameters $\gamma$, $\theta$, $\zeta$, and $\eta$ be chosen according to \eqref{def: epsilon_gamma_zeta}-\eqref{def: theta_eta}, and fix $\delta \in (0, 1/2)$. 
Then, there exists a positive constant $\beta_0$ such that $\beta_1 \ge \beta_0$ implies that
\begin{align}
    a_k \le \exp\left(- \beta_1 c^{k+1} m_k^{\zeta(2d\zeta - s)} \right)
\end{align}
holds for any $k \in \{0, ..., k_0\}$, where $c$ is a positive constant depending only on $\zeta, \delta, d, s$.
\end{lemma}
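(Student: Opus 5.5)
We argue by induction on $k$, following the multi-scale scheme of Proposition 4.4 in \cite{MR2850269}. The claimed bound is $a_k \le \exp(-\beta_1 c^{k+1} m_k^{\zeta(2d\zeta-s)})$. For $k=0$, an $m_0$-box has side length between $2$ and $4$, so it contains at most $4^d$ vertices. A non-edge between two such vertices has probability at most $\exp(-\beta_1 4^{-s})$ by \eqref{eq: long-range-percolation-coupling}. A union bound over at most $16^d$ ordered pairs gives
\[
a_0 \le 16^d e^{-\beta_1 4^{-s}}.
\]
This is at most $\exp(-\beta_1 c\, 2^{\zeta(2d\zeta-s)})$ for a small $c$, once $\beta_1\ge\beta_0$.

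For the inductive step, fix an $m_k$-box with $1\le k\le k_0$. The number $N_k$ of its $m_{k-1}$-sub-boxes satisfies $N_k \asymp (m_k/m_{k-1})^d$. By Lemma~\ref{lemma: L_k asympto}, $m_k/m_{k-1}\asymp m_k^{1-\zeta}$, so $N_k \asymp m_k^{d(1-\zeta)}$. We bound the failure probability as
\[
\P(\text{I fails}) + \P(\text{II fails}\mid \text{I holds}).
\]

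\textbf{Condition I.} Niceness of distinct sub-boxes depends on disjoint edge sets, because all required edges lie inside the sub-box. Hence these events are independent, and I fails only if at least $\delta N_k$ sub-boxes fail. This has probability at most
\[
\binom{N_k}{\delta N_k} a_{k-1}^{\delta N_k}\le \exp\bigl(N_k\log 2 - \delta N_k\,\beta_1 c^{k} m_{k-1}^{\zeta(2d\zeta-s)}\bigr).
\]
Substituting $m_{k-1}\asymp m_k^{\zeta}$ and $N_k\asymp m_k^{d(1-\zeta)}$, the exponent is of order
\[
\beta_1 c^k m_k^{d(1-\zeta)+\zeta^2(2d\zeta-s)}.
\]
We must check that $d(1-\zeta)+\zeta^2(2d\zeta-s)\ge \zeta(2d\zeta-s)$. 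This is equivalent to $d(1-\zeta)\ge \zeta(1-\zeta)(2d\zeta-s)$, i.e. $d\ge \zeta(2d\zeta-s)$, which holds since $2d\zeta-s<d$. The constant loss (from $\delta$ and the implied constants) is absorbed into one extra factor of $c$, which is why the bound carries $c^{k+1}$.

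\textbf{Condition II.} Condition the edges inside sub-boxes so that I holds. Fix two nice sub-boxes $H,H'$. Iterating I down the scales, the set of vertices of $H$ lying in sub-boxes nice at every scale $j<k$ has size at least
\[
\prod_{j<k}(1-\delta)^d\, m_{k-1}^d\ge (1-\delta)^{dk}m_{k-1}^d.
\]
The same holds for $H'$. Edges between $H$ and $H'$ are independent of everything revealed so far, and each has length at most $2m_k$. So the probability that no edge goes from $H$ to $H'$ is at most
\[
\exp\bigl(-\beta_1 (1-\delta)^{2dk} m_{k-1}^{2d} (2m_k)^{-s}\bigr)\le \exp\bigl(-\beta_1 c^{k} m_k^{2d\zeta-s}\bigr).
\]
Since $2d\zeta-s\ge \zeta(2d\zeta-s)$, this exponent dominates the target one. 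A union bound over at most $N_k^2$ pairs and both directions costs only a polynomial factor in $m_k$, absorbed for $\beta_1$ large.

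\textbf{Main obstacle.} The delicate step is the bookkeeping in Condition I. Two things must hold simultaneously: the combinatorial entropy $N_k\log 2$ must be beaten uniformly in $k$, and the constants $c^{k+1}$ must stay consistent across the induction. Both are handled by taking $\beta_1\ge\beta_0$ large, with $c=c(\zeta,\delta,d,s)$ chosen small enough. The deterioration $c^k$ is harmless because $k\le k_0=O(\log\log\log m)$ is tiny.
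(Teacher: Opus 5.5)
Your proposal is correct and follows essentially the same route as the paper. The paper also inducts on $k$ and splits into failure of condition I, bounded via the entropy/binomial estimate $(2a_{k-1}^{\delta})^{n}$ and closed by the same comparison $d(1-\zeta)+\zeta K_2>K_2$ with $K_2=\zeta(2d\zeta-s)$, and failure of II given I, handled by a union bound over pairs of sub-boxes using a lower bound on qualifying endpoints and $2d\zeta-s\ge K_2$. One correction to your closing remark: $c$ and $\beta_0$ are uniform in $k$ and $m$ because $m_k\ge c_\zeta 2^{\zeta^{-k}}$ grows doubly exponentially and so beats both $c^{-k}$ and $\log m_k$; the smallness of $k_0$ plays no role here and only matters later, in Lemma~\ref{lemma: k_0_all_nice}.
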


\begin{proof}
    Let $H$ be an $m_k$-box and let $n$ be the number of $m_{k-1}$-boxes contained in $H$.
    Label these sub-boxes by $H_1,\dots,H_n$.
    Let $\mathcal{G}$ be the event that $H$ is nice, and let $\mathcal{C}$ be the event that at least a $(1{-}\delta)$-fraction of the $n$ sub-boxes $H_1,\dots,H_n$ are nice.
    Then
    \begin{align}
        \hat{\bP}_{m, \beta_1}(\mathcal{G}^c) \le \hat{\bP}_{m, \beta_1}(\mathcal{C}^c) + \hat{\bP}_{m, \beta_1}(\mathcal{G}^c \mid \mathcal{C}). \label{ineq: two_bad_events_for_not_nice}
    \end{align}

    We first bound the probability that at least $\delta n$ of the $m_{k-1}$-boxes are not nice.
    The number of not-nice $m_{k-1}$-boxes is stochastically dominated by a Binomial random variable $X\sim\mathrm{Bin}(n,a_{k-1})$.
    By Chernoff bound, for any $c'>0$,
    \begin{align}
        \hat{\bP}_{m,\beta_1}(\mathcal{C}^c)
        &\le \hat{\bP}_{m,\beta_1}(X \ge \delta n)
        \le e^{-c' \delta n}\big(e^{c'} a_{k-1} + (1-a_{k-1})\big)^n \nonumber \\
        &\le a_{k-1}^{\delta n}(2-a_{k-1})^n
        \le (2 a_{k-1}^{\delta})^n,
    \end{align}
    where in the second line we chose $c'$ so that $e^{-c'}=a_{k-1}$.

    By Lemma \ref{lemma: L_k asympto}, the number \(n\) of \(m_{k-1}\)-sub-boxes in an \(m_k\)-box satisfies
    \begin{align}
    \left(\frac{m_k}{2m_{k-1}}\right)^d \le n \le \left(\frac{2m_k}{m_{k-1}}\right)^d
    \;\Longrightarrow\;
    c_2\,m_k^{d(1-\zeta)} \le n \le C_2\,m_k^{d(1-\zeta)}, \label{ineq: num_of_sub-boxes_bounds}
    \end{align}
    where $c_2:=2^{-d}c_\zeta^{d}$ and $C_2:=2^{d}c_\zeta^{-d}$.
    In particular, if $2 a_{k-1}^\delta < 1$, then
    \begin{align}
    \hat{\bP}_{m, \beta_1}(\mathcal{C}^c) \le \big(2 a_{k-1}^\delta\big)^{c_2\, m_k^{d(1-\zeta)}} \le \big(2 a_{k-1}^\delta\big)^{c_3\, (2^{K_1})^{\zeta^{-k}}}, \label{ineq: first_term_recursion}
    \end{align}
    where $c_3 := c_2 c_\zeta^{d(1-\zeta)}$ and $K_1 := d(1-\zeta)$.
    In the last step we used the lower bound on $m_k$ from Lemma \ref{lemma: L_k asympto}.

    We next bound the second term in \eqref{ineq: two_bad_events_for_not_nice}.
    For $k \in \{1,\ldots,k_0\}$, conditioned on $\mathcal{C}$, at least a $(1{-}\delta)$-fraction of the $m_{k-1}$-sub-boxes are nice.
    By a union bound over ordered pairs of nice sub-boxes and using independence of long-range edges together with the upper bound
    $\exp(-\beta_1 |u-v|^{-s})$ on the edge-close probability, we get
    \begin{align}
        \hat{\bP}_{m, \beta_1}(\mathcal{G}^c \mid \mathcal{C})
        &\le 2 \binom{n}{2}\,
             \exp\!\left(
                 -\,\beta_1\,
                 \frac{\bigl((m_{k-1})^{d}(1-\delta)^{k-1}\bigr)^2}{(d \cdot 2 m_k)^s}
             \right) \nonumber \\[0.3em]
        &\le n^2 \exp\!\left(
                 -\,(2d)^{-s}\,\beta_1\,c_\zeta^{2d}\,(1-\delta)^{2(k-1)}\,m_k^{\,2d\zeta - s}
             \right) \nonumber \\[0.3em]
        &\le C_2^2\, m_k^{\,2d(1-\zeta)}\,
             \exp\!\left(
                 -\,c_4 \beta_1 \,(1-\delta)^{2(k-1)}\,m_k^{\,2d\zeta - s}
             \right), \label{ineq: second_term_recursion}
    \end{align}
    where in the second line we used $2 \binom{n}{2}\le n^2$ and Lemma \ref{lemma: L_k asympto} to replace $(m_{k-1})^{2d}$ by $c_\zeta^{2d} m_k^{2d\zeta}$, and in the third line we used $n \le C_2\, m_k^{d(1-\zeta)}$ from \eqref{ineq: num_of_sub-boxes_bounds}; we also set $c_4 := (2d)^{-s} c_\zeta^{2d}$.

    Since we have $c_\zeta 2^{\zeta^{-k}} \le m_k \le 2^{\zeta^{-k}}$, $\delta < 1/2$, and $\beta_1 \ge 100$, \eqref{ineq: second_term_recursion} is upper bounded by
    \begin{align}
        & C_2^2\, 2^{2d(1-\zeta)\zeta^{-k}}\,
             \exp\!\left(
                 -\,c_4 \beta_1 c_\zeta^{2d\zeta - s} \,(1-\delta)^{2(k-1)}\,(2^{\zeta^{-k}})^{\,2d\zeta - s}
             \right) \nonumber \\[0.3em]
        \le\;& C_3 
             \exp\!\left(
                 -\,c_5 \beta_1 \,2^{-2k}\,(2^{2d\zeta - s})^{\zeta^{-k}}
             \right) 
        \le C_3
             \exp\!\left(
                 -\,c_6 \beta_1 \, (2^{\zeta(2d\zeta - s)})^{\zeta^{-k}}
             \right), \label{ineq: second_term_recursion_simplified} 
    \end{align}
    where $c_5 := c_4 c_\zeta^{2d\zeta - s}/2$, and $c_6, C_3$ are some positive constants depending only on $\zeta, d, s$.
    By observing that $2^{\zeta^{-k}}$ grows much faster than exponential functions in $k$, one can indeed verify the existence of $c_6$ and $C_3$ on $(0, \infty)$. 
    For later use, define $K_2 := \zeta(2d\zeta - s)$.

    We claim that, by choosing $\beta_1$ sufficiently large (depending on $\zeta, d, s$), there exists a constant $c_7\in(0,1)$, depending on $\zeta, d, s$, such that for all $k \in \{0, \ldots, k_0\}$,
    \begin{align}
        a_k \le \exp\!\big(- \beta_1 \delta^{k} c_7^{k+1} (2^{K_2})^{\zeta^{-k}}\big), \label{eq: induction_goal}
    \end{align}
    which implies the statement of the lemma since $m_k \le 2^{\zeta^{-k}}$ by Lemma \ref{lemma: L_k asympto}.

    We proceed by induction on $k$.
    For $k=0$, an $m_0$-box has side-length at most $3$, hence
    $a_0 \le 9^2 \exp(- \beta_1 / 4^s)$.
    Choosing $c_7 \le 2^{-(2s+1)-K_2}$ and $\beta_1$ large enough (depending on $s$) ensures $a_0 \le \exp(- \beta_1 c_7 2^{K_2})$, establishing the base case.

    Assume \eqref{eq: induction_goal} holds at level $k-1$.
    Substituting $a_{k-1}$ into \eqref{ineq: first_term_recursion} and \eqref{ineq: second_term_recursion_simplified}, we obtain
    \begin{align}
        a_k &\le (2 a_{k-1}^\delta)^{c_3 (2^{K_1})^{\zeta^{-k}}} + C_3 \exp\!\big(- c_6 \beta_1 (2^{K_2})^{\zeta^{-k}}\big) \nonumber \\
        &\le 2^{c_3 (2^{K_1})^{\zeta^{-k}}}
            \exp\!\Big(- \, c_3 \beta_1 \delta^{k} c_7^{k}\,(2^{K_2})^{\zeta^{-(k-1)}} (2^{K_1})^{\zeta^{-k}}\Big)
            + C_3 \exp\!\big(- c_6 \beta_1 (2^{K_2})^{\zeta^{-k}}\big). \nonumber
    \end{align}
    Since $\zeta K_2 + K_1 > K_2$ for any $\zeta\in(0,1)$ and $s\in(d,2d)$, the exponential function of the first term is at most 
    $\exp\!\big(-c_3 \beta_1 \delta^k c_7^{k} (2^{K_2})^{\zeta^{-k}}\big)$.
    Also, since $K_1 < K_2$, 
    for all $\beta_1 \ge 100$ (100 can be replaced by any sufficiently large positive constant), 
    there exists $C_4=C_4(\delta, \zeta,d,s)$ such that the first term is bounded by
    $C_4 \exp\!\big(- \tfrac{c_3}{2} \beta_1 \delta^k c_7^{k} (2^{K_2})^{\zeta^{-k}}\big)$.
    We remark that $\zeta K_2 + K_1 > K_2$ can be easily verified by noticing that it is equivalent to verifying $(1 - \zeta)(d - K_2) > 0$, which holds since $K_2 \le 2d - s < d$ and $\zeta \in (0,1)$.
    Also, one can verify that $K_1 < K_2$ by examining the derivative of the function $g(\zeta) := \zeta(2d\zeta - s) - d(1 - \zeta)$ and using the facts that $s < 2d$ and $\zeta > s/2d$.
    We omit the details here.
    Thus, by choosing $c_7$ such that $c_7 \le \min\{c_3/4,\,c_6/2,1\}$, we have
    \begin{align*}
        a_k
        \le \, &C_4 \exp\!\Big(- \tfrac{c_3}{2} \beta_1 \delta^k c_7^{k} (2^{K_2})^{\zeta^{-k}}\Big)
          + C_3 \exp\!\big(- c_6 \beta_1 (2^{K_2})^{\zeta^{-k}}\big) \\
        \le \, &(C_3+C_4)\exp\!\big(- 2 \beta_1 c_7^{k+1} \delta^k (2^{K_2})^{\zeta^{-k}}\big).
    \end{align*}
    Finally, by possibly increasing $\beta_0$ to absorb the prefactor $C_3+C_4$ yields
    $a_k \le \exp\!\big(- \beta_1 \delta^k c_7^{k+1} (2^{K_2})^{\zeta^{-k}}\big)$,
    which is \eqref{eq: induction_goal}.
    The induction is complete, and the lemma follows.
\end{proof}

\begin{lemma} \label{lemma: k_0_all_nice}
    Under the same assumptions and the choice of $\beta_0$ as in Lemma \ref{lemma: recursion}, if $\mathcal{G}_{k_0}$ denotes the event that all $m_{k_0}$-boxes are nice, then $\beta_1 \ge \beta_0$ implies
    \begin{align}
        \lim_{m \to \infty} \hat{\bP}_{m,\beta_1}(\mathcal{G}_{k_0}^c) = 0.
    \end{align}
\end{lemma}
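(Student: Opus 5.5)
A union bound over all $m_{k_0}$-boxes, combined with the tail estimate of Lemma~\ref{lemma: recursion} at scale $k=k_0$, should suffice. The partition $\mathcal{P}_{m,m_{k_0}}$ has at most $(m/m_{k_0})^d \le m^d$ boxes, so
\begin{align*}
\hat{\bP}_{m,\beta_1}(\mathcal{G}_{k_0}^c) \le m^d\, a_{k_0} \le m^d \exp\left(-\beta_1 c^{k_0+1} m_{k_0}^{\zeta(2d\zeta-s)}\right).
\end{align*}
It remains to show that the exponent beats $d\log m$.

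First I will bound $m_{k_0}$ from below using the definition of $k_0$ and Lemma~\ref{lemma: L_k asympto}. Since $2^{\zeta^{-k_0}}\ge(\log m)^\eta$, and $k_0\le k_1$ for large $m$ (because $\eta<\theta$), Lemma~\ref{lemma: L_k asympto} gives $m_{k_0}\ge c_\zeta(\log m)^{\eta}$. Raising this to the power $K_2=\zeta(2d\zeta-s)$ gives
\begin{align*}
m_{k_0}^{K_2}\ge c'(\log m)^{\eta K_2},
\end{align*}
and by the choice \eqref{def: theta_eta} we have $\eta K_2=\eta\zeta(2d\zeta-s)>1$.

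Next I will control the factor $c^{k_0+1}$. Minimality of $k_0$ gives $2^{\zeta^{-(k_0-1)}}<(\log m)^\eta$, so $k_0=O(\log\log\log m)$. Hence $c^{k_0+1}\ge (\log\log m)^{-C}$ for some constant $C$, which is subpolynomial in $\log m$.

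Combining the two bounds, the exponent is at least $\beta_1 c''(\log m)^{\eta K_2}(\log\log m)^{-C}$. Since $\eta K_2>1$, this is much larger than $2d\log m$ for large $m$, so $m^d a_{k_0}\le m^{-d}\to0$.

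The main point to check is that $\eta K_2>1$ strictly, so that the $(\log\log m)^{-C}$ loss from $c^{k_0+1}$ is absorbed. This is exactly the lower constraint on $\eta$ in \eqref{def: theta_eta}. I also need to confirm that $k_0\le k_1$, so that the first regime of Lemma~\ref{lemma: L_k asympto} applies; this holds for large $m$ since $\eta<\theta$.
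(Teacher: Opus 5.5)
Your proof is correct and follows the paper's argument: a union bound over at most $m^d$ boxes, Lemma~\ref{lemma: recursion} at scale $k_0$, the lower bound $m_{k_0}\ge c_\zeta(\log m)^\eta$, and $\eta\zeta(2d\zeta-s)>1$. Your explicit bound $c^{k_0+1}\ge(\log\log m)^{-C}$, obtained from $k_0=O(\log\log\log m)$, fills in a step the paper leaves implicit.

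One small correction: $\eta<\theta$ alone does not give $k_0\le k_1$. Minimality of $k_0$ only yields $2^{\zeta^{-k_0}}<(\log m)^{\eta/\zeta}$, so what you actually need is $\eta/\zeta<\theta$. This does hold, since $\eta/\zeta<\frac{1}{2d-s}+\epsilon<\Delta<\frac{1}{2d\gamma-s}<\theta$ by \eqref{def: epsilon_gamma_zeta}--\eqref{def: theta_eta}.
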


\begin{proof}
    By Lemma \ref{lemma: recursion} and applying a union bound, we obtain
    \begin{align}
        \hat{\bP}_{m,\beta_1}(\mathcal{G}_{k_0}^c) \le m^d \exp( - \beta_1 c^{k_0+1} m_{k_0}^{\zeta(2d\zeta - s)} ). \label{ineq: upper_bound_for _k_0}
    \end{align}
By definition of $k_0$ and Lemma \ref{lemma: L_k asympto}, we have $ m_{k_0} \ge c_\zeta (\log m)^\eta $. It follows from assumption \eqref{def: theta_eta} that $ \eta (2d\zeta - s) \zeta > 1 $. Therefore, the right hand side of \eqref{ineq: upper_bound_for _k_0} goes to zero as $ m $ diverges. 
\end{proof}

\begin{lemma} \label{lemma: T_L_is_nice}
    Under the same assumptions and the choice of $\beta_0$ as in Lemma \ref{lemma: recursion}, let $\mathcal{G}$ denote the event that $\T^d_m$ is nice, i.e., the $m_{k_2}$-box is nice. Then $\beta_1 \ge \beta_0$ implies
    \begin{align}
        \lim_{m \to \infty} \hat{\bP}_{m,\beta_1}(\mathcal{G}^c) = 0.
    \end{align}
\end{lemma}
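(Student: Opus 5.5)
The plan is to condition on $\mathcal{G}_{k_0}$, the event that every $m_{k_0}$-box is nice, which by Lemma~\ref{lemma: k_0_all_nice} has probability tending to one. We then propagate niceness upward through the dense scales $k = k_0+1, \ldots, k_2$ with a union bound. At a dense scale, condition I holds automatically once every $m_{k-1}$-box is nice. It therefore suffices to show that, with high probability, condition II holds simultaneously for every $m_k$-box and every $k\in\{k_0+1,\ldots,k_2\}$. Note that $k_2 - k_0 = O(\log\log m)$, since the scales grow doubly exponentially.

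\textbf{Lower bound on good endpoints.} We need a lower bound on the number of vertices lying in sub-boxes that are nice at every scale $j\le k-1$. Unwinding the recursive definition for $j\le k_0$, a nice $m_{k_0}$-box contains at least $(1-\delta)^{k_0} m_{k_0}^d$ such vertices, up to constants $c_\zeta^d$. Every $m_{k-1}$-box with $k-1\ge k_0$ is a union of nice $m_{k_0}$-boxes on $\mathcal{G}_{k_0}$ and on the event that the intermediate scales are nice. Hence it contains at least $(1-\delta)^{k_0}|H|$ good vertices, where $H$ is the sub-box. Here $(1-\delta)^{k_0}$ is only polylogarithmically small, because $k_0 = O(\log\log\log m)$.

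\textbf{Bounding one pair.} For two sub-boxes $H, H'$ inside an $m_k$-box, all distances are at most $2d\,m_k$. By independence of the edges, the probability that no directed edge goes from a good vertex of $H$ to a good vertex of $H'$ is at most
\begin{align*}
\exp\!\Big(-\beta_1 (1-\delta)^{2k_0} c\, m_{k-1}^{2d} (2d\,m_k)^{-s}\Big).
\end{align*}
For $k\le k_1$ we have $m_{k-1}\approx m_k^{\zeta}$, so the exponent is of order $m_k^{2d\zeta-s}$. For $k>k_1$ we have $m_{k-1}\approx m_k^{\gamma}$, so it is of order $m_k^{2d\gamma-s}$; this exponent is positive because $\gamma>s/2d$.

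\textbf{Union bound.} We sum over the at most $m^{2d}$ pairs of $m_{k-1}$-boxes at each scale, then over all scales. This reduces the statement to showing that for every $k>k_0$,
\begin{align*}
m_{k-1}^{2d-2d\gamma'}\, m_k^{-s}\,(1-\delta)^{2k_0} \gg \log m,
\end{align*}
where $\gamma'$ stands for $\zeta$ or $\gamma$ as appropriate. At scales $k_0<k\le k_1$ we have $m_k \ge (\log m)^{\eta}$ with $\eta\zeta(2d\zeta-s)>1$ by \eqref{def: theta_eta}, so the exponent is $\ge (\log m)^{1+\epsilon'}$ and beats $\log(m^{2d}) = O(\log m)$. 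At scales $k>k_1$ we have $m_{k-1}\ge m_{k_1}\gtrsim(\log m)^{\theta}$ with $\theta(2d\gamma-s)>1$ by \eqref{def: theta_eta}. Hence $m_{k-1}^{2d}m_k^{-s}\gtrsim m_{k-1}^{2d-s/\gamma} = m_{k-1}^{(2d\gamma-s)/\gamma}$, which again exceeds $(\log m)^{1+\epsilon'}$. The polylogarithmic factor $(1-\delta)^{2k_0}$ and the $O(\log\log m)$ scales are absorbed.

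\textbf{Expected obstacle.} The main difficulty is the bookkeeping at the top scale $k_2$, where $m_{k_2}=m$ may be much smaller than the nominal $2^{\zeta^{-k_1}\gamma^{-(k_2-k_1)}}$. I would handle this by using only $m_{k_2}\le m_{k_2-1}^{1/\gamma}$, which holds because of the truncation, so the same exponent estimate goes through. A second point to check is the boundary boxes, whose side lengths lie in $[m_k,2m_k]$. These only change constants, by factors of $2^d$ and $2^s$.
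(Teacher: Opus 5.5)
Your proposal is correct and follows essentially the same route as the paper. Both start from $\mathcal{G}_{k_0}$ via Lemma~\ref{lemma: k_0_all_nice}, observe that condition I is automatic at the dense scales, and union-bound the failure of condition II over the $O(\log\log m)$ dense scales and at most $m^{2d}$ pairs, using the $(1-\delta)^{k_0}$-fraction of admissible endpoints and the estimate $m_{k-1}^{2d}m_k^{-s}\gtrsim(\log m)^{1+\epsilon'}$ (which is what your garbled display should read).

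One small slip: $m_{k_1}$ is only $\gtrsim(\log m)^{\theta\zeta}$, not $(\log m)^{\theta}$. This does not affect the conclusion, because $\zeta>\gamma$ makes your exponent $\theta\zeta(2d\gamma-s)/\gamma$ larger than $\theta(2d\gamma-s)>1$.
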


\begin{proof}
    Let $\mathcal{G}_k$ be the event that all $m_k$-boxes are nice. 
    By Definition \ref{def: nice_box}, we have $\mathcal{G}_{k_0} \supseteq \mathcal{G}_{k_0 + 1} \supseteq \cdots \supseteq \mathcal{G}_{k_2} = \mathcal{G}$. Therefore, we can write
    \begin{align}
        \hat{\bP}_{m, \beta_1}\left(\mathcal{G}^c \right) \le \hat{\bP}_{m, \beta_1}(\mathcal{G}_{k_0}^c) + \sum_{k = k_0}^{k_2 - 1} \hat{\bP}_{m, \beta_1}(\mathcal{G}^c_{k+1} | \mathcal{G}_k).
    \end{align}
    By Lemma \ref{lemma: k_0_all_nice}, we know that the first term approaches $0$ as $m$ tends to infinity. For $k \in \{k_0, \ldots, k_2 - 1 \}$, we have 
    \begin{align}
        \hat{\bP}_{m, \beta_1}(\mathcal{G}^c_{k+1} | \mathcal{G}_k) \le m^{2d} \cdot 2 \cdot \exp\left(- \beta_1 \frac{m_{k}^{2d}}{(d \cdot 2m_{k+1})^s} (1 - \delta)^{k_0}\right),
    \end{align}
    where $m^{2d}$ is an upper bound for the number of ordered pairs of $m_{k+1}$-boxes in $\T^d_m$ and $2$ accounts for the requirement of bidirectional connections. Moreover, the exponential term bounds above the probability that there is no desired directed edge from $B_1$ to $B_2$, where $B_1$ and $B_2$ are two arbitrary distinct $m_k$-boxes contained inside an $m_{k+1}$-box $B$. Indeed, $d \cdot 2 m_{k+1}$ is an upper bound for the $\ell_1$-distance between any two vertices in $B$ and $(m_k)^d (1 {-} \delta)^{k_0}$ is a lower bound for the number of vertices that are qualified to serve as endpoints of the directed connection in either $B_1$ or $B_2$. 

    By invoking Lemma \ref{lemma: k_0_all_nice}, we know that:
    \begin{align*}
        m_k &\ge \begin{cases}
            \, c_\zeta \, 2^{\zeta^{-k}}, & k = k_0, \ldots, k_1, \\[0.2em]
            \, c_\gamma \, 2^{\zeta^{-k_1} \gamma^{-(k - k_1)}}, & k = k_1 + 1, \ldots, k_2-1,
        \end{cases} 
        \\
        m_{k+1} &\le \begin{cases}
            \, 2^{\zeta^{-(k+1)}}, & k = k_0, \ldots, k_1 - 1, \\[0.2em]
            \, 2^{\zeta^{-k_1} \gamma^{-(k - k_1 + 1)}}, & k = k_1, \ldots, k_2 - 2. \\
        \end{cases}
    \end{align*}
    Let $c := c_\zeta \vee c_\gamma.$ It then follows that $m_k^{2d}/m_{k+1}^s$ is at least 
    \begin{align*}
        \begin{cases}
            \, c^{2d} \, \left( 2^{\zeta^{-(k+1)}} \right)^{2d\zeta - s} \ge c^{2d} \cdot (\log m)^{\eta (2d\zeta - s)} \cdot 2^{\zeta^{-(k - k_0 + 1)}}, & k = k_0, \ldots, k_1 - 1, \\[0.2em]
            \, c^{2d} \, \left( 2^{\zeta^{-k_1} \gamma^{-(k - k_1 + 1)}} \right)^{2d\gamma - s} \ge c^{2d} \cdot (\log m)^{\theta (2d \gamma - s)} \cdot 2^{\gamma^{-(k - k_1)}}, & k = k_1, \ldots, k_2 - 1.
        \end{cases}
    \end{align*}
    Since we have $\min \{ (2d\zeta - s) \eta, (2d\gamma - s) \theta \} > 1$, it follows that $m_k^{2d} / m_{k+1}^s \ge c^{2d} (\log m)^{1 + \epsilon'}$ for some small $\epsilon' \in (0, 1)$ and for all $k \in \{ k_0, \ldots, k_2 - 1 \}$. On the other hand, since $k_2 \asymp \log \log m$ and $k_0 \asymp \log \log \log m$, we have
    \begin{align*}
        \lim_{m \to \infty} \sum_{k = k_0}^{k_2 - 1} \hat{\bP}_{m, \beta_1}(\mathcal{G}^c_{k+1} | \mathcal{G}_k) &\le \lim_{m \to \infty} 2 k_2 m^{2d} \cdot \exp\left(- \beta_1 c^{2d} (1 - \delta)^{k_0} (\log m)^{1 + \epsilon'} \right) = 0,
    \end{align*}
    which completes the proof.
\end{proof}

\begin{proof}[Proof of Proposition~\ref{prop: S_emerge}]
    We condition on the occurrence of the event $\mathcal{G}$ defined in Lemma \ref{lemma: T_L_is_nice}.
    Let $A$ denote the set of vertices that are contained in nice $m_k$-boxes at all scales, i.e., for all $k \in \{ 0, \ldots, k_2 \}$. 
    We shall analyze both the diameter of the (long-range) percolation cluster induced by $A$ and the density of $A$ inside $\T^d_m$.

    \begin{figure}
    \centering
    \begin{tikzpicture}[
        scale=0.9,
        every node/.style={font=\large},
        box/.style={dashed, line width=0.8pt},
        point/.style={circle, fill=black, inner sep=1.8pt},
        directed/.style={-{Latex[length=2.5mm]}, line width=0.9pt},
        wavy/.style={
            directed,
            decorate,
            decoration={snake, amplitude=0.6mm, segment length=4mm}
        }
    ]
    
    % Outer box B_{k+1}
    \draw[box, green!50!black] (0,0) rectangle (12,5.5);
    \node[green!50!black, anchor=south west, font=\Large] at (0.15,4.40) {$B_{k+1}$};
    
    % Left subbox B_k^x
    \draw[box, blue] (0.55,0.55) rectangle (4.1,4);
    \node[blue, anchor=south west, font=\Large] at (0.7,3.00) {$B_k^x$};
    
    % Right subbox B_k^y
    \draw[box, purple] (7.9,0.55) rectangle (11.45,4);
    \node[purple, anchor=south east, font=\Large] at (11.3,3.00) {$B_k^y$};
    
    % Points
    \node[point, label=below:{$x$}] (x) at (1.05,1.25) {};
    \node[point, label=above:{$\hat{x}$}] (xh) at (3.45,3.15) {};
    
    \node[point, label=above:{$\hat{y}$}] (yh) at (8.55,3.15) {};
    \node[point, label=below:{$y$}] (y) at (10.95,1.25) {};
    
    % Inductive paths inside the two m_k-boxes
    \draw[wavy, blue] (x) to[out=20,in=210] (xh);
    \draw[wavy, purple] (yh) to[out=-30,in=160] (y);
    
    % Connecting directed edge
    \draw[directed] (xh) -- (yh);
    \node[above] at ($(xh)!0.5!(yh)$) {$1$};
    
    % Length annotations
    \node[blue] at (2.65,1.18) {$\leq 2^{k+1}-1$};
    \node[purple] at (9.25,1.18) {$\leq 2^{k+1}-1$};
    
    % Bottom formula
    % \node[font=\Large] at (6,-0.75)
    % {
    %     $(2^{k+1}-1)+1+(2^{k+1}-1)=2^{k+2}-1$
    % };
    
    \end{tikzpicture}
    \caption{The induction step for constructing a directed path from $x$ to $y$ inside a nice $m_{k+1}$-box $B_{k+1}$. The blue (resp. red) directed wavy arrow from $x$ to $\hat{x}$ (resp. $y$ to $\hat{y}$) is a path consisting of at most $2^{k+1}{-}1$ open direct long-range edges. The black arrow from $\hat{x}$ to $\hat{y}$ is a single open directed long-range edge. In total, the figure illustrates an open path of length at most $2^{k+2}{-}1$ from $x$ to $y$ within $B_{k+1}$.}
    \label{fig:multiscale-directed-path}
    \end{figure}

    We first focus on the diameter. 
    Our first goal is to construct a directed path of length at most $2^{k_2 + 1} {-} 1$ from $x$ to $y$ for any ordered pair of distinct vertices $x, y \in A$.
    We proceed by induction on the scale index $k$.
    Consider a $m_k$-box $B_k$ and let $A(B_k)$ be the subset of $B_k$ whose elements are vertices
    contained in nice boxes at all scales less than or equal to $k$. 
    In particular, if $B_k$ is not nice, then $A(B_k)$ is empty. 
    When there is no ambiguity, we simply write $A_k$ for $A(B_k)$.
    We will show by induction that for any $k = 0, \ldots, k_2$, any $m_k$-box $B_k$, and any two distinct vertices $x, y \in A_k$, 
    there exists a directed path with length at most $2^{k+1} {-} 1$ connecting $x$ to $y$:
    \begin{itemize}[left=15pt, itemsep=0.3em, topsep=1em, rightmargin=25pt]
    \item For the base case, according to Definition \ref{def: nice_box}, 
    if $B_0$ is a nice $m_0$-box, then for any distinct $x, y \in B_0$, 
    there exists a directed edge (a path of length 1) connecting them, which proves the base case. 
    \item Let $k$ be such that $0 \le k < k_2$ and assume that for every nice $m_k$-box $B_k$ and every distinct $x, y \in A_k$, there exists a directed path with length at most $2^{k+1} - 1$ connecting $x$ to $y$. 
    Consider a nice $m_{k+1}$-box $B_{k+1}$. 
    For $x, y \in A_{k+1}$, let $B^x_k$ (resp. $B^y_k$) be the $m_k$-box containing $x$ (resp. $y$). 
    Since $x, y \in A_{k+1}$, both $B^x_k$ and $B^y_k$ are nice. 
    If $B^x_k = B^y_k$, then we are done by the induction hypothesis. If $B^x_k \neq B^y_k$, then by Definition~\ref{def: nice_box}, there exist vertices $\hat{x} \in A(B_k^x)$ and $\hat{y} \in A(B_k^y)$ such that the directed edge $(\hat{x}, \hat{y})$ is present. Since both $B_k^x$ and $B_{k+1}$ are nice, we have $A(B^x_k) = A_{k+1} \cap B_k^x$. By the induction hypothesis, there exists a directed path of length at most $2^{k+1} {-} 1$ connecting $x$ to $\hat{x}$.
    Likewise, there is another directed path of length at most $2^{k + 1} {-} 1$ connecting $\hat{y}$ to $y$. 
    Together with the directed edge $(\hat{x}, \hat{y})$, we conclude that $x$ and $y$ are connected by a directed path of length at most $2^{k + 2} {-} 1$. 
    It completes the induction step.
    \end{itemize}
    
    We now bound the quantity $2^{k_2 + 1} - 1$.
    It follows from the definitions of $k_1$ and $k_2$ that 
    \begin{align*}
        2^{k_2 - k_1} = \left(\gamma^{-(k_2-k_1)}\right)^{\log_{\gamma^{-1}} 2} = \left( \frac{\zeta^{-k_1} \gamma^{-(k_2 - k_1)}}{\zeta^{-k_1}} \right)^{\log_{\gamma^{-1}} 2} \le \left( \frac{1}{ \theta \zeta \gamma } \cdot \frac{\log m }{\log \log m} \right)^{\log_{\gamma^{-1}} 2}.
    \end{align*}
    Indeed, the definitions give that $2^{\zeta^{-k_1} \gamma^{-(k_2 - k_1 - 1)}} \le m$ and $2^{\zeta^{-(k_1+1)}} \ge (\log m)^\theta$.
    On the other hand, we have $2^{\zeta^{-k_1}} \le (\log m)^\theta$, which implies that
    \begin{align*}
        2^{k_1} = \left( \zeta^{-k_1} \right)^{\log_{\zeta^{-1}} 2} \le (\theta \log_2 \log m)^{\log_{\zeta^{-1}} 2}.
    \end{align*}
    Since $\gamma$ is chosen such that
    \begin{align}
        \log_{\gamma^{-1}} 2 < \Delta + \epsilon, \nonumber
    \end{align}
    it is straightforward to verify that $2^{k_2 + 1} < (\log m)^{\Delta + \epsilon}$ for sufficiently large $m$.

    Next, we analyze the density of $A$ inside $\T^d_m$. On the event $\mathcal{G}$, by Definition~\ref{def: nice_box}, every $m_k$-box is nice for all $k \in \{k_0, \ldots, k_2\}$. 
    Therefore, for every $k_0$-box $B_{k_0}$, on the event $\mathcal{G}$, we must have $A \cap B_{k_0} = A_{k_0}$, where $A_{k_0}$ denotes the set of vertices in $B_{k_0}$ that are contained in nice boxes at all scales up to $k_0$. By the definition of nice boxes from scale $0$ to scale $k_0$, we know that at each scale $k \in \{1, \ldots, k_0\}$, at most a $\delta$-fraction of the $m_{k-1}$-sub-boxes are not nice. It then follows that 
    \begin{align*}
        \frac{|A \cap B_{k_0}|}{|B_{k_0}|} = \frac{|A_{k_0}|}{|B_{k_0}|} \ge (1 - \delta)^{k_0}.
    \end{align*}
    Note that by the definition of $k_0$ in \eqref{def: k_0}, we have 
    $m_{k_0} \le (\log m)^{\eta \zeta^{-1}}.$
    It then follows from \eqref{def: theta_eta} that $\eta \zeta^{-1} < 1/(2d - s) + \epsilon.$ 
    Therefore, $m_{k_0} \ll (\log m)^{1/(2d - s) + \epsilon}$ when $m$ goes to infinity, where we write $f(m) \ll g(m)$ if $\lim_{m \to \infty} f(m)/g(m) = 0$.
    Moreover, the definition of $k_0$ also implies that
    \begin{align*}
        2^{\zeta^{-(k_0 - 1)}} < (\log m)^{\eta} \; \Rightarrow \; \zeta^{- k_0} < \frac{\eta}{\zeta \, \log 2} \cdot \log \log m,
    \end{align*}
    which further implies that, for sufficiently large $m$,
    \begin{align*}
        (1 - \delta)^{k_0} = \left(\zeta^{- k_0}\right)^{- \log_{\zeta} (1 - \delta)} > \left( \log \log m \right)^{- 2 \log_{\zeta} (1 - \delta)}.
    \end{align*}
    
    Let $C := 2 \log_{\zeta} (1 - \delta)$.
    Combining the two facts that (1) $m_{k_0} \ll (\log m)^{1/(2d - s) + \epsilon}$ and (2) $(1 - \delta)^{k_0} > (\log \log m)^{-C}$ shows that $A$ is \textup{Hom$\left((\log m)^{\frac{1}{2d - s} + \epsilon}, \rho_m\right)$} with $\rho_m = (\log \log m)^{-C}$. 
\end{proof}

\subsection{Upper Bounds on Activation Time Under Finite Lifespan} \label{Sec: upperbound_finite_lifespan}

In this section, we prove Theorem~\ref{Mainthm: activate_constant_fraction}, which shows that a ``large'' fraction of vertices in $\T^d_L$ are activated within polylogarithmic time when the particle lifespan is sufficiently large for $\alpha \in (0, d)$, and within linear time when $\alpha \ge d$.
We state a stronger version of Theorem~\ref{Mainthm: activate_constant_fraction} in both regimes $\alpha \in (0, d)$ and $\alpha \ge d$; see Proposition~\ref{prop: activate_constant_fraction} and \ref{prop: activate_constant_fraction_alpha>=d}. The proof of Proposition~\ref{prop: activate_constant_fraction} relies on both Section~\ref{Sec: BernPerc} and \ref{Sec: Backbone}, while the proof of Proposition~\ref{prop: activate_constant_fraction_alpha>=d} relies only on Section~\ref{Sec: BernPerc}.

Recall that $\ell_o$ denotes the lifespan of the planted particle at the origin $o$. In the sequel, we will take $\ell_o$ to depend on the side length $L$ of the torus. We write $\ell_o \gg 1$ to indicate that $\ell_o \to \infty$ as $L \to \infty$.
\begin{proposition} \label{prop: activate_constant_fraction}
    Let $\lambda>0$, $d \ge 2$, and $\alpha \in (0, d)$.
    Let $Q \in \mathcal{H}_{\alpha, K}$ and let $Q_L$ be defined as in \eqref{def: Q_L}. 
    Assume that $\ell_o \gg 1$.
    For any $\rho \in (0, 1)$ and $\epsilon > 0$, there exist positive $C_1$ and $N_0$ such that for any particle lifespan $\ell \ge N_0$, we have 
    \begin{align}
        \lim_{L \rightarrow \infty} \bP^+_L \left( \,
        \mathcal{A}\left((\log L)^{\Delta + \epsilon}\right) \textup{ is } \textup{Hom$\left(\left\lfloor C_1(\log L)^{\frac{1}{d-1}} \right\rfloor, \rho\right)$} \, 
        \right) = 1.
    \end{align}
\end{proposition}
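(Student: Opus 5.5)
We combine the renormalization of Section~\ref{Sec: BernPerc} with the long-range percolation of Section~\ref{Sec: Backbone}, using three independent Poisson-thinned parts of the particles: $\cW^{\mathrm{int}}$ and $\cW^{\mathrm{nbhd}}$ (density $\lambda/3$ each) for the local structure, and $\cW^{\mathrm{lr}}$ (density $\lambda/3$) for the first-step long jumps. The planted particle, whose lifespan $\ell_o\gg1$, is only used to reach one "backbone" vertex.

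\textbf{Step 1: choosing the scale.} Fix $r\ge R_0$ large and set $N_0:=\ell(r)$. We use $\ell\ge N_0$; longer lifespans only enlarge ranges, by monotonicity. Partition $\T^d_L$ by $\mathcal{P}_{L,r}$ into boxes indexed by $\T^d_m$, $m=\lfloor L/r\rfloor$.

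Call $v\in\T^d_m$ \emph{available} if two conditions hold. First, $|\mathrm{Good}_{B_v}|\ge r^d/(C_1\log r)$, computed with $\cW^{\mathrm{int}}$. Second, $\widehat{B}_v\subseteq\mathcal{R}_{D,\lambda/3}(\ell(r))$ for every $D\subseteq B_v$ with $|D|\ge|B_v|/4$, computed with $\cW^{\mathrm{nbhd}}$.

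By Corollary~\ref{cor: almost_constant_fraction_good_vertex}, the first condition fails with probability at most $C_2r^{-d}$. For the second, a union bound over $D$ is too costly, so we apply Proposition~\ref{prop: BernPerc_internal_2} only to the specific sets $D=\widehat{\mathcal{A}}_x$ for $x\in \mathrm{Good}_{B_v}$. These sets are determined by $\cW^{\mathrm{int}}$ and are therefore independent of $\cW^{\mathrm{nbhd}}$. A union bound over at most $(2r)^d$ good $x$ then gives failure probability $\le (2r)^dCe^{-c\lambda f(r)}$. Hence $p(r)\to0$.

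Availability of $v$ depends only on particles started in $B_v$. Since $\cW^{\mathrm{lr}}$ is independent, the events $\{X_v\}$ are independent across $v$. Take $G_v=\mathrm{Good}_{B_v}$ and $g(r)=C_1\log r$. Lemma~\ref{lemma: long-range-percolation-coupling} then gives an independent directed long-range percolation with $\beta_1(r)\to\infty$, so we may assume $\beta_1>\beta_0$.

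\textbf{Step 2: translating edges into frog activations.} Suppose $u\Rightarrow v$ and some vertex of $G_u$ is activated. That vertex is good, so the chain confined to $B_u$ activates a set $D\subseteq B_u$ with $|D|\ge|B_u|/4$. By the second availability condition, the particles of $D$ activate all of $\widehat B_u\supseteq B_u$. In particular the $\cW^{\mathrm{lr}}$ particle at $x\in B_u$ is woken, and it jumps in its first step into $G_v$.

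Each of these stages uses chains of length at most $|B_u|\le(2r)^d$, each link of duration at most $\ell(r)$. So one edge costs time at most $T_r:=2(2r)^d\ell(r)+1$, a constant. By Proposition~\ref{prop: S_emerge} (note $s=\alpha+d\in(d,2d)$), with high probability there is a set $A$ that is QC$(\Delta+\epsilon/2)$ and Hom$(\lfloor(\log m)^{1/(2d-s)+\epsilon}\rfloor,\rho_m)$. The construction also places $A$ inside available boxes. Once one box of $A$ has an activated good vertex, every box $B_w$ with $w\in A$ is fully activated within time $T_r(\log m)^{\Delta+\epsilon/2}+T_r\le(\log L)^{\Delta+\epsilon}$.

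To start the process, note that the planted particle with $\ell_o\to\infty$ visits, with probability tending to $1$, some good vertex of some box of $A$. This holds because $A$ has density at least $\rho_m$ and good vertices have density at least $1/(C_1\log r)$, so a long walk hits one. The cost is time $\ell_o$; we may assume $\ell_o\le(\log L)^{\Delta}$ after truncating, which is harmless.

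\textbf{Step 3: homogeneity and the main obstacle.} Every box indexed by $A$ is fully activated, so we still need Hom with a \emph{constant} $\rho$ at scale $(\log L)^{1/(d-1)}$. Proposition~\ref{prop: S_emerge} only gives density $\rho_m\to0$, so something more is needed, and this is the main obstacle.

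The plan is to use the nearest-neighbor spreading in the second availability condition. A fully activated available box activates all $2d$ neighbouring boxes, and those neighbours that are available continue spreading. Hence all boxes in the cluster of available sites (site percolation with $p(r)$ close to $1$) that contains a box of $A$ are activated. The extra time is $T_r$ times the chemical distance in that percolation cluster.

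By the Hom property of $A$, every window of side $(\log m)^{1/(2d-s)+\epsilon}$ contains a vertex of $A$. Since $1/(2d-s)<1/(d-1)$ can be arranged when $\epsilon$ is small and $s>d$, we have $(\log m)^{1/(2d-s)+\epsilon}\ll(\log L)^{1/(d-1)}$. We then need two facts about highly supercritical site percolation on $\T^d_m$, with density $1-p(r)$ close to $1$. First, every box of side $C_1(\log L)^{1/(d-1)}$ has at least a $\rho'$ fraction of its sites in the giant cluster, with $\rho'\to1$ as $p(r)\to0$. Second, each such site is within chemical distance $O((\log L)^{1/(d-1)})$ of the nearby $A$-vertex.

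The exponent $1/(d-1)$ comes from a union bound over $L^d$ windows against the cost $e^{-c n^{d-1}}$ of a separating surface. The added time $O((\log L)^{1/(d-1)})$ is at most $(\log L)^{\Delta+\epsilon}$, because $\Delta>1/(2d-s)$ and $1/(d-1)\le\Delta$ hold, possibly after comparing exponents. I expect this comparison of exponents, together with the surface-counting estimate giving a uniform $\rho'\ge\rho$ across all windows, to be the most delicate part. The final step is to choose $r$, and hence $N_0$, so large that $(1-\rho')$ plus the density loss from boundary boxes is less than $1-\rho$.
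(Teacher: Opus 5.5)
\textbf{There is a genuine gap.} Your overall architecture matches the paper's: scale-$r$ boxes that carry many good vertices and cover their $2d$ neighbours, the long-range percolation of Lemma~\ref{lemma: long-range-percolation-coupling} fed into Proposition~\ref{prop: S_emerge}, and a supercritical site percolation used to upgrade the density $\rho_m\to0$ to a constant $\rho$. The gap is in how you start the process. You claim the planted particle hits, with probability tending to one, a good vertex in a box of $A$, ``because $A$ has density at least $\rho_m$''. Nothing you have established supports this. First, $\ell_o$ is only assumed to diverge, possibly arbitrarily slowly, so the planted walk meets only a slowly growing number of boxes near the origin. Second, the only information about $A$ is density $\rho_m=(\log\log m)^{-C}\to0$ in windows whose side $(\log m)^{1/(2d-s)+\epsilon}$ diverges. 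A density argument of this kind gives no lower bound tending to one. Moreover, entering a box of $A$ is not the same as hitting a good vertex in it.

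The paper uses the opposite order. The planted particle is sent into the giant cluster $\mathrm{GC}$ of the auxiliary site percolation, which has density close to one:
\begin{itemize}
\item Lemma~\ref{lemma: planted_to_fantastic} gives a diverging number of fantastic boxes.
\item In a fantastic box the planted particle visits enough vertices to hit a good one; this uses Proposition~\ref{prop: BernPerc_internal_1} and is built into the definition of ``open''.
\item Part (ii) of Theorem~\ref{thm: percolation_reference} shows that this diverging set meets $\mathrm{GC}$ with high probability.
\end{itemize}
Only then does activation travel through $\mathrm{GC}$ to $S$, along $S$ by long-range edges, and back out through $\mathrm{GC}$.

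This ordering needs two ingredients that your Step~3 takes for granted.
\begin{itemize}
\item \emph{The ``nearby $A$-vertex'' must lie in $\mathrm{GC}$.} Being available does not put an $A$-vertex in the giant cluster, and nearest-neighbour spreading from an $A$-box sitting in a small finite cluster achieves nothing. The paper proves (event $\mathcal{E}_2$) that $S\cap\mathrm{GC}$ meets every box of side $\lceil(\log m)^{\Delta}\rceil$. It applies part (ii) of Theorem~\ref{thm: percolation_reference} with $|U|=|S\cap B|\ge\rho_m(\log m)^{d\Delta}$ and uses $\Delta(d-1)>1$.
\item \emph{Chemical distances in $\mathrm{GC}$ must be bounded uniformly over all windows.} This is part (iii) of Theorem~\ref{thm: percolation_reference}, used at scale $(\log m)^{\Delta}$, and it needs a scale exponent larger than $1$. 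The available estimate (Theorem~\ref{Thm: Perc_Distance}) has tails $e^{-cn}$. These do not survive a union bound over $m^d$ windows when $n=(\log m)^{1/(d-1)}$ and $d\ge3$.
\end{itemize}

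Your comparison of scales is also wrong. Since $2d-s=d-\alpha$, the inequality $1/(2d-s)<1/(d-1)$ holds only for $\alpha<1$. The comparison you actually need is $\Delta>\max\{1,1/(2d-s)\}$. This lets you do all the connecting at scale $(\log m)^{\Delta}$ within the time budget, and reserve the scale $(\log L)^{1/(d-1)}$ solely for the density of $\mathrm{GC}$ via part (i).
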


\begin{proposition} \label{prop: activate_constant_fraction_alpha>=d}
    Let $\lambda>0$, $d \ge 2$, and $\alpha \ge d$.
    Let $Q \in \mathcal{H}_{\alpha, K}$ and let $Q_L$ be defined as in \eqref{def: Q_L}. 
    Assume that $\ell_o \gg 1$.
    For any $\rho \in (0, 1)$, there exist positive $C_1, C_2,$ and $N_0$ such that for any particle lifespan $\ell \ge N_0$, we have 
    \begin{align}
        \lim_{L \rightarrow \infty} \bP^+_L \left( \,
            \mathcal{A}\left(C_2 L\right) \textup{ is } \textup{Hom$\left(\left\lfloor C_1 (\log L)^{\frac{1}{d-1}} \right\rfloor, \rho\right)$} \, 
        \right) = 1.
    \end{align}
\end{proposition}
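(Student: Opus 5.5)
We will prove Proposition~\ref{prop: activate_constant_fraction_alpha>=d} by renormalization: we dominate a supercritical Bernoulli site percolation on the renormalized torus $\T^d_m$, with $m=\lfloor L/r\rfloor$, by the frog model, using only Section~\ref{Sec: BernPerc}. Fix a large scale $r\ge R_0\vee R$ and take $N_0=\ell(r)$. Since $\ell\ge\ell(r)$, a lifespan-$\ell$ particle visits at least the sites of its first $\ell(r)$ steps, so by monotonicity we may take $\ell=\ell(r)$. Split the particles by Poisson thinning into $\cW^{\mathrm{int}}$ and $\cW^{\mathrm{nbhd}}$, each of intensity $\lambda/2$.

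We call a renormalized site $v\in\T^d_m$ \emph{open} if two things hold. First, using $\cW^{\mathrm{int}}_{B_v}$, the box $B_v$ satisfies $|\mathrm{Good}_{B_v}|\ge r^d/(C_1\log r)$; by Corollary~\ref{cor: almost_constant_fraction_good_vertex} this fails with probability at most $C_2r^{-d}$. In particular, activating any good vertex activates at least $|B_v|/4$ sites of $B_v$ within time $|B_v|\ell(r)$. Second, for every $D\subseteq B_v$ with $|D|\ge|B_v|/4$ that can arise as $\widehat{\mathcal A}_x$ for a good $x$, the $\cW^{\mathrm{nbhd}}$-particles in $D$ cover $\widehat B_v$ within $\ell(r)$ steps. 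We do not union-bound over all $D$. Instead we condition on the $\cW^{\mathrm{int}}$-configuration, which determines the at most $r^d$ sets $\widehat{\mathcal A}_x$, and apply Proposition~\ref{prop: BernPerc_internal_2} to each of them. Since $\cW^{\mathrm{nbhd}}$ is independent of $\cW^{\mathrm{int}}$, this costs $r^dC e^{-c\lambda f(r)/2}$, which tends to $0$ as $r\to\infty$ because $f(r)\ge r^{2-o(1)}\gg\log r$.

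Openness of $v$ depends only on particles started in $B_v$, so sites are independent and open with probability $1-p(r)$, where $p(r)\to0$. Choose $r$ so that $1-p(r)$ exceeds what standard supercritical site-percolation estimates on $\T^d_m$ (valid for $d\ge2$) require: with high probability there is a giant cluster $\mathcal C$ that meets every sub-box of side $C_1'(\log m)^{1/(d-1)}$ in at least a $\rho$-fraction of its sites. This follows from Peierls/Deuschel--Pisztora-type bounds, since closed $*$-connected sets of size $n$ have probability at most $e^{-cn}$ when $p$ is small. These bounds also give that the chemical distance in $\mathcal C$ between any two sites is at most $C''m$.

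The propagation step is the following. If $v\in\mathcal C$ has some vertex of $B_v$ activated, then all of $\widehat B_v$ is activated; in particular every neighboring open box $B_w$ is fully activated, and hence so are its good vertices. Each such step takes time at most $2|B_v|\ell(r)=O_r(1)$. Following a chemical path of length $\le C''m$ therefore activates all boxes of $\mathcal C$ by time $O_r(m)=O_r(L/r)$, and the set activated by then contains $\bigcup_{v\in\mathcal C}B_v$. That set inherits Hom$(\lfloor C_1(\log L)^{1/(d-1)}\rfloor,\rho')$ after adjusting constants, with $\rho'$ as close to $1$ as we like; this covers any given $\rho$.

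The main obstacle is initiating: the planted particle must reach a box in $\mathcal C$. Since $\ell_o\to\infty$ and $|\mathcal C|\ge(1-o(1))m^d$ (the box of the origin is open with probability $1-p(r)$, but this need not hold w.h.p.), we argue that the planted walk's first $\ell_o$ positions land in $\ell_o\to\infty$ renormalized boxes. It visits at least $\ell_o^{c}$ distinct boxes, which is easy for heavy-tailed jumps, and each is in $\mathcal C$ with probability close to $1$ given its location. Because openness of visited boxes is determined by particles other than the planted one, the walk is independent of $\mathcal C$, so it hits $\mathcal C$ w.h.p. within $\ell_o\wedge CL$ steps; if needed we use only the first $\min(\ell_o,L)$ steps. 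Combining these steps gives the claim with $C_2$ depending on $r$.
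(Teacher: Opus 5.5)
Your renormalization scheme matches the paper's: openness via Corollary~\ref{cor: almost_constant_fraction_good_vertex} and Proposition~\ref{prop: BernPerc_internal_2}, then giant-cluster density for the homogeneity and $O(m)$ chemical distance for the time bound. The initiation step, however, has a genuine gap. Your claim ``if $v\in\mathcal C$ has some vertex of $B_v$ activated, then all of $\widehat B_v$ is activated'' is false for your notion of openness. Spreading is guaranteed only from a \emph{good} vertex, and condition (1) guarantees only $r^d/(C_1\log r)$ good vertices, a vanishing fraction of $B_v$. If the planted particle enters $B_v$ at a non-good vertex $y$, then $\widehat{\mathcal A}_y$ may consist of a handful of sites and nothing propagates. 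Box-to-box propagation is fine, since a neighbouring open box is covered entirely and so its good vertices are activated. But ``the planted walk reaches a box of $\mathcal C$'' does not start the process. Your openness can be box-local and independent of the planted walk precisely because it ignores which sites of $B_v$ the planted walk touches, and that is exactly the information you need.

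The missing idea is the paper's fantastic boxes (Definition~\ref{def: fantastic_vertex}): boxes in which the planted walk visits at least $\phi(s(r))$ distinct sites. The planted trajectory is independent of the other frogs. So Proposition~\ref{prop: BernPerc_internal_1}, applied with $A$ equal to the planted range inside $B_v$, shows that the planted walk hits a good vertex there except with probability $Ce^{-c\lambda\phi(s(r))}$. The paper builds this requirement into the definition of ``open'' for fantastic boxes. Conditionally on the planted trajectory, openness then still dominates Bernoulli$(p)$ site percolation with $p$ close to $1$. Lemma~\ref{lemma: planted_to_fantastic} gives $|\mathrm{Fant}|\to\infty$: fresh boxes are fantastic with probability bounded below, by Lemma~\ref{lemma: range_estimates_main_text} and the strong Markov property. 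Theorem~\ref{thm: percolation_reference}(ii) then gives $\mathrm{Fant}\cap\mathrm{GC}\neq\varnothing$ with probability tending to $1$.

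That last citation also fills a second, smaller hole in your sketch. With $r$ fixed, a given box lies in $\mathcal C$ only with probability $1-\eta(r)$ for some fixed $\eta(r)>0$. So ``many visited boxes, each likely in $\mathcal C$'' yields probability tending to $1$ only through a decorrelation bound such as $\mathbb P(U\cap\mathrm{GC}=\varnothing)\le e^{-\beta|U|^{(d-1)/d}}+e^{-\beta m}$. You cannot instead let $r$ depend on the target error, since $C_1,C_2,N_0$ must be fixed before $L\to\infty$.
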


Let $\T^d_m$ be the renormalized torus at scale-0 defined as in \eqref{def: m}-\eqref{def: partition_const_level} for some sufficiently large $r \le L$ to be determined later, where each vertex $v \in \T^d_m$ is associated with a box $B_v$ of side-length between $r$ and $2r$ as in \eqref{def: box_const_level}. 

\begin{definition}[Fantastic vertex] \label{def: fantastic_vertex}
We say $v \in \T^d_m$ is $\mathbf{fantastic}$ if the planted particle $w_0^o$ visits at least $\phi(s)$ number of vertices in $B_v$, where
    \begin{align}
        \phi(s) = \begin{cases}
            \, \frac{c_1 s}{2}, &\textup{if $d \ge 3$ or $(d = 2$ and $\alpha > 2)$,} \\[0.2em]
            \, \frac{c_1 s}{2 \log \log s}, &\textup{if $d = 2$ and $\alpha = 2$,} \\[0.2em]
            \, \frac{c_1 s}{2 \log s}, &\textup{if $d = 2$ and $\alpha < 2$,}
        \end{cases} \quad
        s = \begin{cases}
        \, r^2, &\textup{if } \alpha > 2, \\[0.2em]
        \, \frac{r^2}{\log r}, &\textup{if } \alpha = 2, \\[0.2em]
        \, r^\alpha, &\textup{if } \alpha < 2,
    \end{cases} 
    \end{align}
    and $c_1$ is the same constant in Lemma~\ref{lemma: range_estimates_main_text}. Let $\mathrm{Fant}$ denote the set of all $\mathrm{fantastic}$ vertices.
\end{definition}

The next lemma shows that when $\ell_o \gg 1$, with high probability the number of fantastic vertices diverges as $L \to \infty$.

\begin{lemma} \label{lemma: planted_to_fantastic}
    Let $\lambda > 0$, $d \ge 2$, and $\alpha > 0$.
    If $\ell_o \gg 1$, then for any $C > 0$, we have
    \begin{align}
    	\lim_{L \rightarrow \infty} \bP_L^+\left( \, \left|\mathrm{Fant}\right| > C \, \right) = 1.
    \end{align}
\end{lemma}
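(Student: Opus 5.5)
The plan is to split the long trajectory of the planted particle $w_0^o$ into many disjoint time windows of length $s$, where $s$ is as in Definition~\ref{def: fantastic_vertex} (essentially $\ell(r)$). We then show that each window independently has a chance bounded below of producing a new fantastic vertex. Since $\ell_o\gg1$ while $r$ is fixed, the number of windows $N:=\lfloor \ell_o/s\rfloor$ tends to infinity as $L\to\infty$.

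First, fix the window index $j\le N$ and condition on the walk up to time $js$; let $x_j:=w_0^o(js)$ and let $v_j\in\T^d_m$ be the index of the box with $x_j\in B_{v_j}$. Apply Lemma~\ref{lemma: range_estimates_main_text} with $H(r)=B_{v_j}$ and $D=\emptyset$ (lifted to $\Z^d$ and projected, which only merges visits and is harmless for a lower bound on the visits inside one box, since distinct lifts of a box point project to the same point but a set of distinct points in one lifted box projects injectively when $L\ge 2r$). This gives that, with probability at least $c_0$, the walk during $[js,(j+1)s]$ visits at least $c_1 f(r)$ distinct vertices of $B_{v_j}$. We need $c_1 f(r)\ge\phi(s)$, and this is where the factor $1/2$ and the matching of the logarithmic corrections in $\phi$ with those in $f$ come in. If the indexing of the cases in $\phi$ does not literally match $f$, we use the weaker of the two and adjust; any positive threshold suffices for this lemma. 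By the Markov property these events dominate i.i.d.\ Bernoulli$(c_0)$ variables, so the number of windows in which the current box receives $\ge\phi(s)$ visits is at least $\mathrm{Bin}(N,c_0)$ and tends to infinity in probability.

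The main obstacle is to convert ``many successful windows'' into ``many distinct fantastic boxes'', since the walk might keep returning to the same few boxes. We handle this by a second requirement on each window: the box $B_{v_j}$ should differ from the boxes $B_{v_1},\dots,B_{v_{j-1}}$ counted so far. Given that at most $C$ boxes have been used, we insert a single jump before each window. By the heavy-tailed lower bound on $Q$ (for the torus, via the lower bound of Lemma~\ref{lemma: one-step_Z_to_T}), a single step lands at distance between $3r$ and $6r$ with probability at least some $p_r>0$ depending only on $r,K,\alpha,d$. Landing uniformly over a region containing many boxes, it avoids the $\le C$ previously used boxes with conditional probability bounded below by a positive constant depending on $C$ and $r$.

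Thus each block (one step followed by $s$ steps) succeeds in producing a \emph{new} fantastic box with probability at least $q(C,r)>0$, uniformly in the past, as long as fewer than $C+1$ fantastic boxes have been found. The number of blocks $\lfloor \ell_o/(s+1)\rfloor\to\infty$, so $\bP_L^+(|\mathrm{Fant}|\le C)\le (1-q)^{\lfloor \ell_o/(s+1)\rfloor}$ up to a combinatorial factor, which tends to $0$. This proves the lemma.
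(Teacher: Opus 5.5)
Your argument is correct in substance and is essentially the paper's. The paper also uses disjoint windows of length about $s$, each starting in a not-yet-used box. It does this via stopping times $\tau_i$ of first entrance into a new box, spaced by $2s$. It then applies Lemma~\ref{lemma: range_estimates_main_text} with the strong Markov property to dominate $|\mathrm{Fant}|$ by $\mathrm{Bin}(k,p)$, and simply asserts that $\tau_k\le\ell_o$ with high probability. Your device of blocks that start with a long jump is a clean way to supply that omitted step.

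One detail needs fixing. The annulus $\{y: 3r\le|y-x|\le 6r\}$ meets only a bounded number of boxes, independent of $C$. For large $C$, the previously found fantastic boxes may cover it, and your conditional success probability can then be $0$. Instead, take jumps to distance in $[R,2R]$ with $R=R(C,r)$ chosen so that $C(2r)^d$ is at most half the size of that annulus. By Lemma~\ref{lemma: one-step_Z_to_T}, for $L>4R$, a single step then lands outside the used boxes with probability at least $cR^{-\alpha}$. This gives $q=q(C,r)>0$ uniformly in the past and in $L$.

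Your caveat about $\phi$ versus $f$ is justified. The two $d=2$ cases in Definition~\ref{def: fantastic_vertex} appear swapped relative to $f$, and the paper's sketch also tacitly uses $\phi(s)\le c_1 f(r)$.
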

Lemma~\ref{lemma: planted_to_fantastic} can be proved using Lemma~\ref{lemma: range_estimates_main_text} and the following observation. Let $X$ be a walk starting from $o$. Let $\tau_0 := 0$, and let $v_0 \in \T^d_m$ be the (unique) vertex such that $o \in B_{v_0}$. Define
\[\tau_1 := \inf \{t > 2s : X_t \in B_v \textup{ for some } v \neq v_0\}.\]
Inductively, for $i \ge 1$, let $v_i \in \T^d_m$ be the vertex such that $X_{\tau_i} \in B_{v_i}$, and set
\[\tau_{i+1} := \inf \left\{t > \tau_i + 2s : X_t \in B_v \textup{ for some } v \notin \{v_0, v_1, \ldots, v_i\} \right\}.\]
One can show that for any fixed $k$ we have $\tau_k \le \ell_o$ with overwhelming probability as $L \to \infty$.
On the event $\{\tau_k \le \ell_o\}$, it follows from Lemma~\ref{lemma: range_estimates_main_text} that $|\mathrm{Fant}|$ stochastically dominates a $\mathrm{Bin}(k, p)$ random variable for some constant $p \in (0, 1)$.\footnote{Note that, although Lemma~\ref{lemma: range_estimates_main_text} is stated on $\Z^d$, by a standard coupling argument, the same result holds for a random walk driven by $Q_L$ on $\T^d_L$.} By taking $k$ sufficiently large, we can make $\bP(\mathrm{Bin}(k, p) \le C)$ arbitrarily small. We omit the details for brevity.
	
\begin{proof}[Proof of Proposition~\ref{prop: activate_constant_fraction} and \ref{prop: activate_constant_fraction_alpha>=d}]
	Assume without loss of generality that $1 \ll \ell_o \le (\log L)^{\Delta}$. 
    (If $\ell_o$ is larger, we may truncate the planted walk after $(\log L)^{\Delta}$ steps; this can only make activation harder.)
    For the purpose of this proof, it is convenient to use Poisson thinning to split the particles at each vertex into two independent collections $\cW_{x}^{(1)}$ and $\cW_{x}^{(2)}$, each containing $\mathrm{Pois}(\lambda/2)$ particles.
    The first half, $\cW_{x}^{(1)}$, is used to construct an auxiliary Bernoulli site percolation on $\T^d_m$, primarily utilizing the results presented in Section \ref{Sec: BernPerc}. 
    The remaining half, $\cW_{x}^{(2)}$, is applied to execute the Bernoulli long-range percolation strategy, as discussed in Section \ref{Sec: Backbone}. 
    The detailed proof of Proposition~\ref{prop: activate_constant_fraction} commences as follows. 
	
	Let $r$ be a positive integer to be determined, which will be taken sufficiently large.
    Throughout the proof, we always choose $m = \lfloor L / r \rfloor$ and the particle lifespan as $\ell = \ell(r)$ (see \eqref{def: s(r)}). 
    Let $\T^d_m$ be the renormalized torus at scale-0 defined for the chosen $r$. 
	
	Firstly, we construct an auxiliary Bernoulli site percolation on $\T^d_m$, utilizing only particles in $\cW_{x}^{(1)}$. 
    Recall that for each box $B_v$, we defined $\widehat{B}_v$ as the union of $B_v$ and its $2d$ neighboring boxes. 
    For any $x \in B_v$, we use $\widehat{\mathcal{A}}_x^{(1)}$ to denote the set of vertices activated by the frog model dynamics with lifespan $\ell$ starting with all particles in $\cW_{x}^{(1)}$ being activated, all particles in $\cup_{y \in B_v, y \neq x} \cW_{y}^{(1)}$ being sleeping/inactive, and no particle exists outside of $B_v$.
    Let $\cW_{B_v}^{(1)} := \cup_{y \in B_v} \cW_{y}^{(1)}$.
    % when initially all particles outside of $\cW_{B_v}^{(1)} := \cup_{y \in B_v} \cW_{y}^{(1)}$ are labeled as removed, all particles in $\cW_{x}^{(1)}$ (at $x$) are activated at time 0, and all other particles are sleeping/inactive at time 0.
    We now say that a vertex $v \in \T^d_m$ is \textbf{open} if one of the following conditions is satisfied: 
    \begin{itemize}[left=15pt, itemsep=0.3em, topsep=1em, rightmargin=25pt]
    \item[•] $v \in \mathrm{Fant}$ and there exists $x \in B_v$ such that $w_0^o$ visits $x$ and $\widehat{\mathcal{A}}_x^{(1)} \supseteq \widehat{B}_v$,
    \item[•] $v \in \mathrm{Fant}^c$ and there exists $x \in B_v$ such that $\widehat{\mathcal{A}}_x^{(1)} \supseteq \widehat{B}_v$,
    \end{itemize}
    % where, for any $x \in B_v$, we use $\widehat{\mathcal{A}}_x^{(1)}$ to denote the set of vertices activated by time $\ell$ when initially all particles outside of $\cW_{B_v}^{(1)} := \cup_{y \in B_v} \cW_{y}^{(1)}$ are labeled as removed, all particles in $\cW_{x}^{(1)}$ (at $x$) are activated at time 0, and all other particles are sleeping/inactive at time 0.
    Note that $\widehat{\mathcal{A}}_x^{(1)}$ has the same distribution as $\mathcal{A}_x^{\lambda/2, \ell, B_v}$ defined in \eqref{def: activated_set_restricted}.
    
    Let $\widehat{\mathcal{F}}^{o}$ be the sigma-algebra generated by the random walk trajectory of the planted particle $w_0^o$. 
    For $u \in \T^d_m$, let $\mathcal{F}^{u}$ be the sigma-algebra generated by the initial configuration of particles in $B_u$ (a Poisson random variable per site) and the random walk trajectories taken by particles in $\cW_{B_u}^{(1)}$. 
    Moreover, let $\mathcal{F}^{U} := \sigma(\bigcup_{u \in U} \mathcal{F}^{u})$ and we aim to show that for any $p \in (0, 1)$, if $r$ is large enough, then
    \begin{align} \label{ineq: donination_conditional}
        \bP_L^+ \left(v \text{ is open } \middle\vert \sigma(\widehat{\mathcal{F}}^{o} \cup \mathcal{F}^{\T^d_m \setminus \{ v \}}) \right) \ge p 
    \end{align}
    for every $v \in \T^d_m$. 
    This allows us to conclude that, conditioned on $\widehat{\mathcal{F}}^{o}$, $\{ \1_{v \text{ is open}} \}_{v \in \T^d_m}$ stochastically dominates an independent Bernoulli site percolation with density $p$ on $\T^d_m$. Indeed, Lemma 1.1 in \cite{MR1428500} states that for a sequence of 0-1 valued random variables $\{ Z_i \}_{i = 1}^{\infty}$, if, for all $i$, we have 
    $$\bP(Z_i = 1 | \sigma(Z_1, ..., Z_{i-1})) \ge p,$$ 
    then $\{ Z_i \}_{i = 1}^{\infty}$ stochastically dominates a sequence of independent Bernoulli random variables with parameter $p$. 

    We now prove equation \eqref{ineq: donination_conditional}.
    For every $x \in B_v$, we use Poisson thinning to further divide $\cW_{x}^{(1)}$ into two independent collections $\cW_{x}^{\mathrm{int}}$ and $\cW_{x}^{\mathrm{nbh}}$, each having $\mathrm{Pois}(\lambda/4)$ particles.
    Let $\mathrm{Good}^{\mathrm{int}}_{B_v}$ be the collection of good vertices in $B_v$ with respect to particles in $\cW_{B_v}^{\mathrm{int}} := \cup_{y \in B_v} \cW_{y}^{\mathrm{int}}$ defined as in Definition~\ref{def: good_vertex}. 
    Therefore, $\mathrm{Good}^{\mathrm{int}}_{B_v}$ has the same distribution as $\mathrm{Good}^{\lambda/4, \ell}_{B_v}$ (see Definition~\ref{def: good_vertex}). 
    We first analyze the case when $v \in \mathrm{Fant}$.
    On the event $\{ v \in \mathrm{Fant}\}$, we have
    \begin{align} 
        &\bP_L^+\left(v \text{ is not open } | \widehat{\mathcal{F}}^{o}, \mathcal{F}^{\T^d_m \setminus \{ v \}}\right) \nonumber \\
        \le \ &\bP_L^+\left( \mathrm{Good}^{\mathrm{int}}_{B_v} \cap \mathrm{R}_{w_0^{o}}(\ell_o) = \varnothing \middle\vert \widehat{\mathcal{F}}^{o}, \mathcal{F}^{\T^d_m \setminus \{ v \}} \right) \label{term: Fant_Good_not_empty} \\
        & \qquad + \frac{\bP_L^+ \left( v \text{ is not open}, \mathrm{Good}^{\mathrm{int}}_{B_v} \cap \mathrm{R}_{w_0^{o}}(\ell_o) \neq \varnothing \middle\vert \widehat{\mathcal{F}}^{o}, \mathcal{F}^{\T^d_m \setminus \{ v \}} \right)}{\bP_L^+ \left( \mathrm{Good}^{\mathrm{int}}_{B_v} \cap \mathrm{R}_{w_0^{o}}(\ell_o) \neq \varnothing \middle\vert \widehat{\mathcal{F}}^{o}, \mathcal{F}^{\T^d_m \setminus \{ v \}} \right)} \label{term: Fant_non_trans} \\
        \le \ & C \exp(-c \lambda \phi(s(r))) \, + \, C' \exp(-c' \lambda f(r)), \label{ineq: Fant_bounds}
    \end{align}
    where $c, c', C, C' > 0$ are constants independent of $r$, $f(r)$ is defined as in Lemma~\ref{lemma: range_estimates_main_text}, and $s(r)$ is defined as in Definition~\ref{def: fantastic_vertex}. More specifically, the upper bound for \eqref{term: Fant_Good_not_empty} follows from Proposition~\ref{prop: BernPerc_internal_1} together with the fact that $|\mathrm{R}_{w_0^{o}}(\ell_o) \cap B_v| \ge \phi(s(r))$ on the event $\{ v \in \mathrm{Fant}\}$.
    Also, the upper bound for \eqref{term: Fant_non_trans} follows from Proposition~\ref{prop: BernPerc_internal_2} applied to particles in $\cW_{B_v}^{\mathrm{nbh}}$: conditioned on the event that $\mathrm{Good}^{\mathrm{int}}_{B_v} \cap \mathrm{R}_{w_0^{o}}(\ell_o) \neq \varnothing$, there exists some $x \in B_v$ such that it is good with respect to particles in $\cW_{B_v}^{\mathrm{int}}$. In particular, $x$ activates at least a quarter of the vertices in $\widehat{B}_v$ using particles in $\cW_{B_v}^{\mathrm{int}}$. Let $D_x$ denote the set of those activated vertices. Then, we apply Proposition~\ref{prop: BernPerc_internal_2} to particles in $\cW_{B_v}^{\mathrm{nbh}}$ with $D = D_x$.
    By taking $r$ sufficiently large, we can make the left-hand side of \eqref{ineq: donination_conditional} larger than any given $p \in (0, 1)$.
    The case when $v \in \mathrm{Fant}^c$ can be treated similarly.
    The only difference is that, in equation \eqref{term: Fant_Good_not_empty} and \eqref{term: Fant_non_trans}, one should remove the intersection with $\mathrm{R}_{w_0^{o}}(\ell_o)$.
    This completes the proof of the stochastic domination.

    Consider the auxiliary Bernoulli site percolation on $\T^d_m$ given by $\{ \1_{\{v \text{ is open} \}}\}_{v \in \T^d_m}$. 
    We have proved that, conditioned on $\widehat{\mathcal{F}}^{o}$, almost surely it stochastically dominates a Bernoulli site percolation with retention probability $p$. 
    Let $\mathrm{GC} = \mathrm{GC}(m)$ be the largest connected component in the auxiliary Bernoulli site percolation on $\T^d_m$. 
    Let 
    \begin{align}
        A := \bigcup_{v \in \mathrm{GC}} B_v.
    \end{align}
    We will first show that, by choosing $r$ sufficiently large, with high probability, $A$ has the desired spatial homogeneous property.
    By (i) of Theorem~\ref{thm: percolation_reference}, which concerns the spacial density and homogeneity of $\mathrm{GC}$, we know that with high probability, in every box of side length $n = \lceil C(d, p) (\log m)^{\frac{1}{d-1}} \rceil$, $|\mathrm{GC}|$ is at least $c(p) n^d$, where $c(p) \rightarrow 1$ as $p \rightarrow 1$. Note that, by choosing $r$ large enough, we can make $p$ arbitrarily close to 1. Therefore, for any given $\rho \in (0, 1)$, we can choose $r$ sufficiently large such that, with high probability, $A$ is $\mathrm{Hom}(C_1 (\log L)^\frac{1}{d-1}, \rho)$, where $C_1 = C_1(\rho, d, r)$ is a positive constant. 

    We now complete the proof of Proposition~\ref{prop: activate_constant_fraction_alpha>=d} by showing that, with high probability, $A \subseteq \mathcal{A}(C_2 L)$ for some $C_2>0$ and for all $\alpha > 0$. 

    By Lemma~\ref{lemma: planted_to_fantastic}, we know that for any $C > 0$, with high probability, we have $|\mathrm{Fant}| > C$ as $L \rightarrow \infty$. 
    By (ii) in Theorem~\ref{thm: percolation_reference}, we have that, with high probability, $\mathrm{Fant} \cap \mathrm{GC}$ is non-empty.  
    Indeed, we can write $\bP_L^+(\mathrm{Fant} \cap \mathrm{GC} = \varnothing) \le \bP_L^+(|\mathrm{Fant}| \le C) + \bP_L^+(\mathrm{Fant} \cap \mathrm{GC} = \varnothing \, \vert \, |\mathrm{Fant}| > C)$, where the first term on the right-hand side vanishes as $L$ diverges to infinity by Lemma~\ref{lemma: planted_to_fantastic}, and the second term on the right-hand side can be made arbitrarily small by taking $C$ sufficiently large according to (ii) in Theorem~\ref{thm: percolation_reference}. 
    Since the auxiliary Bernoulli site percolation is supercritical, Theorem~\ref{Thm: Perc_Distance} together with a straightforward union bound imply that there exists a constant $C_0$ such that, with high probability, for all pairs of vertices$u$ and $v$ on the renormalized torus $\T^d_m$ that are connected by a percolation cluster, the shortest path connecting $u$ and $v$ has length at most $C_0 m$. 
    One can verify this by using a union bound and applying Theorem~\ref{Thm: Perc_Distance} to all pairs of vertices that are separated by a graph distance of at least $m/8$. 
    On the event that $\mathrm{Fant} \cap \mathrm{GC} \neq \varnothing$, we have that for any $x \in A$, the maximal activation time from the origin \(o\) to \(x\) is upper bounded by \(\ell_o + C_0 (2r)^d \ell(r) m\). 
    Note that $(2r)^d \ell(r)$ is a deterministic upper bound for the time needed to activate an open box $B_v$ together with its $2d$ neighboring boxes using particles in $\cW_{B_v}^{(1)}$, starting from one activated good vertex in $B_v$. 
    Since $m = \lfloor L / r \rfloor$ and we can choose $r = R(\rho, \lambda, d, \alpha, K)$ sufficiently large but fixed, we have that $\ell_o + C_0 (2r)^d \ell(r) m \le C_2 L$ for some constant $C_2 = C_2(\rho, \lambda, d, \alpha, K) > 0$ when $L$ is sufficiently large. 
    Moreover, we require the particle lifespan to be at least $N_0 = \ell(R)$, where $\ell(r)$ is defined as in \eqref{def: s(r)}.
    This completes the proof of Proposition~\ref{prop: activate_constant_fraction_alpha>=d}.
    
    We now turn to the case where \(\alpha \in (0, d)\). We show that for any \(\epsilon > 0\), by choosing \(r\) sufficiently large, with high probability we have \(A \subseteq \mathcal{A}((\log L)^{\Delta + \epsilon})\). Note that in the proof of Proposition~\ref{prop: activate_constant_fraction_alpha>=d} we only used the particles in $\cW_x^{(1)}$. We now leverage the results from Secion~\ref{Sec: Backbone} and use the second set of $\mathrm{Pois}(\lambda/2)$ particles per site, namely $\cW_x^{(2)}$. 

    We first recall the discussion around equation \eqref{ineq: G_v_condition} to connect the frog model with the long-range percolation while making the choice of $G_v$ more explicit.
    Let $g(r) = C_3 \log r$, where $C_3$ is the same constant as $C_1$ in Corollary~\ref{cor: almost_constant_fraction_good_vertex}.
    Let $X_{v}^{[L]}$ be the indicator of the event $\{ v \text{ is open}, |\mathrm{Good}^{\mathrm{int}}_{B_v}| \ge r^d / g(r) \}$ and define
    \[
        q(r) := \sup_{v \in \T^d_m} \bP_L\left(X_{v}^{[L]} = 0\right).
    \]
    It follows from equation \eqref{ineq: Fant_bounds} and Corollary~\ref{cor: almost_constant_fraction_good_vertex} that $q(r) \to 0$ as $r \to \infty$, uniformly in $L$. 
    For every $v \in \T^d_m$ with $X_{v}^{[L]} = 1$, we choose 
    $$G_v = \mathrm{Good}^{\mathrm{int}}_{B_v}.$$ 
    Note that $\mathrm{Good}^{\mathrm{int}}_{B_v}$ only depends on particles in $\cW_{B_v}^{(1)}$.
    Then, we draw a directed edge from $v$ to $w$ on $\T^d_m$, denoted by $v \Rightarrow w$, whenever (1) $X_{w}^{[L]} = 1$ and (2) there exists a particle in $\cW_{x}^{(2)}$ for some $x$ in $B_v$ that jumps to some $y$ in $G_w$ at its first step. Define
    $$Y_{(v, w)}^{[L]} := \mathbbm{1}_{\{v \Rightarrow w\}}.$$
    % In this case, we write $Y_{(v, w)}^{[L]} = 1$.
    Therefore, we are under the same setup constructed before Lemma~\ref{lemma: long-range-percolation-coupling}. In particular, the lemma implies that 
    \begin{align*} 
        \P_{L}^+\left(Y_{(u, v)}^{[L]} = 1\right) \ge 1 - \exp\left(- \beta_1 |u - v|^{-s}\right),
    \end{align*}
    where $s := \alpha + d$ and $\beta_1$ can be made arbitrarily large by taking $r$ sufficiently large.

    By Proposition~\ref{prop: S_emerge}, for any $\epsilon > 0$, when $r$ is sufficiently large (so that $\beta_1$ is sufficiently large), with high probability, a random set \(S \subseteq \T^d_m\) that is 
    $$\text{Hom}((\log m)^{1/(2d - s) + \epsilon}, (1/\log \log m)^C)$$ 
    emerges, where $C$ is a positive constant. Moreover, for every two vertices \(x, y \in S\), we have
    \begin{align*}
        d(x, y) \le (\log m)^{\Delta + \frac{\epsilon}{2}},
    \end{align*}
    where \(d(x, y)\) denotes the chemical (graph) distance between any two vertices \(x, y\) on \(\T^d_m\) within the long-range percolation clusters. 
    Let us denote the event that such $S$ emerges by $\mathcal{E}_1$.
    Since $\Delta > 1/(2d - s)$ for every $s \in (d, 2d)$, it follows that such \(S\) is 
    $$\mathrm{Hom}((\log m)^{\Delta}, (1/\log \log m)^C/2)$$
    for all large enough $m$.
    
    Let $\mathcal{E}_2 \subseteq \mathcal{E}_1$ be the event that for every box \(B\) with side length \(\lceil (\log m)^{\Delta} \rceil\) on \(\T^d_m\), the intersection \(S \cap \mathrm{GC} \cap B\) is non-empty.
    Part (ii) of Theorem~\ref{thm: percolation_reference}, which gives us a stretched exponential tail in terms of $|U|$ on the probability of $\mathrm{GC} \cap U$ being empty for general vertex set $U$, together with a union bound gives that $\bP_L^+(\mathcal{E}_2 \mid \mathcal{E}_1)$ converges to 1 as $L$ (and hence $m$) diverges. Indeed, conditional on the vertex indicators $\{X_v^{[L]}\}_{v \in \T^d_m}$, the long-range edges (and hence $S$) are generated from $\cW^{(2)}$ independently of the auxiliary site percolation (and hence $\mathrm{GC}$), which is generated from $\cW^{(1)}$. In particular, for every $0 < d' < d$, there exists $\beta_1$ such that
    \begin{align*}
        \bP_L^+( \, \mathcal{E}_2^c \, | \, \mathcal{E}_1 \,) \le m^{-d} \exp{\left(- \beta_1 \frac{2(\log m)^{\Delta(d' - 1)}}{(\log \log m)^{C}}\right)}.
    \end{align*}
    Choosing $d'$ such that \((d'-1) \cdot \Delta > 1\), this probability tends to 0 as $L$ (and hence $m$) diverges.

    Additionally, part (iii) of Theorem~\ref{thm: percolation_reference} implies that for some positive constant $C'$, with high probability the following event occurs: 
    for every box \(B\) with side length \(\lceil (\log m)^{\Delta} \rceil\) on \(\T^d_m\), we have
    \begin{align*}
        \mathrm{diam}(\mathrm{GC} \cap B) \le C'(\log m)^{\Delta},
    \end{align*}
    where $\mathrm{diam}(D)$ is the largest graph distance between any pair of vertices in $D$ within the auxiliary Bernoulli site percolation clusters, where the connecting paths may leave $D$. 
    Let us denote the above event by $\mathcal{E}_3$.

    In summary, on the event $\mathcal{E}_2 \cap \mathcal{E}_3 \cap \{ \mathrm{Fant} \cap \mathrm{GC} \neq \varnothing  \}$, for every \(x \in A\), we can bound the activation time $\mathrm{AT}(o, x)$ as follows:
    \begin{align}
        \mathrm{AT}(o, x) 
        &\le \ell_{o} + 2 \cdot (2r)^d \ell(r) \cdot \max_B \left( \mathrm{diam}(\mathrm{GC} \cap B) \right) + (2r)^d \ell(r) \cdot \max_{x, y \in S} d(x, y) \nonumber \\
        &\le (\log L)^{\Delta} + C_4 (\log L)^{\Delta} + (\log L)^{\Delta + \frac{\epsilon}{2}}, \label{ineq: AT_decompose}
    \end{align}
    where the last inequality holds for sufficiently large $L$ and $C_4$ is a positive constant depending on $r, d,$ and $\alpha$. Moreover, the maximum in the first inequality is taken over all boxes \(B\) with side length \(\lceil (\log m)^{\Delta} \rceil\). Therefore, for $L$ large enough, the right-hand side of \eqref{ineq: AT_decompose} is upper bounded by \((\log L)^{\Delta + \epsilon}\). This completes the proof of Proposition~\ref{prop: activate_constant_fraction}.
\end{proof}

\subsection{Upper Bounds on Activation Time Under Diverging Lifespan} \label{Sec: punchline}

    In this section, we prove Theorem~\ref{MainThm: infinite_l_activation_time}, which gives an upper bound on the activation time for the SI frog model on \(\mathbb{T}^d_L\) (i.e., all frogs have infinite lifespan).
    The theorem states that, with high probability, the time needed for the SI model to activate all vertices of \(\mathbb{T}^d_L\) is at most \((\log L)^{\Delta + \epsilon}\) when \(\alpha \in (0, d)\), and at most \(O(L)\) when \(\alpha \ge d\).
    In fact, we prove the stronger statement Theorem~\ref{thm: infinite_l_activation_time}, which shows that the same conclusion holds for the SIR frog model with a slowly diverging lifespan \(\ell = \ell(L)\).
    The proof will be based on Proposition~\ref{prop: activate_constant_fraction} and \ref{prop: activate_constant_fraction_alpha>=d} established in the previous section.

    We now make the choice of diverging lifespan \(\ell = \ell(L)\) more explicit.
    Define $\ell_1: \N \rightarrow \N$ by
    \begin{align}
        \ell_1(L) := 
            \begin{cases}
                \; r_1(L)^2 & \textup{if } \alpha > 2, \\
                \left\lfloor r_1(L)^2/\log r_1(L) \right\rfloor & \textup{if } \alpha = 2, \\
                \left\lfloor r_1(L)^\alpha \right\rfloor & \textup{if } \alpha < 2,
            \end{cases}
    \end{align}
    where \( r_1(L) = (\log (L + 2))^{\frac{1}{d-1}} \).
    For the frog model on \(\mathbb{T}^d_L\), the lifespan of all particles (including the planted particle) is set to be \(\ell(L)\), where
    \begin{align} \label{def: diverging_lifespan_ell}
        \ell(L) \gg \max\{\ell_1(L), \log L \log \log L \},
    \end{align}
    i.e., \(\ell(L)/\ell_1(L) \to \infty\) and \(\ell(L)/(\log L \log \log L) \to \infty\) as \(L \to \infty\).

    \begin{theorem} \label{thm: infinite_l_activation_time}
        Let $\lambda>0$, $d \ge 2$, and $\alpha > 0$.
        Let $Q \in \mathcal{H}_{\alpha, K}$ and let $Q_L$ be defined as in \eqref{def: Q_L}.
        Assume $\ell(L)$ satisfies \eqref{def: diverging_lifespan_ell}.
        If \( \alpha \in (0, d) \), then, for any \( \epsilon > 0 \), we have
            \begin{align}
                \lim_{L \rightarrow \infty} \P_L^+\left(\max_{x \in \mathbb{T}^d_L} \mathrm{AT}_{\ell(L)}(o, x) \le (\log L)^{\Delta + \epsilon} \right) = 1.
            \end{align}
        If \( \alpha \ge d \), then there exists a positive constant \( C \), depending on $\lambda, d, \alpha,$ and $K$, such that
            \begin{align}
                \lim_{L \rightarrow \infty} \P_L^+\left(\max_{x \in \mathbb{T}^d_L} \mathrm{AT}_{\ell(L)}(o, x) \le C L \right) = 1.
            \end{align}
    \end{theorem}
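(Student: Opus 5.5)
The plan is to use Proposition~\ref{prop: activate_constant_fraction} (resp.\ Proposition~\ref{prop: activate_constant_fraction_alpha>=d}) as a black box to obtain, within time $(\log L)^{\Delta+\epsilon/2}$ (resp.\ $C_2L$), an activated set that is spatially homogeneous on the mesoscopic scale $n:=\lfloor C_1(\log L)^{1/(d-1)}\rfloor$. In a second phase, the diverging lifespan $\ell(L)$ then lets particles started in this set sweep every remaining vertex in one additional lifespan.

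Because $\ell(L)\to\infty$, for $L$ large $\ell(L)$ exceeds the threshold $N_0$ of those propositions. The first step is to thin the particles by Poisson thinning into two independent halves, $\cW^{(1)}$ and $\cW^{(2)}$, each of density $\lambda/2$. With $\cW^{(1)}$ and lifespan truncated to a fixed large constant $N_0$ (monotonicity: a longer lifespan only helps), apply the proposition with some $\rho\in(0,1)$, say $\rho=1/2$. This gives, w.h.p., a set $\mathcal{A}_1:=\mathcal{A}((\log L)^{\Delta+\epsilon/2})$ with $|\mathcal{A}_1\cap H|\ge \rho|H|$ for every box $H$ of side $n$. Since the planted particle is needed only in this phase, its lifespan $\ell(L)\gg1$ suffices.

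The second step covers each target vertex $x\in\T^d_L$. Let $H_x$ be the side-$n$ box around $x$, so $|\mathcal{A}_1\cap H_x|\ge \rho n^d$. The set $\mathcal{A}_1$ is measurable with respect to $\cW^{(1)}$ and the planted walk, and is therefore independent of $\cW^{(2)}$. Hence, conditionally, the number of $\cW^{(2)}$-particles starting in $\mathcal{A}_1\cap H_x$ that hit $x$ within $\ell(L)$ steps is Poisson with mean
\[
\tfrac{\lambda}{2}\sum_{y\in \mathcal{A}_1\cap H_x}\bfP^L_y(\tau_x\le \ell(L)) .
\]
Here we apply Lemma~\ref{lemma: random_walk_expected_hitting_size_main_text} at scale $r=n$, with $D=\mathcal{A}_1\cap H_x$; a constant-factor loss on $|D|\ge n^d/4$ is harmless. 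Since $\ell(L)\ge \ell_1(L)\ge \ell(n)$ up to constants, this mean is at least $c\lambda f(n)$. One checks that $f(n)$ is about $n^{\alpha}$ or $n^2$ up to logarithms. Beyond that, we need a stronger lower bound of order $\log L$ times a diverging factor. The stated assumption $\ell(L)\gg \log L\log\log L$ is exactly what provides it: with a lifespan much longer than $\ell(n)$, a single walk from $D$ hits $x$ with probability $\gtrsim$ (expected range)/(volume), and summing over $|D|\asymp n^d$ sources the mean becomes $\gtrsim \ell(L)/(\log\log\text{-type correction})\gg \log L$. A union bound over the $L^d$ vertices $x$ then gives failure probability $L^d e^{-\omega(\log L)}\to0$.

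Consequently every $x$ is activated by time at most $(\log L)^{\Delta+\epsilon/2}+\ell(L)$ (resp.\ $C_2L+\ell(L)$). Activation times here use the SIR lifespan $\ell(L)$ itself; the infinite-lifespan version, Theorem~\ref{MainThm: infinite_l_activation_time}, follows by monotonicity.

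The main obstacle is that the additive $\ell(L)$ must be at most $(\log L)^{\Delta+\epsilon}$, since $\ell(L)$ is only bounded below. I would handle this by first reducing to $\ell(L)$ growing slowly, namely $\max\{\ell_1,\log L\log\log L\}\ll \ell(L)\le (\log L)^{\Delta+\epsilon/2}$. Capping the lifespan can only slow activation, and since $\Delta>1/(2d-s)$ and $\Delta\ge1$ in the relevant range, such a choice exists. For $\alpha\ge d$ the cap is irrelevant, since $\ell(L)$ can likewise be truncated to $o(L)$. The delicate computation is the hitting-probability lower bound in the second step: showing that $\sum_{y\in D}\bfP_y(\tau_x\le \ell(L))\gg\log L$ uniformly over sets $D$ of density $\rho$ in $H_x$. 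I would attempt it via a second-moment (Green's function) argument, $\bfP_y(\tau_x\le t)\ge G_t(y,x)/G_t(o,o)$, using the heavy-tailed heat kernel lower bound for $t\ge\ell(n)$. In $d=2$ the logarithmic factor in $G_t(o,o)$ is where the $\log\log L$ in the hypothesis gets consumed.
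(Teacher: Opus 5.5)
Most of your outline matches the paper's proof: Poisson thinning into two halves; applying Proposition~\ref{prop: activate_constant_fraction} or \ref{prop: activate_constant_fraction_alpha>=d} with $\rho=1/2$ to $\cW^{(1)}$ and the planted particle; independence of $\cW^{(2)}$; capping $\ell(L)$; and a union bound over the $L^d$ sites. The second phase, however, has a genuine gap.

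You only use sources in $D=\mathcal{A}_1\cap H_x$, where $H_x$ has side $n\asymp(\log L)^{1/(d-1)}$. For such $D$, no increase of the lifespan makes the Poisson mean $\gg\log L$. To see this, combine the second inequality of Lemma~\ref{lemma: cover_to_green} (with $A=\{x\}$ and $\ell_1=\ell_2=t$) with the bound $Q^k(o,z)\le C\,r(k)^{-d}$ from Theorem~\ref{thm: heat_kernel_typical}:
\[
\sum_{y\in H_x}\bfP_y(\tau_x\le t)\;\le\;G_{2t}(x,H_x)\;\le\;1+\sum_{k=1}^{2t}\min\bigl\{1,\,C n^d r(k)^{-d}\bigr\}.
\]
For $\alpha<2$ (and $\alpha<d$) the right-hand side is $O(n^{\alpha})$ uniformly in $t$. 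On $\T^d_L$ the correction is negligible for polylogarithmic $t$ by Lemma~\ref{lemma: heat_kernel_from_Z_to_T}.

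When $\alpha<d-1$, for example $d=3$ and $\alpha=1$, this bound is $O((\log L)^{1/2})=o(\log L)$. Then the probability that no $\cW^{(2)}$-particle from $D$ hits $x$ is at least $\exp(-C\sqrt{\log L})$, and $L^d$ times this diverges, so the union bound fails.

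Your ``range/volume'' heuristic is correct, but the relevant volume is that of the region explored in $\ell(L)$ steps, of order $r(\ell(L))^d$. You therefore need order $r(\ell(L))^d$ sources, not $n^d$. Your proposed Green's function bound $\bfP_y(\tau_x\le t)\ge G_t(y,x)/G_t$, summed over $y\in H_x$, can only reproduce the bound above.

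The missing idea is to turn homogeneity at scale $n$ into positive density at the scale $r(\ell(L))$ of the typical $\ell(L)$-step displacement. This is the real purpose of the hypothesis $\ell(L)\gg\ell_1(L)$ in \eqref{def: diverging_lifespan_ell}, not the comparison $\ell(L)\gtrsim\ell(n)$. It gives $r(\ell(L))\gg(\log L)^{1/(d-1)}\asymp n$. Tiling $B_{r(\ell(L))}(x)$ by side-$n$ boxes then shows that $\mathcal{A}_1$ occupies at least a third of this ball.

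Next apply Lemma~\ref{apx_lemma: random_walk_expected_hitting_size_lower_bound} at radius $r(\ell(L))$, whose associated time scale is of order $\ell(L)$. It gives a Poisson mean of order at least $\lambda\ell(L)$, $\lambda\ell(L)/\log\log\ell(L)$ or $\lambda\ell(L)/\log\ell(L)$, according to the regime. After capping $\ell(L)$ at a suitable power of $\log L$ (harmless in both regimes), $\log\ell(L)=O(\log\log L)$. The hypothesis $\ell(L)\gg\log L\log\log L$ then makes the mean $\gg\log L$, and the union bound closes. With this change your argument coincides with the paper's proof.
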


    \begin{lemma} \label{lemma: Delta_g_1/d-1}
    For every $d \ge 2$ and every $0 < \alpha < d$, we have 
    \begin{align}
        \Delta > 
        \begin{cases}
            \frac{2}{d - 1} \text{ for } \alpha \ge 2, \\
            \frac{\alpha}{d-1} \text{ for } \alpha < 2.
        \end{cases}
    \end{align}
    \end{lemma}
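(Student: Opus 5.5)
The plan is to reduce everything to the single inequality $\Delta>1$, and to treat the one remaining case, $d=2$, by a short concavity estimate.

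\emph{Step 1: $\Delta>1$ always.} Set $x:=\alpha/d\in(0,1)$. Then
\[
\Delta^{-1}=\log_2\!\left(\frac{2d}{\alpha+d}\right)=\log_2\!\left(\frac{2}{1+x}\right)=1-\log_2(1+x).
\]
Since $x>0$, we have $\log_2(1+x)\in(0,1)$. Hence $\Delta^{-1}\in(0,1)$, that is, $\Delta>1$.

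\emph{Step 2: the case $d\ge 3$.} For $\alpha\ge2$ the target is $2/(d-1)\le 1<\Delta$. For $\alpha<2$ the target is $\alpha/(d-1)<2/(d-1)\le1<\Delta$. So Step 1 settles both cases.

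\emph{Step 3: the case $d=2$.} Here $\alpha<d=2$, so only the second alternative arises, and we must show $\Delta>\alpha$. Equivalently, we need
\[
1-\log_2\!\left(1+\tfrac{\alpha}{2}\right)<\tfrac1\alpha .
\]
If $\alpha\le 1$, the right-hand side is at least $1$, while the left-hand side is strictly less than $1$, so the inequality holds.

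If $\alpha\in(1,2)$, we use concavity. The function $t\mapsto\log_2(1+t)$ is concave and equals $t$ at $t=0$ and $t=1$, so $\log_2(1+t)\ge t$ on $[0,1]$. Taking $t=\alpha/2$ gives $\log_2(1+\alpha/2)\ge \alpha/2$. It therefore suffices to check $\alpha/2>1-1/\alpha$. This is equivalent to $\alpha^2-2\alpha+2>0$, i.e.\ $(\alpha-1)^2+1>0$, which always holds.

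There is no real obstacle. The only step that is not immediate is the $d=2$, $\alpha\in(1,2)$ case, and the chord bound from concavity handles it.
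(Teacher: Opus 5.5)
Your proof is correct and follows essentially the same route as the paper: both reduce almost everything to $\Delta^{-1}<1$ and isolate $d=2$, $\alpha\in(1,2)$ as the only nontrivial case. In that case the paper uses monotonicity in $\alpha$, namely $\Delta^{-1}=\log_2\bigl(4/(2+\alpha)\bigr)<\log_2(4/3)<1/2<1/\alpha$, whereas you use the concavity chord bound $\log_2(1+t)\ge t$ followed by the check $(\alpha-1)^2+1>0$; either argument works.
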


    The proof of Lemma~\ref{lemma: Delta_g_1/d-1} is an elementary calculation, and we leave it to the supplementary material (see Section~4 of \cite{angel2026supplement}).

    \begin{proof}[Proof of Theorem~\ref{thm: infinite_l_activation_time}]
        By Lemma~\ref{lemma: Delta_g_1/d-1}, we have $(\log L)^{\Delta} \gg \ell_1(L)$.
        Since increasing the lifespan can only decrease activation times, it suffices to consider $\ell(L) \le (\log L)^{\Delta}$.
        We again use Poisson thinning to split the particles at each vertex into two independent collections $\cW_{x}^{(1)}$ and $\cW_{x}^{(2)}$, each containing $\mathrm{Pois}(\lambda/2)$ particles.
        The first half, $\cW_{x}^{(1)}$, is used to apply Proposition~\ref{prop: activate_constant_fraction} and \ref{prop: activate_constant_fraction_alpha>=d}. 
        The remaining half, $\cW_{x}^{(2)}$, is used to activate the entire torus $\mathbb{T}^d_L$ starting from a spatially homogeneous set of activated vertices.

        We first describe the use of particles in $\cW_{x}^{(1)}$ together with the planted particle $w_0^o$. 
        We apply Proposition~\ref{prop: activate_constant_fraction} when $\alpha \in (0, d)$ and Proposition~\ref{prop: activate_constant_fraction_alpha>=d} when $\alpha \ge d$ for $\cW_{x}^{(1)}$ and the planted particle $w_0^o$.
        More specifically, we apply the propositions under the following setting:
        \begin{itemize}
            \item At time $0$, only the planted particle $w_0^o$ is active at the origin $o$, while all other particles (those in $\cW_{x}^{(1)}$ for all $x$) are sleeping.
            \item Choose $\rho = 1/2$, and replace $\epsilon$ in Proposition~\ref{prop: activate_constant_fraction} by $\epsilon/2$.
            \item The lifespan of $w_0^o$ is $\ell(L)$, while the lifespan of each particle in $\cW_{x}^{(1)}$ is a fixed but sufficiently large constant, as required in the propositions.
        \end{itemize}
        Let $C_1$ and $C_2$ be the same constants as in Proposition~\ref{prop: activate_constant_fraction} and \ref{prop: activate_constant_fraction_alpha>=d} with respect to the corresponding choice of parameters. 
        Moreover, let $\mathcal{A}^{(1)}$ be the set of vertices activated by time $(\log L)^{\Delta + \frac{\epsilon}{2}}$ when $\alpha \in (0, d)$ and $C_2 L$ when $\alpha \ge d$, using only particles in $\cW_{x}^{(1)}$ and the planted particle $w_0^o$.
        Then, with high probability, 
        $$\mathcal{A}^{(1)} \text{ is } \mathrm{Hom}\left(C_1 (\log L)^{\frac{1}{d-1}}, 1/2\right).$$
        Let $\mathcal{E}$ denote the event that $\mathcal{A}^{(1)}$ has the above spatially homogeneous property.
        Let $\mathcal{F}^{(1)}$ denote the sigma-algebra generated by the planted particle $w_0^o$ and all particles in $\cW_{x}^{(1)}$ for all $x$.

        Let us define the typical distance traveled by an $\ell(L)$-step random walk as follows:
        \begin{align*}
            r(L) := \begin{cases}
                \sqrt{\ell(L)} &\text{if } \alpha > 2, \\
                \sqrt{\ell(L) \log \ell(L)} &\text{if } \alpha =2, \\
                (\ell(L))^{1/\alpha} &\text{if } \alpha < 2.
            \end{cases}
        \end{align*}
        Due to \eqref{def: diverging_lifespan_ell}, it is straightforward to verify that $r(L) \gg (\log L)^{1/(d-1)}.$

        For $A \subseteq \mathbb{T}^d_L$, let $\mathcal{R}_{A}^{(2)}$ be the union of the ranges of $\ell(L)$-step random walks initiated by particles in $\cW_{x}^{(2)}$ for all $x \in A$. Note that particles in $\cW_{x}^{(2)}$ have density $\lambda/2$ and are independent of those in $\cW_{x}^{(1)}$ and the planted particle $w_0^o$.
        On the event $\mathcal{E}$, for any $x \in \mathbb{T}^d_L$, we have that 
        \begin{align*}
            \P_{L}^+\left( x \notin \mathcal{R}_{\mathcal{A}^{(1)}}^{(2)} \, \middle\vert \, \mathcal{F}^{(1)} \right) &= \exp\left(-\frac{\lambda}{2} \sum_{y \in \mathcal{A}^{(1)}} \mathbf{P}_y^L(\tau_x \le \ell(L))\right).
        \end{align*}
        Note that $\mathbf{P}_y^L(\tau_x \le \ell(L)) \ge \mathbf{P}_y(\tau_x \le \ell(L))$, with a slight abuse of notation by identifying vertices on $\mathbb{T}^d_L$ with those in $[0, L)^d \cap \mathbb{Z}^d$.
        On the event $\mathcal{E}$, since $r(L) \gg C_1 (\log L)^{1/(d-1)}$, $\mathcal{A}^{(1)}$ occupies at least 1/3 of the vertices in $B_{r(L)}(x)$ for all sufficiently large $L$. 
        Hence, by Lemma~\ref{apx_lemma: random_walk_expected_hitting_size_lower_bound}, there is some constant $c = c(d, \alpha, K) > 0$ such that
        \begin{align*}
            \sum_{y \in \mathcal{A}^{(1)}  \cap B_{r(L)}(x) } \mathbf{P}_y(\tau_x \le \ell(L)) \ge \begin{cases}
            c \ell(L) & \text{ if } d \ge 3 \text{ or } (d = 2, \alpha < 2), \\
            c \ell(L)/\log \log \ell(L) & \text{ if } d = \alpha = 2, \\
            c \ell(L)/\log \ell(L) & \text{ if } d = 2, \alpha > 2,
            \end{cases}
        \end{align*}

        Given that $\ell(L) \gg \log L \log \log L$, we have
        \begin{align*}
            \P_{L}^+\left( \exists \, x \in \mathbb{T}^d_L \text{ such that } x \notin \mathcal{R}_{\mathcal{A}^{(1)}}^{(2)} \, \middle\vert \, \mathcal{E} \right) &\le L^d \exp\left(-c' \lambda \ell(L) / f(\ell(L))\right) \rightarrow 0,
        \end{align*}
        where $f(\ell(L))$ is either 1, $\log \log \ell(L)$, or $\log \ell(L)$ depending on the dimension and $\alpha$. 

        Therefore, on the event $\mathcal{E} \cap \{ x \in \mathcal{R}_{\mathcal{A}^{(1)}}^{(2)} \text{ for all } x \}$, all vertices in $\mathbb{T}^d_L$ are activated by time $(\log L)^{\Delta + \epsilon/2} + \ell(L)$ if $\alpha \in (0, d)$ and by time $C_2 L + \ell(L)$ if $\alpha \ge d$.
        Since $\ell(L) \le (\log L)^{\Delta}$, we have $(\log L)^{\Delta + \epsilon/2} + \ell(L) \le (\log L)^{\Delta + \epsilon}$ for all sufficiently large $L$. Also, $(\log L)^{\Delta} = o(L)$, so we may absorb $\ell(L)$ into the constant in the case $\alpha \ge d$.
    \end{proof}

\section{Lower Bounds on Activation Time}
\label{sec:activation_LB}
In Section~\ref{sec:activation_UB}, we established upper bounds on the time needed to activate a large fraction of the vertices in $\T^d_L$ when the lifespan is large but fixed.
These bounds are polylogarithmic when $\alpha \in (0, d)$ and linear when $\alpha \ge d$.

In this section, we complement these findings by deriving corresponding lower bounds.
In Section~\ref{sec:activation_LB_small_alpha}, we prove a polylogarithmic lower bound with a matching exponent for the activation time in the regime $\alpha \in (0, d)$.
In Section~\ref{sec:activation_LB_large_alpha}, we outline how to adapt the approach of \cite{berger2004lower}, originally developed for long-range percolation, to obtain a linear lower bound on the activation time when $\alpha > d$. 
Lower bounds on $\mathrm{AT}$ in the following two cases remain open: (1) $\alpha = d$ with finite lifespan (see 
Conjecture~\ref{Conj: alpha_equals_d}); (2) infinite lifespan (i.e., the SI frog model).
Throughout this section, we use $|x|$ to denote the $\ell^\infty$-norm of $x \in \Z^d$.

\subsection{Case I: $0 < \alpha < d$} 
\label{sec:activation_LB_small_alpha}
In this subsection, we establish Theorem~\ref{MainThm: LowerBound_Smaller_d} by proving the following stronger result, Theorem~\ref{Thm: LowerBound_Smaller_d}.
Let $B(L) := [-L, L]^d \cap \Z^d$.
We remind the reader that 
$$\Delta^{-1} := \log_2(\frac{2d}{\alpha + d}).$$

\begin{theorem} \label{Thm: LowerBound_Smaller_d}
    Let $d \geq 1$, $\lambda > 0$, $\ell \in \N^+$, $\alpha \in (0, d)$, and $Q \in \mathcal{H}_{\alpha, K}$. 
    Then, there exists a positive constant $c_1 = c_1(d, \alpha, K, \lambda, \ell)$ such that
\begin{align}
    \lim_{M \rightarrow \infty} \bP \left( \min_{x : |x| \ge M} \frac{\mathrm{AT}_\ell(x)}{(\log |x|)^\Delta} \le c_1  \right) = 0. 
\end{align}
\end{theorem}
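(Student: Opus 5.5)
Since $\mathrm{AT}_\ell \ge \mathrm{D}_{\lambda,\ell}$ by \eqref{eq: relation_between_AT_and_D}, it suffices to prove the same statement for the infection distance $\mathrm{D}_{\lambda,\ell}(o,x)$. We work with the dependent directed graph on $\Z^d$ in which $v\to u$ whenever $u\in\mathcal{R}_{v,\lambda}(\ell)$. The plan is to adapt the lower-bound argument for chemical distance in long-range percolation with exponent $s=\alpha+d\in(d,2d)$ (Biskup's hierarchical argument, as in \cite{MR2850269}) to this graph. The key input is a first-moment bound on edges. The number of particles started at $v$ that visit $u$ within $\ell$ steps is Poisson with mean $\lambda\bfP_v(\tau_u\le\ell)\le C_\ell|u-v|^{-s}$. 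This holds because $\bfP_v(\tau_u\le \ell)\le \sum_{k\le\ell}Q^k(v,u)$, and a $k$-step heavy-tailed kernel still satisfies $Q^k(0,x)\le C_k|x|^{-(d+\alpha)}$ for fixed $k$. Hence $\bP(v\to u)\le C|u-v|^{-s}$ for $|u-v|\ge 1$.

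The structural statement to prove is the following. If $o$ reaches $x$ with $|x|=N$ by a chain of length $n$, then by an iterative halving argument (``hierarchies'') the chain must contain long edges at geometrically organized scales. Concretely, a path of length $\le 2^k$ spanning distance $N$ either has its longest edge shorter than $N/2^{k}\cdot$const, which is impossible by the triangle inequality, or contains an edge $(z_1,z_2)$ such that the subpaths $o\to z_1$ and $z_2\to x$ have at most $2^{k-1}$ steps each. Iterating yields a binary hierarchy of depth $k$. It consists of $2^k$ ``gaps'' (pairs $z_{2i-1},z_{2i}$ with $|z_{2i}-z_{2i-1}|$ small relative to the parent edge) and $2^k-1$ long edges $z_{2i}\to z_{2i+1}$. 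Following Biskup, one then requires at each level that the gap be at most $|\text{edge}|^{\gamma}$ with $\gamma\in(s/2d,1)$. Otherwise the gap itself spans a distance larger than a power of the edge length, and one recurses on it. With $n\le c(\log N)^{\Delta}$ and $\gamma$ chosen so that $\log 2/\log(1/\gamma)>$ a threshold slightly below $\Delta$, the depth $\log_2 n$ is too small for the scales $N,N^\gamma,N^{\gamma^2},\dots$ to reach $O(1)$. Hence some hierarchy with all edges at polynomially large scales must be present.

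Next we bound the expected number of such hierarchies by a union bound: sum over positions of the $2^{k+1}$ endpoints, with gaps constrained to balls of radius $N_j^\gamma$, of the product of edge probabilities $\prod C|z_{2i+1}-z_{2i}|^{-s}$. The main obstacle is that the edges are \emph{not independent}. The events $v\to u$ for different $u$ with the same source $v$ share particles, and a single particle's range is a dependent set. I would handle this by a BK-type disjoint-occurrence bound. The edges in a hierarchy can be taken to have distinct sources, since repeated sources can be shortcut, shortening the chain. Distinct sources use disjoint particle sets, and particles at distinct sites are independent by Poisson thinning. So the probability that a fixed collection of directed edges with distinct sources is simultaneously open is at most the product of the single-edge probabilities. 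With this factorization, the Biskup computation goes through verbatim with $s=\alpha+d$. It gives an expected count $\le \exp(-c(\log N)^{\cdot})$, summable over dyadic shells $N=2^j$ after also summing over $x$ with $|x|\in[N,2N)$, which contributes a factor $N^d$ absorbed by the superpolynomial decay.

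Finally, Borel--Cantelli over dyadic $N$ gives that almost surely only finitely many shells contain an $x$ with $\mathrm{D}(o,x)\le c_1(\log|x|)^\Delta$. Together with $\mathrm{D}(o,x)<\infty$ issues (infinite distance is fine), this yields that the probability of the event in Theorem~\ref{Thm: LowerBound_Smaller_d} with $\min_{|x|\ge M}$ tends to $0$ as $M\to\infty$. The delicate point to verify carefully is the choice of $\gamma$ and the depth bookkeeping so that the exponent obtained matches exactly $\Delta^{-1}=\log_2(2d/s)$. I expect to need $\gamma$ arbitrarily close to $s/2d$, and then to absorb the loss into the constant $c_1$ (or an $o(1)$ in the exponent removed via Biskup's finer argument).
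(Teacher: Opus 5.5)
Your reduction to $\mathrm{D}_{\lambda,\ell}$, the edge bound $\bP(v\to u)\le C|u-v|^{-s}$, and the independence of edges with distinct sources are all correct. The gap is that a hierarchy count with a fixed gap exponent cannot give the statement as written, which asks for the sharp form $\mathrm{D}_{\lambda,\ell}(o,x)>c_1(\log|x|)^{\Delta}$ with a fixed constant.

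Consider one level of the union bound, where the parent gap has span $\rho$ and the child gaps may span at most $\rho^{\gamma}$. Such a level contributes a factor of order $\rho^{2d\gamma}\rho^{-s}$. For $\gamma\in(s/2d,1)$, as you wrote, this factor blows up; that range belongs to the upper-bound construction. The factor is small only when $\gamma<s/2d$. For fixed $\gamma<s/2d$, the scales $N,N^{\gamma},N^{\gamma^2},\dots$ reach $O(1)$ after about $\log\log N/\log(1/\gamma)$ levels. So you can only rule out chains of length up to $(\log N)^{\log 2/\log(1/\gamma)}$, and $\log2/\log(1/\gamma)<\log2/\log(2d/s)=\Delta$ strictly.

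The plan therefore gives $(\log|x|)^{\Delta-\epsilon}$ for every $\epsilon>0$, and the loss sits in the exponent. It cannot be absorbed into $c_1$, since $(\log|x|)^{\Delta-\epsilon}=o((\log|x|)^{\Delta})$. The ``finer argument'' you allude to is a different induction, not a refinement of this count. Your deterministic dichotomy is also only sketched: what to do when a gap exceeds its allowed span, and the ``either/or'' about the middle edge.

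The paper instead runs the Trapman/Biskup induction on two-point probabilities.

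1. It works on the finite product space whose coordinates are the site ranges $\mathcal{R}_{v,\lambda}(\ell)\cap B(L_0)$. A chain of length at most $m$ from $o$ to $x$ has a first edge $y\to z$ with $|y-z|\ge |x|/m$.
2. Reimer's inequality applied to $\{\widetilde{\mathrm{D}}(o,y)\le j\}\circ\{y\to z\}\circ\{\widetilde{\mathrm{D}}(z,x)\le m-j-1\}$ gives
$\bP(\mathrm{D}(o,x)\le m)\le \lambda C (m/|x|)^{s}\sum_j \E|B_j|\,\E|B_{m-1-j}|$.
Here factorization is needed for \emph{path} events, not just single edges, which is why Reimer rather than plain independence is used.
3. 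The ansatz $\bP(\mathrm{D}(o,x)\le n)\le (cM(n)/|x|)^{s}$ converts into $\E|B_j|\lesssim M(j)^d$.
4. With $M(n)\asymp (n+1)^{-p}e^{C'n^{1/\Delta}}$, the convolution inequality $\sum_j M(j)^dM(n-j)^d\le (n+1)^{-s}M(n+1)^{s}$ closes the induction with no loss. This works precisely because $2^{-1/\Delta}=s/2d$.
5. The result is $\bP(\mathrm{D}(o,x)\le m)\le C_1|x|^{-s}e^{C_2 m^{1/\Delta}}$. Taking $m=c_1(\log|x|)^{\Delta}$ with $C_2c_1^{1/\Delta}<\alpha$ gives the bound $C_1|x|^{-d-(\alpha-C_2c_1^{1/\Delta})}$, which is summable over $|x|\ge M$.
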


\begin{proof}[Proof of Theorem~\ref{MainThm: LowerBound_Smaller_d} using Theorem~\ref{Thm: LowerBound_Smaller_d}]
    Define 
    $$E_M := \left\{ \mathrm{AT}_\ell(x) > c_1 (\log |x|)^\Delta \text{ for all } x \text{ with } |x| \ge M \right\}.$$
    Observe that $E_1 \subseteq E_2 \subseteq \cdots$ and it follows from Theorem~\ref{Thm: LowerBound_Smaller_d} that $\bP( \cup_{M=1}^\infty E_M ) = 1$.
    Therefore, with probability 1, there exists some (random) $M_0$ such that $\mathrm{AT}_\ell(x) > c_1 (\log |x|)^\Delta$ for all $x$ with $|x| \ge M_0$.
    We prove Theorem~\ref{MainThm: LowerBound_Smaller_d} by taking $c = c_1/2$.
    First, the observation above implies that $\mathcal{A}(c_1 (\log L)^\Delta) \subseteq B(L)$ for all $L \ge M_0$.
    Moreover, we also have 
    \begin{align*}
    \mathcal{A}(t) 
        &\subseteq \{ x : |x| < M_0 \} \cup \{ x : |x| \ge M_0, \, \mathrm{AT}_\ell(x) \le t \} \\
        &\subseteq \{ x : |x| < M_0 \} \cup \{ x : |x| \ge M_0, \, |x| < \exp((t/c_1)^{1/\Delta}) \}
    \end{align*}
    for all $t > 0$. Let $t = \frac{c_1}{2} (\log L)^\Delta$. Then, for all sufficiently large $L$, we have $\mathcal{A}(t) \subseteq B(L^{1 - \epsilon})$ for some $\epsilon \in (0, 1)$ depending on $\Delta$. In particular, almost surely, we have $|\mathcal{A}(t)| = o(L^d)$ as $L \to \infty$. This completes the proof.
\end{proof}

It turns out to be more convenient to work with the infection distance $\mathrm{D}_{\ell}(o, x)$ defined in equation \eqref{def: infection_distance}. 
Recall that $\mathrm{D}_{\ell}(o, x)$ is the minimum number of particles used along an infection chain from the origin $o$ to $x$ when each particle has lifespan $\ell$. Moreover, $\mathrm{AT}_{\ell}(x) \ge \mathrm{D}_{\ell}(o, x)$ for all $x \neq o$.
The next proposition upper bounds the probability that the infection distance from the origin to $x$ is at most a given integer $m$. 
Its proof is adapted from Theorem 1.2 in Trapman \cite{MR2663638} and Theorem 3.1 in Biskup \cite{MR2850269}, 
which study the chemical distance in long-range percolation on $\Z^d$. 
Compared with the original arguments, our setting involves two additional difficulties:
(1) we need a suitable way to apply the van den Berg--Kesten--Reimer inequality to the frog model; and
(2) in the frog model, the directed edges are not mutually independent.

\begin{proposition} \label{Prop: LB_main}
    Let $d \geq 1$, $\lambda > 0$, $\ell \in \N^+$, $\alpha \in (0, d)$, and $Q \in \mathcal{H}_{\alpha, K}$.
    There exist positive constants $C_1$, $C_2$, and $M$ (depending on $d, \alpha, K, \lambda, \ell$) such that, for all $m \ge M$ and all $x \in \Z^d$ with $x \neq o$, we have
    \begin{align}
        \bP(\mathrm{D}_{\ell}(o, x) \le m) \le \frac{C_1}{|x|^{d + \alpha}} e^{C_2 m^{1/\Delta}}. \label{ineq: LB_main_thm}
    \end{align}
\end{proposition}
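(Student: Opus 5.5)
We follow the hierarchical argument of Trapman and of Biskup (Theorem 3.1 in \cite{MR2850269}): we bound the probability that there is a $(\lambda,\ell)$-chain of length at most $m$ from $o$ to $x$, by induction over dyadic values of $m$. The underlying one-edge estimate is
\[
\bP(u\to v)\le \lambda\,\bfP_u(\tau_v\le \ell)\le C_\ell |u-v|^{-(d+\alpha)}.
\]
It follows from a union bound over the $\ell$ steps of a single walker together with Poisson thinning. Write $s=d+\alpha\in(d,2d)$. The target is a bound of the form $\bP(\mathrm{D}_\ell(o,x)\le 2^n)\le C_1 e^{C_2 2^{n/\Delta}}|x|^{-s}$. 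Since $2^{1/\Delta}=2d/s$, this amounts to showing that each doubling of the chain length multiplies the constant in front of $|x|^{-s}$ at most by a factor whose logarithm grows like $(2d/s)^n$.

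\textbf{Key decomposition.} Fix a chain $o=v_0,\dots,v_{k}=x$ with $k\le 2m$, and look at the edge $(v_{j-1},v_j)$ that straddles the midpoint. Then $\mathrm{D}(o,v_{j-1})\le m$ and $\mathrm{D}(v_j,x)\le m$, and at least one of the three displacements $v_{j-1}-o$, $v_j-v_{j-1}$, $x-v_j$ has length at least $|x|/3$. We sum over $y=v_{j-1}$ and $z=v_j$. To split the probability of the event
\[
\{\mathrm{D}(o,y)\le m\}\circ\{y\to z\}\circ\{\mathrm{D}(z,x)\le m\},
\]
we use a BK--Reimer inequality. This requires choosing the witnesses with care. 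The dependence between edges comes only from sharing a source vertex: the edges $u\to v$ and $u\to w$ both use particles from $\cW_u$. We therefore represent the configuration as a product space indexed by particles $(u,i)$, each carrying the independent data (presence, trajectory). An edge $u\to v$ is then witnessed by a single particle at $u$. In a shortest chain we can discard redundant vertices, so the sub-chains use distinct source vertices and hence disjoint particle sets. Reimer's inequality on this (countable, truncated) product space then gives
\[
\bP(\cdot)\le \bP(\mathrm{D}(o,y)\le m)\,\bP(y\to z)\,\bP(\mathrm{D}(z,x)\le m).
\]

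\textbf{Recursion.} Let $a_m(r)=\sup_{|x|\ge r}\bP(\mathrm{D}(o,x)\le m)$, or better the pointwise version $p_m(x)$. Combining the decomposition with Reimer's inequality yields
\[
p_{2m}(x)\le \sum_{y,z} p_m(y)\,C|z-y|^{-s}\,p_m(x-z).
\]
Inserting the induction hypothesis $p_m(y)\le \min(1,c_m|y|^{-s})$, we must evaluate a convolution of three power laws, each truncated at $1$. The unique dominant contribution comes from configurations where $y$ and $x-z$ stay within the scale $R_m=c_m^{1/s}$ at which $p_m$ saturates, and the long edge carries the distance $|x|$. This gives
\[
p_{2m}(x)\lesssim R_m^{2d}|x|^{-s}=c_m^{2d/s}|x|^{-s}.
\]
The remaining regions of the sum (where $y$ or $x-z$ is long) produce terms of the form $c_m\cdot(\text{bounded sum})\cdot|x|^{-s}$, which are smaller because $s>d$ makes $\sum|y|^{-s}$ converge. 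Hence $c_{2m}\le C c_m^{2d/s}$. Iterating gives $\log c_{2^n}\lesssim (2d/s)^n=2^{n/\Delta}$, which is the desired $e^{C_2 m^{1/\Delta}}$. Non-dyadic $m$ are handled by monotonicity in $m$.

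\textbf{Main obstacle.} The main difficulty is the Reimer step. The event $\{\mathrm{D}\le m\}$ is neither increasing nor local in an obvious product structure. In addition, a particle from $u$ can witness $u\to v$ for several targets, which creates correlations between the three pieces. The first approach to try is the particle-indexed product space described above, with finitely many particles per site by truncating at a large number $N$ and letting $N\to\infty$. Alternatively, we can dominate the frog connections by a Poisson-type model in which each particle contributes independent edge indicators. If Reimer's inequality turns out to be awkward in this setting, we can instead use the monotone BK inequality: enlarging the configuration only creates more chains, so all three events are increasing in the particle presences and trajectories viewed as added points of a Poisson process. The BK inequality for Poisson point processes (van den Berg) then applies directly.
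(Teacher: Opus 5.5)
\textbf{The gap is in the recursion $c_{2m}\le C c_m^{2d/s}$.} The regions you dismiss as subdominant are in fact the dominant ones.

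Take the region $|y|\ge |x|/3$. After bounding $p_m(y)\le 3^s c_m|x|^{-s}$, both $y$ and $z$ are still being summed:
\[
\sum_{|y|\ge |x|/3}\ \sum_{z\neq y} p_m(y)\,C|z-y|^{-s}\,p_m(x-z)\;\le\;3^s c_m|x|^{-s}\cdot C'\sum_{w}p_m(w)=3^s C' c_m|x|^{-s}\,\E|B_m| .
\]
The sum $\E|B_m|$ is not bounded. Your hypothesis only controls it by $R_m^d=c_m^{d/s}$, so this region contributes $c_m^{1+d/s}|x|^{-s}$. Since $(1+d/s)-2d/s=\alpha/s>0$, this exceeds the middle-edge term $c_m^{2d/s}|x|^{-s}$.

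A finer treatment of that region does not help. The only information you carry is the envelope $p_m(w)\le\min(1,c_m|w|^{-s})$. For that envelope, taking $z$ within $R_m$ of $x$ and $y$ within $O(1)$ of $z$ already gives order $R_m^d\,c_m|x|^{-s}$. In words, the union bound over the midpoint counts the event ``the chain is already near $x$ after $m$ steps'' roughly $R_m^d$ times.

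What the midpoint split actually gives is $c_{2m}\le Cc_m^{1+d/s}$. That yields only $\bP(\mathrm{D}_\ell(o,x)\le m)\le C|x|^{-s}\exp(Cm^{\log_2(1+d/s)})$. Since $\log_2(1+d/s)>\log_2(2d/s)=\Delta^{-1}$, the resulting lower bound on $\mathrm{D}$ has exponent strictly smaller than $\Delta$.

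\textbf{The missing idea} (Trapman, Biskup, and the paper's Lemma~\ref{lemma: LB_lemma_1}) is to split not at the midpoint but at the first edge $y\to z$ with $|y-z|\ge |x|/m$. Such an edge exists by the triangle inequality.

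With this split, the decay in $|x|$ always sits on the edge: $\bP(y\to z)\le C\lambda (m/|x|)^{s}$ uniformly. Both half-chains are then summed freely, giving
\[
\bP(\mathrm{D}_\ell(o,x)\le m)\le \lambda C(m/|x|)^s\sum_{j<m}\E|B_j|\,\E|B_{m-j-1}|.
\]
The price is only the polynomial factor $m^s$.

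Because the split index $j$ is arbitrary, a dyadic induction no longer closes. Instead one inducts $m\to m+1$:
\begin{itemize}
\item The hypothesis $\bP(\mathrm{D}\le j)\le (M(j)/|x|)^s$ is turned into $\E|B_j|\le C_0M(j)^d$ (Lemma~\ref{lemma: LB_lemma_2}).
\item The induction is closed with Biskup's inequality $\sum_{j\le m}M(j)^dM(m-j)^d\le (m+1)^{-s}M(m+1)^s$ for $M(m)\propto (m+1)^{-p}e^{C'm^{1/\Delta}}$ (Lemma~\ref{lemma: LB_lemma_3}).
\end{itemize}
In that inequality, concavity of $t\mapsto t^{1/\Delta}$ makes the balanced split the worst case. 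The identity $2^{1/\Delta}=2d/s$ is exactly what balances it, and the factor $(m+1)^{-p}$ absorbs $m^s$.

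Your Reimer step is workable but more elaborate than necessary. The paper uses the finite product over $u\in B(L_0)$ with coordinate $\mathcal{R}_u\cap B(L_0)$. A self-avoiding chain uses each source vertex once, so the three pieces have disjoint vertex witnesses, and Reimer's inequality applies to arbitrary events.
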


\begin{proof}[Proof of Theorem~\ref{Thm: LowerBound_Smaller_d} given Theorem~\ref{Prop: LB_main}]
    Let $m = c_1 (\log |x|)^{\Delta}$ for some $c_1 > 0$ to be chosen later. Then, by \eqref{ineq: LB_main_thm}, for all $|x|$ larger than some positive constant $M$, we have that 
    \begin{align} \label{ineq: LB_main}
        \bP\left(\mathrm{AT}_{\ell}(x) \le c_1 (\log |x|)^{\Delta}\right) \le \bP\left( \mathrm{D}_{\ell}(o, x) \le c_1 (\log |x|)^{\Delta} \right) \le C_1 {|x|}^{C_2 c_1^{1/\Delta} - (d + \alpha)}.
    \end{align}
    Choose $c_1 > 0$ such that $C_2 c_1^{1/\Delta} < \alpha$. Then, by using a union bound over all vertices $x$ on $\Z^d$ with $|x| \ge M$, we obtain 
    \begin{align*}
        1 - \bP \left( E_M \right) \le \sum_{k = M}^{\infty} C(d) \, C_1 \, k^{C_2 c_1^{1/\Delta} - (d + \alpha)} k^{d - 1} \le C' M^{C_2 c_1^{1/\Delta} - \alpha},
    \end{align*}
    where $C(d)$ is a dimensional constant such that the term $C(d) k^{d-1}$ is an upper bound for the number of vertices in $\Z^d$ with $|x| = k$ and $C'$ is a positive constant depending on $d, \alpha, K, \lambda,$ and $\ell$. The right-hand side converges to 0 as $M$ goes to infinity. This completes the proof.
\end{proof}

We now prove Proposition~\ref{Prop: LB_main}, which relies on Reimer's inequality \cite{MR1751301}. 
To apply Reimer's inequality, we first set up an finite product space by restricting to infection chains using only frogs within a large but finite box. 
Fix a finite box $B(L_0) := [-L_0, L_0]^d \cap \Z^d$.
Recall that $\mathcal{R}_{x, \ell, \lambda}$ is the set of vertices visited by at least one particle started from $x$; equivalently, it is the union of the ranges of $\ell$-step random walks initiated by $\Pois(\lambda)$ particles at $x$.
Let $\Omega_{x}$ denote the collection of all possible subsets of $B(L_0)$ that contain $x$, where each $\omega_x \in \Omega_x$ corresponds to a possible realization of $\mathcal{R}_{x, \ell, \lambda} \cap B(L_0)$. Let $\mathcal{F}_x$ be the power set of $\Omega_x$.  
Let $\bQ_x$ be the probability measure given by the law of $\mathcal{R}_{x, \ell, \lambda} \cap B(L_0)$.
For each $x \in B(L_0)$, we have a probability space $(\Omega_x, \mathcal{F}_x, \bQ_x)$.
Then, let $(\Omega_{L_0}, \mathcal{F}_{L_0}, \bQ_{L_0})$ be the product space over all $x \in B(L_0)$. 
For any $\omega \in \Omega_{L_0}$ and $S \subseteq B(L_0)$, define the cylinder set
\[C(\omega, S) := \{ \omega' \in \Omega_{L_0} : \omega_x' = \omega_x \text{ for all } x \in S \}.
\]
Let $A, B \subseteq \Omega_{L_0}$ be two events. Then, the disjoint occurrence of $A$ and $B$, denoted by $A \circ B$, is defined as 
\begin{align*}
    A \circ B := \left\{ \omega \in \Omega_{L_0} : \exists S \subseteq B(L_0) \text{ such that } C(\omega, S) \subseteq A, C(\omega, S^c) \subseteq B \right\},
    \end{align*}
where $S^c := B(L_0) \setminus S$.
Reimer's inequality states that
\begin{align}
    \bQ_{L_0}(A \circ B) \le \bQ_{L_0}(A) \, \bQ_{L_0}(B). \label{ineq: Riemer_inequality}
\end{align}
On $\Omega_{L_0}$, we define the \textbf{$L_0$-approximate infection distance} $\widetilde{\mathrm{D}}(x, y) = \widetilde{\mathrm{D}}(x, y; L_0)$ between any two vertices $x, y \in B(L_0)$ as the number of edges traversed by the shortest directed path from $x$ to $y$ in the random directed graph on $B(L_0)$ whose directed edges are given by
\[
u \xrightarrow[]{} v \quad \Longleftrightarrow \quad v \in \omega_u,
\qquad u,v\in B(L_0), u \neq v.
\]
Note that $u \rightarrow v$ is also an event defined on the original probability space. The relevant probability space will be clear from the context whenever we write $u \rightarrow v$.
Note that we use the same notation in both cases since $\bQ_{L_0}(u \rightarrow v) = \bP(u \rightarrow v)$ for any $u, v \in B(L_0)$. It is straightforward to verify that $\widetilde{\mathrm{D}}(x, y; L_0)$ converges almost surely to $\mathrm{D}_{\ell}(x, y)$ as $L_0 \to \infty$.

\begin{lemma} \label{lemma: LB_lemma_1}
    Let $B_k := \{x \in \Z^d : \mathrm{D}_{\ell}(o, x) \le k \}$. Then, there exists a positive $C = C(d, \alpha, K, \ell)$ such that for any $m \ge 1$ and any $x \in \Z^d$ with $x \neq o$, we have
    \begin{align*}
        \P \left( \mathrm{D}_\ell(o, x) \le m \right) \le \lambda C \left(\frac{m}{|x|}\right)^{d + \alpha} \, \sum_{j=0}^{m-1} \mathbb{E} [|B_j|] \ \mathbb{E} [|B_{m-j-1}|].
    \end{align*}
\end{lemma}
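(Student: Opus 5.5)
We prove the bound first for the $L_0$-approximate distance $\widetilde{\mathrm{D}}(o,x;L_0)$ on the finite product space $(\Omega_{L_0},\mathcal{F}_{L_0},\bQ_{L_0})$. We then let $L_0\to\infty$ using $\widetilde{\mathrm{D}}(o,x;L_0)\downarrow \mathrm{D}_\ell(o,x)$ a.s. and monotone convergence of $\bE|B_j|$.

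\textbf{Step 1: locating a long edge.} Suppose $\widetilde{\mathrm{D}}(o,x)\le m$ and fix a shortest chain $o=v_0\to v_1\to\cdots\to v_k=x$ with $k\le m$. Since $\sum_i |v_{i}-v_{i-1}|\ge |x|$, some step $j$ satisfies $|v_{j+1}-v_j|\ge |x|/k\ge |x|/m$. The chain splits into three pieces:
\begin{itemize}
\item a chain from $o$ to $u:=v_j$ of length $j$, using only the sets $\omega_{v_0},\dots,\omega_{v_{j-1}}$;
\item the edge $u\to w$ with $w:=v_{j+1}$, witnessed by $\omega_u$;
\item a chain from $w$ to $x$ of length $k-j-1$, using $\omega_{v_{j+1}},\dots,\omega_{v_{k-1}}$.
\end{itemize}
Because the chain is shortest, its vertices are distinct. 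The three witnessing index sets are therefore disjoint, and on this event
\[
\{\widetilde{\mathrm D}(o,u)\le j\}\circ\{u\to w\}\circ\{\widetilde{\mathrm D}(w,x)\le m-j-1\}
\]
occurs, for some $j\in\{0,\dots,m-1\}$ and some pair $u,w$ with $|u-w|\ge|x|/m$.

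\textbf{Step 2: union bound and Reimer.} Taking a union bound over $j,u,w$ and applying Reimer's inequality \eqref{ineq: Riemer_inequality} twice gives
\[
\bQ_{L_0}(\widetilde{\mathrm D}(o,x)\le m)\le\sum_{j=0}^{m-1}\sum_{|u-w|\ge |x|/m}\bQ(\widetilde{\mathrm D}(o,u)\le j)\,\bP(u\to w)\,\bQ(\widetilde{\mathrm D}(w,x)\le m-j-1).
\]
By Poisson thinning, $\bP(u\to w)\le\lambda\,\bfP_u(\tau_w\le\ell)$. A union bound over the $\ell$ steps together with the heavy-tail kernel bound gives
\[
\bfP_u(\tau_w\le \ell)\le C(\ell,K)\,|u-w|^{-(d+\alpha)}\le C\,(m/|x|)^{d+\alpha}.
\]
Here a single large jump is needed; more carefully, $\sum_{n\le\ell}Q^n(u,w)\le C_\ell|u-w|^{-(d+\alpha)}$, since an $n$-step path covering distance $r$ must contain a jump of size at least $r/n$. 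This is the one routine estimate to check.

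\textbf{Step 3: summing out $u$ and $w$.} After bounding the edge factor uniformly, the sum factorizes:
\[
\sum_u\bQ(\widetilde{\mathrm D}(o,u)\le j)\,\sum_w\bQ(\widetilde{\mathrm D}(w,x)\le m-j-1).
\]
The first sum is at most $\bE|B_j|$, since $\widetilde{\mathrm D}\ge \mathrm D_\ell$ and so the approximate ball is contained in the true one. For the second sum, translation invariance gives $\bP(\mathrm D(w,x)\le i)=\bP(\mathrm D(o,x-w)\le i)$. Summing over $w$ then yields $\bE|B_i|$ with $i=m-j-1$. Note that this is the ball of vertices that can reach $o$ rather than those reachable from it, so the directed nature of the chains enters here.

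\textbf{Main obstacle.} The hard step is justifying that $\mathrm D(w,x)$ and $\mathrm D(x,w)$ have the same law, so that the reversed ball has the same mean as $B_i$. I would obtain this from the symmetry of $Q$: a walk from $w$ hitting $v$ within $\ell$ steps, reversed in time, is a walk from $v$ hitting $w$. The cleanest route is to compare $\bE$ of the number of chains directly. The expected number of length-$i$ chains from $w$ to $x$ is bounded by a sum of products of the edge probabilities $\lambda\bfP(\tau\le\ell)$, which are symmetric up to a constant, using $\bfP_a(\tau_b\le\ell)\le \sum_{n\le\ell}Q^n(a,b)$ and $\bfP_a(\tau_b\le \ell)\ge \ell^{-1}\sum_{n\le\ell}Q^n(a,b)/G_\ell(o,o)$. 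Failing an exact symmetry, I would accept a constant loss and absorb it into $C$.
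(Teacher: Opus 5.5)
Your proof is correct and is essentially the paper's argument. You extract an edge of length at least $|x|/m$ from a self-avoiding chain in the $L_0$-box, apply Reimer's inequality to the three disjointly witnessed pieces, and bound that edge by $\lambda C(m/|x|)^{d+\alpha}$. Your one-big-jump estimate works provided you bound the big jump's kernel value pointwise, $Q(a,b)\le K(n/|u-w|)^{d+\alpha}$, and sum out the other transitions, rather than bounding its tail probability. You then factor the sum and let $L_0\to\infty$.

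The ``main obstacle'' you raise is not an obstacle. Your own identity $\bP(\mathrm{D}_\ell(w,x)\le i)=\bP(\mathrm{D}_\ell(o,x-w)\le i)$, summed over $w$, already gives $\sum_{v}\bP(\mathrm{D}_\ell(o,v)\le i)=\bE|B_i|$. That is the forward ball, so no reversal symmetry of $\mathrm{D}_\ell$ is needed. This is a cleaner justification than the paper's appeal to $\bfP_u(\tau_v\le\ell)=\bfP_v(\tau_u\le\ell)$. You should also drop the chain-counting fallback, since an expected number of chains is not controlled by $\bE|B_i|$.
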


\begin{proof}
    Fix some $L_0 \ge |x|$.
    If $\widetilde{\mathrm{D}}(o, x) \le m$, then there is a self-avoiding directed path from $o$ to $x$ with at most $m$ edges in the random directed graph on $B(L_0)$ defined above. In particular, there is an edge on the path, say $y \rightarrow z$, such that $|y - z| \ge |x|/m$.
    If the first such edge occurs at the $j+1$-th step, then there exist two directed paths using disjoint sets of vertices, one from $o$ to $y$ and the other from $z$ to $x$, such that they use at most $j$ and $m - j - 1$ edges, respectively. Therefore, we can upper bound $\bQ_{L_0} ( \widetilde{\mathrm{D}}(o, x) \le m )$ by
    \begin{align}
       &\sum_{j = 0}^{m-1} \sum_{\substack{y, z \in B(L_0) \\ |y-z| \ge |x| / m}} \bQ_{L_0} \left( \left\{ \widetilde{\mathrm{D}}(o,y) \le j \right\} \circ \left\{ y \rightarrow z \right\} \circ \left\{ \widetilde{\mathrm{D}}(z,x) \le m - j - 1 \right\} \right) \nonumber \\
       \le &\sum_{j = 0}^{m-1} \sum_{\substack{y, z \in B(L_0) \\ |y-z| \ge |x| / m}} \bQ_{L_0} \left( \left\{ \widetilde{\mathrm{D}}(o,y) \le j \right\} \right) \, \bP \left( y \rightarrow z \right) \, \bQ_{L_0} \left( \left\{ \widetilde{\mathrm{D}}(z,x) \le m - j - 1 \right\} \right), \label{LB_lemma_1_line00}
    \end{align}
    where we used Reimer's inequality. 
    Note that $\bP \left( y \rightarrow z \right) = 1 - \exp(-\lambda \bfP_y(\tau_z \le \ell))$. 
    Moreover, by Lemma~\ref{lemma: hitting_probability_upper_bound_on_Z}, there exists a constant $C = C(d, \alpha, K, \ell)$ such that $\bfP_y(\tau_z \le \ell) \le C |y - z|^{-(d + \alpha)}$. Therefore, for all $y, z$ satisfying $|y - z| \ge |x|/m$, we have $\bP \left( y \rightarrow z \right) \le \lambda C (|x|/m)^{-(d + \alpha)}$. By substituting this bound into \eqref{LB_lemma_1_line00}, we obtain
    \begin{align*}
        \bQ_{L_0} \left( \widetilde{\mathrm{D}}(o, x) \le m \right) &\le \lambda C \left( \frac{|x|}{m} \right)^{-(d + \alpha)} \sum_{j=0}^{m-1} \E_{\bQ_{L_0}} \left[ \, \left|\widetilde{B}_j(o) \right| \, \right] \, \E_{\bQ_{L_0}} \left[ \, \left| \widetilde{B}_{m - j - 1}(x) \right| \, \right],
    \end{align*}
    where $\widetilde{B}_k(x) := \{ y \in B(L_0) : \widetilde{\mathrm{D}}(x, y) \le k \}$. Note that, in the last line, we used the fact that $\bfP_u(\tau_v \le \ell) = \bfP_v(\tau_u \le \ell)$ for any $u, v \in \Z^d$, which allows us to exchange the roles of $x$ and $z$ in equation \ref{LB_lemma_1_line00}. Moreover, it is clear that $\widetilde{\mathrm{D}}(u, v; L_0)$ stochastically dominates $\mathrm{D}_{\ell}(u, v)$ for any $u, v \in B(L_0)$. Therefore, we always have $\E_{\bQ_{L_0}} [ |\widetilde{B}_k(u)| ] \le \E [|B_k| ]$ for any $u \in B(L_0)$ and any $k \in \N^+$. By taking $L_0 \to \infty$, we complete the proof.
\end{proof}

The next lemma is a direct computation that relates estimates on the probability of $\left\{ \mathrm{D}_{\ell}(o, x) \le k \right\}$ to the expected volume of the ball $B_k$. 
\begin{lemma} \label{lemma: LB_lemma_2}
    There exists a positive constant $C_0 = C_0(\alpha, d)$ such that for any $k \ge 1$ and $M > 0$, if
    $\bP \left( \mathrm{D}_{\ell}(o, x) \le k \right) \le (M/|x|)^{\alpha + d}$ holds for all $x \in \Z^d$ with $x \neq o$, then $\mathbb{E} [|B_k|] \le C_0 M^d$.
\end{lemma}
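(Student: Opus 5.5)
The plan is to write $\mathbb{E}[|B_k|]$ as a sum of probabilities over $x\in\Z^d$ and apply the hypothesis pointwise, truncated by the trivial bound $1$:
\begin{align*}
    \mathbb{E}[|B_k|] = \sum_{x \in \Z^d} \bP\left(\mathrm{D}_\ell(o,x) \le k\right) \le 1 + \sum_{x \neq o} \min\left\{1, \left(\frac{M}{|x|}\right)^{\alpha+d}\right\}.
\end{align*}
The term $1$ accounts for $x=o$, which always lies in $B_k$. Neither $k$ nor $\ell$ enters the estimate, which is why $C_0$ depends only on $\alpha$ and $d$.

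Next I would split the sum according to whether $|x| \le M$ or $|x| > M$, organising it by $\ell^\infty$-shells. The number of $x$ with $|x| = j$ is at most $C(d) j^{d-1}$, the same dimensional constant used earlier in the paper.

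The inner region has at most $(2M+1)^d$ points, each contributing at most $1$.

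For the outer region,
\begin{align*}
    \sum_{j > M} C(d) j^{d-1} \frac{M^{d+\alpha}}{j^{d+\alpha}} = C(d) M^{d+\alpha} \sum_{j > M} j^{-1-\alpha} \le C(d, \alpha) M^{d+\alpha} M^{-\alpha} = C(d,\alpha) M^d.
\end{align*}
Here I compare the series with $\int_{M}^\infty t^{-1-\alpha}\,dt$. This requires a little care when $M<1$: then the sum runs over all $j\ge 1$ and is bounded by $\zeta(1+\alpha)M^{d+\alpha}$.

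The only mild obstacle is the regime of small $M$. There the additive $1$ coming from $x=o$ is not bounded by a multiple of $M^d$, so the claim needs a convention. I would assume $M \ge 1$, which is the case of interest, since Proposition~\ref{Prop: LB_main} uses a large $M$ of the form $e^{Cm^{1/\Delta}}$. Then $1+(2M+1)^d \le 4^d M^d$, and combining the two regions gives $\mathbb{E}[|B_k|] \le C_0 M^d$.

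If $M<1$ must also be covered, I would interpret the bound as $C_0\max\{M,1\}^d$. Alternatively, I would note that the hypothesis with $M<1$ applied at nearest neighbours combined with $k\ge1$ is only consistent with very small $M$, and handle that case separately. I do not expect this to matter for the application.
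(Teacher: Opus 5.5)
Your proof is correct and is essentially the paper's argument: write $\mathbb{E}[|B_k|]=\sum_x\bP(\mathrm{D}_\ell(o,x)\le k)$, bound each term with $|x|\le M$ by $1$, and sum the tail $(M/|x|)^{\alpha+d}$ over $|x|>M$, which gives $O(M^{-\alpha})\,M^{\alpha+d}=O(M^d)$. Your small-$M$ caveat points to a real flaw in the statement, not a cosmetic one: the paper's bound $\#\{|x|\le M\}\le C_1M^d$ silently needs $M\ge1$, and since $o\in B_k$ always while for $k=1$ the hypothesis already holds with $M=(\lambda C)^{1/(d+\alpha)}\to0$ as $\lambda\to0$ (with $C$ from Lemma~\ref{lemma: hitting_probability_upper_bound_on_Z}), the lemma must be read with $M\ge1$ or with $C_0\max\{M,1\}^d$ (your nearest-neighbour alternative would not rescue $C_0=C_0(\alpha,d)$, since it only bounds $M$ below by a constant depending on $\lambda,K,\ell$).
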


\begin{proof}
    We have 
    \begin{align*}
        \bE[|B_k|] = \sum_{x \in \Z^d} \bP(\mathrm{D}_{\ell}(o, x) \le k) \le \sum_{\substack{ x \in \Z^d \\ |x| \le M }} 1 + \sum_{\substack{x \in \Z^d \\ |x| > M}}  \left( \frac{M}{|x|} \right)^{\alpha + d}.
    \end{align*}
    Note that the first term is at most $C_1 M^d$, while the second term is at most $C_2 M^{-\alpha} M^{\alpha + d} = C_2 M^d$ for some positive constants $C_1, C_2$ depending on $\alpha$ and $d$. This completes the proof.
\end{proof}

The following lemma is taken from \cite{MR2850269}, Lemma 3.4. We refer the reader to \cite{MR2850269} for a proof. 

\begin{lemma} \label{lemma: LB_lemma_3}
    Let $s := \alpha + d$. For each $p > \frac{s + 1}{2d - s}$ and each $C_4 > 0$ there is $C_3 = C_3(C_4, p) \in (0, \infty)$ such that for each $C' \ge C_4$ the quantity 
    \begin{align}
    M(m) := \frac{1}{C_3} \ (m+1)^{-p} e^{C' m^{1/\Delta}}, \label{def: M_function}
    \end{align}
    satisfies 
    \begin{align*}
    \sum_{j=0}^{m} M(j)^d M(m-j)^d \le (m+1)^{-s} M(m+1)^s, 
    \end{align*}
    where $m \in \N^+$.
\end{lemma}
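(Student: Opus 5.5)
The plan is to compare the two sides term by term, with $\gamma:=1/\Delta=\log_2(2d/s)\in(0,1)$, using strict concavity of $t\mapsto t^\gamma$. The identity that drives everything is $2^{1-\gamma}=2\cdot\frac{s}{2d}=s/d$. By concavity, for $0\le j\le m$,
\begin{align*}
  d\bigl(j^\gamma+(m-j)^\gamma\bigr)\le d\cdot 2\,(m/2)^\gamma = d\,2^{1-\gamma}m^\gamma = s\,m^\gamma \le s\,(m+1)^\gamma .
\end{align*}
Hence the exponential factor $e^{C'd(j^\gamma+(m-j)^\gamma)}$ of each summand is at most the exponential factor $e^{C's(m+1)^\gamma}$ of $M(m+1)^s$. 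It then remains only to control polynomial factors and the constant $C_3$. Note that $C_3$ enters as $C_3^{-2d}$ on the left and $C_3^{-s}$ on the right. Since $2d>s$, enlarging $C_3$ gains a factor $C_3^{-(2d-s)}$, and this freedom will absorb all constants and all small values of $m$.

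I would split the sum into a \emph{central} range $m/4\le j\le 3m/4$ and the \emph{tails}.

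In the central range, $(j+1)^{-pd}(m-j+1)^{-pd}\le C\,(m+1)^{-2pd}$. Bounding the exponential by $e^{C's(m+1)^\gamma}$ and the number of terms by $m+1$, the central contribution is at most $C\,C_3^{-2d}(m+1)^{1-2pd}e^{C's(m+1)^\gamma}$. The right-hand side equals $C_3^{-s}(m+1)^{-s-ps}e^{C's(m+1)^\gamma}$. So it suffices that $1-2pd\le -s-ps$, i.e.\ $p(2d-s)\ge s+1$. This is exactly the hypothesis $p>\frac{s+1}{2d-s}$, and the leftover constant is absorbed by taking $C_3^{2d-s}\ge C$. (A sharper Gaussian count near $j=m/2$ would even save $m^{\gamma/2}$, but it is not needed.)

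In the tails, say $j<m/4$ by symmetry, strict concavity gives a uniform gap: there is $\eta=\eta(\gamma)>0$ with $j^\gamma+(m-j)^\gamma\le(1-\eta)\,2^{1-\gamma}m^\gamma$. The exponential factor is therefore at most $e^{C's m^\gamma}e^{-C'\eta s m^\gamma}$. The polynomial factors are at worst $1$ on the left versus $(m+1)^{-s(1+p)}$ on the right. The deficit $e^{-C'\eta s m^\gamma}\le e^{-C_4\eta s m^\gamma}$ beats $(m+1)^{s(1+p)+1}$ for all $m$ up to a constant $K(C_4,p)$ that is \emph{uniform in $C'\ge C_4$}. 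This uniformity is why $C_3$ may depend on $C_4$ but not on $C'$.

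The remaining worry is the finitely many small $m$ and the mismatch $e^{C's m^\gamma}$ versus $e^{C's(m+1)^\gamma}$. The latter only helps, since $(m+1)^\gamma\ge m^\gamma$ and $C'>0$. For small $m$, the ratio of the two sides is bounded by a function of $m,p,s,d$ only, once we use $d(j^\gamma+(m-j)^\gamma)\le s(m+1)^\gamma$, which holds for all $C'$. So taking $C_3$ large handles all $m\le K$.

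The main obstacle is bookkeeping uniformity in $C'$: every estimate must either use the concavity inequality, which is $C'$-free after dominating by $e^{C's(m+1)^\gamma}$, or use $C'\ge C_4$ monotonically. I would state the tail bound explicitly in terms of $C_4$ before fixing $C_3$.
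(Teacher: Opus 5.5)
Your argument is correct and complete. The identity $d\,2^{1-1/\Delta}=s$ combined with concavity bounds every exponential factor by $e^{C's(m+1)^{1/\Delta}}$. The crude count of at most $m+1$ terms in the window $m/4\le j\le 3m/4$ then requires exactly $p(2d-s)\ge s+1$. In the tails, the strict-concavity gap $e^{-C_4\eta s\,m^{1/\Delta}}$ beats all polynomial factors uniformly in $C'\ge C_4$, so $C_3$ depends only on $C_4$ and $p$.

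The paper does not prove this lemma itself; it imports it from Biskup's Lemma 3.4. Your proof is a valid self-contained replacement along the standard lines. The hypothesis $p>\frac{s+1}{2d-s}$ is precisely what the crude central count needs.

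There is one small slip. Since $M(m+1)^s$ carries the factor $(m+2)^{-ps}$ rather than $(m+1)^{-ps}$, the right-hand side is $C_3^{-s}(m+1)^{-s}(m+2)^{-ps}e^{C's(m+1)^{1/\Delta}}$. Using $(m+2)\le 2(m+1)$, this costs only a factor $2^{ps}$, which is absorbed into $C_3$.

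Your separate small-$m$ step is also unnecessary. The tail estimate already covers every $m\ge 1$, because
\[
\sup_{m\ge 1}\,(m+1)^{1+s}(m+2)^{ps}e^{-C_4\eta s\, m^{1/\Delta}}<\infty .
\]
It therefore suffices to take $C_3^{2d-s}$ at least twice this supremum and at least the constant coming from the central window.
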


\begin{proof}[Proof of Proposition~\ref{Prop: LB_main}]
    Let $s = \alpha + d$ and $q = \frac{1}{2d-s}$. 
    Fix some $p > \frac{s+1}{2d-s}$ and $C_4 > 0$.
    Let $C_3 = C_3(C_4, p)$ be the same as in Lemma~\ref{lemma: LB_lemma_3} and let $C' > C_4$ be some constant to be determined later. Then, we use the same $M(m)$ as defined in equation \eqref{def: M_function}.
    Lastly, let $C > 0$ be chosen such that $\bP(u \rightarrow v) \le \lambda C |u - v|^{-(d + \alpha)}$ for any $u, v \in \Z^d$ (see Lemma~\ref{lemma: hitting_probability_upper_bound_on_Z}), and let $C_0 = C_0(d, \alpha)$ be the same constant as in Lemma~\ref{lemma: LB_lemma_2}.

    We will prove by induction on $n$ that for any $n \ge 1$ and any $x \in \Z^d$ with $x \neq o$, we have
    \begin{align}
        \bP(\mathrm{D}_{\ell}(o, x) \le n) \le \left( \frac{(\lambda C)^{-q} C_0^{-2q} M(n)}{|x|} \right)^s. \label{eq: induction_hypo}
    \end{align}

    When $n = 1$, if we choose $C'$ such that $e^{C'} \ge 2^p C_3 C_0^{2q} (\lambda C)^{q + \frac{1}{s}}$, then \eqref{eq: induction_hypo} follows from $\bP(o \rightarrow x) \le \lambda C |x|^{-(d + \alpha)}$.
    Assume \eqref{eq: induction_hypo} holds for all $n \le m$. Then, by using Lemma~\ref{lemma: LB_lemma_1}, Lemma~\ref{lemma: LB_lemma_2}, and Lemma~\ref{lemma: LB_lemma_3} in order, we have
    \begin{align*}
        & \bP(\mathrm{D}_{\ell}(o, x) \le m+1) \, \le \, C \lambda \left(\frac{m+1}{|x|}\right)^{s} \sum_{j=0}^{m} \bE[|B_j|] \bE[|B_{m-j}|] \\
        \le \, & \left(\lambda C\right)^{1 - 2dq} C_0^{2 - 4dq} \, \left(\frac{m+1}{|x|}\right)^{s} \, \sum_{j=0}^{m} M(j)^d M(m-j)^d \\
        \le \, & \left( \lambda C \cdot C_0^{2} \right)^{1 - 2dq} \, \left( \frac{M(m+1)}{|x|} \right)^s.
    \end{align*}
    Since $q = 1/(2d - s)$, we have $1 - 2dq = -qs$. Thus, the induction step is complete and so is the proof of Proposition~\ref{Prop: LB_main}.
\end{proof}

\subsection{Case II: $\alpha > d$} 
\label{sec:activation_LB_large_alpha}
In this subsection, we focus on the case $\alpha > d$. Our main objective is to prove Theorem~\ref{Thm: lower_bound_alpha_ge_d}. We first state the main result of this subsection and show that it immediately implies Theorem~\ref{MainThm: lower_bound_alpha_ge_d}. 

\begin{theorem} \label{Thm: lower_bound_alpha_ge_d}
    Let $d \geq 1$, $\lambda > 0$, $\ell \in \N^+$, $\alpha > d$, $K > 0$, and $Q \in \mathcal{H}_{\alpha, K}$.
    Then, almost surely, we have
    \begin{align}
        \liminf_{|x| \rightarrow \infty} \frac{\mathrm{AT}_\ell(x)}{|x|}  > 0.
    \end{align}
\end{theorem}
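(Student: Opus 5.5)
We adapt Berger's argument for long-range percolation with $s>2d$ (\cite{berger2004lower}) to the frog model. We work with the infection distance $\mathrm{D}_\ell(o,x)$, since $\mathrm{AT}_\ell(x)\ge \mathrm{D}_\ell(o,x)$. The goal is to prove that there exist $c>0$ and $\kappa>0$ with
\begin{align*}
  \bP\big(\mathrm{D}_\ell(o,x)\le c|x|\big)\le C|x|^{-d-\kappa}
\end{align*}
for all large $|x|$. Summing over $x\in\Z^d$ and applying Borel--Cantelli then gives $\liminf_{|x|\to\infty}\mathrm{AT}_\ell(x)/|x|\ge c>0$ almost surely.

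The tool that replaces edge independence is the one already used in Lemma~\ref{lemma: LB_lemma_1}. We restrict to the finite product space $(\Omega_{L_0},\mathcal F_{L_0},\bQ_{L_0})$ whose coordinates are the ranges $\mathcal{R}_{x,\lambda}(\ell)\cap B(L_0)$, and apply Reimer's inequality~\eqref{ineq: Riemer_inequality} to disjoint occurrences of edges emanating from distinct vertices. A self-avoiding directed path uses each source vertex at most once. Hence the event that a fixed vertex sequence $(v_0,\dots,v_n)$ forms a chain is the disjoint occurrence of the events $\{v_{i-1}\to v_i\}$, and its probability is at most $\prod_i \lambda C|v_i-v_{i-1}|^{-(d+\alpha)}$ by Lemma~\ref{lemma: hitting_probability_upper_bound_on_Z}. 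Letting $L_0\to\infty$ afterwards reduces the problem to a first-moment bound for a model whose edge probabilities are dominated by $p(u,v)=\min\{1,\lambda C|u-v|^{-s}\}$ with $s=\alpha+d>2d$, in the BK sense. This is exactly the setting in which Berger's proof operates.

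Next we run Berger's renormalization. Fix a large $N$ and call an edge \emph{long} if its length exceeds $N$. A path from $o$ to $x$ with $n\le c|x|$ steps must cover $\ell^\infty$-distance $|x|$. Since short edges contribute at most $Nn\le cN|x|$, choosing $c<1/(2N)$ forces the long edges to contribute at least $|x|/2$. We then bound the expected number of self-avoiding chains of length $n$ whose long-edge displacement totals at least $|x|/2$. The short-edge part is controlled by the growth rate $\mu_N=\sum_{0<|z|\le N}\min(1,\lambda C|z|^{-s})\le C'N^d$, which is only polynomial in $N$. The long-edge part is controlled by a heavy-tailed sum: the expected number of long jumps with total displacement $\ge |x|/2$ behaves like $\sum_{k}\binom{n}{k}\big(\sum_{|z|>N}|z|^{-s}\big)^{k}$ times the tail probability. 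Berger handles this through a hierarchical (multi-scale) argument rather than this crude count. The crude count fails because the expected number of nearest-neighbour paths of length $n$ grows like $\mu^n$, so we cannot compare directly. In the multi-scale scheme, one declares a box at scale $N^k$ ``good'' if there is no chain with few steps crossing it, and uses $s>2d$ to show that the probability a box is bad decays fast enough, specifically as $N^{-k(s-2d)}$-type bounds, for the bad-box density to be summable across scales.

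The main obstacle is transporting Berger's scale-recursion, which uses independence of edges in disjoint regions, to the frog model. The difficulty is that each range $\mathcal{R}_{v,\lambda}(\ell)$ produces correlated edges out of a single vertex $v$. Here we would use two facts. First, the range has at most $\ell+1$ points per particle, and the number of particles at $v$ is Poisson, so the out-degree has exponential moments. Second, the events used in the recursion ("a chain crosses box $B$", "there is a long edge between boxes $B$ and $B'$") depend on the coordinates $\omega_v$ only for $v$ in the relevant boxes. Hence crossings of disjoint boxes are independent in the product space. Where a single vertex appears in several pieces of a path, we use Reimer's inequality as in Lemma~\ref{lemma: LB_lemma_1}. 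With these substitutions each step of Berger's induction should go through, with constants depending on $\lambda,\ell,K$. The recursion gives a tail bound $\bP(\mathrm{D}_\ell(o,x)\le c|x|)\le C|x|^{-(s-d)}$, and since $s-d=\alpha>d$ the sum over $x$ converges, completing the Borel--Cantelli step.
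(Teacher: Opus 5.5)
\textbf{There is a genuine gap at the core of the proposal.} The estimate you aim for, $\bP(\mathrm{D}_\ell(o,x)\le c|x|)\le C|x|^{-d-\kappa}$, is false in general, and so is the bound $C|x|^{-(s-d)}$ that you say the recursion yields. Consequently Borel--Cantelli over individual sites $x$ cannot work when $d<\alpha\le 2d$.

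To see this, take $d\ge 2$ and $\ell$ large, a case the theorem covers, and fix $c>0$. The renormalization of Section~\ref{Sec: BernPerc} gives the following, with probability bounded below:
\begin{itemize}
\item $o$ reaches a positive fraction of $B(o,c'|x|)$ within $O(c'|x|)$ steps;
\item a positive fraction of $B(x,c'|x|)$ reaches $x$ within $O(c'|x|)$ steps.
\end{itemize}
A single directed edge between these two sets has probability of order $(c'|x|)^{2d}|x|^{-(d+\alpha)}$, and all events involved are increasing. Taking $c'$ small relative to $c$, this gives $\bP(\mathrm{D}_\ell(o,x)\le c|x|)\gtrsim |x|^{2d-s}=|x|^{d-\alpha}$. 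Since $\sum_{x\in\Z^d}|x|^{d-\alpha}=\infty$ for $\alpha\le 2d$, no per-site summable bound is available. The per-site union bound loses a factor $|x|^d$: one long edge landing at distance $R$ makes order $R^d$ sites simultaneously close to $o$ in chemical distance. The bad events must therefore be grouped by scale, not by site. Separately, nothing in your sketch actually produces a point-to-point tail bound.

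\textbf{What the paper does instead (following Berger).} Goodness of a block is not ``no short chain crosses it''; that event admits no tractable recursion. A $k$-block is good if
\begin{itemize}
\item[(a)] it contains no directed edge of length $>A_{k-1}/100$ between two of its vertices;
\item[(b)] all but at most one of its children are good;
\item[(c)] the same holds for the shifted inner blocks $Q^{\mathrm{int}}+v$.
\end{itemize}
The scales are $A_k=M(k!)^2$. Goodness of a block depends only on the ranges of frogs started inside it. Since edges from distinct sources are independent, goodness of disjoint blocks is independent, and no Reimer/BK argument is needed. A union bound with $\bP(u\to w)\le C\lambda|u-w|^{-(d+\alpha)}$ then gives
$P_k\le \lambda C A_k^{2d}(100/A_{k-1})^{d+\alpha}+(3^d+1)\binom{C_k^d}{2}P_{k-1}^2$, hence $\sum_k P_k<\infty$ for $M$ large.

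The linear lower bound is then a \emph{deterministic} consequence of goodness (Lemma~2, Proposition~3 and Lemma~3 of \cite{berger2004lower}). Any path crossing a macroscopic part of a good block has length proportional to its displacement, and this extends to arbitrary paths via half-translates and enclosing good blocks. Borel--Cantelli is applied over the scales $k$, to the boundedly many blocks around $o$ at each scale.

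Your sketch supplies neither this block definition nor the deterministic crossing lemma. Your fixed-ratio scales $N^k$ are also suspect for that step: the paper takes growing ratios $C_k=k^2$, so that the per-scale losses in the crossing bound keep a positive infinite product.
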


\begin{proof}[Proof of Theorem~\ref{MainThm: lower_bound_alpha_ge_d} using Theorem~\ref{Thm: lower_bound_alpha_ge_d}]
    Let $W := \inf_{x : x \neq o} \frac{\mathrm{AT}_\ell(x)}{|x|}$, which is positive almost surely by Theorem~\ref{Thm: lower_bound_alpha_ge_d}. Take $c_0$ such that $\P(W \le c_0) < \epsilon$. 
    On the event $\{W > c_0\}$, we have $\mathrm{AT}_\ell(x) > c_0 |x|$ for all $x \neq o$. 
    In particular, this implies that for any $L > 0$, we have $\mathcal{A}(c_0 L) \subseteq B(L)$.
    Moreover, we have 
    \begin{align*}
        \frac{\left|\mathcal{A}\left(\frac{c_0\epsilon^{1/d}}{3}L\right)\right|}{L^d} \le \frac{\left|B\left(\frac{\epsilon^{1/d}}{3}L\right)\right|}{L^d} \le \frac{\left(3 \cdot \frac{\epsilon^{1/d}L}{3}\right)^d}{L^d} = \epsilon.
    \end{align*}
    By setting $c = \frac{c_0\epsilon^{1/d}}{3}$, we complete the proof.
\end{proof}

The proof of Theorem~\ref{Thm: lower_bound_alpha_ge_d} is an adaptation of the approach used in \cite{berger2004lower} for long-range percolation. We shall provide details of the modifications to adapt their proof to our setting, while omitting some details that are essentially the same as those in \cite{berger2004lower}.

% In the preamble:
% \usepackage{tikz}
% \usetikzlibrary{arrows.meta}

\begin{figure}[ht]
\centering
\begin{tikzpicture}[
    x=1.05cm,
    y=1.05cm,
    every node/.style={font=\small},
    childgrid/.style={gray!55, line width=0.25pt},
    innergrid/.style={blue!35, line width=0.25pt},
    kblock/.style={black, very thick},
    innerblock/.style={blue!70!black, very thick},
    shiftedinner/.style={orange!85!black, very thick, dashed},
    shiftarrow/.style={-{Latex[length=2.2mm]}, orange!85!black, thick},
    dim/.style={<->, >=Latex, thin}
]

% For the picture only, take a = A_{k-1}=1 and C_k=5, so A_k=5.
\pgfmathsetmacro{\a}{1}
\pgfmathsetmacro{\C}{5}
\pgfmathsetmacro{\Ak}{\C*\a}
\pgfmathsetmacro{\half}{0.5*\a}

% Q^{int} = [a/2, A_k-a/2]^2
\pgfmathsetmacro{\IntLo}{\half}
\pgfmathsetmacro{\IntHi}{\Ak-\half}

% Choose v = (a/2,-a/2)
\pgfmathsetmacro{\vx}{\half}
\pgfmathsetmacro{\vy}{-\half}

% Q^{int}+v
\pgfmathsetmacro{\ShiftLoX}{\IntLo+\vx}
\pgfmathsetmacro{\ShiftLoY}{\IntLo+\vy}
\pgfmathsetmacro{\ShiftHiX}{\IntHi+\vx}
\pgfmathsetmacro{\ShiftHiY}{\IntHi+\vy}

% ------------------------------------------------------------
% Main regions
% ------------------------------------------------------------

% Background for Q
\fill[gray!5] (0,0) rectangle (\Ak,\Ak);

% Fill Q^{int}
\fill[blue!12] (\IntLo,\IntLo) rectangle (\IntHi,\IntHi);

% Fill shifted Q^{int}+v
\fill[orange!25, opacity=0.35]
    (\ShiftLoX,\ShiftLoY) rectangle (\ShiftHiX,\ShiftHiY);

% Grid of the original (k-1)-children of Q
\draw[step=\a, childgrid] (0,0) grid (\Ak,\Ak);

% Optional shifted grid inside Q^{int}, showing side length A_{k-1}
\draw[step=\a, innergrid] (\IntLo,\IntLo) grid (\IntHi,\IntHi);

% Boundaries
\draw[kblock] (0,0) rectangle (\Ak,\Ak);
\draw[innerblock] (\IntLo,\IntLo) rectangle (\IntHi,\IntHi);
\draw[shiftedinner] (\ShiftLoX,\ShiftLoY) rectangle (\ShiftHiX,\ShiftHiY);

% ------------------------------------------------------------
% Labels
% ------------------------------------------------------------

\node[anchor=south west, fill=white, inner sep=1pt]
    at (\Ak+0.05,\Ak+0.03) {$Q$};

\node[blue!70!black, anchor=north east, fill=white, inner sep=1.2pt]
    at (\IntHi-0.06,\IntHi-0.06) {$Q^{\mathrm{int}}$};

\node[orange!85!black, anchor=west, fill=white, inner sep=1.2pt]
    at (\ShiftHiX+0.08,\ShiftHiY-0.25) {$Q^{\mathrm{int}}+v$};

\node[anchor=north east] at (-0.05,-0.05) {$x$};

% % Mark lower-left corners
% \fill[blue!70!black] (\IntLo,\IntLo) circle (1.5pt);
% \fill[orange!85!black] (\ShiftLoX,\ShiftLoY) circle (1.5pt);

% % Shift arrow
% \draw[shiftarrow] (\IntLo,\IntLo) -- (\ShiftLoX,\ShiftLoY)
%     node[midway, right=2pt, fill=white, inner sep=1pt] {$v$};

% ------------------------------------------------------------
% Dimension annotations
% ------------------------------------------------------------

% Whole side length A_k
\draw[dim] (0,-0.42) -- (\Ak,-0.42)
    node[midway, below] {$A_k=C_kA_{k-1}$};

% % One child side length A_{k-1}
% \draw[dim] (0,-0.42) -- (\a,-0.42)
%     node[midway, above] {$A_{k-1}$};

% Horizontal half-margin
\draw[dim] (0,\Ak+0.32) -- (\a,\Ak+0.32)
    node[midway, above] {$A_{k-1}$};

% Vertical half-margin
\draw[dim] (-0.38,0) -- (-0.38,\half)
    node[midway, left] {$A_{k-1}/2$};

% ------------------------------------------------------------
% Legend
% ------------------------------------------------------------

\begin{scope}[shift={(0,\Ak+0.95)}]
    \draw[kblock] (-0.5,0) rectangle (-0.18,0.32);
    \node[anchor=west] at (-0.1,0.16) {$k$-block $Q$};

    \draw[innerblock] (1.95,0) rectangle (2.27,0.32);
    \node[anchor=west] at (2.38,0.16) {$Q^{\mathrm{int}}$};

    \draw[shiftedinner] (3.70,0) rectangle (4.02,0.32);
    \node[anchor=west] at (4.13,0.16) {shifted $Q^{\mathrm{int}}+v$};
\end{scope}

\end{tikzpicture}
\caption{A two-dimensional illustration of a \(k\)-block \(Q\), its inner block \(Q^{\mathrm{int}}\), and one shifted inner block \(Q^{\mathrm{int}}+v\), with \(v=(A_{k-1}/2,-A_{k-1}/2)\in\{0,\pm A_{k-1}/2\}^d\). The gray grid represents the original \((k{-}1)\)-children of \(Q\).}
\end{figure}

We first construct a renormalization structure similar to that in \cite{berger2004lower}. Let $C_0 = M$ be a positive even integer and $C_n = n^2$ for $n \ge 1$, and define
\begin{align*}
    A_n = \prod_{i = 0}^{n} C_i = M \, (n!)^2.
\end{align*}
Later we will choose $M$ sufficiently large depending on $d, \alpha, K, \lambda,$ and $\ell$.
An $n$-block is a set of the form $x + [0, A_n)^d \cap \Z^d$ for some $x \in \Z^d$. The children of an $n$-block $Q$ are the $C_n^d$ many $(n{-}1)$-blocks contained in $Q$, which can be written as
\begin{align*}
    \left\{ x + A_{n-1} v + [0, A_{n-1})^d \cap \Z^d \ | \ v \in [0, C_n)^d \cap \Z^d \right\}.
\end{align*}
Moreover, let 
\begin{align*}
    Q^{\mathrm{int}} := x + \frac{A_{n-1}}{2} \mathbf{1} + [0, A_{n} - \frac{A_{n-1}}{2})^d \cap \Z^d, \quad \mathbf{1} = (1, ..., 1).
\end{align*}
Observe that there are $(C_{n} {-} 1)^d$ $(n{-}1)$-blocks in $Q^{int}$. 
As before, we draw a directed edge from $x$ to $y$ if and only if $x \rightarrow y$, where recall that $x \rightarrow y$ means there exists at least one particle initially at $x$ that visits $y$ within its lifespan $\ell$. As discussed in the previous section (also see Lemma~\ref{lemma: hitting_probability_upper_bound_on_Z}), there exists a constant $C = C(d, \alpha, K, \ell)$ such that for any $x, y \in \Z^d$, we have
\begin{align} \label{ineq: long-range-edge-upper-bound}
    \bP(x \rightarrow y) \le C \lambda |x - y|^{-(d + \alpha)}.
\end{align}
Moreover, $x \rightarrow y$ and $u \rightarrow v$ are independent events for any distinct $x, u \in \Z^d$.

\begin{definition}[Good blocks]
    \label{def: Berger_good_box_definition}
    Let $Q$ be a $k$-block

    \begin{enumerate}[label=\textup{(\roman*)}, leftmargin=2.2em, itemsep=2pt]
    \item A $0$-block $Q$ is \textbf{good} if there is no directed edge $(u,w)$ with
    $u,w\in Q$ and $|u-w|>A_0/100$.

    \item For $k\ge1$, a $k$-block $Q$ is \textbf{good} if the following hold:
    \begin{enumerate}[label=\textup{(\alph*)}, leftmargin=2.2em, itemsep=2pt]
    \item there is no directed edge $(u,w)$ with
    $u,w\in Q$ and $|u-w|>A_{k-1}/100$;
    \item among the $(k{-}1)$-blocks (children) of $Q$,
    all but at most one are good;
    \item for every
    $v\in\{0,\pm A_{k-1}/2\}^d$, the shifted inner block $Q^{\mathrm{int}}+v$
    also satisfies \textup{(b)} (i.e., among its $(k{-}1)$-subblocks, all but at most
    one are good).
    \end{enumerate}
    \end{enumerate}
\end{definition}

The following lemma is parallel to Lemma 1 in Berger's paper \cite{berger2004lower}. Since we modified the definition of a good block, we give a proof here for completeness.

\begin{lemma} \label{lemma: sum_faliure_finite}
    Let $d \ge 1$ and $\alpha > d$.
    Let $P_k$ be the probability of a $k$-block being not good as defined in Definition~\ref{def: Berger_good_box_definition}.
    If $M$ is sufficiently large, then
    \begin{align}
        \sum_{k = 0}^{+\infty} P_k < \infty.
    \end{align}
\end{lemma}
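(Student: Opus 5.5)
We will bound $P_k$ by a recursion in $k$ and show it decays fast enough to be summable, following the scheme of Lemma~1 in \cite{berger2004lower}. A $k$-block $Q$ fails to be good only if (a), (b), or (c) fails, so
\[
P_k \le E_k + F_k + 3^d F_k,
\]
where $E_k$ is the probability that (a) fails, and $F_k$ is the probability that at least two of the $(k{-}1)$-subblocks of a fixed box of side $A_k$ (or of a shifted inner box) are bad. The $3^d$ counts the shifts $v\in\{0,\pm A_{k-1}/2\}^d$, and translation invariance makes all these boxes statistically equivalent.

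For $E_k$, take a union bound with \eqref{ineq: long-range-edge-upper-bound}:
\[
E_k \le \sum_{u\in Q}\sum_{w:\,|u-w|>A_{k-1}/100} C\lambda |u-w|^{-(d+\alpha)} \le C' \lambda A_k^d A_{k-1}^{-\alpha} = C'\lambda C_k^d A_{k-1}^{d-\alpha}.
\]
Since $\alpha>d$ and $A_{k-1}=M((k-1)!)^2$ grows superexponentially while $C_k^d=k^{2d}$ is polynomial, this is summable. Taking $M$ large also makes it as small as we like uniformly in $k$; for $k=0$ we get $E_0\le C'\lambda M^{d-\alpha}$.

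For $F_k$, the difficulty is that goodness of two distinct children is not independent. The edges $x\to y$ from a given source $x$ are dependent, but edges from different sources are independent, and the events defining goodness of a block $Q'$ depend only on $\{\mathcal R_{x}\cap Q' : x\in Q'\}$ (only edges with both endpoints in $Q'$ matter). Two disjoint children depend on disjoint sets of sources. Within a child $Q'$, the edges used are restricted to targets inside $Q'$, and $\mathcal R_x\cap Q'$ for $x\in Q'$ is a function of $x$'s own particles. Hence goodness of disjoint children is independent, and
\[
F_k \le \binom{C_k^d}{2} P_{k-1}^2 \le k^{4d} P_{k-1}^2.
\]
The subblocks of a shifted inner block are also translates of $(k{-}1)$-blocks, pairwise disjoint, so the same bound applies. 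The point to check carefully here is that the shifted boxes $Q^{\mathrm{int}}+v$ still lie in $Q$ up to boundary effects and are partitioned exactly into $(C_k-1)^d$ disjoint $(k{-}1)$-blocks. This needs $A_{k-1}$ even, which holds because $M$ is even.

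Combining gives
\[
P_k \le C'\lambda k^{2d}A_{k-1}^{d-\alpha} + (1+3^d)k^{4d}P_{k-1}^2 .
\]
We would then prove by induction that $P_k \le \delta\, k^{-8d}$ (or $\le \delta\,(k+1)^{-\beta}$ with $\beta$ large) for a small $\delta$. For the base case, $P_0=E_0$ is small when $M$ is large. For the inductive step, the quadratic term gives $(1+3^d)k^{4d}\delta^2 (k-1)^{-16d}\le \tfrac{\delta}{2}k^{-8d}$ once $\delta$ is small and $k$ is handled with constants (adjusting for small $k$). The first term is at most $\tfrac{\delta}{2}k^{-8d}$ for all $k$ once $M$ is large, because $A_{k-1}^{d-\alpha}=M^{d-\alpha}((k-1)!)^{-2(\alpha-d)}$ beats any polynomial. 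Summability of $P_k$ then follows.

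The main obstacle is the independence claim behind the squared term, together with making the constants in the induction uniform for small $k$. If strict independence fails for some edge-case definition, we would instead use the BK/Reimer inequality as in Lemma~\ref{lemma: LB_lemma_1}, on the product space indexed by source vertices. Badness of two disjoint children is witnessed on disjoint source sets, so the events occur disjointly, and Reimer's inequality still yields the $P_{k-1}^2$ bound.
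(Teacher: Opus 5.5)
Your proposal is correct and follows essentially the same route as the paper: you use the same recursion $P_k \le (\text{long-edge term}) + (3^d+1)\binom{C_k^d}{2}P_{k-1}^2$, closed by induction. The paper inducts on $P_k \le C' e^{-k}$, while you induct on a polynomial bound $\delta(k+1)^{-\beta}$. Your sharper union bound $A_k^d A_{k-1}^{-\alpha}$ and your explicit justification that disjoint subblocks are independent (via disjoint source sets) are fine refinements of what the paper uses implicitly. Use the $(k+1)^{-\beta}$ version rather than $\delta k^{-8d}$, since the latter is undefined at $k=0$ and breaks the step $k=1$.
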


\begin{proof}
    We show by induction that there exists $C'>0$ such that for all $k\ge 0$,
    \[
    P_k \le C' e^{-k}.
    \]
    From Definition~\ref{def: Berger_good_box_definition}, for $k\ge1$, we have the following recursive inequality:
    \begin{align*} 
        P_{k} 
        &\le \lambda C A_k^{2d} \left(\frac{100}{A_{k-1}}\right)^{d + \alpha} + (3^d + 1)  \binom{C_k^d}{2} P_{k-1}^2 \\ 
        &\le \lambda C 100^{d + \alpha} M^{d - \alpha} (k!)^{2d - 2\alpha} k^{2d + 2\alpha} + (3^d + 1) k^{4d} P_{k-1}^{2}. 
    \end{align*}
    Since $\alpha>d$, we have $2d-2\alpha<0$, and hence
    \[
    (k!)^{2d-2\alpha} k^{2d+2\alpha}
    = \exp\!\big(-(2\alpha-2d)\log(k!) + O(\log k)\big)
    \le e^{-3k/2}
    \]
    for all $k$ large enough. Absorbing finitely many small $k$ into the constant,
    there exists $C_1>0$ (depending on $d, \alpha, C, \lambda$) such that
    \[
    \lambda C 100^{d+\alpha} M^{d-\alpha} (k!)^{2d-2\alpha} k^{2d+2\alpha}
    \le C_1 M^{d-\alpha} e^{-3k/2}.
    \]
    Assume inductively that $P_{k-1}\le C' e^{-(k-1)}$. Then, for any $k \ge 1$, we have
    \[
    (3^d+1) k^{4d} P_{k-1}^2
    \le (3^d+1) C'^2 k^{4d} e^{-2k+2}
    \le C_2 C'^2 e^{-3k/2},
    \]
    for some constant $C_2>0$.
    Combining the bounds,
    \[
    P_k \le (C_1 M^{d - \alpha} + C_2 C'^2) \, e^{-3k/2}.
    \]
    Choosing $M$ large enough and $C'>0$ small so that
    \[
    (C_1 M^{d - \alpha} + C_2 C'^2) e^{-1/2} \le C',
    \]
    we obtain $P_k \le C' e^{-k}$, closing the induction.
\end{proof}

Given Lemma~\ref{lemma: sum_faliure_finite}, Theorem~\ref{Thm: lower_bound_alpha_ge_d} follows by the same argument as in \cite{berger2004lower}. More precisely, Lemma 2, Proposition 3, and Lemma 3 of \cite{berger2004lower}, together with their proofs, apply verbatim in our setting. We therefore do not repeat the details, and instead briefly summarize the argument. Sections 3--5 of \cite{berger2004lower} establish a deterministic multiscale estimate: if a block is good, then any path crossing a macroscopic portion of that block must have length at least a fixed positive multiple of its Euclidean displacement. This lower bound is then extended from paths confined to a single block to arbitrary paths by using neighboring half-translates and larger enclosing good blocks. Since the probabilities of bad blocks are summable, the Borel--Cantelli lemma implies that only finitely many relevant bad blocks occur.
We refer the reader to Sections 3--5 of \cite{berger2004lower} for the complete argument.

\section{Cover Lifespan on the Torus} \label{section: Susceptibility}

Throughout this section, we let $d \ge 2$, $\lambda > 0$, and fix a heavy-tailed kernel $Q \in \mathcal{H}_{\alpha, K}$ with $\alpha > 0$ and $K > 0$.
In this section, we prove Theorem~\ref{MainThm: Susceptibility} which provides concentration inequalities on the cover lifespan $\mathcal{L} = \mathcal{L}(L)$, where 
\begin{align*}
    \mathcal{L}(L) := \inf \left\{ \ell \in \mathbb{N} : \mathcal{A}^{\lambda, \ell}_o(\infty) = \T^d_L \right\}.
\end{align*}
In words, the random variable $\mathcal{L}$ is the minimum lifespan needed for the frog model with the planted particle started from $o$ with particle density $\lambda$ to eventually activate every vertex of the torus $\T^d_L$ (see Section~\ref{subsection: FrogModelFormal} for the detailed construction).
Moreover, we recall the definition of $\ell^*$:
\begin{align} \label{def: t*}
    \ell^* := \inf \left\{ \ell \in \N^+ : \frac{\lambda \ell}{G_{\ell}} \ge d \log L \right\},
\end{align}
where $G_n := \sum_{i = 0}^{n} Q^i(o, o)$ is the on-diagonal Green's function up to time $n$ for the random walk with transition kernel $Q$ on $\Z^d$.

In the next two subsections, we will show that $\mathcal{L}$ is asymptotically concentrated around $\ell^*$ as $L \rightarrow \infty$.
Before turning to the proofs, we record the following asymptotics of $\ell^*$ for each choice of $d$ and $\alpha$:
\begin{align} \label{eq: Susc_asymp_t*}
	\ell^* \asymp_{Q, d, \lambda} \begin{cases}
		\log L \log \log L &\text{~if~} d = 2 \text{ and } \alpha > 2, \\
		\log L \log \log \log L &\text{~if~} d = 2 \text{ and } \alpha = 2, \\
		\log L &\text{~if~} d \ge 3 \text{ or } (d = 2 \text{ and } \alpha < 2).
	\end{cases}
\end{align}
The proof of equation~\eqref{eq: Susc_asymp_t*} is a straightforward application of Lemma~\ref{lemma: green_function_on_Z}, so we omit it.

In addition to the quantitative description of $\mathcal{L}$ above, our argument reveals a close connection between the cover lifespan of the frog model and the cover time of random walks on the torus. Let $\tau_{\mathrm{cov}}$ be the minimum time required for $\Pois(\lambda L^d)$ stationary discrete-time random walks, all activated at time $0$, to cover the torus $\T^d_L$. We show that the cover lifespan $\mathcal{L}$ is asymptotically concentrated around $\tau_{\mathrm{cov}}$ as $L \rightarrow \infty$ under the natural coupling. 

The proof proceeds as follows. In Section~\ref{subsection: SusceptibilityUpper}, we establish an asymptotic upper bound on $\mathcal{L}$ by adapting methods introduced in Section~\ref{Sec: BernPerc}. In Section~\ref{subsection: SusceptibilityLower}, we prove that, for any $\epsilon > 0$, we have $\tau_{\mathrm{cov}} \ge (1 - \epsilon) \ell^*$ with high probability. Combining these results with the fact that $\tau_{\mathrm{cov}}$ is stochastically dominated\footnote{Using Poisson thinning, it is not difficult to see that $\tau_{\mathrm{cov}}$ has the same distribution as the cover lifespan of the frog model when all particles are active at time $0$.} by $\mathcal{L}$, we conclude that both $\mathcal{L}$ and $\tau_{\mathrm{cov}}$ are asymptotically concentrated around $\ell^*$ as $L \rightarrow \infty$.

\subsection{Upper Bound} \label{subsection: SusceptibilityUpper}
In this subsection, we prove the following upper bound on the cover lifespan.

\begin{proposition} \label{prop: Susc_upper}
Let $d \ge 2$ and $\alpha > 0$.
Let $\lambda > 0$, $K > 0$, and $Q \in \mathcal{H}_{\alpha, K}$. 
Let $Q_L$ be as in equation \eqref{def: Q_L}. Then, for any $c > 0$,
\begin{align*}
    \bP_L^+\left(\mathcal{L} \ge (1+c)\ell^*\right) \rightarrow 0,
\end{align*}
as $L \to \infty$.
\end{proposition}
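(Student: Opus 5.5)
Set $\ell := \lceil (1+c)\ell^* \rceil$; we must show that with high probability the frog model on $\T^d_L$ with lifespan $\ell$, started from the planted particle at $o$, eventually activates every vertex. The plan has two stages, each using an independent half (by Poisson thinning) of the particles: $\cW^{(1)}$ with density $\lambda_1 = \lambda\delta$ for a small $\delta>0$, and $\cW^{(2)}$ with density $\lambda_2 = \lambda(1-\delta)$. In the first stage we use $\cW^{(1)}$ together with the planted particle to activate a spatially homogeneous set. In the second stage we use $\cW^{(2)}$ to cover everything from that set. Since $\ell^* \to \infty$ and, by \eqref{eq: Susc_asymp_t*}, $\ell^*$ is at least of order $\log L \gg \ell(r)$ for any fixed $r$, the lifespan $\ell$ exceeds any fixed constant $N_0$. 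We may therefore invoke the renormalization of Section~\ref{Sec: BernPerc} exactly as in the proof of Proposition~\ref{prop: activate_constant_fraction_alpha>=d}, with density $\lambda_1$ and fixed large $r$; the lifespan of the $\cW^{(1)}$ frogs can be truncated to $\ell(r)$, which only makes activation harder. This stage yields, with high probability, that the eventually activated set $\mathcal{A}^{(1)}$ contains $A=\bigcup_{v\in \mathrm{GC}} B_v$. Here $A$ is $\mathrm{Hom}(C_1(\log L)^{1/(d-1)},\rho)$ for any prescribed $\rho<1$, obtained by taking $r$ large. Activation time is irrelevant here; we only need eventual activation.

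For the second stage, fix $x\in\T^d_L$. Conditionally on $\mathcal{F}^{(1)}$,
\[
\P_L^+\bigl(x\notin \mathcal{R}^{(2)}_{A}\bigm|\mathcal{F}^{(1)}\bigr)=\exp\Bigl(-\lambda_2\sum_{y\in A}\bfP^L_y(\tau_x\le \ell)\Bigr).
\]
The key identity is the exact torus computation $\sum_{y\in\T^d_L}\bfP^L_y(\tau_x\le \ell)=\E^L_x|\mathrm{R}(\ell)|$ by symmetry. Together with the standard last-exit estimate $\E|\mathrm{R}(\ell)| \ge (\ell+1)/G_\ell$, this gives
\[
\sum_{y\in\T^d_L}\bfP^L_y(\tau_x\le\ell) \ge \frac{\ell}{G_\ell}(1-o(1)).
\]
Here the Green function on $\T^d_L$ up to time $\ell = O(\log L\log\log L)$ agrees with $G_\ell$ on $\Z^d$ up to $1+o(1)$, because wraparound within $\ell \ll L^{\alpha\wedge 2}$ steps is negligible. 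Moreover $\ell/G_\ell$ is increasing, and $G_\ell$ is slowly varying in the relevant regimes, so $\lambda\ell/G_\ell \ge (1+c)(1-o(1))\, d\log L$.

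It remains to restrict the sum from all $y$ to $y\in A$ while losing only a factor close to $1$. Only $y$ within distance about $R=\ell^{1/\alpha}$ (or $\sqrt{\ell}$, or $\sqrt{\ell\log\ell}$, matching $r(L)$ in Section~\ref{Sec: punchline}) of $x$ contribute substantially. Since $R\gg (\log L)^{1/(d-1)}$, homogeneity makes $A$ occupy a $\rho$-fraction of every mesoscopic box in $B_x(R)$. I then intend to show that the function $y\mapsto \bfP_y(\tau_x\le\ell)$ is roughly constant on boxes of side $(\log L)^{1/(d-1)}$, apart from a negligible neighborhood of $x$. This would give $\sum_{y\in A}\ge \rho(1-o(1))\sum_{y}$. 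Choosing $\rho$ and $\delta$ so that $(1-\delta)\rho(1+c)>1+c/2$, we get a conditional miss probability of at most $L^{-d(1+c/4)}$. A union bound over the $L^d$ vertices then finishes the proof. If the planted particle does not survive long enough for stage one, we use that the $\ell_o=\ell\to\infty$ hypothesis of Lemma~\ref{lemma: planted_to_fantastic} holds.

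The main obstacle is this regularity step, namely showing that the hitting probabilities are locally nearly constant, so that a $\rho$-dense set captures a $\rho$-fraction of the Green's-function mass. An alternative that sidesteps it is to avoid exact constants in stage one entirely. One would make $\rho$ close to $1$ and bound the mass of the complement: $\sum_{y\notin A}\bfP_y(\tau_x\le\ell)$ is at most the expected number of steps a walk from $x$ spends in $A^c$ within time $\ell$. By homogeneity and local mixing of the walk at scale $(\log L)^{1/(d-1)}\ll R$, this quantity is at most about $(1-\rho)\ell$ up to constants. Dividing by $G_\ell$ then shows that the lost mass is an $O(1-\rho)$ fraction, uniformly in $x$. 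I would try this occupation-time route first.
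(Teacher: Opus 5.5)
\paragraph{The gap: the two scales do not separate in low dimensions.} Your second stage needs $A=\bigcup_{v\in\mathrm{GC}}B_v$ to be $\rho$-dense, with $\rho$ close to $1$, at all scales $\ll R=r\bigl((1+c)\ell^*\bigr)$. Your first stage only gives homogeneity at scale $(\log L)^{1/(d-1)}$ (Theorem~\ref{thm: percolation_reference}(i)).

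By \eqref{eq: Susc_asymp_t*}, $\ell^*$ is of order $\log L$ up to $\log\log$ factors. Hence $R=(\log L)^{1/2+o(1)}$ for $\alpha\ge 2$ and $R\asymp(\log L)^{1/\alpha}$ for $\alpha<2$. So your claim $R\gg(\log L)^{1/(d-1)}$ fails for $d=2$ with $\alpha\ge 1$, and for $d=3$ with $\alpha>2$. In Section~\ref{Sec: punchline} this separation came from the assumption $\ell(L)\gg\ell_1(L)$, which $(1+c)\ell^*$ does not satisfy in these cases.

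This is a real obstruction, not a weakness of the quoted estimate. Each box $B_v$ fails to be open, independently over $v$, with probability at least $q_0=e^{-C\lambda\delta f(r)}$. Indeed, a fixed vertex of a neighbouring box that is missed by every $\cW^{(1)}$-particle of $B_v$ already prevents $v$ from being open, and by the upper bound in Lemma~\ref{apx_lemma: random_walk_expected_hitting_size_lower_bound} this has probability at least $q_0$.

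In $d=2$ it follows that, with high probability, some square of side $k\asymp\log m/\log(1/q_0)$ in $\T^2_m$ has an entirely closed boundary. Then $A$ misses a region of diameter of order $\log L$ in $\T^2_L$, which is $\gg R$ when $\alpha>1$. Take $x$ deep inside it, at distance $h\gg R$ from $A$. The second inequality of Lemma~\ref{lemma: cover_to_green} together with \eqref{ineq: heat_kernel_intypical} gives $\sum_{y\in A}\bfP^L_y(\tau_x\le\ell)\le G^{(L)}_{2\ell}(x,B_x(h)^c)/G^{(L)}_\ell=o(\ell/G_\ell)=o(\log L)$. So your one-shot second stage misses $x$ with conditional probability $L^{-o(1)}$, not $o(L^{-d})$, and the union bound over the $L^d$ sites cannot close.

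In the borderline cases ($d=2,\alpha=1$ and $d=3,\alpha>2$) the two scales are comparable. Holes of diameter comparable to $R$ then already cost a non-negligible fraction of the mass, and homogeneity at scale $(\log L)^{1/(d-1)}$ does not rule them out. Your argument does work when $R\gg(\log L)^{1/(d-1)}$, i.e.\ for $d\ge 4$, for $d=3$ with $\alpha\le 2$, and for $d=2$ with $\alpha<1$.

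\paragraph{How the paper avoids this.} The paper does not use a fixed-$r$ supercritical percolation here. It partitions $\T^d_L$ into boxes of the diverging side $r^*=r(\ell^*)$. It runs two thinned groups of density $\epsilon\lambda$ with lifespan $\ell^*$, for which every box-level event fails with probability $o(L^{-d})$:
\begin{itemize}
\item a good vertex activating a $(1-\delta)$-fraction of its own box exists (Corollary~\ref{cor: Susc_internal});
\item from any $(1-\delta)$-dense subset of $B_v$, the second group covers a $(1-\delta)$-fraction of each neighbouring box (Corollary~\ref{cor: Susc_internal_2}, via $u$-nice sets and Lemma~\ref{lemma: cover_to_green}).
\end{itemize}
A union bound over all boxes and a box-by-box exploration, starting from the box where the planted particle first hits a good vertex, produce a set $A^*$ that is $(1-\delta)$-dense in \emph{every} box of side $r^*$. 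The final step, Lemma~\ref{lemma: cover_from_high_density}, is essentially your occupation-time computation with the remaining $\mathrm{Pois}((1-2\epsilon)\lambda)$ particles.

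So your second stage is fine. The first stage must be replaced by something that gives density close to $1$ uniformly at scale $r^*$, and that requires overwhelming rather than merely supercritical per-box probabilities.

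One smaller point: the bare bound $\bfE_x|\mathrm{R}(\ell)\cap A^c|\le G_\ell(x,A^c)$ is too weak when $G_\ell\to\infty$. What justifies your ``dividing by $G_\ell$'' is the second inequality of Lemma~\ref{lemma: cover_to_green}.
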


One quantity that we will use repeatedly in this subsection is the \textbf{typical distance} traveled by a random walk with transition kernel $Q$ in $\ell$ steps, defined by
\begin{align} \label{def: typical_distance_susc}
    r(\ell) := \begin{cases}
        \lceil \sqrt{\ell} \rceil &\text{ if } \alpha > 2, \\
        \lceil \sqrt{\ell \log \ell} \rceil &\text{ if } \alpha = 2, \\
        \lceil \ell^{1/\alpha} \rceil &\text{ if } \alpha < 2.
    \end{cases}
\end{align}
Combining equation \eqref{eq: Susc_asymp_t*} with equation \eqref{def: typical_distance_susc}, we obtain the following asymptotics for the typical distance traveled by the random walk on $\Z^d$, for $d \ge 2$, up to time $\ell^*$:
% \begin{align} \label{eq: typical_distance_at_t*_easy}
%     r(\ell^*) \asymp_{Q, d, \lambda} \begin{cases}
%         (\log L \log \log L)^{1/2} &\text{~if~} \hspace{3mm} d = 3 \text{ and } \alpha = 2, \\
%         (\log L)^{1/\alpha} &\text{~if~} \hspace{2mm}
%         \begin{aligned}
%             &(d = 2 \text{ and } \alpha \in (0, 1)) \text{~or~} \\ 
%             &(d \ge 3 \text{ and } \alpha \in (0, 2)).
%         \end{aligned}
%     \end{cases}
% \end{align}
\begin{align} \label{eq: typical_distance_at_t*}
    r^* := r(\ell^*) \asymp_{Q, d, \lambda} \begin{cases}
        (\log L)^{1/2} &\text{~if~} \; d \ge 3 \text{ and } \alpha > 2, \\[1mm]
        (\log L \log \log L)^{1/2} &\text{~if~} \;
        \begin{aligned}
            &d = 2 \text{ and } \alpha > 2 \text{, or}\\ 
            &d \ge 3 \text{ and } \alpha = 2,
        \end{aligned} \\[1mm]
        (\log L \log \log L \log \log \log L)^{1/2} &\text{~if~} \; d = 2 \text{ and } \alpha = 2, \\[1mm]
        (\log L)^{1/\alpha} &\text{~if~} \; d \ge 2 \text{ and } \alpha \in (0, 2).
    \end{cases}
\end{align}

In the rest of this subsection, we prove Proposition~\ref{prop: Susc_upper} in a manner similar in spirit to the proofs in Section~\ref{Sec: BernPerc}. The main difference is that, instead of using boxes of fixed side length in the renormalized lattice, we use boxes of side length $r^*$, which diverges as $L$ tends to infinity. The advantage of this choice is that, rather than only requiring certain ``good'' events in each box to occur with probability larger than $p_c$, we obtain that their probabilities are close to 1. 
Moreover, we can quantitatively control the $o(1)$ term, which allows us to take a union bound over all boxes in the renormalized lattice. We now turn to the details.

Partition $\T^d_L$ into boxes of side length $r^*$, allowing the boundary boxes to have side lengths between $r^*$ and $2r^*$. Let $m := \lfloor L/r^* \rfloor$, and denote the resulting partition by $\mathcal{P}_{L, r^*} := \{ B_v : v \in \T^d_m \}$, constructed as in equation \eqref{def: box_const_level} with $r$ replaced by $r^*$.

Let $\epsilon$ be a small positive constant to be chosen later.
At each site $x \in \T^d_L$, we use Poisson thinning to divide the particles into three independent groups: two groups with Pois$(\epsilon \lambda)$ particles each, and one group with Pois$((1 - 2\epsilon) \lambda)$ particles. Denote these three groups at site $x$ by $\cW^{(1)}_x$, $\cW^{(2)}_x$, and $\cW^{(3)}_x$, respectively.

We use the first group of particles in $B_v$ to define \textbf{good} vertices, analogously to Definition~\ref{def: good_vertex}.
Let $\delta$ be a small positive constant to be chosen later, and fix $v \in \T^d_m$ and $x \in B_v$.
Consider the frog model that uses only the first group of particles in $B_v$, namely $\cup_{x \in B_v} \cW^{(1)}_x$, with all lifespans equal to $\ell^*$, and is started from the vertex $x$. Let $\mathcal{A}^*_x$ denote the set of vertices in $B_v$ that are eventually activated under the above setup. We call $x$ \textbf{good} if $|\mathcal{A}^*_x| \ge (1-\delta)|B_v|$. Let $\mathrm{Good}^{(1)}_v$ denote the set of good vertices in $B_v$. By adapting the proof of Proposition~\ref{prop: BernPerc_internal_1}, we obtain the following result:

\begin{corollary} \label{cor: Susc_internal}
    Let $d \geq 2$, $\delta \in (0, 1)$, $\epsilon \in (0, 1/2)$, $\lambda > 0$, $K > 0$, and $\alpha > 0$. Let $Q \in \mathcal{H}_{\alpha, K}$ (see \eqref{def: H_alpha}).
    Let $Q_L$ be defined as in equation \eqref{def: Q_L}.
    Let $\ell^*$, $r^*$, and $m$ be defined as above.
    Then there exist positive constants $L_0$, $C$, and $c$ such that, for all $L \ge L_0$, $v \in \T^d_m$, and $B \subseteq B_v$, we have
    \begin{align}
        \bP_L \left( \mathrm{Good}^{(1)}_v \cap B = \varnothing \right) \leq C \exp\left(-c \epsilon \lambda |B|\right). \label{ineq: Susc_int-dynamic_uppper_bound}
    \end{align}
\end{corollary}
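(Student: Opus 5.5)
We will rerun the exploration argument in the proof of Proposition~\ref{prop: BernPerc_internal_1} inside the box $H = B_v$, whose side length lies between $r^*$ and $2r^*$. We use only the particles in $\cW^{(1)}$, which have density $\epsilon\lambda$, give every particle lifespan $\ell^*$, and raise the target size of the activated set from $|H|/4$ to $(1-\delta)|H|$.

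\textbf{Reduction and exploration.} As there, it suffices to treat $B$ with $|B| \le \delta|H|/4$; a larger $B$ can be replaced by a subset of that size, at the cost of a constant factor in $c$ depending on $\delta$. We define the explored, revealed, unrevealed and incremental sets $A_k, \cV_k, \cU_k, \mathcal{I}_k$ exactly as before, with $A_0 = B$ and increments $\mathcal{I}_{k+1} = H \cap (\cR^{(1)}_{x_{k+1}}(\ell^*) \setminus A_k)$. The stopping time $\tau_2$ now becomes $\inf\{k : |A_k| > (1-\delta/2)|H|\}$.

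The proof of Claim~\ref{claim: internal_exploration} carries over verbatim. If $S_k \ge k$ for all $|B| \le k \le \tau$, then some $x^\star \in B$ has
\[
|\mathcal{A}^*_{x^\star}| \ge (1-\delta/2)|H| - 2|B| \ge (1-\delta)|H|,
\]
so $x^\star$ is good.

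\textbf{Increment bound.} We need an analogue of Lemma~\ref{lemma: range_estimates_main_text}. Let $D \subseteq H$ with $|D| \le (1-\delta/2)|H|$, and let the walk start from any $x \in H$. We want constants $c_0(\delta), c_1(\delta) > 0$ with
\[
\bfP_x\big(|\mathrm{R}(\ell^*) \cap (H\setminus D)| \ge c_1 f(\ell^*)\big) \ge c_0 .
\]
Here $f(\ell^*)$ is $\ell^*$, $\ell^*/\log\log\ell^*$ or $\ell^*/\log\ell^*$ according to the case. The point is that $r^* = r(\ell^*)$ is by construction the typical displacement in $\ell^*$ steps, so an $\ell^*$-step walk stays in a box comparable to $H$ with probability bounded below, and its range there has the usual order $f(\ell^*)$.

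The set $H\setminus D$ has density at least $\delta/2$ in $H$. We would therefore argue through a second-moment bound: the expected number of visited points in $H\setminus D$ is at least $c(\delta)f(\ell^*)$, via Green's function lower bounds as in Lemma~\ref{lemma: random_walk_expected_hitting_size_main_text}. The second moment is at most $C f(\ell^*)^2$, by the range variance estimates used for Lemma~\ref{lemma: range_estimates_main_text}. Paley--Zygmund then gives the claim, with constants that depend on $\delta$ but not on $L$. We expect this step to be the main obstacle: unlike the $3/4$-density case, we must check that the supplementary range lemma tolerates a removed set $D$ of density up to $1-\delta/2$, uniformly in the starting point $x$ near the boundary of $H$.

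\textbf{Concluding.} By Poisson thinning, on $\{\tau \ge k\}$ we have
\[
\bP_L\big(\Delta_k \ge c_1 f(\ell^*) \mid \cF_{k-1}\big) \ge 1 - e^{-c_0\epsilon\lambda} =: q .
\]
Since $f(\ell^*) \to \infty$ as $L \to \infty$, for $L \ge L_0$ we have $c_1 f(\ell^*) q \ge 2$. Comparing $S_k$ with $c_1 f(\ell^*)\,\mathrm{Bin}(k,q)$ and applying Lemma~\ref{lemma: large_deviation_Bern} exactly as in the two cases ($c_0\epsilon\lambda \le 1$ and $c_0\epsilon\lambda > 1$) of the proof of Proposition~\ref{prop: BernPerc_internal_1} gives the bound $C\exp(-c\,\epsilon\lambda|B|)$. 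The constants are independent of $v$ and of $L \ge L_0$, which is exactly \eqref{ineq: Susc_int-dynamic_uppper_bound}.
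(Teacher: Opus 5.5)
Your proposal is correct and is essentially the paper's argument: the paper obtains this corollary by adapting the exploration proof of Proposition~\ref{prop: BernPerc_internal_1}, run in boxes of side $r^*$ with lifespan $\ell^*$, density $\epsilon\lambda$ and target $(1-\delta)|B_v|$, which is exactly your plan. The step you flag as the main obstacle is already handled by Lemma~\ref{lemma: range_estimates} together with Lemma~\ref{apx_lemma: random_walk_expected_hitting_size_lower_bound}. These allow an arbitrary removed density $1-\epsilon$ and any target set inside $B_x(K_0 r)$, so the increment bound holds uniformly in the starting point, up to a harmless constant-factor adjustment between $\ell(r^*)$ and $\ell^*$.
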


The next corollary concerns the second set of particles in $B_v$, namely $\cup_{x \in B_v} \cW^{(2)}_x$.
This collection has particle density $\epsilon \lambda$ at each site.
Fix $\delta \in (0, 1)$ and $v \in \T^d_m$, and let $A \subseteq B_v$.
Throughout, we assume that $|A| / |B_v| \ge 1 - \delta$.
One should think of $A$ as the set of vertices in $B_v$ activated by the frog model that uses only the first set of particles and starts from a good vertex in $B_v$.
For each $\ell \in \N$, define
$$ \mathcal{R}^{(2)}_v(A, \ell) := \bigcup_{x \in A} \bigcup_{\omega \in \cW^{(2)}_x} \left\{ \omega(0), \ldots, \omega(\ell) \right\}. $$
When $\ell = \ell^*$, we abbreviate $\mathcal{R}^{(2)}_v(A, \ell^*)$ to $\mathcal{R}^{(2)}_v(A)$.
In other words, $\mathcal{R}^{(2)}_v(A)$ is the set of vertices visited within time $\ell^*$ by particles in $\cup_{x \in A} \cW^{(2)}_x$.
We will show that, with high probability, $\mathcal{R}^{(2)}_v(A)$ covers at least a $(1{-}\delta)$-fraction of the vertices in every box neighboring $B_v$.

\begin{corollary} \label{cor: Susc_internal_2}
    Let $d \geq 2$, $\delta \in (0, 1)$, $\epsilon > 0$, $\lambda > 0$, $K > 0$, and $\alpha > 0$. Let $Q \in \mathcal{H}_{\alpha, K}$ (see \eqref{def: H_alpha}).
    Let $Q_L$ be defined as in equation \eqref{def: Q_L}.
    Let $\ell^*$, $r^*$, and $m$ be as defined above.
    Then, for every $v \in \T^d_m$ and uniformly for every $A \subseteq B_v$ with $|A|/|B_v| \ge 1 - \delta$, we have
    \begin{align}
        L^d \, \bP_L \left( \frac{|\mathcal{R}^{(2)}_v(A) \cap B_{v + e}|}{|B_{v + e}|} \le 1 - \delta \text{ for some } e \in \{ \pm e_1, \ldots, \pm e_d \} \right) \rightarrow 0, \label{ineq: Susc_int-dynamic_uppper_bound_2}
    \end{align}
    as $L \to \infty$, where $e_1, \ldots, e_d$ denote the standard basis vectors of $\Z^d$.
\end{corollary}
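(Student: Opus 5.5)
The plan is to bound the probability of the bad event through a union bound over the possible \emph{uncovered sets}. For a fixed neighbouring box $B' := B_{v+e}$ and a fixed set $S \subseteq B'$, Poisson thinning gives
\begin{align*}
\bP_L\left( S \cap \mathcal{R}^{(2)}_v(A) = \varnothing \right) = \exp\left( -\epsilon\lambda \sum_{x \in A} \bfP^L_x\left(\tau_S \le \ell^*\right) \right).
\end{align*}
The event in \eqref{ineq: Susc_int-dynamic_uppper_bound_2} for this $e$ forces some $S \subseteq B'$ with $|S| \ge \delta|B'|$ to be entirely uncovered. There are at most $2^{|B'|} \le \exp(C (r^*)^d)$ such sets. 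So it suffices to show that, uniformly in $A$ with $|A| \ge (1-\delta)|B_v|$ and in $S$ with $|S| \ge \delta |B'|$,
\[
\epsilon\lambda\sum_{x\in A}\bfP_x(\tau_S\le \ell^*) \;\ge\; \omega_L\,(r^*)^d, \qquad \omega_L \to \infty .
\]
Here we use $\bfP^L \ge \bfP$ as in the proof of Proposition~\ref{prop: BernPerc_internal_2}. Since $(r^*)^d \gg \log L$ by \eqref{eq: typical_distance_at_t*} (for $d=2$, $\alpha>2$ only by a $\log\log L$ factor, but still), the union bound together with the factor $L^d$ and the $2d$ choices of $e$ is then harmless.

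\textbf{Single points.} For each $y \in B'$, the set $A$ lies in $B_y(4r^*)$ and has at least a constant fraction of its volume. By Lemma~\ref{lemma: random_walk_expected_hitting_size_main_text} (applied at scale $r^*$, possibly after shrinking to a sub-box so that the $r^d/4$ density hypothesis holds), we get $\sum_{x\in A}\bfP_x(\tau_y\le\ell^*) \ge c f(\ell^*)$. Here $f$ is the range-type function of Lemma~\ref{lemma: range_estimates_main_text} evaluated at $\ell(r^*)\asymp \ell^*$. By Lemma~\ref{lemma: green_function_on_Z} and the definition \eqref{def: t*}, $f(\ell^*)\asymp \ell^*/G_{\ell^*}\asymp \lambda^{-1}\log L$. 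Thus each single vertex is missed with probability at most $L^{-c\epsilon}$.

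\textbf{Passing to sets.} A crude bound of the form $\bfP_x(\tau_S\le\ell^*)\ge \bfE_x N_S/\max_z \bfE_z N_S$ only yields an exponent of order $\epsilon\lambda (r^*)^d$ with a small constant. This can lose to the entropy $\log 2\cdot|B'|$, and I expect this to be the main obstacle. To get around it, I would pass to a well-separated subset. Fix a large constant $k$ and take $S' \subseteq S$ with pairwise distances at least $r^*/k$ and $|S'| \ge c_k |S|/(r^*/k)^d \cdot$ (a suitable count). More usefully, take $S'$ of size $\asymp k^{-d}|S|/\text{(local density)}$ with mutual spacing at least $k$.

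By Bonferroni,
\[
\bfP_x(\tau_{S'}\le\ell^*) \;\ge\; \sum_{w\in S'}\bfP_x(\tau_w\le\ell^*) - \sum_{w\ne w'\in S'} \bfP_x(\tau_w\le\ell^*,\,\tau_{w'}\le\ell^*).
\]
Summing over $x\in A$, the first term is at least $c|S'|\log L/\lambda$ by the single-point estimate. For the pair terms, I would use the strong Markov property and the symmetric Green's-function bound. Specifically, $\bfP_x(\tau_w\le \ell^*,\tau_{w'}\le \ell^*)\le \bfP_x(\tau_w\le\ell^*)\bfP_w(\tau_{w'}\le\ell^*)+(\text{symmetric})$, and $\sum_{w'\ne w}\bfP_w(\tau_{w'}\le\ell^*)$ should be made small by the spacing $k$ together with the heavy-tail decay $|w-w'|^{-(d+\alpha)}$ and the Green's-function decay. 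If this sum is at most $1/2$, the exponent is at least $c\,\epsilon|S'|\log L \gtrsim \epsilon\, k^{-d}\delta\,(r^*)^d\log L$. This beats $C(r^*)^d$ because $\log L\to\infty$, giving the required $\omega_L\asymp \epsilon k^{-d}\delta\log L$.

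The delicate point is exactly this pair-sum control. In $d=2$ with $\alpha\ge 2$, or $d\ge3$ with $\alpha\ge2$, the walk is recurrent-like at scale $r^*$, so $\sum_{w'}\bfP_w(\tau_{w'}\le\ell^*)$ does not become small merely from a fixed spacing $k$. I would first try the spacing $k = r^*/M$ for a large constant $M$, which leaves $|S'|\asymp M^d$ points. That is too few to beat entropy, so instead I would run the argument box by box. I would split $B'$ into sub-boxes of side $r^*/M$ and apply the set bound only to the family of sub-boxes that are at least $\delta/2$-uncovered, using one representative per sub-box. This keeps the entropy at $2^{M^d}$ while the exponent remains of order $\epsilon M^d\log L$. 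The leftover requirement is a within-sub-box estimate, namely that a sub-box is $\delta/2$-uncovered with probability at most $L^{-C}$. I would obtain this by iterating the same representative-point bound inside each sub-box, possibly borrowing a small slice of $\cW^{(3)}$ if additional density is needed.
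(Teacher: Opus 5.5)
Your proposal has a genuine gap. The reduction to $\epsilon\lambda\sum_{x\in A}\bfP_x(\tau_S\le\ell^*)\ge\omega_L(r^*)^d$ can never be achieved. For every $z\in S$, \eqref{ineq: heat_kernel_typical} gives $G_{\ell^*}(z,S)\ge c\delta\ell^*$. The strong Markov argument behind Lemma~\ref{lemma: cover_to_green} then yields $\sum_{x\in\Z^d}\bfP_x(\tau_S\le\ell^*)\le G_{2\ell^*}(S,\Z^d)/(c\delta\ell^*)\le C|S|/\delta=O((r^*)^d)$, with $C$ independent of $L$. Replacing $S$ by $S'\subseteq S$ only decreases the left side.

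So the exponent $\epsilon k^{-d}\delta(r^*)^d\log L$ you claim for a spacing-$k$ subset is false, and the Bonferroni pair sum cannot be made small. Applying Lemma~\ref{lemma: cover_to_green} to singletons gives $\sum_{w'\in S'}\bfP_w(\tau_{w'}\le\ell^*)\ge G_{\ell^*}(w,S')/G_{\ell^*}\ge c\delta k^{-d}\ell^*/G_{\ell^*}\asymp k^{-d}\log L\to\infty$. This holds for every fixed $k$ and in every regime of $(d,\alpha)$, transient ones included. The obstruction is not recurrence: a walker that hits one point of a dense spaced set goes on to hit $\asymp k^{-d}\ell^*/G_{\ell^*}$ others.

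Your final fallback fixes one representative per sub-box of side $r^*/M$. But a sub-box being $\delta/2$-uncovered does not make a fixed representative uncovered, so you are left needing the corollary itself at scale $r^*/M$. Iterating that is circular, and borrowing particles from $\cW^{(3)}$ changes the statement being proved.

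The missing idea is that you never have to beat an entropy of order $e^{c(r^*)^d}$. Because $r^*$ is only polylogarithmic in $L$, it suffices to union-bound over uncovered subsets of a \emph{fixed} size $M$. There are at most $\binom{(2r^*)^d}{M}\le(2r^*)^{dM}=L^{o(1)}$ of them.

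The paper does exactly this. Lemma~\ref{lemma: good_set_intersect_high_density_set} uses a counting argument: each vertex $a$ has at most $\ell^*/(uG_{\ell^*})\ll(r^*)^d$ vertices $b$ with $G_{\ell^*}(a,b)>uG_{\ell^*}$. Hence any $A'\subseteq B_{v+e}$ with $|A'|\ge\delta|B_{v+e}|$ contains a $u$-nice set $A_1$ of size $M$. For such $A_1$, Lemma~\ref{lemma: cover_to_green} gives $\sum_{x\in A}\bfP_x(\tau_{A_1}\le\ell^*)\ge G_{\ell^*}(A_1,A)/\max_{z\in A_1}G_{\ell^*}(z,A_1)\ge c'M\ell^*/((uM+1)G_{\ell^*})$. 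With $M\ge 4/(c'\epsilon)$ and $u\le 1/M$, each $A_1$ is missed with probability at most $L^{-2d}$, and $L^d(2r^*)^{dM}L^{-2d}\to 0$.

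Your sub-box scheme would close the same way with one change. Take the representative of each bad sub-box to be an \emph{uncovered} vertex chosen inside it, and pay a factor $(2r^*)^d$ per sub-box in the union bound, while ensuring the chosen points are $u$-nice. A representative fixed in advance does not work.
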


\begin{proof}
    Fix $e \in \{ \pm e_1, \ldots, \pm e_d \}$.
    Let $u \in (0, 1)$, and let $M$ be a positive integer.
    We say that a set $A \subseteq B_{v + e}$ is \textbf{$u$-nice} if 
    $$\max_{\substack{x, y \in A \\ x \neq y}} G_{\ell^*}(x, y) \le u G_{\ell^*}.$$
    We will use the following lemma.
    \begin{lemma} \label{lemma: good_set_intersect_high_density_set}
        Let $d \ge 2$.
        For every $\delta \in (0, 1)$, every $u \in (0, 1)$, and every $M \in \N_+$, there exists $\hat{L}_0 > 0$ such that for all $L \ge \hat{L}_0$ and all $A' \subseteq B_{v + e}$ with $|A'|/|B_{v + e}| \ge \delta$, $A'$ must contain a $u$-nice set of cardinality $M$.
    \end{lemma}
    We first complete the proof of Corollary~\ref{cor: Susc_internal_2}, assuming the lemma above. 
    Let $A_1 \subseteq B_{v+e}$ be a $u$-nice set of cardinality $M$.
    Define
    $$T_{A_1} := \inf \{ k \in \N : A_1 \cap \mathcal{R}^{(2)}_v(A, k) \neq \varnothing \},$$
    which is the first time that particles in $\cup_{x \in A} \cW^{(2)}_x$ hit $A_1$.
    By Poisson thinning, we have
    \begin{align*}
        &\bP_L \left( A_1 \cap \mathcal{R}^{(2)}_v(A) = \varnothing \right) \le \exp\Bigg( - \epsilon \lambda \sum_{x \in A} \bfP_x(T_{A_1} \le \ell^*) \Bigg).
    \end{align*}
    Moreover, by Lemma~\ref{lemma: cover_to_green}, we have
    \begin{align*}
        \sum_{x \in A} \bfP_x(T_{A_1} \le \ell^*) \ge \frac{G_{\ell^*}(A_1, A)}{\max_{x \in A_1} G_{\ell^*}(x, A_1) } \ge \frac{c' M \ell^*}{(u M + 1) G_{\ell^*}}, 
    \end{align*}
    where $G_{n}(B, C) := \sum_{x\in B, y\in C} G_n(x,y)$. Note that the second inequality follows from the definition of a $u$-nice set and the fact that $G_{\ell^*}(x, A) \ge c' \ell^*$ for some constant $c' = c'(\delta)$ and all $x \in B_{v + e}$ (see equation~\eqref{ineq: heat_kernel_typical} in Theorem~\ref{thm: heat_kernel_typical}). By choosing $u \le c' \epsilon/4$ and $M \ge 4/(c' \epsilon)$, we obtain
    \begin{align*}
        \epsilon \lambda \sum_{x \in A} \bfP_x(T_{A_1} \le \ell^*) \ge 2d \log L.
    \end{align*}
    Fix such a choice of $u$ and $M$ for the remainder of the proof. It then follows from Lemma~\ref{lemma: good_set_intersect_high_density_set} that, for sufficiently large $L$,
    \begin{align*}
        &\bP_L \left( \frac{|\mathcal{R}^{(2)}_v(A) \cap B_{v + e}|}{|B_{v + e}|} \le 1 - \delta \right) \\
        \le & \; \bP_L \left( \exists A' \subseteq B_{v + e} \text{ such that } A' \cap \mathcal{R}^{(2)}_v(A) = \varnothing \text{ and } |A'|/|B_{v + e}| \ge \delta \right) \\
        \le & \; \bP_L \left( \exists A_1 \subseteq B_{v + e} \text{ such that } A_1 \cap \mathcal{R}^{(2)}_v(A) = \varnothing \text{ and } A_1 \text{ is $(u, M)$-nice} \right) \\
        \le & \; \binom{(r^*)^d}{M} \cdot L^{-2d} \le (r^*)^{dM} \cdot L^{-2d}.
    \end{align*}
    Using equation \eqref{eq: typical_distance_at_t*}, we complete the proof by taking a union bound over all $e$ in $\{ \pm e_1, \ldots, \pm e_d \}$.
\end{proof}

\begin{proof}[Proof of Lemma~\ref{lemma: good_set_intersect_high_density_set}]
    For every $a \in \T^d_L$, we define the set 
    $$ V_a := \{ b : b \neq a, G_{\ell^*}(a, b) > u G_{\ell^*} \}. $$
    On the one hand, we have $\sum_{b \in V_a} G_{\ell^*}(a, b) > |V_a| \cdot u G_{\ell^*}$ by definition. 
    On the other hand, $\sum_{b \in V_a} G_{\ell^*}(a, b) \le \ell^*$.
    Therefore, $|V_a| \le \ell^*/(u G_{\ell^*})$ for every $a$. By equation \eqref{def: typical_distance_susc} and Lemma~\ref{lemma: green_function_on_Z}, we have $\ell^*/(u G_{\ell^*}) \ll (r^*)^d$ whenever $d \ge 2$. Thus, for any $u, M$ and any $q \in (0, 1)$, there exists $\hat{L}_0$ such that for any $L \ge \hat{L}_0$ and any $a$, we have $|V_a| \le q(r^*)^d - 1$. 
    Now consider the graph $\widehat{G} = (V, \widehat{E})$ where $V$ is the vertex set of $T^d_L$ and 
    $$ \widehat{E} := \{ \{a, b\} : a \neq b, G_{\ell^*}(a,b) > u G_{\ell^*} \}. $$
    Set $q = \delta / M$.
    We make the following two observations: (1) $|A'| \ge \delta \cdot |B_{v+e}| \ge \delta \cdot (r^*)^d$ and (2) there exists $\hat{L}_0$ such that for all $L \ge \hat{L}_0$, every ball of radius 1 on $\widehat{G}$ has size at most $q \cdot (r^*)^d$. Therefore, $A'$ contains an independent set of size at least $\delta/q = M$. Taking an independent set of size $M$ in $A'$, it is straightforward to verify that their centers form a $u$-nice set.
\end{proof}

We now handle the planted particle. 
Let $\ell_o = \ell^*$ and choose positive integers $r' = r'(L)$ such that $1 \ll r' \ll r^*$ (for instance, we can choose $r' := \lceil (\log L)^{1/3} \rceil$). 
We aim to use the planted particle with lifespan $\ell_o$ to activate a good vertex in some box $B_v$. 
To exploit the fact that, with high probability, there are many good vertices in each box $B_v$, 
we further partition each $B_v$ into boxes of side length $r'$:
we try to tile it with boxes of side-length $r'$;
if $r^*$ is not divisible by $r'$, we can still partition $B_v$ into $(\lfloor r^*/r' \rfloor - 1)^d$ boxes of side length $r'$, 
and the remaining boxes next to the boundary of $B_v$ can have side lengths between $r'$ and $2r'$. 
We use $\mathcal{G}'_v$ to denote the collection of sub-boxes in $B_v$ after this partition.
Let $\mathcal{G}' := \bigcup_{v \in \T^d_m} \mathcal{G}'_v$. 

We define the corresponding typical time required for the random walk to travel distance $r'$ as
\begin{align} \label{def: Susc_typical_time}
    \ell' = \begin{cases}
        (r')^2, &\textup{ if } \alpha > 2, \\
        (r')^2/\log r', &\textup{ if } \alpha = 2, \\
        (r')^\alpha, &\textup{ if } \alpha < 2.
    \end{cases} 
\end{align}
Moreover, we define 
\begin{align}
    \phi(s) := \begin{cases}
        c_1 s, &\textup{ if $d \ge 3$ or $d = 2$ and $\alpha > 2$,} \\
        c_1 s / \log \log s, &\textup{ if $d = 2$ and $\alpha = 2$,} \\
        c_1 s / \log s, &\textup{ if $d = 2$ and $\alpha < 2$,}
    \end{cases}
\end{align}
where $c_1$ is chosen as the same constant as in Lemma~\ref{lemma: range_estimates}. 

A box $B' \in \mathcal{G'}$ is called \textbf{fantastic} if the planted particle $w_0^o$ visits at least $\phi(\ell')$ vertices in $B'$ within time $\ell_o$. Let $\mathsf{Fant}$ denote the set of all fantastic boxes in $\mathcal{G}'$. Define
\begin{align*}
    \tau_n := \inf \left\{ k \in \N : 
    \begin{array}{c}
        k \ge \tau_{n-1} + 2 \ell' \text{ and } w_0^o \text{ visits }\\
        \text{ a previously unvisited box in } \mathcal{G}' \text{ at time } k 
    \end{array}
    \right\},
\end{align*}
with $\tau_0 := 0$. For every $n \in \N_+$, let $B'_n$ be the box in $\mathcal{G}'$ visited by the planted particle at time $\tau_n$.
By the strong Markov property and Lemma~\ref{lemma: range_estimates}, for any $n \in \N_+$, there exists a constant $c_0 > 0$ such that $B'_n$ is fantastic with probability at least $c_0$. 
(Note that, although Lemma~\ref{lemma: range_estimates} is stated for the random walk on $\Z^d$, the same result for the random walk on $\T^d_L$ follows directly by examining the standard coupling between the two random walks.)
Moreover, since $\ell' \ll \ell_o$, with high probability there are diverging numbers of $n$ for which $\tau_n \le \ell_o$.
Thus, we have the following result. We omit the detailed proof.

\begin{lemma} \label{lemma: Susc_planted_to_fantastic}
    Let $d \ge 2$, $\lambda > 0$, $K > 0$, and $\alpha > 0$. 
    For any $C > 0$, we have that 
    \begin{align}
    	\lim_{L \rightarrow \infty} \bP_L(|\mathsf{Fant}| \ge C) = 1.
    \end{align}
\end{lemma}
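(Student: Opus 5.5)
The plan is to show that, with high probability, the planted particle makes a diverging number of essentially fresh excursions into new sub-boxes before time $\ell_o=\ell^*$, and that each excursion has a uniformly positive conditional probability of producing a fantastic box. Fix $C>0$ and an integer $k$, to be taken large depending on $C$.

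\textbf{Step 1: $\tau_k\le\ell_o$ with high probability.} Since $\ell'\ll\ell^*$, it suffices to show that, started anywhere, the walk enters a previously unvisited box of $\mathcal{G}'$ within $3\ell'$ steps of the waiting period with probability bounded below. Each $\tau_n-\tau_{n-1}$ is at least $2\ell'$ by definition. After that waiting time, only $n\le k$ boxes have been visited. These occupy at most $k(2r')^d$ sites, which is a vanishing fraction of any ball of radius $r(\ell')\asymp r'$ times a large constant. The walk after $\ell'$ additional steps is spread over such a ball by the heat kernel bounds (Theorem~\ref{thm: heat_kernel_typical}). Hence, with probability at least some $c>0$, it lands in an unvisited box, and $\tau_n-\tau_{n-1}\le 3\ell'$.

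Iterating via the strong Markov property, the number of attempts needed to produce $k$ successes is geometric-like. So $\tau_k\le C_k\ell'$ with probability at least $1-\eta$, for any $\eta>0$, with $C_k$ depending on $k$ and $\eta$. Since $C_k\ell'\ll\ell^*$, the event $\{\tau_k\le\ell_o\}$ holds with high probability. On the torus we use the standard coupling with $\Z^d$; here $r'\ll L$, so wrapping around is irrelevant at these scales.

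\textbf{Step 2: each new box is fantastic with probability at least $c_0$.} At time $\tau_n$ the walk sits in the fresh box $B'_n$. By Lemma~\ref{lemma: range_estimates} applied with $r=r'$, $H(r')=B'_n$ and $D=\emptyset$, the walk visits at least $c_1 f(\ell')\ge\phi(\ell')$ sites of $B'_n$ during $[\tau_n,\tau_n+\ell']$ with probability at least $c_0$. This holds uniformly in the starting point and in the past, by the strong Markov property at $\tau_n$. Because $\tau_{n+1}\ge\tau_n+2\ell'$, these windows are disjoint and each finishes before $\tau_{n+1}$. On $\{\tau_k\le\ell_o\}$, the windows lie in $[0,\ell_o+\ell']$. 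To keep everything within $[0,\ell_o]$, we may instead require $\tau_k\le\ell_o-\ell'$, which costs nothing given $C_k\ell'\ll\ell^*$. The boxes $B'_1,\dots,B'_k$ are distinct by construction.

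Therefore $|\mathsf{Fant}|$ stochastically dominates $\mathrm{Bin}(k,c_0)$ on this event, using the sequential domination lemma of \cite{MR1428500}. Choosing $k$ so that $\P(\mathrm{Bin}(k,c_0)<C)<\eta$ and letting $L\to\infty$ gives the claim.

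\textbf{Main obstacle.} The delicate step is Step 1: obtaining a uniform lower bound on reaching a genuinely new box within $O(\ell')$ steps after the $2\ell'$ waiting period. This must hold regardless of the history and of the exact geometry of the up to $2r'$-sided boundary sub-boxes. I would handle it through the on-diagonal heat kernel upper bound $Q^{t}(x,y)\le C/r(t)^d$, which makes the probability of landing in the $k$ visited boxes at most $Ck(r')^d/(Mr')^d$ at time $\asymp M^{\alpha\wedge2}\ell'$, with $M$ large.
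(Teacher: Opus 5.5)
Your proof is correct and follows the same route as the paper's sketch. The paper uses stopping times $\tau_n$ separated by waiting periods of length $2\ell'$, gets a uniform conditional probability $c_0$ that each new box $B'_n$ is fantastic from the strong Markov property and Lemma~\ref{lemma: range_estimates}, uses $\ell'\ll\ell_o$ to ensure $\tau_k\le\ell_o$ with high probability, and concludes by binomial domination. You also supply the details the paper omits, in particular the geometric-trials bound on $\tau_n-\tau_{n-1}$ from the heat kernel estimates. As your last paragraph notes, that bound needs either $\asymp M^{\alpha\wedge 2}\ell'$ steps with $M$ large or the lower bound in \eqref{ineq: heat_kernel_typical}, not just $\ell'$ steps with the upper bound.
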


Combining Corollary~\ref{cor: Susc_internal} and Lemma~\ref{lemma: Susc_planted_to_fantastic}, with high probability, we know that the planted particle can activate a good vertex of some box $B_v$ and, thus, can activate $(1{-}\delta)$-fraction of the box $B_v$. 

For the next lemma, we recall and introduce a few notions. 
Recall that $\{\omega^x_i\}_{i \in \N_+, x \in \T^d_L}$ is a collection of independent discrete-time random walks, 
where the superscript $x$ denotes the starting position.
For each $\rho > 0$, let $\{\mathcal{N}_{\rho, x}\}_{x \in \T^d_L}$ be a collection of independent Pois$(\rho)$ random variables. 
For each $y \in \T^d_L$, define
\begin{align*}
    T_y(\rho, A) := \inf \left\{ k \in \N : y \in \bigcup_{x \in A} \bigcup_{i = 1}^{\mathcal{N}_{\rho, x}} \bigcup_{j = 0}^{k} \omega^x_i(j) \right\}, \qquad \tau_{\mathrm{cov}}^{\rho, A} := \max_{y \in \T^d_L} T_y(\rho, A).
\end{align*}
In other words, $\tau_{\mathrm{cov}}^{\rho, A}$ is the time required to cover the torus when one starts with an independent Pois$(\rho)$ number of random walkers at each site of $A$ and no walkers at sites outside $A$.

\begin{lemma} \label{lemma: cover_from_high_density}
    Let $\ell^*$ and $m$ be as defined above.
    For any $c > 0$, there exist $\delta > 0$ and $\epsilon > 0$ depending on $d, \lambda, K$, and $\alpha$ such that, if $A \subseteq \T^d_L$ satisfies
    \begin{align*}
        \frac{|A \cap B_v|}{|B_v|} \ge 1-\delta \qquad \text{for every } v \in \T^d_m,
    \end{align*}
    then 
    \begin{align}
        \bP_L \left( \tau_{\mathrm{cov}}^{(1-\epsilon)\lambda, A} \ge (1 + c) \ell^* \right) \rightarrow 0 \quad \text{as } L \to \infty.
    \end{align}
\end{lemma}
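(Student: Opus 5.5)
We prove the lemma by a first-moment bound over target vertices $y\in\T^d_L$, reducing the hitting of $y$ to a Green's function computation. Set $\ell:=\lceil(1+c)\ell^*\rceil$, $\rho:=(1-\epsilon)\lambda$, and fix $y$. By Poisson thinning,
\begin{align*}
\bP_L\bigl(T_y(\rho,A)>\ell\bigr)=\exp\Bigl(-\rho\sum_{x\in A}\bfP^L_x(\tau_y\le \ell)\Bigr).
\end{align*}
A union bound over the $L^d$ vertices then reduces the lemma to the uniform estimate
\begin{align*}
\rho\sum_{x\in A}\bfP^L_x(\tau_y\le\ell)\ge (1+c')\,d\log L \qquad\text{for all } y\in\T^d_L,
\end{align*}
for some $c'>0$. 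Given this, the failure probability is at most $L^d\cdot L^{-d(1+c')}\to0$.

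To obtain the estimate, we use the last-exit (or first-hit) decomposition $\sum_{x}G^L_\ell(x,y)\le \sum_x\bfP^L_x(\tau_y\le\ell)\,G^L_\ell(y,y)$. This gives $\sum_{x\in\T^d_L}\bfP^L_x(\tau_y\le\ell)\ge (\ell+1)/G^L_\ell(o,o)$, since summing the $\ell$-step Green's function over all starting points yields exactly $\ell+1$ by stationarity. We need $G^L_\ell(o,o)=(1+o(1))G_\ell$. This follows because $\ell\asymp\ell^*$ is polylogarithmic, so by the heat kernel bounds the torus wraparound contribution to $Q_L^n(o,o)$ for $n\le\ell\ll L^{\alpha\wedge2}$ is negligible compared with $G_\ell$. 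Moreover, $G_\ell$ is slowly varying in $\ell$: it is either bounded or grows logarithmically or doubly logarithmically, by Lemma~\ref{lemma: green_function_on_Z}. Hence $G_{(1+c)\ell^*}=(1+o(1))G_{\ell^*}$, and by the definition of $\ell^*$ we get
\begin{align*}
\lambda\frac{\ell}{G_\ell}\ge(1+c)(1-o(1))\,d\log L .
\end{align*}

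It remains to remove the contribution of $x\notin A$. The deficit is $\sum_{x\in A^c}\bfP^L_x(\tau_y\le\ell)$. We bound $\bfP_x(\tau_y\le\ell)\le G^L_\ell(x,y)/G^L_\ell(y,y)\le G_\ell(x,y)$, up to constants, and split $A^c$ according to its distance from $y$. For $|x-y|\le K_0 r^*$, the set $A^c$ has density at most $\delta$ in each box $B_v$. Since on-scale the Green's function satisfies $G_\ell(x,y)\lesssim \ell/(r^*)^d$ plus a near-diagonal part, the sum over these $x$ is at most $C\delta\,\ell+\sum_{|x-y|\le R}G_\ell(x,y)$. The near-diagonal part contributes only $O(R^d G_\ell)$, which is $o(\ell/G_\ell)$ because $\ell/G_\ell\asymp\log L\to\infty$. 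For $|x-y|>K_0r^*$, the heat kernel tail bound (Theorem~\ref{thm: heat_kernel_typical} and the upper bounds on $Q^n$) gives a total mass at most $\eta(K_0)\,\ell$ with $\eta(K_0)\to0$; here it is essential that $A^c$ still has density at most $\delta$, so we again get the bound $C\delta\ell$. Altogether the deficit is $O(\delta\ell)+o(\ell)$, which is at most $(C\delta G_\ell)\cdot\ell/G_\ell$.

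The main obstacle is this comparison. The deficit scales like $\delta\ell$, while the required margin is only of order $c\,\ell/G_\ell$. In the cases $d=2$, $\alpha\ge2$, $G_\ell$ diverges, so a crude bound of $\delta\ell$ is not enough. To handle this, we would bound the deficit more carefully through hitting probabilities rather than Green's functions. For $x$ at distance of order $r^*$, the hitting probability satisfies $\bfP_x(\tau_y\le\ell)\asymp G_\ell(x,y)/G_\ell$, so the density-$\delta$ set $A^c$ contributes at most $C\delta\,\ell/G_\ell$. Only the near-diagonal region $|x-y|\le R$ then needs separate treatment, and its contribution is $O(R^d)=o(\log L)$. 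Choosing $\delta$ small relative to $c$, and $\epsilon\le c/4$, yields the estimate with $c'=c/4$ and completes the proof.
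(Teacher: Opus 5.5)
Your skeleton is the same as the paper's: Poisson thinning, a union bound over $y$, the Green's-function lower bound of Lemma~\ref{lemma: cover_to_green}, the torus-to-$\Z^d$ comparison, and slow variation of $G_\ell$. The step that controls the contribution of $A^c$, however, does not work as written.

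\textbf{The hitting-probability inequality goes the wrong way.} The first-hit decomposition $G_\ell(x,y)=\sum_{k\le\ell}\bfP_x(\tau_y=k)\,G_{\ell-k}(y,y)$ gives $G_\ell(x,y)\le\bfP_x(\tau_y\le\ell)\,G_\ell(y,y)$. That is a \emph{lower} bound on the hitting probability. The upper bound you need is $\bfP_x(\tau_y\le\ell)\le G_{2\ell}(x,y)/G_\ell$, from the second display of Lemma~\ref{lemma: cover_to_green}.

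\textbf{The bound $C\delta\,\ell/G_\ell$ is false.} You claim that for $R<|x-y|\le K_0r^*$ the set $A^c$ contributes at most $C\delta\,\ell/G_\ell$, with only a fixed ball $|x-y|\le R$ treated separately. The density hypothesis holds only at scale $r^*$, so $A^c$ may cluster around $y$ at every intermediate scale. For example, take $d\ge3$ and $\alpha>2$, so $G_\ell\asymp1$. Let $A^c$ be an $\ell^\infty$-ball of radius $c\,\delta^{1/d}r^*$ about the center $y$ of a box, with volume at most $\delta|B_v|$. There, by \eqref{ineq: heat_kernel_typical}, $\bfP_x(\tau_y\le\ell)\ge G_\ell(x,y)/G_\ell\gtrsim|x-y|^{2-d}$. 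Summing gives a contribution $\gtrsim\delta^{2/d}\ell$, which exceeds $C\delta\,\ell$ for any fixed $C$ once $\delta$ is small. You also cannot enlarge $R$ to absorb the cluster with the trivial bound, because its volume $\asymp\delta(r^*)^d$ is much larger than $\log L$ in every case.

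\textbf{The fix: split in time, not space.} What you actually need is $\sum_{x\in A^c}G_{2\ell}(x,y)\le\eta\,\ell$ with $\eta$ small; it need not be linear in $\delta$. The paper gets this as follows:
\begin{itemize}
\item Times $k\le\epsilon_2\ell$ contribute total mass at most $\epsilon_2\ell$, whatever the set.
\item For $k\ge\epsilon_2\ell$, the kernel is uniformly at most $C(\epsilon_2)/(r^*)^d$ by \eqref{ineq: heat_kernel_typical}. So density $\delta$ in the $(2K_0+1)^d$ boxes around $y$ costs at most $C(\epsilon_2,K_0)\,\delta\,\ell$.
\item The far region is handled by \eqref{ineq: heat_kernel_intypical}.
\end{itemize}
With this, and $G_{2\ell}=(1+o(1))G_\ell$ from Lemma~\ref{lemma: green_function_compare_n_and_Cn}, your route closes.

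\textbf{Your main obstacle is an artifact of subtracting hitting probabilities.} The paper applies the first inequality of Lemma~\ref{lemma: cover_to_green} directly to $B=B_y^{(K)}\cap A$, which gives $\sum_{x\in B}\bfP_x(\tau_y\le\ell)\ge G_\ell(y,B_y^{(K)}\cap A)/G_\ell$. Since $G_\ell(y,\cdot)$ is additive over sets, removing $A^c$ and the far region costs only Green's-function mass. That mass is compared with the total $\ell+1$ under the common denominator $G_\ell$. So no upper bound on hitting probabilities is needed, and no comparison against the smaller margin $\ell/G_\ell$ ever arises.
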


We would like to point out that Lemma~\ref{lemma: cover_from_high_density} was the original motivation for Proposition~\ref{prop: Susc_upper}.
At a high level, Lemma~\ref{lemma: cover_from_high_density} reduces the proof of Proposition~\ref{prop: Susc_upper} to showing that using a Pois$(\epsilon \lambda)$ number of particles per site already activates a set $A^*$ that is spatially homogeneous and has high density in the above sense.
Once this is established, the proof of Proposition~\ref{prop: Susc_upper} is completed by an application of Lemma~\ref{lemma: cover_from_high_density}. 
Before diving into the proof of Lemma~\ref{lemma: cover_from_high_density}, we first complete the proof of Proposition~\ref{prop: Susc_upper} using Lemma~\ref{lemma: cover_from_high_density}.

\begin{proof}[Proof of Proposition~\ref{prop: Susc_upper}]
    Let $\delta, \epsilon \in (0, 1)$ be two small constants to be determined later.
    Recall that we have a planted particle $w_0^o$ at the origin with lifespan $\ell_o = \ell^*$, and for each $x \in \T^d_L$, we use Poisson thinning to split the particles initially generated at $x$ into three independent sets of particles, where the first two sets have Pois$(\epsilon \lambda)$ particles per site and the third has Pois$((1{-}2\epsilon) \lambda)$ particles per site. Let $U_o$ be the event that the planted particle $w_0^o$ visits at least one good vertex in some box $B_{v}$.
    On $U_o$, let $v_1$ be the index of the box containing the first good vertex visited by the planted particle $w_0^o$.
    By Corollary~\ref{cor: Susc_internal} and Lemma~\ref{lemma: Susc_planted_to_fantastic}, we have $\bP_L^+(U_o) \rightarrow 1$ as $L \rightarrow \infty$. 
    
    Assume that $U_o$ occurs. 
    % By \cref{cor: Susc_internal_2}, for every $v \in \T^d_m$ and every deterministic set $A \subseteq B_v$ with $|A|/|B_v| \ge 1 - \delta$, the event
    %     \[
    %         \frac{|\mathcal{R}^{(2)}_v(A) \cap B_{v + e}|}{|B_{v + e}|} > 1 - \delta
    %         \qquad \text{for every } e \in \{\pm e_1, \ldots, \pm e_d\}
    %     \]
    % has complement probability $o(L^{-d})$, uniformly over all such $v$ and $A$.
    Fix a total order on the set of edges of $\T^d_m$. 
    Using the first two sets of particles, we now define an exploration process on $\T^d_m$ starting from the box $B_{v_1}$. 
    This construction produces an event $U_b$ such that, on $U_b$, at least a $(1-\delta)$-fraction of the vertices in every box is activated by the frog model that uses only the planted particle and the first two sets of particles.
    Let $x_1$ be the first good vertex visited by the planted particle, so $x_1 \in B_{v_1}$.
    At the first stage, set $V_1 := \{ v_1 \}$ and $A_1 := \mathcal{A}^*_{x_1}$.
    Recall that $\mathcal{A}^*_{x_1}$ is the set of vertices in $B_{v_1}$ activated by the frog model started from $x_1$ using only the first collection of particles in $B_{v_1}$, namely the particles arising from the first Pois$(\epsilon \lambda)$ thinning. Since $x_1$ is good, we have $|A_1|/|B_{v_1}| \ge 1-\delta$.
    Finally, let $U^{(1)}$ be the event that
    \[
        \frac{|\mathcal{R}^{(2)}_{v_1}(A_1) \cap B_{v_1 + e}|}{|B_{v_1 + e}|} > 1 - \delta
        \qquad \text{for every } e \in \{\pm e_1, \ldots, \pm e_d\}.
    \]
    
    Now suppose that the first $i$ stages have been completed, and that $V_i = \{v_1, \ldots, v_i\}$ together with $A_1, \ldots, A_i$ and $U^{(1)}, \ldots, U^{(i)}$ has already been defined.
    On the event $U^{(1)} \cap \cdots \cap U^{(i)}$, we define the $(i{+}1)$-st stage as follows.
    Choose $j_i \in \{1, \ldots, i\}$ and $v_{i+1} \in \T^d_m \setminus V_i$ so that $v_{i+1} \sim v_{j_i}$ and the edge $\{v_{j_i}, v_{i+1}\}$ is the first, in the fixed total order, among all edges with one endpoint in $V_i$ and the other in $\T^d_m \setminus V_i$.
    Since $\T^d_m$ is connected, such a choice is always possible. Moreover, because we are on the event $U^{(j_i)}$, the set $\mathcal{R}^{(2)}_{v_{j_i}}(A_{j_i}) \cap B_{v_{i+1}}$ occupies more than a $(1-\delta)$-fraction of the vertices in $B_{v_{i+1}}$.
    We therefore define 
    \[
        A_{i+1} := \mathcal{R}^{(2)}_{v_{j_i}}(A_{j_i}) \cap B_{v_{i+1}}, \qquad
        V_{i+1} := V_i \cup \{ v_{i+1}\}.
    \]
    We then let $U^{(i+1)}$ be the event that
    \[
        \frac{|\mathcal{R}^{(2)}_{v_{i+1}}(A_{i+1}) \cap B_{v_{i+1} + e}|}{|B_{v_{i+1} + e}|} > 1 - \delta
        \qquad \text{for every } e \in \{\pm e_1, \ldots, \pm e_d\}.
    \]
    The exploration stops after stage $m^d$. We then set
    \[
        U_b := \bigcap_{i=1}^{m^d} U^{(i)}.
    \]
    On the event $U_o \cap U_b$, define
    \[
        A^* := \bigcup_{i=1}^{m^d} A_i.
    \]
    By a union bound and Corollary~\ref{cor: Susc_internal_2},
    \begin{align*}
        \bP_L^+\left(U_b^c \, \middle\vert \, U_o \right)
        &\le \sum_{i=1}^{m^d} \bP_L^+\left((U^{(i)})^c \, \middle\vert \, U_o, U^{(1)}, \ldots, U^{(i-1)} \right),
    \end{align*}
    as $L \to \infty$. Hence $\bP_L^+(U_o \cap U_b) \to 1$ as $L \to \infty$. 

    Finally, on the event $U := U_o \cap U_b$, we use the final set of particles, which has Pois$((1{-}2\epsilon)\lambda)$ particles per site, to activate all remaining vertices, with all frogs in $A^*$ initially active. Recall that $\mathcal{W}_x^{(3)}$ denotes the third set of particles initially generated at $x$. 
    For any $y \in \T^d_L$, we define 
    \begin{align}
    T^{(3)}_{y} := \inf \left\{ k \in \N : y \in \bigcup_{x \in A^*} \ \bigcup_{\omega \in \cW_x^{(3)}} \ \bigcup_{j = 0}^{k} \omega(j) \right\} \; \text{ and } \; \tau_{\mathrm{cov}}^{(3)} := \max_{y \in \T^d_L} T_y^{(3)}
    \end{align}

    Since, for every $v \in \T^d_L$, we have that $A^* \cap B_v$ occupies at least a $(1{-}\delta)$-fraction of the vertices in $B_v$, it follows from Lemma~\ref{lemma: cover_from_high_density} (applied with $\epsilon$ replaced by $2\epsilon$) that, for any $c > 0$, there exist $\delta, \epsilon > 0$ such that 
    \[
        \bP_L^+ \left( \tau_{\mathrm{cov}}^{(3)} \ge (1+c) \ell^* \, \middle\vert \, U \right) \rightarrow 0,
    \]
    as $L$ goes to infinity. Therefore, we complete the proof by writing 
    \[
        \bP_L^+ \left( \mathcal{L} > (1 + c) \ell^* \right) 
        \le \ \bP_L^+\left(U^c\right) + \bP_L^+\left( \tau_{\mathrm{cov}}^{(3)} \ge (1+c) \ell^* \, \middle\vert \, U \right).
    \]
\end{proof}

We now finish this subsection by proving Lemma~\ref{lemma: cover_from_high_density}.

\begin{proof}[Proof of Lemma~\ref{lemma: cover_from_high_density}]
    Let $K$ be a large positive constant to be determined later. 
    Let $v_y$ be the unique $v \in \T^d_m$ such that $y \in B_v$. 
    Define $B_y^{(K)} := \cup_{v \in v_y + [-K, K]^d} B_v$. 
    Thus, for every $c > 0$ and every $A \subseteq \T^d_L$ satisfying $|A \cap B_v|/|B_v| \ge 1-\delta$ for all $v \in \T^d_m$, we have 
    \begin{align*}
        \bP_L \bigl( \, T_{y}\left((1{-}\epsilon)\lambda, A \, \bigr) > (1 + c) \, \ell^* \right) \le \exp\left(- (1{-}\epsilon) \lambda \sum_{x \in B_y^{(K)} \cap A} \bfP_x\left(\tau_y \le (1 + c)\ell^*\right) \right),
    \end{align*}
    where $\tau_y$ is the hitting time of $y$ by a random walk on $\Z^d$ and $\bfP_x$ is the law of the random walk starting from $x$.
    By Lemma \ref{lemma: cover_to_green}, we have 
    \begin{align*}
    &\sum_{x \in B_y^{(K)} \cap A^*} \bfP_x\left(\tau_y \le (1 + c)\ell^*\right) \ge \frac{G_{(1 + c)\ell^*}\left(y, B_y^{(K)} \cap A^*\right)}{G_{(1 + c)\ell^*}} \\
    = \ & \frac{G_{(1 + c)\ell^*}\left( y, \Z^d \right) - G_{(1 + c)\ell^*}\left(y, (B_y^{(K)})^c\right) - G_{(1 + c)\ell^*}\left(y, (A^*)^c \cap B_y^{(K)}\right)}{G_{(1 + c)\ell^*}}.
    \end{align*}
    We analyze the three terms in the numerator separately.
    For the first term in the numerator, we have $G_{(1 + c)\ell^*}(y, \Z^d) = (1 + c)\ell^*$.
    For the second term, it follows from equation \eqref{ineq: heat_kernel_intypical} that for any $\epsilon_1 > 0$, we can choose $K$ large enough such that 
    $$ G_{(1 + c)\ell^*}\left(y, (B_y^{(K)})^c\right) \le \epsilon_1 (1 + c)\ell^*.$$
    For the third term, it follows from equation \eqref{ineq: heat_kernel_typical} and the fact that $(A^*)^c$ occupies at most a $\delta$-fraction of each box that, for any $\epsilon_2 > 0$ and $K > 0$, there exists $C = C(\epsilon_2, K)$ such that  
    \begin{align*}
        G_{(1 + c)\ell^*}\left(y, (A^*)^c \cap B_y^{(K)}\right) \le \ &\epsilon_2 (1 + c) \ell^* + \sum_{k = \epsilon_2 (1 + c) C \ell^*}^{(1 + c)\ell^*} \sum_{x \in (A^*)^c \cap B_y^{(K)}} Q^k(x, y) \\
        \le \ &\epsilon_2 (1 + c) \ell^* + (1 + c) \ell^* \cdot \delta |B_y^{(K)}| \cdot \frac{C(\epsilon_2, K)}{|B_y^{(K)}|} \\
        = \ &\epsilon_2 (1 + c) \ell^* + \delta C(\epsilon_2, K) (1 + c) \ell^*.
    \end{align*}
    Moreover, for the denominator, by Lemma~\ref{lemma: green_function_compare_n_and_Cn}, for any $\epsilon_3 > 0$ and any sufficiently large $L$ (depending on $\epsilon_3$), we have
    \begin{align*}
        G_{(1 + c)\ell^*} \le G_\ell^* / (1 - \epsilon_3).
    \end{align*}
    Therefore,
    \begin{align*}
        &(1{-}\epsilon) \lambda \sum_{x \in B_y^{(K)} \cap A^*} \bfP_x\left(\tau_y \le (1 + c)\ell^*\right) \\
        \ge \, &(1{-}\epsilon) \cdot \left(1 - \epsilon_1 - \epsilon_2 - \delta C(\epsilon_2, K)\right) \cdot (1 - \epsilon_3) \cdot (1+c) \cdot \frac{\lambda \ell^*}{G_{\ell^*}}. 
    \end{align*}
    By the definition of $\ell^*$, we have $\lambda \ell^* / G_{\ell^*} \ge d \log L$. 
    Let $\hat{c} := 1 - (1+c/2)/(1+c)$.
    Thus, for any $c > 0$, by first choosing $\epsilon, \epsilon_1, \epsilon_2, \epsilon_3$ to be smaller than $\hat{c}/5$ and then choosing $\delta$ to be smaller than $\hat{c}/5C(\epsilon_2, K)$, the quantity above is at least $(1 + c/2) d \log L$ for all sufficiently large $L$. Therefore, we have
    \begin{align*}
        \bP_L \left( \tau_{\mathrm{cov}}^{(1-\epsilon)\lambda, A} \ge (1 + c) \ell^* \right)
        % \bP_L^+ \left( \mathcal{L} > (1 + c) \ell^* \right) 
        % \le \ & \bP_L^+(U^c) + L^d \bP_L^+\left(T^{(3)}_y > (1 + c) \ell^* \, \middle| \, U \right) \\
        \le \ L^d \cdot \exp(- (1 + c/2) d \log L) \rightarrow 0,
    \end{align*}
    as $L$ goes to infinity. 
\end{proof}

\subsection{Lower Bound via Cover Time} \label{subsection: SusceptibilityLower}

In this subsection, we prove the lower bound in Theorem~\ref{MainThm: Susceptibility} by studying the cover time of $\T^d_L$ by Pois$(\lambda L^d)$ number of walkers starting from the uniform distribution. We denote this quantity by $\tau_{\mathrm{cov}}$. 
Observe that $\tau_{\mathrm{cov}}$ is stochastically dominated by the cover lifespan $\mathcal{L}$.
Indeed, starting with Pois$(\lambda L^d)$ stationary walkers on $\T^d_L$ at time 0 is equivalent to the frog model with particle density $\lambda$ and all particles being active at time 0 (without the planted particle) due to Poisson thinning.
The following result completes the proof of the lower bound in Theorem~\ref{MainThm: Susceptibility} by showing that $\tau_{\mathrm{cov}}/\ell^*$ converges to 1 in probability.

\begin{theorem} \label{thm: covertime}
    Let $\lambda > 0$, $d \ge 2$, $\alpha > 0$, and $K > 0$. Let $Q \in \mathcal{H}_{\alpha, K}$ and let $Q_L$ be defined as in \eqref{def: Q_L}. 
    Then, for any $c > 0$, we have
    \begin{align}
        \bP_L \left(   (1 - c) \ell^* \le \tau_{\mathrm{cov}} \le (1 + c) \ell^* \right) \rightarrow 1 \quad \text{as} \quad L \rightarrow \infty.
    \end{align}
\end{theorem}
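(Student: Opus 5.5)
Both bounds in Theorem~\ref{thm: covertime} come from the Poisson structure: the walkers form a Poisson point process on trajectories. So for each $y$,
\[
\bP_L(T_y > t) = \exp\Big(-\lambda \sum_{x} \bfP^L_x(\tau_y \le t)\Big) = \exp\big(-\lambda\, \bE^L_\pi[|\mathrm{R}(t)|]\big),
\]
where we use symmetry and translation invariance. In this expression $\bE^L_\pi[|\mathrm{R}(t)|]$ is the expected range of a $t$-step walk on $\T^d_L$. The standard last-exit/Green's function identity then gives
\[
\frac{t+1}{G^L_t} \le \bE^L[|\mathrm{R}(t)|] \le \frac{2(t+1)}{G^L_{t}}
\]
up to $1+o(1)$ factors, and the more precise bound $\bE|\mathrm{R}(t)| \sim t/G_t$ holds in the transient-like regime we are in. Since $\ell^* = O(\log L\log\log L)$ is far smaller than the mixing time on $\T^d_L$, I would compare $G^L_t$ with $G_t$ on $\Z^d$ via the standard coupling. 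The point is that $Q_L^n(o,o)-Q^n(o,o)$ is negligible for $n\le 2\ell^*$, because the heavy-tailed wraparound contributes only $O(n L^{-d-\alpha}\cdot L^{d})$ per step.

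\textbf{Upper bound.} This already follows from Proposition~\ref{prop: Susc_upper}, since $\tau_{\mathrm{cov}}\le\mathcal L$ under the coupling. Alternatively, apply Lemma~\ref{lemma: cover_from_high_density} with $A=\T^d_L$: take a union bound over $y$ and use $\lambda(1+c)\ell^*/G_{(1+c)\ell^*} \ge (1+c/2)\,d\log L$, which holds by Lemma~\ref{lemma: green_function_compare_n_and_Cn}.

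\textbf{Lower bound.} This is the main work, a second moment (or Poisson clumping) argument. Set $t=(1-c)\ell^*$. By minimality of $\ell^*$ and the slow variation of $G_t$ (Lemma~\ref{lemma: green_function_compare_n_and_Cn}), we get $\lambda t/G_t \le (1-c/2)d\log L$. Hence $\bP(T_y>t)\ge L^{-d(1-c/2)}$, and the number $N$ of uncovered points has $\bE N \ge L^{dc/2}\to\infty$. To control the variance, I would restrict to a well-separated grid $Y$ of points at mutual distance $\ge L^{\kappa}$ for a small $\kappa>0$, which still has $|Y|\bE$-mass $\ge L^{dc/4}$. For $y\ne z\in Y$, the Poisson formula gives
\[
\frac{\bP(T_y>t,T_z>t)}{\bP(T_y>t)\bP(T_z>t)} = \exp\Big(\lambda\sum_x \bfP^L_x(\tau_y\le t,\tau_z\le t)\Big).
\]
So it suffices to show that $\sum_x \bfP_x(\tau_y\le t,\tau_z\le t) = o(1)$ uniformly for separated pairs. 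I would bound this sum by $2t\cdot \bfP_y(\tau_z\le t)$, which by Lemma~\ref{lemma: hitting_probability_upper_bound_on_Z}-type estimates (a union over the $t$ steps with the heavy-tailed kernel) is $O(t^2 |y-z|^{-d-\alpha})\le O((\log L)^3 L^{-\kappa(d+\alpha)})\to0$. Chebyshev then yields $N_Y>0$ w.h.p., i.e.\ $\tau_{\mathrm{cov}}\ge t$.

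The step I expect to be hardest is making the expected-range estimate $\bE|\mathrm R(t)|\le(1+o(1))t/G_t$ sharp rather than off by a factor of $2$ for the discrete-time heavy-tailed walk. The constant matters because $\ell^*$ is defined by an exact threshold. I would prove it by decomposing the range by last visit: $\bE|\mathrm R(t)| = \sum_{k\le t}\bfP(\text{no return in } t-k \text{ steps})$. I would then use that the escape probability up to time $s$ equals $1/G_s$ asymptotically, as in the renewal identity $\sum_k \bfP(\text{no return by } k)\,Q^{t-k}(o,o)=1$. Slow variation of $G$, from Lemma~\ref{lemma: green_function_on_Z}, converts this into $(1+o(1))/G_t$.
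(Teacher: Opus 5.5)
Your proof is correct but takes a different route from the paper at the two points that carry the weight; your direct union bound for the upper bound is just Proposition~\ref{prop: expected_uncovered}(i) plus Markov.

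\textbf{Sharp first moment.} The paper never decomposes the range. It applies the second inequality of Lemma~\ref{lemma: cover_to_green} with $A=\{o\}$ and the split $\ell_2+\epsilon\ell_2$:
$\sum_x \bfP^{(L)}_x(\tau_o\le \ell_2)\le ((1+\epsilon)\ell_2+1)/G^{(L)}_{\epsilon\ell_2}$.
After that it needs only $G_{\epsilon\ell}/G_\ell\to1$ for fixed $\epsilon$. Your last-exit/renewal route also works: monotonicity gives $\bfP_o(\tau_o^+>s)\le 1/G_s$, hence $\bE|\mathrm R(t)|\le\sum_{s\le t}1/G_s$. Be careful in the recurrent cases $d=2$, $\alpha\ge 2$. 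There the naive split $\sum_{s\le t}1/G_s\le \epsilon t+t/G_{\epsilon t}$ is not enough, because $\epsilon t$ is not $o(t/G_t)$ when $G_t\to\infty$. The simplest fix is to bound the initial block $\sum_{s\le\epsilon t}\bfP_o(\tau_o^+>s)=\bE|\mathrm R(\epsilon t)|$ by the crude estimate $(2\epsilon t+1)/G_{\epsilon t}$, which reproduces exactly the paper's bound. For this direction you can also stay on $\Z^d$, since the projected range is never larger than the range on $\Z^d$.

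\textbf{Variance.} The paper keeps all pairs. It isolates the set $S^c$ of $x$ with $\bfP^{(L)}_o(\tau_x\le\ell_2)>1/(\log L\log\log L)$. This set has size at most $(\ell_2+1)\log L\log\log L$, because $\sum_x\bfP^{(L)}_o(\tau_x\le\ell_2)\le\ell_2+1$, and its contribution is absorbed by $\bE|U_{\ell_2}|\ge L^{cd/2}$. That argument needs no spatial decay of hitting probabilities at all.

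Your sparse grid pays a harmless factor $L^{-d\kappa}$ in the first moment to make every covariance uniformly small. In exchange you do need a decay estimate, and Lemma~\ref{lemma: hitting_probability_upper_bound_on_Z} cannot supply it. Its constant depends on $\ell$ (its proof gives growth like $2^{(d+\alpha)\ell}$), while here $t\asymp\ell^*\to\infty$. The power of $t$ in your bound $O(t^2|y-z|^{-d-\alpha})$ is also not justified. The crude observation that some jump must have length at least $|y-z|/t$ repairs this:
$\bfP^{(L)}_y(\tau_z\le t)\le C t^{1+\alpha}|y-z|^{-\alpha}$,
so $2(t+1)\,\bfP^{(L)}_y(\tau_z\le t)\le C' t^{2+\alpha}L^{-\kappa\alpha}\to 0$, since $t$ is polylogarithmic in $L$.
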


The upper bound in Theorem~\ref{thm: covertime} is also a consequence of results in Section~\ref{subsection: SusceptibilityUpper} together with the stochastic domination discussed above. Nevertheless, we still provide a proof since it is short and direct. The more involved part is the lower bound, which will be our main focus below.

Before proving Theorem~\ref{thm: covertime}, we introduce some useful notation. For each $j \in \N^+$, we use $\xi^j = (\xi_n^j)_{n = 0}^\infty$ to denote the trajectory of the $j$-th walker starting from stationarity. Let $\mathcal{R}_\ell^j := \cup_{n = 0}^{\ell} \{ \xi_{n}^j \}$ be the trace of the $j$-th walker until time $\ell$. Let $\mathcal{N}$ be an independent Pois$(\lambda L^d)$ random variable. Then, the cover time $\tau_{\mathrm{cov}}$ can be formally defined as follows:
\begin{align*}
    \tau_{\mathrm{cov}} = \inf\left\{ \ell \in \N: \cup_{j = 1}^{\mathcal{N}} \mathcal{R}_\ell^j = \T^d_L \right\}.
\end{align*}
Moreover, we denote the uncovered set until time $\ell$ by $U_\ell := \T^d_L \setminus ( \cup_{j = 1}^{\mathcal{N}} \mathcal{R}_\ell^j )$. We start with estimating the first moment of $|U_\ell|$ around time $\ell^*$.

\begin{proposition} \label{prop: expected_uncovered}
    Let $\lambda > 0$, $d \ge 2$, $\alpha > 0$, and $K > 0$. Let $Q \in \mathcal{H}_{\alpha, K}$ and let $Q_L$ be defined as in \eqref{def: Q_L}.
    Then, the following statements hold: 
    \begin{itemize}[left=15pt, itemsep=0.1em, topsep=0.1em, rightmargin=25pt]
        \item[(i)] for any $c > 0$, if $\ell_1 = \lceil \ell^*(1 + c) \rceil$, then there exists $L_1 \in \N$ such that $L \ge L_1$ implies
            \begin{align*}
                \bE_L[|U_{\ell_1}|] \le L^{-cd/2};
            \end{align*}
        \item[(ii)] for any $c \in (0, 1)$, if $\ell_2 = \lfloor \ell^*(1 - c) \rfloor$, then there exists $L_2 \in \N$ such that $L \ge L_2$ implies
            \begin{align*}
                \bE_L[|U_{\ell_2}|] \ge L^{cd/2}.
            \end{align*}
    \end{itemize}
\end{proposition}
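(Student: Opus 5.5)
By Poisson thinning, for each fixed $y\in\T^d_L$ the number of stationary walkers that avoid $y$ up to time $\ell$ is Poisson. Its mean is $\lambda L^d\,\bfP^L_\pi(\tau_y>\ell)$, where $\pi$ is the uniform distribution on $\T^d_L$. Therefore
\[
\bE_L[|U_\ell|]=L^d\exp\bigl(-\lambda L^d\,\bfP^L_\pi(\tau_y\le \ell)\bigr)=L^d\exp\bigl(-\lambda\,\bE^L_\pi[\text{\#}\{y\text{-visits counted once}\}]\bigr).
\]
By reversibility and translation invariance, $L^d\,\bfP^L_\pi(\tau_y\le\ell)=\sum_{x}\bfP^L_x(\tau_y\le \ell)$, which is the expected size of the range $\bE^L_o|\mathrm{R}(\ell)|$ of an $\ell$-step walk on the torus. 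So the whole statement reduces to the estimate
\[
\bE^L_o|\mathrm{R}(\ell)| = (1+o(1))\,\frac{\ell}{G_\ell}\qquad\text{uniformly for } \ell\in[\tfrac12\ell^*,2\ell^*].
\]
Given this, part (i) follows by setting $\ell_1=\lceil(1+c)\ell^*\rceil$. We get $\lambda\ell_1/G_{\ell_1}\ge (1+c)(1-o(1))\lambda\ell^*/G_{\ell^*}\ge(1+c)(1-o(1))d\log L$, using Lemma~\ref{lemma: green_function_compare_n_and_Cn}, i.e.\ $G_{(1+c)\ell^*}\le G_{\ell^*}/(1-\epsilon_3)$. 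This gives $\bE_L|U_{\ell_1}|\le L^{d-(1+c)(1-o(1))d}\le L^{-cd/2}$. Part (ii) is symmetric. Minimality of $\ell^*$ gives $\lambda(\ell^*-1)/G_{\ell^*-1}<d\log L$. Monotonicity of $G$ gives $\lambda\ell_2/G_{\ell_2}\le(1-c)\lambda\ell^*/G_{\ell^*}(1+o(1))$. Since $G_{\ell^*}/G_{\ell^*-1}\to1$, we also have $\lambda \ell^*/G_{\ell^*}\le (1+o(1))d\log L$. Hence $\bE_L|U_{\ell_2}|\ge L^{d-(1-c)(1+o(1))d}\ge L^{cd/2}$.

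For the range estimate I use the standard last-exit decomposition. For $x$ in the range, let $\sigma$ be the first visit time. Then
\[
\sum_{k\le\ell}\bfP^L_o(X_k=x)=\sum_{j\le \ell}\bfP^L_o(\sigma=j)\,G^L_{\ell-j}(x,x).
\]
Summing over $x$ gives $\ell+1\le \bE|\mathrm{R}(\ell)|\,G^L_\ell(o,o)$, i.e.\ $\bE|\mathrm{R}(\ell)|\ge (\ell+1)/G^L_\ell$. In the other direction, restrict to first visits by time $(1-\eta)\ell$. This gives $(1-\eta)\ell\cdot$ (an appropriate count) $\le \bE|\mathrm{R}(\ell)|\,G^L_{\ell}$ after bounding, and more precisely $\bE|\mathrm{R}((1-\eta)\ell)|\,G^L_{\eta\ell}\le \ell+1$. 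Combined with $G_{\eta\ell}/G_\ell\to1$ (Lemma~\ref{lemma: green_function_compare_n_and_Cn}) and the monotonicity of the range, this yields $\bE|\mathrm{R}(\ell)|\le(1+o(1))\ell/G_{\ell}$ after shifting $\ell$ by a factor $(1+\eta)$, which is harmless by the same lemma.

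The step I expect to be the main obstacle is replacing the torus Green function $G^L_\ell(o,o)=\sum_{k\le\ell}Q_L^k(o,o)$ by the $\Z^d$ one, $G_\ell$. The torus kernel satisfies $Q_L^k(o,o)=\sum_{z\in L\Z^d}Q^k(o,z)$. For $\ell\asymp\ell^*$, which is polylogarithmic in $L$, the walk's typical displacement is $r^*\ll L$ by \eqref{eq: typical_distance_at_t*}, but jumps are heavy-tailed. I would bound the wraparound contribution by $\sum_{k\le\ell}\sum_{z\neq0}Q^k(o,z)\le \ell\cdot\bfP_o(\max_{k\le\ell}|X_k|\ge L/2)$. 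Then I would use the heavy-tail estimate $\bfP(\text{some jump}\ge L/4)\le C\ell L^{-\alpha}$, together with a standard truncated-jump (Chebyshev) bound for the rest. This gives a wraparound contribution of $O(\ell^2L^{-\alpha}+\ell e^{-cL/r^*})=o(1)$, while $G_\ell\ge1$. Hence $G^L_\ell=(1+o(1))G_\ell$ uniformly on $[\tfrac12\ell^*,2\ell^*]$, which completes the reduction.
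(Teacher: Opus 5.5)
Your proposal is correct and follows essentially the same route as the paper. Your two-sided first-visit bounds for $\bE^L_o|\mathrm{R}(\ell)|=\sum_x\bfP^L_x(\tau_o\le\ell)$ are exactly Lemma~\ref{lemma: cover_to_green} with $A=\{o\}$, and the paper likewise handles the torus wraparound with a heavy-tailed jump bound and the Green's function ratios with Lemma~\ref{lemma: green_function_compare_n_and_Cn}.

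There are two minor slips, neither of which is a gap. First, in (ii) the bound you need, $G_{\ell^*}\le(1+o(1))G_{\ell_2}$, comes from Lemma~\ref{lemma: green_function_compare_n_and_Cn}, not from monotonicity of $G$; monotonicity only gives the opposite inequality. Second, Chebyshev applied to the truncated jumps yields a polynomial wraparound bound rather than $e^{-cL/r^*}$, but that is still $o(1)$ for polylogarithmic $\ell$, so the conclusion is unaffected.
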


\begin{remark}
Since $|U_{\ell_1}|$ is an $\N$-valued random variable, (i) together with Markov's inequality imply that $\bP_L(\tau_{\mathrm{cov}} > (1+c)\ell^*) \rightarrow 0$ as $L \rightarrow \infty$, which is the desired upper bound in Theorem~\ref{thm: covertime}. However, to obtain the lower bound, (ii) is not sufficient and we need to control the variance of $|U_{\ell_2}|$, which is the content of Lemma~\ref{lemma: variance_uncovered}. 
\end{remark}

\begin{proof}[Proof of Proposition~\ref{prop: expected_uncovered}]
    Recall that $\mathcal{N}$ is an independent Pois$(\lambda L^d)$ random variable.
    For $x \in \T^d_L$, define the first hitting time of $x$ by the collection of $\mathcal{N}$ stationary walkers as follows:
    \begin{align*}
        \widehat{\tau}_x := \inf \left\{ \ell \in \N: x \in \cup_{j = 1}^{\mathcal{N}} \ \mathcal{R}^j_\ell \right\},
    \end{align*}
    and define $\tau_x$ as the hitting time of $x$ by a single random walk starting from stationarity.
    Let $\bfP^{(L)}_\pi$ (resp. $\bfP^{(L)}_x$) denote the probability measure associated with a single stationary random walk (resp. random walk starting from $x$) on $\T^d_L$.
    We first express the expected size of the uncovered set up to time $\ell$ as a sum of indicators and obtain
    \begin{align*}
        \bE_L\left[|U_{\ell}|\right] = \bE_L \left[ \sum_{x \in \T^d_L} \mathbf{1}_{\{\widehat{\tau}_x > \ell\}} \right] = L^d \exp\left(-\lambda L^d \, \bfP^{(L)}_{\pi}(\tau_{o} \le \ell)\right),
    \end{align*}
    where the second equality uses Poisson thinning and the fact that $\widehat{\tau}_x \stackrel{d}{=} \widehat{\tau}_y$ and $\bfP^{(L)}_\pi(\tau_{x} \le \ell) = \bfP^{(L)}_\pi(\tau_{y} \le \ell)$ for any $x, y \in \T^d_L$. Observe that for any $\ell \in \N$, we have
    \begin{align} \label{eq: 4.6}
        \bfP^{(L)}_{\pi}(\tau_o \le \ell) = \frac{1}{L^d} \sum_{x \in \T^d_L} \bfP^{(L)}_x(\tau_o \le \ell) \ge \frac{\sum_{x \in \T^d_L} G_\ell^{(L)}(o, x)}{L^d \cdot G_\ell^{(L)}} \ge \frac{\ell}{L^d \cdot G_\ell^{(L)}},
    \end{align}
    where the first inequality follows from Lemma~\ref{lemma: cover_to_green} and $G_\ell^{(L)}(x, y) := \sum_{k = 0}^{\ell} Q_L^k(x, y)$ is the Green function on $\T^d_L$ with $G_\ell^{(L)} := G_\ell^{(L)}(o, o)$.
    Therefore, we have 
    \begin{align} \label{eq: 4.7}
        \log \bE_L [|U_{\ell_1}|] &\le d \log L - \frac{\lambda \ell_1}{G_{\ell_1}^{(L)}} \le d \log L ( 1 - \frac{G_{\ell^*}}{G_{\ell_1}^{(L)}} - \frac{c G_{\ell^*}}{G_{\ell_1}^{(L)}}),
    \end{align}
    where in the last inequality, we use $\lambda \ell^* \ge d \log L \cdot G_{\ell^*}$ by the definition of $\ell^*$.
    We will show that for any $c > 0$, there exists $L_0$ such that for any $L \ge L_0$, we have $G_{\ell^*}/G_{\ell_1}^{(L)} \ge (2 + c)/(2 + 2c)$. This together with \eqref{eq: 4.7} then implies the desired result in (i). 
    By Lemma~\ref{lemma: heat_kernel_from_Z_to_T}, we have
    \begin{align*}
        G_{\ell_1}^{(L)} &= \sum_{k = 0}^{\ell_1} Q_L^k(o, o) \le \sum_{k = 0}^{\ell_1} Q^k(o, o) + \frac{C}{L^{\alpha + d}} \sum_{k = 0}^{\ell_1} k^{\alpha + d + 1} \\
        &\le G_{\ell^*} + G_{\ell_1} - G_{\ell^*} + \frac{C' \ell_1^{\, \alpha + d + 2}}{L^{\alpha + d}} 
    \end{align*}
    Since $C' \ell_1^{\, \alpha + d + 2}/L^{\alpha + d}$ goes to 0 as $L \rightarrow \infty$, it suffices to show that $(G_{\ell_1} {-} G_{\ell^*})/G_{\ell^*} < c/(2+c)$ holds for all large $L$.
    In fact, we will show that $G_{\ell_1} {-} G_{\ell^*} = o(G_{\ell^*})$ as $L \rightarrow \infty$.
    We divide the argument into three cases:
    When the random walk driven by $Q$ is transient, i.e. (1) $\alpha < 2$ and $d = 2$ or (2) $d \ge 3$, we have $G_{\ell_1} {-} G_{\ell^*} \le \sum_{k \ge \ell^* {+} 1} Q^k(o, o) = o(1)$ as $L \rightarrow \infty$. When $d = 2$ and $\alpha > 2$, by Lemma~\ref{lemma: green_function_on_Z}, we have $G_{\ell_1} {-} G_{\ell^*} \le C_0 \log(\ell_1/\ell^*) \le C_0 \log(1 {+} c)$ for some constant $C_0 > 0$ depending on $Q$. Since $G_{\ell^*}$ diverges as $L \rightarrow \infty$, we also have $(G_{\ell_1} {-} G_{\ell^*})/G_{\ell^*} \rightarrow 0$ as $L \rightarrow \infty$. When $d = 2$ and $\alpha = 2$, by Lemma~\ref{lemma: green_function_on_Z}, we have $G_{\ell_1} {-} G_{\ell^*} \le C_1 \log(\log(\ell_1)/\log(\ell^*))$ for some constant $C_1 > 0$ depending on $Q$. Since $G_{\ell^*}$ diverges as $L \rightarrow \infty$, we again have $(G_{\ell_1} {-} G_{\ell^*})/G_{\ell^*} \rightarrow 0$ as $L \rightarrow \infty$. This completes the proof of (i).
    
    We now proceed to prove (ii). Similar to \eqref{eq: 4.6}, we also have for any $\ell > 0$ that
    \begin{align*}
        \bfP^{(L)}_{\pi}(\tau_o \le \ell) = \frac{1}{L^d} \sum_{x \in \T^d_L} \bfP^{(L)}_x(\tau_o \le \ell) \le \frac{\sum_{x \in \T^d_L} G_{\ell+\epsilon\ell}^{(L)}(o, x)}{L^d \cdot G_{\epsilon \ell}^{(L)}} = \frac{(1+\epsilon)\ell + 1}{L^d \cdot G_{\epsilon \ell}^{(L)}},
    \end{align*}
    for any $\epsilon \in (0, 1)$, where the inequality follows from Lemma~\ref{lemma: cover_to_green}.
    Fix $\epsilon := c/(2c + 2)$. 
    Similar to \eqref{eq: 4.7} we have
    \begin{align*}
        \log \bE_L [|U_{\ell_2}|] &\ge d \log L - \frac{\lambda \left((1+\epsilon)\ell_2+1\right)}{G_{\epsilon \ell_2}^{(L)}} \\
        &\ge d \log L \left( 1 - \frac{G_{\ell^*-1}}{G_{\epsilon \ell_2}}\left((1+\epsilon)(1 - c) + \frac{1}{\ell^*-1}\right)\right),
    \end{align*}
    where in the second inequality, we use $\lambda(\ell^*{-}1) \le G_{\ell^*{-}1} \cdot d \log L $, which follows from the definition of $\ell^*$. We also use the fact that $G_{\ell} \le G_{\ell}^{(L)}$ for any $\ell$ and $L$.
    Take $L_2$ large enough such that for any $L \ge L_2$, we have $1/(\ell^*{-}1) \le \epsilon c$. Therefore, in order to complete the proof of (ii), it suffices to show that for all $L \ge L_2$ (with possibly larger $L_2$), we have 
    $$\frac{G_{\ell^*-1}}{G_{\epsilon \ell_2}} \le \frac{1 - \frac{c}{2}}{1 + \epsilon - c}.$$
    Note that the quantity on the right-hand side is strictly larger than 1.
    In fact, we will show that $G_{\ell^*-1}/G_{\epsilon \ell_2}$ goes to 1 as $L$ goes to infinity.
    Observe that 
    \begin{align}\label{eq: 4.8}
         G_{\ell^*-1} \le G_{\epsilon \ell_2} + \sum_{k = \lfloor \epsilon(1-c)\ell^* \rfloor}^{\ell^*} Q^k(o, o).
    \end{align}
    We argue analogously as before in three cases.
    When the random walk is transient, the sum in equation \eqref{eq: 4.8} goes to 0 as $L$ goes to infinity. 
    When $d = 2$ and $\alpha > 2$, by Lemma~\ref{lemma: green_function_on_Z}, we have $\sum_{k = \lfloor \epsilon(1-c)\ell^* \rfloor}^{\ell^*} Q^k(o, o) \le C_0 \log(1/\epsilon(1{-}c))$ for some constant $C_0 > 0$ depending on $Q$. Since $G_{\epsilon \ell_2}$ diverges as $L$ goes to infinity, we have $\sum_{k = \lfloor \epsilon(1-c)\ell^* \rfloor}^{\ell^*} Q^k(o, o)/G_{\epsilon \ell_2} \rightarrow 0$ as $L$ goes to infinity. When $d = 2$ and $\alpha = 2$, by Lemma~\ref{lemma: green_function_on_Z}, we have $\sum_{k = \lfloor \epsilon(1-c)\ell^* \rfloor}^{\ell^*} Q^k(o, o) \le C_1 \log(\log(1/\epsilon(1{-}c)))$ for some constant $C_1 > 0$ depending on $Q$. Since $G_{\epsilon \ell_2}$ diverges as $L$ goes to infinity, we again have $\sum_{k = \lfloor \epsilon(1-c)\ell^* \rfloor}^{\ell^*} Q^k(o, o)/G_{\epsilon \ell_2} \rightarrow 0$ as $L$ goes to infinity. This completes the proof of (ii).
\end{proof}

The next step is to control the variance of $|U_{\ell_2}|$, which is the content of Lemma~\ref{lemma: variance_uncovered}.
Let $\mathrm{Var}_L$ and $\mathrm{Cov}_L$ denote the variance and covariance under $\bP_L$. 

\begin{lemma} \label{lemma: variance_uncovered}
    Under the same setting as in Proposition~\ref{prop: expected_uncovered}, we have that 
    \begin{align*}
        \lim_{L \rightarrow \infty} \frac{\mathrm{Var}_L\left[\left\vert U_{\ell_2}\right\vert\right]}{\bE_L\left[\left\vert  U_{\ell_2} \right\vert\right]^2} = 0.
    \end{align*}
\end{lemma}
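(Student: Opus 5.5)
We will expand the variance as a double sum of covariances of the indicators $\1_{\{\widehat{\tau}_x > \ell_2\}}$ and use Poisson thinning to obtain an exact formula for each term. Since the walkers form a Poisson process of trajectories with intensity $\lambda L^d \bfP^{(L)}_\pi$, the events $\{\widehat{\tau}_x>\ell\}$ and $\{\widehat{\tau}_y>\ell\}$ are both "no point of the process lies in a given set of trajectories". Hence
\begin{align*}
\bP_L(\widehat{\tau}_x>\ell,\widehat{\tau}_y>\ell)=\exp\Bigl(-\lambda L^d\bigl[2p_\ell-\bfP^{(L)}_\pi(\tau_x\le\ell,\tau_y\le\ell)\bigr]\Bigr),
\end{align*}
where $p_\ell:=\bfP^{(L)}_\pi(\tau_o\le\ell)$. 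Writing $q_\ell(x,y):=\lambda L^d\,\bfP^{(L)}_\pi(\tau_x\le\ell,\tau_y\le\ell)$, we obtain $\mathrm{Cov}_L=e^{-2\lambda L^d p_\ell}(e^{q_\ell(x,y)}-1)$. Since $\bE_L[|U_\ell|]^2=L^{2d}e^{-2\lambda L^d p_\ell}$, the ratio in the lemma equals
\begin{align*}
\frac{1}{L^{2d}}\sum_{x,y\in\T^d_L}\bigl(e^{q_{\ell_2}(x,y)}-1\bigr).
\end{align*}
The task is therefore to show that this average tends to $0$.

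We bound $q_\ell(x,y)$ using the two orderings of the hitting times together with the strong Markov property:
\begin{align*}
\bfP_\pi(\tau_x\le\ell,\tau_y\le\ell)\le \bfP_\pi(\tau_x\le \ell)\,\bfP_x(\tau_y\le \ell)+\bfP_\pi(\tau_y\le\ell)\,\bfP_y(\tau_x\le\ell).
\end{align*}
By symmetry of $Q$ and translation invariance this gives $q_\ell(x,y)\le 2\lambda L^d p_\ell\,\bfP_o(\tau_{y-x}\le\ell)$. From the computation in Proposition~\ref{prop: expected_uncovered}(ii), and since $\ell_2<\ell^*$, we have $\lambda L^d p_{\ell_2}\le (1-c/2)\,d\log L$ for large $L$. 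We then split the pairs according to $z=y-x$.

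\textbf{Far pairs.} For $|z|\ge r_L$ with $r_L:=(\log L)^{A}$ and $A$ large, the heavy-tail hitting estimate (Lemma~\ref{lemma: hitting_probability_upper_bound_on_Z}, extended to time $\ell_2=O(\log L\log\log L)$ via a union over steps, together with the torus comparison Lemma~\ref{lemma: heat_kernel_from_Z_to_T}) gives $\bfP_o(\tau_z\le\ell_2)\le C\ell_2^{C}\bigl(|z|^{-(d+\alpha)}+\text{Gaussian/heavy-tail decay}\bigr)$. Consequently $q\le C'(\log L)^{C}|z|^{-\min(\alpha,2)}\cdots$, which is $o(1)$ uniformly once $A$ is large. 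Using $e^q-1\le 2q$, the contribution of these pairs is at most $o(1)\cdot L^{2d}/L^{2d}=o(1)$.

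\textbf{Near pairs.} For $|z|<r_L$ there are at most $L^d(2r_L)^d$ such pairs. We use the crude bound $q\le 2\lambda L^dp_{\ell_2}\bfP_o(\tau_z\le\ell_2)$. For $z\neq o$, we need $\bfP_o(\tau_z\le\ell_2)\le 1-\eta$ uniformly for some $\eta>0$, which holds for any nondegenerate walk in $d\ge2$ (this is the main delicacy when $d=2$ and the walk is recurrent). With this bound, $e^{q}\le L^{2(1-\eta)(1-c/2)d}$. The contribution of the near pairs is then at most
\begin{align*}
L^{-d}(2r_L)^d\,L^{(1-\eta)(2-c)d},
\end{align*}
and the diagonal $x=y$ contributes $L^{-d}e^{\lambda L^dp}\le L^{-cd/2}\to0$.

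The near pairs are the main obstacle. The crude bound above only suffices if $(1-\eta)(2-c)<1$, which is false in general. The fix is to replace the bound $1-\eta$ by the sharper estimate $\bfP_o(\tau_z\le\ell)\le G_\ell(o,z)/G_\ell$, obtained from Lemma~\ref{lemma: cover_to_green} and symmetry. In addition, we would use the exact inclusion–exclusion $\bfP(\tau_x\le\ell,\tau_y\le\ell)=2p-\bfP(\tau_{\{x,y\}}\le\ell)$ to write $2p_\ell L^d\lambda-q=\lambda L^d\bfP_\pi(\tau_{\{x,y\}}\le\ell)$. We would then lower bound $\bfP_\pi(\tau_{\{x,y\}}\le\ell)\ge \ell/(L^d(G_\ell+G_\ell(x,y)))$, again by Lemma~\ref{lemma: cover_to_green}. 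This gives $e^{-2\lambda L^dp}e^{q}\le \exp\bigl(-\tfrac{\lambda\ell}{G_\ell+G_\ell(o,z)}\bigr)$. Since $G_\ell(o,z)\le(1-\eta')G_\ell$ for $z\neq o$ (the analogue of Lemma~\ref{lemma: good_set_intersect_high_density_set}'s Green estimates, Lemma~\ref{lemma: green_function_on_Z}), this is at most $L^{-(1-c)d/(2-\eta')}$ up to $1+o(1)$ factors.

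We then sum this directly against the normalization $L^{2d}e^{-2\lambda L^dp}\ge L^{2d}L^{-2(1-c/2)d}$. Each near pair thus contributes at most $L^{-d(c')}$ for some $c'>0$ after choosing constants, and multiplying by the $L^d r_L^d$ near pairs still yields $o(1)$ because $r_L$ is only polylogarithmic. Getting a uniform constant $\eta'$, especially for $z$ adjacent to $o$ and for the recurrent $d=2$ cases, is where we expect the work to lie.
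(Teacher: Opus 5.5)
\textbf{Verdict: there is a genuine gap in the near-pair step.} Your reduction matches the paper's. This includes the exact Poisson-thinning formula $\mathrm{Cov}_L(h_x,h_y)=e^{-2\lambda L^d p_\ell}(e^{q_\ell(x,y)}-1)$ and the strong Markov bound $q_\ell(x,y)\le 2\lambda L^d p_\ell\,\bfP_o(\tau_{y-x}\le\ell)$. The proposed fix for the near pairs does not close.

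\textbf{Your own count diverges.} Each near pair contributes at most $L^{-(1-c)d/(2-\eta')}\cdot L^{-cd}$ to the ratio, and there are $L^d(2r_L)^d$ near pairs. This gives $(2r_L)^d L^{d(1-c)(1-\eta')/(2-\eta')}\to\infty$. Your exponent $c'=c+(1-c)/(2-\eta')$ is strictly less than $1$, so multiplying by $L^d$ does not give $o(1)$.

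\textbf{Where the loss comes from.} Lemma~\ref{lemma: cover_to_green} with $A=\{x,y\}$ and $B=\T^d_L$ has numerator $G_\ell(A,\T^d_L)=2(\ell+1)$, not $\ell$. Without that factor $2$, your lower bound on $\bfP_\pi(\tau_{\{x,y\}}\le\ell)$ is weaker even than the trivial bound $\bfP_\pi(\tau_{\{x,y\}}\le\ell)\ge p_\ell$.

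\textbf{Restoring the factor $2$ is not enough.} You assume $G_\ell(o,z)\le(1-\eta')G_\ell$ uniformly in $z\neq o$. This is false when $d=2$ and $\alpha\ge 2$: the walk is recurrent and $G_\ell\to\infty$, so $G_\ell(o,z)/G_\ell\to1$ for each fixed $z$. Hence $\eta'$ must shrink with $L$, and you would need a further quantitative argument.

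\textbf{The far-pair step also relies on unproved input.} It needs off-diagonal hitting bounds that are polynomial in $\ell$. Lemma~\ref{lemma: hitting_probability_upper_bound_on_Z} gives only a constant $C(\ell)$ with no control in $\ell$; its proof yields exponential growth, which is a power of $L$ at $\ell=\ell_2$.

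\textbf{The missing idea.} It is one line, and it already follows from your own inclusion--exclusion identity. Since $\bfP_\pi(\tau_{\{x,y\}}\le\ell)\ge\bfP_\pi(\tau_x\le\ell)=p_\ell$, you get $q_\ell(x,y)\le\lambda L^d p_\ell$, that is, $\mathrm{Cov}_L(h_o,h_x)\le\bE_L[h_o]$. By Proposition~\ref{prop: expected_uncovered}(ii),
\[
e^{\lambda L^d p_{\ell_2}}=\frac{L^d}{\bE_L[|U_{\ell_2}|]}\le L^{d(1-c/2)} .
\]
So each ``bad'' displacement $z$ contributes at most $1/\bE_L[|U_{\ell_2}|]\le L^{-cd/2}$ to the ratio. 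Any polylogarithmic number of bad $z$ is therefore harmless, and no $\eta'$ is needed.

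\textbf{How the paper avoids spatial decay estimates.} It splits according to the size of the hitting probability rather than the distance. Let $S=\{x:\bfP^{(L)}_o(\tau_x\le\ell_2)\le 1/(\log L\log\log L)\}$. For $x\in S$ one has $q_{\ell_2}(o,x)\le 4d/\log\log L$, using $\lambda\ell_2/G^{(L)}_{\ell_2}\le d\log L$. The bound $|S^c|\le(\ell_2+1)\log L\log\log L$ then follows from $\sum_x\bfP^{(L)}_o(\tau_x\le\ell_2)\le\ell_2+1$.
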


\begin{proof}
    Let $h_x := \1_{\{\widehat{\tau}_x > \ell_2 \}}$. Then, we have
    \begin{align} \label{ineq: decompose_variance}
        \mathrm{Var}_L\left[\left\vert U_{\ell_2}\right\vert\right] \le \bE_L\left[\left\vert U_{\ell_2}\right\vert\right] + L^d \sum_{x \neq o} \mathrm{Cov}_L(h_o, h_x).
    \end{align}
    From Proposition~\ref{prop: expected_uncovered} (ii), we have $\bE_L\left[\left\vert U_{\ell_2}\right\vert\right] \ge L^{cd/2}$ for all large $L$.
    Therefore, in order to prove Lemma~\ref{lemma: variance_uncovered}, it is sufficient to prove
    \begin{align*}
        \lim_{L \rightarrow \infty} \frac{L^d \sum_{x \neq o} \mathrm{Cov}_L(h_o, h_x)}{\bE_L\left[\left\vert U_{\ell_2}\right\vert\right]^2} = 0,
    \end{align*}
    By Poisson thinning, we have
    \begin{align*} 
        \mathrm{Cov}_L(h_o, h_x) = \ & \bP_L(\widehat{\tau}_{o} \wedge \widehat{\tau}_{x} > \ell_2) - \bP_L(\widehat{\tau}_o > \ell_2)^2 \\
        = \ & \exp(-\lambda L^d \bfP_{\pi}^{(L)}(\tau_o \wedge \tau_x \le \ell_2)) - \exp(- 2 \lambda L^d \bfP_{\pi}^{(L)}(\tau_o \le \ell_2))\\
        = \ & \exp(- 2 \lambda L^d  \bfP_{\pi}^{(L)}(\tau_o \le \ell_2)) \left( \exp( \lambda L^d \bfP_{\pi}^{(L)}(\tau_o \vee \tau_x \le \ell_2))  - 1 \right) \\
        = \ & \bE_L[h_o]^2 \left( \exp( \lambda L^d \bfP_{\pi}^{(L)}(\tau_o \vee \tau_x \le \ell_2))  - 1 \right),   
    \end{align*}
    By the strong Markov property and transitivity of $\T^d_L$, we have
    \begin{align} \label{ineq: hitting_min_to_max}
        \bfP_{\pi}^{(L)}(\tau_o \vee \tau_x \le \ell_2) \le 2 \bfP_{\pi}^{(L)} (\tau_o \le \ell_2 ) \bfP_{o}^{(L)} (\tau_x \le \ell_2) \le \frac{4 \ell_2}{L^d G_{\ell_2}^{(L)}} \bfP_{o}^{(L)} (\tau_x \le \ell_2),
    \end{align}
    where we remind the reader that $G^{(L)}_\ell := \sum_{ k = 0}^{\ell} Q^k_{L}(o, o)$. Moreover, in the last inequality, we use the fact that $\bfP_{\pi}^{(L)} (\tau_o \le \ell_2 ) = \sum_{y \in \T^d_L} \bfP_{y}^{(L)} (\tau_o \le \ell_2 ) / L^d$ and Lemma~\ref{lemma: cover_to_green}. 
    
    Since $G_{\ell_2}^{(L)} \ge G_{\ell_2}$, for all sufficiently large $L$, we have $\ell_2/G_{\ell_2}^{(L)} \le (\ell^*{-}1)/G_{\ell^*{-}1}$. Define 
    $$S := \left\{x \in \T^d_L : \bfP_{o}^{(L)} (\tau_x \le \ell_2) \le 1/ (\log L \cdot \log \log L) \right\}.$$ 
    For any $x \in S$, by possibly further enlarging $L$, we have 
    \begin{align*}
        \exp( \lambda L^d \bfP_{\pi}^{(L)}(\tau_o \vee \tau_x \le \ell_2)) - 1 \le \exp(\frac{4 d}{\log \log L}) - 1 \le \frac{8d}{\log \log L},
    \end{align*}
    where the last inequality uses $\exp(\delta) - 1 \le 2\delta$ for any $\delta \in (0, 1)$. 
    Therefore, we have 
    \begin{align*}
    L^d \sum_{x \neq o} \mathrm{Cov}_L(h_o, h_x) 
    \le \ & L^{2d} \bE_L[h_o]^2 \frac{8d}{\log \log L} + L^d \bE_L[h_o] \ |S^c| \\
    = \ & \bE_L[|U_{\ell_2}|]^2 \frac{8d}{\log \log L} + \bE_L[|U_{\ell_2}|] \ |S^c| ,
    \end{align*}
    where, for $x \in S^c$, we use the fact that $\mathrm{Cov}_L(h_o, h_x) \le \bE_L[h_o]$. Therefore, it is sufficient to show that $|S^c| / \bE_L[|U_{\ell_2}|]$ vanishes as $L$ goes to infinity.
    Since we have
    \begin{align}
        \sum_{x \in \T^d_L} \bfP_{o}^{(L)} (\tau_x \le \ell_2) \le \sum_{x \in \T^d_L} G^{(L)}_{\ell_2}(o, x) = \ell_2+1,
    \end{align}
    it follows that $|S^c| \le (\ell_2+1) \log L \log \log L$. Since $\ell_2+1 \le \ell^*$ for all sufficiently large $L$ and we know that $\bE_{L}[|U_{\ell_2}|] \ge L^{cd/2}$, it follows from the asymptotics of $\ell^*$ (see equation \eqref{eq: Susc_asymp_t*}) that $|S^c| / \bE_L[|U_{\ell_2}|]$ vanishes as $L$ goes to infinity. This completes the proof of Lemma~\ref{lemma: variance_uncovered}.
\end{proof}

\begin{proof}[Proof of Theorem~\ref{thm: covertime}]
    Since $|U_\ell|$ is an $\N$-valued random variable, for any $c > 0$, we have 
    $$\bP_L(\tau_{\mathrm{cov}} \ge \ell_1) \le \bP_L(|U_{\ell_1}| \ge 1) \le \bE_L[|U_{\ell_1}|] \rightarrow 0$$ 
    as $L$ diverges to infinity by (i) in Proposition~\ref{prop: expected_uncovered}. 
    On the other hand, for any $c \in (0, 1)$, we have
    \begin{align*}
      \bP_L(\tau_{\mathrm{cov}} \le \ell_2) = \bP_L(|U_{\ell_2}| = 0) \le \ \bP_L \Big( \Big\vert |U_{\ell_2}| - \bE_L[|U_{\ell_2}|] \Big\vert \ge \bE_L[|U_{\ell_2}|] \Big) \le \frac{\mathrm{Var}_L[|U_{\ell_2}|]}{\bE_L[|U_{\ell_2}|]^2},
    \end{align*}
    which goes to $0$ as $L$ diverges by Lemma \ref{lemma: variance_uncovered}. This completes the proof. 
\end{proof} 

\section*{Acknowledgement}
The authors thank Itai Benjamini for suggesting the study of the long-range frog model, Matthew Junge for valuable discussions during the early stages of this project, and Mathav Murugan and Tom Hutchcroft for helpful discussions.

The first author was supported in part by NSERC, an SLMath Clay senior scholarship, and Magdalen college, Oxford. The second and third authors were supported in part by NSERC. 

\appendix 

\section{Heat Kernel Estimates}
In this section, we establish the necessary heat kernel estimates for heavy-tailed random walks on $\Z^d$ with $d \ge 2$. 
Throughout, we assume that $Q \in \mathcal{H}_{\alpha, K}$ for some $\alpha > 0$ and the walks are in discrete-time.
Recall, in \eqref{def: H_alpha}, we defined
\begin{align}
\mathcal{H}_{\alpha, K} := \left\{ Q : Q \begin{array}{c} \text{ is symmetric, translation-invariant,} \\ \text{ and heavy-tailed with parameters } \alpha \text{ and } K\end{array} \right\}.
\end{align}
Let $X_n$ be a random walk on $\Z^d$ with transition matrix $Q$. 
We denote by $Q^n(x, y) := \bfP_x(X_n = y)$ the $n$-step transition kernel, where $\bfP_x$ is the law of the random walk starting from $x \in \Z^d$. 
Let $|x|$ be the $\ell^1$-norm of $x \in \Z^d$.

The following theorem is a direct consequence of Theorem 1.7 in \cite{MR3644017}. The original theorem in \cite{MR3644017} is stated in much more general settings, namely on finitely generated groups of polynomial volume growth with symmetric one-step transition kernels. For our purposes, we only require the special case of $\Z^d$ with heavy-tailed random walks. For reader's convenience, we note that the $r$ function below is the same $r$ function defined in Theorem 1.7 of \cite{MR3644017} with their specific form worked out through a straightforward computation.

\begin{theorem} \label{thm: heat_kernel_typical}
    Let $d \ge 2$ and $Q \in \mathcal{H}_{\alpha, K}$ for some $\alpha > 0$.
    Then, there exists $C = C(Q) \in (0, \infty)$ and, for any $\kappa > 0$, there exists $c = c(\kappa, Q) > 0$ such that, for all $n \ge 1$ and $|x| \le \kappa r(n)$,
    \begin{align} \label{ineq: heat_kernel_typical}
        c \frac{1}{r(n)^d} \le Q^{n}(o, x) \le C \frac{1}{r(n)^d}.
    \end{align}
    where
    \begin{align} \label{def: typical_distance}
        r(n) := \begin{cases}
            \sqrt{n} &\text{ if } \alpha > 2, \\
            \sqrt{n \log n} &\text{ if } \alpha =2, \\
            n^{1/\alpha} &\text{ if } \alpha < 2.
        \end{cases}
    \end{align}
    Moreover, for every $\epsilon > 0$ there exists $K$ such that
    \begin{align} \label{ineq: heat_kernel_intypical}
        \sum_{m = 0}^n \sum_{\substack{y \in \Z^d \\ |y| > K r(n)}} Q^m(o, y) \le n \epsilon.
    \end{align}
\end{theorem}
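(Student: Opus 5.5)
The plan is to deduce Theorem~\ref{thm: heat_kernel_typical} from the general heat kernel theory for symmetric walks on groups of polynomial growth, Theorem 1.7 in \cite{MR3644017}, specialised to $\Z^d$. The cited theorem gives two-sided on-diagonal and near-diagonal bounds of the form $Q^n(o,x)\asymp 1/V(r(n))$ for $|x|\le \kappa r(n)$. Here $V(t)\asymp t^d$ is the volume growth, and $r(n)$ is the inverse of the function
\[
\phi(t):=\frac{t^2}{\sum_{|y|\le t}|y|^2Q(o,y)} .
\]

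The first step is to compute the truncated second moment
\[
\Sigma(t):=\sum_{|y|\le t}|y|^2 Q(o,y)\asymp \sum_{k\le t}k^{d-1}k^{2-d-\alpha}=\sum_{k\le t}k^{1-\alpha},
\]
using $Q(o,y)\asymp |y|^{-(d+\alpha)}$ and the fact that there are $\asymp k^{d-1}$ lattice points at distance $k$. This sum is $\asymp 1$ if $\alpha>2$, $\asymp\log t$ if $\alpha=2$, and $\asymp t^{2-\alpha}$ if $\alpha<2$. Hence $\phi(t)$ is $\asymp t^2$, $t^2/\log t$ and $t^\alpha$ respectively. Inverting gives $r(n)\asymp\sqrt n$, $\sqrt{n\log n}$ and $n^{1/\alpha}$. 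Since the bounds in the statement only involve constants, replacing $r$ by a comparable function is harmless. One must also check the mild hypotheses of the cited theorem: symmetry, a finite generating set, which here is given by $Q(o,\pm e_i)>0$, and the regularity of $\phi$, which is immediate from the explicit forms above. Substituting these into the cited bound yields \eqref{ineq: heat_kernel_typical}, with $C$ independent of $\kappa$ because the upper bound is on-diagonal.

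The main obstacle is the tail estimate \eqref{ineq: heat_kernel_intypical}. I would first prove an exit-time bound for a single time,
\[
\bfP_o\bigl(|X_m|>K r(n)\bigr)\le \epsilon \quad\text{for all } m\le n ,
\]
and then sum over $m\le n$ to obtain the factor $n\epsilon$. To get the single-time bound, decompose the walk into jumps of size at most $t:=Kr(n)/2$ and larger jumps.

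1. \emph{Large jumps.} The probability that some jump among the first $n$ exceeds $t$ is at most $n\,Q(|X_1|>t)\asymp n t^{-\alpha}$. Checking each case against the formula for $r(n)$, this is $\lesssim K^{-\alpha}$ when $\alpha\le 2$, and it is even smaller when $\alpha>2$, because then $n t^{-\alpha}\asymp n^{1-\alpha/2}K^{-\alpha}$.

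2. \emph{Small jumps.} The truncated walk is symmetric, so its displacement has second moment at most $n\Sigma(t)$. Chebyshev's inequality then bounds the probability that it exceeds $t$ by
\[
\frac{n\Sigma(t)}{t^2}\asymp\frac{n}{\phi(Kr(n))}\lesssim K^{-\min(\alpha,2)}
\]
up to logarithmic corrections. In the case $\alpha=2$ this uses $\phi(Kr(n))\ge cK^2\phi(r(n))$ for $K$ large, which holds since $\log(Kr)\asymp\log r$.

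Choosing $K$ large makes both contributions at most $\epsilon/2$, uniformly in $m\le n$ because $r$ is increasing. This completes the tail estimate.
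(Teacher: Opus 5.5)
Your proposal is correct. For the two-sided bound \eqref{ineq: heat_kernel_typical} you do what the paper does: you invoke Theorem~1.7 of \cite{MR3644017} and identify its scale function. Your truncated-second-moment computation reproduces the three forms of $r(n)$.

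The difference is in the tail estimate \eqref{ineq: heat_kernel_intypical}. The paper gives no separate argument for it. Its remark says the bound follows from the second item of the ``controlled'' property (Definition~1.2 of \cite{MR3644017}), which is established inside the proof of Theorem~1.7 there. You instead prove the single-time bound $\sup_{m\le n}\bfP_o(|X_m|>Kr(n))\le\epsilon$ directly and then sum over $m$. You do this with a union bound over jumps larger than $Kr(n)/2$, plus Chebyshev's inequality for the truncated walk, whose increments have mean zero by symmetry.

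The paper's route costs one sentence but relies on the internals of a cited proof. Yours is self-contained and uses only symmetry, translation invariance and the upper bound $Q(o,y)\le K|y|^{-(d+\alpha)}$. It also makes visible why $r(n)$ is the right scale.

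One step should be tightened. For $\alpha=2$, the comparison $\phi(Kr)\ge cK^2\phi(r)$ is uniform only when $r\gtrsim K$, so it does not cover small $n$ uniformly in $K$. It is cleaner to bound directly
$n\,\Sigma(Kr(n)/2)/(Kr(n)/2)^2\lesssim(1+\log K)/K^2$ for all $n\ge 2$. This follows from $\Sigma(t)\lesssim 1+\log t$ and $\log(Kr(n))\le\log K+\log n$.

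Also note that for $\alpha=2$ the stated $r(1)=\sqrt{1\cdot\log 1}=0$, so the statement is degenerate at $n=1$; for instance, the lower bound in \eqref{ineq: heat_kernel_typical} is then false. This is a defect of the statement, shared by the paper, not of your argument. It disappears if one uses, for example, $\sqrt{n\log(n+1)}$; only after that does your remark that replacing $r$ by a comparable function is harmless apply for every $n\ge 1$.
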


\begin{remark}
    Theorem 1.7 in \cite{MR3644017} implies that the $n$-step transition kernel is ``strongly controlled'' in the sense of Definition 1.3 in \cite{MR3644017}. Note that equation \eqref{ineq: heat_kernel_typical} is exactly the first item in the definition of strongly controlled. On the other hand, in the proof of Theorem 1.7, the authors first shows that the $n$-step transition kernel is ``controlled'' in the sense of Definition 1.2. Then, equation \ref{ineq: heat_kernel_intypical} follows from the second item in the definition of controlled.
\end{remark}

We now state two lemmas that are immediate consequences of Theorem~\ref{thm: heat_kernel_typical}.
Define the Green's function up to time $n \in \N \cup \{ \infty \}$ by $G_n(x, y) := \sum_{k = 0}^n Q^k(x, y)$ and $G_n := G_n(o, o)$. The following lemma gives the asymptotic behavior of $G_n$ for different values of $\alpha$ and $d$. With a slight abuse of notation, for vertex sets $A, B \subseteq \Z^d$, we define $G_n(A, B) := \sum_{x \in A} \sum_{y \in B} G_n(x, y)$.

\begin{lemma} \label{lemma: green_function_on_Z}
Let $d \ge 2$ and $Q \in \mathcal{H}_{\alpha, K}$ for some $\alpha, K > 0$. 
The Green's function on $\Z^d$ satisfies the following: \\
(i) if $\alpha > 2$ and $d = 2$, then
\begin{align*}
    G_n \asymp \log n,
\end{align*}
(ii) if $\alpha = 2$ and $d = 2$, then
\begin{align*}
    G_n \asymp \log \log n,
\end{align*}
(iii) if $\alpha < 2$ and $d = 2$ OR $d \ge 3$, then 
\begin{align*}
    G_n \asymp 1,
\end{align*}
where the implicit constants in $\asymp$ depend on $Q$.
\end{lemma}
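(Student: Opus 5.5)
The plan is to sum the on-diagonal heat kernel bound of Theorem~\ref{thm: heat_kernel_typical}. Taking $x=o$ (so $|x|=0\le \kappa r(k)$ for any $\kappa$), \eqref{ineq: heat_kernel_typical} gives $c\, r(k)^{-d}\le Q^k(o,o)\le C\, r(k)^{-d}$ for all $k\ge 1$. Since $Q^0(o,o)=1$, we can write
\[
G_n = 1+\sum_{k=1}^n Q^k(o,o) \asymp 1+\sum_{k=1}^n r(k)^{-d},
\]
and the lemma reduces to estimating the elementary sum $\sum_{k\le n} r(k)^{-d}$ in each regime. Throughout, the constant $1$ takes care of small $n$. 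It also covers small $k$ such as $k=1$ when $\alpha=2$, where $\log k=0$; there we simply restrict the comparison to $k\ge 2$ and absorb finitely many terms into the constants.

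\emph{Case (i), $d=2$, $\alpha>2$.} Here $r(k)^{-2}=1/k$, so the sum is $\sum_{k\le n}1/k\asymp \log n$ for $n\ge 2$.

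\emph{Case (ii), $d=2$, $\alpha=2$.} Here $r(k)^{-2}=1/(k\log k)$. Comparing with the integral $\int_2^n \frac{dt}{t\log t}=\log\log n-\log\log 2$ gives $G_n\asymp \log\log n$ for large $n$.

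\emph{Case (iii).} Here we need the sum to be bounded above, since the lower bound $G_n\ge Q^0(o,o)=1$ is trivial.
\begin{itemize}
\item If $d=2$ and $\alpha<2$, then $r(k)^{-2}=k^{-2/\alpha}$ with $2/\alpha>1$, which is summable.
\item If $d\ge3$ and $\alpha>2$, then $r(k)^{-d}=k^{-d/2}$ with $d/2\ge 3/2$, which is summable.
\item If $d\ge3$ and $\alpha=2$, then $r(k)^{-d}=(k\log k)^{-d/2}\le k^{-3/2}$ for $k\ge3$, which is summable.
\item If $d\ge 3$ and $\alpha<2$, then $r(k)^{-d}=k^{-d/\alpha}$ with $d/\alpha>3/2$, which is summable.
\end{itemize}
Thus $G_n\le 1+C\sum_{k\ge1}r(k)^{-d}<\infty$ uniformly in $n$.

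The only step requiring care is that \eqref{ineq: heat_kernel_typical} is stated for $n\ge1$ with constants uniform in $n$. Given that, there is no real obstacle: the constants in $\asymp$ depend only on $Q$ through $c=c(1,Q)$ and $C=C(Q)$.
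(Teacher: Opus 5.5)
Your proposal is correct and follows the same route as the paper, which simply says the lemma follows from summing the on-diagonal bound \eqref{ineq: heat_kernel_typical}. You have written out the elementary summations in each regime, including the harmless treatment of $k=1$ when $\alpha=2$, where $r(1)=0$.
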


\begin{proof}
This follows from the on-diagonal heat kernel estimates given by \eqref{ineq: heat_kernel_typical}.
\end{proof}

\begin{lemma} \label{lemma: green_function_compare_n_and_Cn}
    Let $d \ge 2$ and $Q \in \mathcal{H}_{\alpha, K}$ for some $\alpha, K > 0$. For any $C > 0$ and any $\epsilon > 0$, there exists $L_0$ such that for any $L \ge L_0$, we have $G_{Cn} \le (1+\epsilon) G_n$.
\end{lemma}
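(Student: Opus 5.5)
The plan is to split $G_{Cn}-G_n$ according to whether the walk is transient or recurrent, and in each case compare the difference with $G_n$. Only the case $C>1$ needs work, since for $C\le 1$ we have $G_{Cn}\le G_n$ trivially. We read the statement as ``for all $n\ge L_0$'': this is how it is used later, with $n=\ell^*\to\infty$. Throughout we use the on-diagonal bound $Q^k(o,o)\le C_Q\, r(k)^{-d}$ from \eqref{ineq: heat_kernel_typical} with $x=o$. The basic identity is
\begin{align*}
G_{Cn}-G_n=\sum_{k=n+1}^{Cn} Q^k(o,o)\le C_Q\sum_{k=n+1}^{Cn} r(k)^{-d}.
\end{align*}

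First take the transient cases, $d\ge 3$ or ($d=2$, $\alpha<2$). Here $r(k)^{-d}$ is summable. For $\alpha<2$ it equals $k^{-d/\alpha}$, and $d/\alpha>1$ because $\alpha<2\le d$. For $\alpha\ge 2$ and $d\ge3$ it is at most $k^{-d/2}$, which is summable. So the tail $\sum_{k>n} r(k)^{-d}$ tends to $0$. Since $G_n\ge Q^0(o,o)=1$, we get $G_{Cn}\le G_n+o(1)\le (1+\epsilon)G_n$ for all large $n$.

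Next take the recurrent cases with $d=2$. If $\alpha>2$, then $r(k)^{-2}=1/k$, so the difference is at most $C_Q\log C + o(1)$. If $\alpha=2$, then $r(k)^{-2}=1/(k\log k)$, so the difference is at most $C_Q\bigl(\log\log(Cn)-\log\log n\bigr)\to 0$. In both cases Lemma~\ref{lemma: green_function_on_Z} gives $G_n\to\infty$: $G_n\asymp\log n$ or $G_n\asymp\log\log n$ respectively. Hence the ratio $(G_{Cn}-G_n)/G_n\to 0$, and the claim follows once $n$ is large enough, depending on $C,\epsilon$.

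There is no real obstacle here. The only points needing care are the boundary regime $\alpha=2$, where $r$ carries a log factor, and the divergence of $G_n$ in the recurrent cases. The latter needs the lower on-diagonal bound in \eqref{ineq: heat_kernel_typical}, and that is exactly what Lemma~\ref{lemma: green_function_on_Z} records.
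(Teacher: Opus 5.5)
Your proof is correct and takes the same route as the paper. The paper reduces the claim to $\sum_{k=n+1}^{Cn} Q^k(o,o)=o(G_n)$ via the on-diagonal bound \eqref{ineq: heat_kernel_typical} and Lemma~\ref{lemma: green_function_on_Z}, then omits the computation; you supply that computation case by case (transient; $d=2$ with $\alpha>2$; $d=2$ with $\alpha=2$), and your reading of the quantifier as ``for all $n\ge L_0$'' is the intended one, matching its use with $n=\ell^*$.
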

\begin{proof}
    It is sufficient to show that $\sum_{k = n+1}^{Cn} Q^k(o, o) = o(G_n)$ as $n$ goes to infinity. By using equation \eqref{ineq: heat_kernel_typical} and Lemma~\ref{lemma: green_function_on_Z}, it is then a straightforward computation. We omit the details.
\end{proof}

We now present a lemma that was used for multiple times throughout this paper. 

\begin{lemma} \label{lemma: cover_to_green}
Let $A, B \subseteq \Z^d$ and $\ell \in \N^+$, then we have 
\begin{align*}
	\sum_{y \in B} \bfP_y(\tau_{A} \le \ell) \ge \frac{G_{\ell}(A, B)}{\max_{x \in A} G_{\ell}(x, A)}.
\end{align*}
Similarly, for any $\ell_1, \ell_2 \in \N^+$ we also have 
\begin{align*}
\sum_{y \in B} \bfP_y(\tau_{A} \le \ell_1) \le \frac{G_{\ell_1 + \ell_2}(A, B)}{\max_{x \in A} G_{\ell_2}(x, A)}.
\end{align*}
Moreover, the same inequalities hold on the torus $\T^d_L$ by replacing $G_\ell$ with $G^{(L)}_\ell$ and $\bfP_y$ with $\bfP^{(L)}_y$ for any $L \in \N^+$.
\end{lemma}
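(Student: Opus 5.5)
The plan is to prove both inequalities by decomposing at the hitting time $\tau_A$ and applying the strong Markov property, and then to sum over $y \in B$ using the symmetry of $Q$. Throughout I will use that $G_n(x,y)=G_n(y,x)$, because $Q$ is symmetric and therefore so is every $Q^k$. Consequently
\[
\sum_{y\in B} G_n(y,A) = G_n(B,A) = G_n(A,B),
\]
where $G_n(y,A)=\bfE_y[\#\{0\le k\le n: X_k\in A\}]$ is the expected number of visits to $A$ up to time $n$. The torus statements follow verbatim: $Q_L$ is again symmetric and the strong Markov property is unchanged.

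\textbf{First inequality.} Fix $y\in B$. Any visit to $A$ during $[0,\ell]$ happens at or after $\tau_A$, and only on the event $\{\tau_A\le \ell\}$. By the strong Markov property at $\tau_A$, the expected number of visits in $[\tau_A,\ell]$ is at most $\bfE_{X_{\tau_A}}[\#\text{visits to } A \text{ in } [0,\ell]] = G_\ell(X_{\tau_A},A)$. Since $X_{\tau_A}\in A$, this gives
\[
G_\ell(y,A)\le \bfP_y(\tau_A\le \ell)\,\max_{x\in A}G_\ell(x,A).
\]
Summing over $y\in B$ and using $G_\ell(B,A)=G_\ell(A,B)$ yields the first claim.

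\textbf{Second inequality.} I run the same decomposition in the other direction. On $\{\tau_A\le \ell_1\}$, the window $[\tau_A,\tau_A+\ell_2]$ lies inside $[0,\ell_1+\ell_2]$. By the strong Markov property, the expected number of visits to $A$ in that window is $G_{\ell_2}(X_{\tau_A},A)$. This gives
\[
G_{\ell_1+\ell_2}(y,A)\ \ge\ \bfP_y(\tau_A\le \ell_1)\,G_{\ell_2}(X_{\tau_A},A)\ \text{(in expectation)}\ \ge\ \bfP_y(\tau_A\le\ell_1)\,\min_{x\in A}G_{\ell_2}(x,A),
\]
and summing over $y\in B$ gives the bound with $\min_{x\in A}G_{\ell_2}(x,A)$ in the denominator.

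\textbf{Main obstacle: passing from $\min$ to the stated $\max$.} To obtain the lemma exactly as worded, I need $\min_{x\in A}G_{\ell_2}(x,A)=\max_{x\in A}G_{\ell_2}(x,A)$, i.e., that $x\mapsto G_{\ell_2}(x,A)$ is constant on $A$.
\begin{itemize}
\item If $A=\{a\}$ is a singleton, then by translation invariance $G_{\ell_2}(a,a)=G_{\ell_2}$, so the function is trivially constant.
\item If $|A|=2$, then $G(a,a)+G(a,b)=G(b,b)+G(b,a)$ by symmetry and translation invariance, so it is again constant.
\item More generally, it is constant whenever $A$ is invariant under a transitive group of symmetries of $Q$.
\end{itemize}
In all of these cases the $\min$ bound is exactly the stated one. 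For an arbitrary $A$ the $\max$ version can fail: take $A$ to be a tight cluster together with a far isolated point $p$, and $B$ a neighbourhood of $p$. I therefore expect this step to be where the argument needs the structural hypothesis.

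So my plan is to prove the second inequality in the form above and then verify that every use of it in the paper has $A=\{o\}$ or $A=\{x\}$, where $\min=\max$; these are the uses in the proof of Proposition~\ref{prop: expected_uncovered}(ii) and Lemma~\ref{lemma: variance_uncovered}. If the statement is to be read for general $A$, I would flag that the denominator must be $\min_{x\in A}G_{\ell_2}(x,A)$.
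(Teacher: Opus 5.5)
Your argument is the same as the paper's: decompose at $\tau_A$ with the strong Markov property, bound the visits after $\tau_A$ from above by $\max_{x\in A}G_{\ell}(x,A)$ (resp.\ from below by $G_{\ell_2}(X_{\tau_A},A)$), and sum over $y\in B$ using the symmetry $G_n(B,A)=G_n(A,B)$. Your caveat is also correct: the paper's own ``similarly'' step, $\bfE_x[N_A^{(\ell_2)}]\le \bfE_y[N_A^{(\ell_1+\ell_2)}\mid \tau_A\le \ell_1,\tau_A=\tau_x]$, only yields $\min_{x\in A}G_{\ell_2}(x,A)$ in the denominator, and the stated $\max$ version fails in general (e.g.\ with $B=\{p\}\subseteq A$ the left side is $1$, while the right side is small when $A\setminus\{p\}$ is a large cluster far from $p$); every application of the second inequality in the paper (Proposition~\ref{prop: expected_uncovered}(ii), Lemma~\ref{lemma: variance_uncovered}, and the upper bound in Lemma~\ref{apx_lemma: random_walk_expected_hitting_size_lower_bound}) uses a singleton $A$, where $\min=\max$.
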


\begin{proof}
Let $N_S^{(\ell)}$ be the number of visits to $S \subseteq \Z^d$ up to time $\ell \in \N$, and let $\tau_S$ be the hitting time of subset $S$. For any $\ell$, we have that
\begin{align*}
    G_\ell(A, B) 
    & = \sum_{y \in B} \bfE_y[N_A^{(\ell)}] \\
    & = \sum_{y \in B} \sum_{x \in A} \bfE_y[N_A^{(\ell)} | \tau_A \le \ell, \tau_A = \tau_x] \bfP_y(\tau_A \le \ell, \tau_A = \tau_x) \\
    & \le \sum_{y \in B} \sum_{x \in A} \bfE_x[N_A^{(\ell)}] \bfP_y(\tau_A \le \ell, \tau_A = \tau_x) \\
    & = \max_{x \in A} G_\ell(x, A) \sum_{y \in B} \bfP_y(\tau_A \le \ell).
\end{align*}
The first inequality follows from the strong Markov property, which ensures that 
$$\bfE_y[N_A^{(\ell)} | \tau_A \le \ell, \tau_A = \tau_x] \le \bfE_x[N_A^{(\ell)}]$$ 
for any $x \in A$. This completes the proof of the first inequality.

Similarly, we can prove the second inequality in Lemma \ref{lemma: cover_to_green} by observing that
$$\bfE_x[N_A^{(\ell_2)}] \le \bfE_y[N_A^{(\ell_1 + \ell_2)} | \tau_A \le \ell_1, \tau_A = \tau_x]$$
for any $x \in A$, which again follows from the strong Markov property.
The proof of the same inequalities on $\T^d_L$ is identical and thus omitted.
\end{proof}

Let $\T^d_L := (\Z / L \Z)^d$ and 
let $\pi: \Z^d \rightarrow \T^d_L$ be the canonical projection.
For $u \in \T^d_L$, 
let $\widecheck{u}$ be the vertex in the singleton set $\pi^{-1}(u) \cap [-L/2, L/2)^d$.
For a random walk $\{X_n\}_{n \in \N^+}$ on $\Z^d$ with translation-invariant transition matrix $Q$, we denote by $\{X_n^{(L)}\}_{n \in \N^+}$ the corresponding random walk on $\T^d_L$ obtained by projecting $X_n$ through $\pi$. Its transition matrix is denoted by $Q_L$, where $Q_L(x, y) := \sum_{z \in \widecheck{y} + L\Z^d} Q(\widecheck{x}, z)$ for any $x, y \in \T^d_L$. We use $o$ to denote the origin of either $\Z^d$ or $\T^d_L$.

Let $G^{(L)}_n(x, y) := \sum_{k = 0}^n Q^k_L(x, y)$ denote the Green's function on $\T^d_L$ up to time $n \in \N \cup \{\infty\}$, and let $G^{(L)}_n := G^{(L)}_n(o, o)$. 
Throughout our analysis, we need to go back and forth between Green's functions on $\Z^d$ and $\T^d_L$.
Therefore, estimates that relate $G^{(L)}_n(x, y)$ and $G_n(x, y)$ are needed.
Luckily, the following simple observation suffices for our purposes, and we state it without a proof.

The next result allows us comparing heat kernels on $\T^d_L$ and $\Z^d$. The bound below is far from optimal, but it suffices for our purposes. 
\begin{lemma} \label{lemma: heat_kernel_from_Z_to_T}
For any $x \in \T^d_L$, $n \in \N$, and $L \in \N^+$, we have 
    \begin{align*}
        0 \le Q^n_L(o, x) - Q^n(o, \widecheck{x}) \le \frac{C \ n^{d + \alpha + 1}}{L^{d + \alpha}}
    \end{align*}
for some $C = C(\alpha, L)$. 
\end{lemma}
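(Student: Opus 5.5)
The plan is to decompose the $n$-step walk on $\Z^d$ according to the lattice translates of $\widecheck{x}$, since the torus walk is the projection of the lattice walk. By the definition of $Q_L$ we have
\[
Q^n_L(o,x)=\sum_{z\in \widecheck{x}+L\Z^d} Q^n(o,z)=Q^n(o,\widecheck{x})+\sum_{z\in \widecheck{x}+L\Z^d,\ z\neq \widecheck{x}} Q^n(o,z).
\]
The lower bound $0\le Q^n_L(o,x)-Q^n(o,\widecheck{x})$ is then immediate, because every term in the remaining sum is nonnegative.

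For the upper bound, the remainder is exactly the probability that $X_n\in \widecheck{x}+L\Z^d\setminus\{\widecheck{x}\}$. Every such point has $\ell^\infty$-norm at least $L/2$, because $\widecheck{x}\in[-L/2,L/2)^d$ and the other translates differ from it by a nonzero multiple of $L$ in some coordinate. Hence the remainder is at most $\bfP_o(|X_n|\ge L/2)$.

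To bound this probability I will use a union bound over jumps. If $|X_n|\ge L/2$, then one of the $n$ increments has norm at least $L/(2n)$. The tail of a single jump satisfies
\[
\sum_{|y|\ge R}Q(o,y)\le K\sum_{|y|\ge R}|y|^{-(d+\alpha)}\le C(d,\alpha)\,K\,R^{-\alpha}.
\]
This gives the bound $n\cdot C\,(2n/L)^{\alpha}$, which decays only like $L^{-\alpha}$ rather than $L^{-(d+\alpha)}$. That is too weak, and getting the full $L^{-(d+\alpha)}$ is the step I expect to be the main obstacle.

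To recover the missing factor $L^{-d}$, I will not pass to the event $\{|X_n|\ge L/2\}$ but instead bound the point probabilities $Q^n(o,z)$ for a single $z$ with $|z|\ge L/2$. On the event $X_n=z$, some step $i$ has an increment $y$ with $|y|\ge |z|/n$. Conditioning on the first such step and applying the Markov property, I get
\[
Q^n(o,z)\le \sum_{i=1}^n\sum_{w}\bfP_o(X_{i-1}=w)\,Q(w,w')\,\bfP(\text{remaining steps go from }w'\text{ to }z)
\le n\,\max_{|y|\ge|z|/n}Q(o,y)\le nK\,(n/|z|)^{d+\alpha}.
\]
The second inequality comes from summing over the intermediate positions, with the large jump pinned so that the overall path ends at $z$. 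This yields $Q^n(o,z)\le Kn^{d+\alpha+1}|z|^{-(d+\alpha)}$.

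Summing over $z\in\widecheck{x}+L\Z^d$ with $z\neq\widecheck{x}$, the points satisfy $|z|\gtrsim L|k|$ for $k\in\Z^d\setminus\{0\}$. So the sum is bounded by
\[
n^{d+\alpha+1}L^{-(d+\alpha)}\sum_{k\neq 0}|k|^{-(d+\alpha)},
\]
and this series converges since $\alpha>0$. This gives the claim with $C=C(d,\alpha,K)$.
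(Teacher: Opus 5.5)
Your argument is correct, and it takes a genuinely different route from the paper's.

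\textbf{The paper's route.} The paper bounds the difference by $\bfP_o(X_n\notin[-L/2,L/2)^d)$, then by the probability that some increment has norm at least $L/(2n)$, and finishes with a union bound. As you observed, that union bound only yields $n\,\bfP_o(|X_1|\ge L/(2n))\asymp n^{1+\alpha}L^{-\alpha}$. Already for $n=1$ the exit probability is of order $L^{-\alpha}$. So no argument that passes through the exit event can produce the claimed $L^{-(d+\alpha)}$, and the paper's last displayed inequality does not hold as written. The weaker $n^{1+\alpha}L^{-\alpha}$ does suffice for the lemma's only application, in Proposition~\ref{prop: expected_uncovered}, because there $n\le \ell_1$ is polylogarithmic in $L$.

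\textbf{Your route.} You keep the endpoint pinned. The single-big-jump decomposition gives
\[
Q^n(o,z)\le n\max_{|y|\ge |z|/n}Q(o,y)\le Kn^{d+\alpha+1}|z|^{-(d+\alpha)}.
\]
Summing over the translates $z=\widecheck{x}+Lk$, $k\neq 0$, using $|z|\ge L|k|/2$, gives exactly the stated bound with $C=C(d,\alpha,K)$. This is the sensible reading of the constant, which the statement prints as $C(\alpha,L)$.

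\textbf{What to write out.} Make your ``second inequality'' explicit. After replacing $Q(w,w')$ by $\max_{|y|\ge|z|/n}Q(o,y)$, you use $\sum_{w'}Q^{n-i}(w',z)=1$. This holds because $Q$ is translation-invariant, hence doubly stochastic. Then $\sum_w\bfP_o(X_{i-1}=w)\le 1$ finishes the bound for each $i$.

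\textbf{A by-product.} Your pointwise estimate also gives a polynomial-in-$n$ constant in the bound $Q^n(x,y)\le C_n|x-y|^{-(d+\alpha)}$ from Lemma~\ref{lemma: hitting_probability_upper_bound_on_Z}. The induction there produces $C_n$ growing exponentially in $n$. That is harmless there, since $\ell$ is fixed, but it could not give this lemma.
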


\begin{proof}
We use the natural coupling via the projection $\pi$ between the random walks on $\Z^d$ and $\T^d_L$.
Let $\{X_n\}_{n \in \N^+}$ be a random walk on $\Z^d$ with transition matrix $Q$ starting from $o$. 
Then,
    \begin{align*}
        & \hspace{10mm} Q^n_L(o, x) - Q^n(o, \widecheck{x}) 
        \le \bfP_o \left( X_n \notin [-L/2, L/2)^d \right)  \\
        \le \ & \bfP_o \left( \exists \ i \in \{1, \ldots, n\} \text{ s.t. } |X_i - X_{i-1}| \ge L/2n \right)
        \le \ \frac{C(\alpha, d) \ n^{d + \alpha + 1}}{L^{d+\alpha}}.
    \end{align*}
where the last inequality uses a union bound. 
\end{proof}

Let $B_x(r) := \{y \in \T^d_L : |y - x| \le r\}$, where $|\cdot|$ is the $\ell^\infty$ distance on $\T^d_L$.
Recall the definition of $\ell(r)$ in \eqref{def: s(r)}, which represents the \textit{typical time} needed for a random walker to travel a distance proportional to $r$, which is asymptotically the inverse of the function $r(\cdot)$ defined in \eqref{def: typical_distance}.

\begin{lemma} \label{apx_lemma: random_walk_expected_hitting_size_lower_bound}
    Let $Q \in \mathcal{H}_{\alpha, K}$ for some $\alpha, K > 0$. Fix $K_0 \ge 1$ and $\epsilon \in (0, 1)$.
    Then, there exists positive constants $C = C(Q)$ and $c = c(Q, K_0)$ such that the following holds:
    for any $r, L \in \N^+$ and
    any subset $D \subseteq B_x(K_0 r)$ with $|D| \ge \epsilon r^d$, we have

        \vspace{2mm}

        \begin{align}
            \epsilon c f(r) \le \sum_{y \in \widecheck{D}} \bfP_y \left( \tau_{\widecheck{x}} \le \ell(r) \right) \le \sum_{y \in D} \bfP^L_y \left( \tau_x \le \ell(r) \right) \le C f(r),
        \end{align}
        where $\widecheck{D} := \{ \widecheck{y} : y \in D \}$ and $f(r)$ is defined as follows:
        \begin{align*}
            f(r) := \begin{cases}
                \ell(r) & \text{if } d \ge 3 \text{ or } (d = 2 \text{ and } \alpha < 2), \\
                \ell(r) / \log \log \ell(r) & \text{if } d = 2 \text{ and } \alpha = 2, \\
                \ell(r) / \log \ell(r) & \text{if } d = 2 \text{ and } \alpha > 2.
            \end{cases}
        \end{align*}
\end{lemma}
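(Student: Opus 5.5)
The three bounds in the chain are proved in order: the middle inequality by a coupling, the upper bound by Lemma~\ref{lemma: cover_to_green}, and the lower bound by combining Lemma~\ref{lemma: cover_to_green} with the heat kernel estimates of Theorem~\ref{thm: heat_kernel_typical}.

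\textbf{Middle inequality.} Project the $\Z^d$ walk started at $\widecheck y$ through $\pi$. Every visit to $\widecheck x$ projects to a visit to $x$, so $\bfP_{\widecheck y}(\tau_{\widecheck x}\le \ell)\le \bfP^L_y(\tau_x\le\ell)$. Summing over $y\in D$ gives the inequality.

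\textbf{Upper bound.} Apply the second inequality of Lemma~\ref{lemma: cover_to_green} on $\T^d_L$ with $A=\{x\}$, $\ell_1=\ell_2=\ell(r)$ and $B\supseteq D$ (any superset only helps). The numerator is at most $\sum_{y}G^{(L)}_{2\ell(r)}(x,y)=2\ell(r)+1$. The denominator is $G^{(L)}_{\ell(r)}(x,x)\ge G_{\ell(r)}$ (take $\ell_2 = \ell(r)$). Lemma~\ref{lemma: green_function_on_Z} gives $G_{\ell(r)}\asymp 1$, $\log\log\ell(r)$, or $\log \ell(r)$ in the three cases, so the ratio is at most $Cf(r)$.

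\textbf{Lower bound.} Apply the first inequality of Lemma~\ref{lemma: cover_to_green} on $\Z^d$ with $A=\{\widecheck x\}$ and $B=\widecheck D$:
\[
\sum_{y\in\widecheck D}\bfP_y(\tau_{\widecheck x}\le \ell(r))\ \ge\ \frac{G_{\ell(r)}(\widecheck x,\widecheck D)}{G_{\ell(r)}}.
\]
Since $\ell(r)$ is the inverse of $r(\cdot)$, we have $r(k)\asymp r$ for $k\in[\ell(r)/2,\ell(r)]$. The points of $\widecheck D$ lie within $\ell^\infty$-distance of order $K_0 r$ of $\widecheck x$ after choosing the right lift; for small $L$ (when $K_0r \gtrsim L$), the torus version of the lemma is used instead. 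Hence \eqref{ineq: heat_kernel_typical}, applied with $\kappa$ of order $K_0$, gives $Q^k(\widecheck x,y)\ge c\,r^{-d}$ for all such $k$ and $y\in\widecheck D$. Therefore
\[
G_{\ell(r)}(\widecheck x,\widecheck D)\ \ge\ \tfrac{\ell(r)}{2}\,|D|\,c\,r^{-d}\ \ge\ c\epsilon\,\ell(r).
\]
Dividing by $G_{\ell(r)}$ and using Lemma~\ref{lemma: green_function_on_Z} once more gives $\epsilon c f(r)$.

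\textbf{Main obstacle.} The delicate step is the geometry of lifting $D$ to $\Z^d$. When $K_0 r$ is comparable to or larger than $L$, the lift $\widecheck D$ chosen via $[-L/2,L/2)^d$ may place points at distance up to $L$ from $\widecheck x$ rather than within $K_0 r$. One must check that this distance is still $O(K_0 r)$ so that \eqref{ineq: heat_kernel_typical} applies with a constant depending only on $K_0$. The other point to handle is small $r$: when $\ell(r)/2<1$, the bound must be absorbed into the constants directly.
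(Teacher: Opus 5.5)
Your three steps are the paper's three steps, and your middle and upper inequalities are fine. For the middle inequality you couple from each $\widecheck y$, which is slightly more direct than the paper's count of visited vertices of $D$, since that count tacitly uses symmetry. For the upper bound you use Lemma~\ref{lemma: cover_to_green} with $A=\{x\}$ and numerator $2\ell(r)+1$; for the lower bound, Lemma~\ref{lemma: cover_to_green} plus \eqref{ineq: heat_kernel_typical}.

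The gap is the lifting step in the lower bound, and you have located it in the wrong regime. You cannot ``choose the right lift'': $\widecheck D$ is defined through the fixed domain $[-L/2,L/2)^d$. The regime you worry about, $K_0r\gtrsim L$, is harmless. There $\widecheck x,\widecheck y\in[-L/2,L/2)^d$ forces $|\widecheck y-\widecheck x|_\infty<L=O(K_0r)$.

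The bad regime is $K_0r\ll L$ with $\widecheck x$ near the boundary of the domain. Take $L>4r$ even, $\widecheck x=(-L/2,0,\dots,0)$, and let $D$ be the set of $y$ with $\widecheck y=(L/2-j,z_2,\dots,z_d)$, $1\le j\le r$, $|z_i|\le r$. Then $D\subseteq B_x(r)\subseteq B_x(K_0r)$ and $|D|\ge r^d$. But $|\widecheck y-\widecheck x|_\infty\ge L-r$ for every $y\in D$. So by Lemma~\ref{lemma: hitting_probability_upper_bound_on_Z} the leftmost sum is $O_r(L^{-(d+\alpha)})$, which tends to $0$ as $L\to\infty$ with $r$ fixed, whereas $\epsilon c f(r)$ does not depend on $L$. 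Hence the leftmost inequality is false as literally stated for general $x$, and your proposed check that ``this distance is still $O(K_0r)$'' cannot succeed. The paper's proof has the same tacit step: it applies the heat kernel bound to $y\in B_{\widecheck x}(K_0r)$ without verifying $\widecheck D\subseteq B_{\widecheck x}(K_0r)$.

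What is true, and what the applications actually use, is the outer bound $\epsilon c f(r)\le\sum_{y\in D}\bfP^L_y(\tau_x\le\ell(r))$. Prove it with a lift adapted to $x$: send $y\mapsto\widecheck x+\widecheck{(y-x)}$. Equivalently, use translation invariance of $Q_L$ to reduce to $x=o$, where this coincides with the statement's own lift. Since the representative of a residue class in $[-L/2,L/2)$ has minimal absolute value, $|\widecheck{(y-x)}|_\infty=|y-x|\le K_0r$ for every $L$. The map is injective because it projects back to $y$. Your heat kernel computation then gives the lower bound $\epsilon c f(r)$ for the $\Z^d$ sum over this recentered lift, taking $\kappa$ of order $dK_0$ because the appendix uses the $\ell^1$ norm. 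Your projection coupling transfers that bound to the torus, and no separate treatment of $K_0r\gtrsim L$ is needed.
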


\begin{proof}
    We first prove the middle inequality. Let $N^{(L)}$ be the number of (distinct) vertices in $D$ visited by the random walk on $\T^d_L$ starting from $x$ up to time $\ell(r)$. Let $N$ be the corresponding random variable for the random walk on $\Z^d$ starting from $\widecheck{x}$ and visiting vertices in $\widecheck{D}$. It follows from the coupling given by the projection map that $N^{(L)}$ stochastically dominates $N$, which gives us the middle inequality.

    We now prove the lower bound. By Lemma~\ref{lemma: cover_to_green}, we have
        \begin{align*}
            \sum_{y \in \widecheck{D}} \bfP_y(\tau_{\widecheck{x}} \le \ell(r)) \ge \frac{G_{\ell(r)}(\widecheck{x}, \widecheck{D})}{G_{\ell(r)}}.
        \end{align*}
    By Theorem~\ref{thm: heat_kernel_typical}, for any $y \in B_{\widecheck{x}}(K_0 r) \subseteq \Z^d$, we have
        \begin{align*}
            G_{\ell(r)}(\widecheck{x}, y) \ge \sum_{k = \lceil \ell(r)/2 \rceil}^{\ell(r)} Q^k(\widecheck{x}, y) \ge c' \ell(r) / r^d,
        \end{align*}
    where $c'$ is a positive constant depending on $Q$ and $K_0$. We complete the proof of the lower bound by combining the above two inequalities with Lemma~\ref{lemma: green_function_on_Z} and the assumption that $|D| \ge \epsilon r^d$.
    
    For the upper bound, it follows from the second display of Lemma~\ref{lemma: cover_to_green}\footnote{Although Lemma~\ref{lemma: cover_to_green} is stated for random walks on $\Z^d$, the same argument applies to random walks on $\T^d_L$} that
        \begin{align*}
            \sum_{y \in D} \bfP^L_y(\tau_{x} \le \ell(r)) \le \frac{G^{(L)}_{2\ell(r)}(x, D)}{G^{(L)}_{\ell(r)}} \le \frac{3\ell(r)}{G_{\ell(r)}},
        \end{align*}
    where in the last inequality we use the fact that $G^{(L)}_{2\ell(r)}(x, D) \le 2 \ell(r) + 1 \le 3 \ell(r)$. We conclude the proof by plugging in the estimates for $G_{\ell(r)}$ from Lemma~\ref{lemma: green_function_on_Z}.
\end{proof}

Let $\mathrm{R}(k) := \{X_0, X_1, \ldots, X_k\}$ be the \textit{range} of the first $k$ steps of $\{X_n\}_{n \in \N}$.

\begin{lemma} \label{lemma: range_estimates}
        Let $Q \in \mathcal{H}_{\alpha, K}$ for some $\alpha, K > 0$. Fix an arbitrary $\epsilon \in (0, 1)$.
        Then, there exist positive constants $c_0 = c_0(Q, \epsilon)$ and $c_1 = c_1(Q)$ such that the following holds:
        for 
        any $r \in \N^+$ and 
        any subset $D \subseteq B_x(r)$ with $|D| \le (1-\epsilon) |B_x(r)|$, 
        we have 
        \begin{align} \label{ineq: range_est_case1}
        \bfP_x \left( \, \left\vert \widetilde{\mathrm{R}}\left(\ell(r)\right) \right\vert \ge c_1 \epsilon f(r) \, \right) &\ge  c_0,
        \end{align}
        where $\widetilde{\mathrm{R}}(k) := \mathrm{R}(k) \cap B_x(r) \setminus D$ and $f(r)$ and $\ell(r)$ are defined as above.
\end{lemma}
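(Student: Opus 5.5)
We prove Lemma~\ref{lemma: range_estimates} by the second moment (Paley--Zygmund) method applied to $N := |\widetilde{\mathrm{R}}(\ell(r))|$, working on $\Z^d$; the torus case follows by the projection coupling as elsewhere in the paper. Write $U := B_x(r)\setminus D$, so $|U| \ge \epsilon |B_x(r)| \ge \epsilon (2r)^d$. Replacing $\ell(r)$ by a constant multiple changes nothing beyond constants, so we may use a time horizon $n := \ell(r)$ with $r(n) \asymp r$, where $r(\cdot)$ is as in Theorem~\ref{thm: heat_kernel_typical}.

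\textbf{First moment.} We have $\bfE_x[N] = \sum_{y\in U} \bfP_x(\tau_y \le n)$. By the first inequality of Lemma~\ref{lemma: cover_to_green} with $A = \{y\}$, $B=\{x\}$, $\bfP_x(\tau_y\le n) \ge G_n(x,y)/G_n$. By \eqref{ineq: heat_kernel_typical} with $\kappa$ a fixed constant, for $y \in B_x(r)$,
\[
G_n(x,y) \ge \sum_{k=\lceil n/2\rceil}^{n} Q^k(x,y) \ge c\, n/r^d .
\]
Summing over $y\in U$ gives $\bfE_x[N] \ge c\,\epsilon\, n/G_n$. By Lemma~\ref{lemma: green_function_on_Z}, $n/G_n \asymp f(r)$ in each case: $G_n\asymp 1$, $\log\log n$, or $\log n$, matching the definition of $f$. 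Thus $\bfE_x[N] \ge c_2\epsilon f(r)$.

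\textbf{Second moment.} We bound $\bfE_x[N^2] \le \bfE_x[|\mathrm{R}(n)\cap B_x(r)|^2]$, which is at most
\[
2\sum_{y,z}\bfP_x(\tau_y\le\tau_z\le n)\le 2\sum_{y,z\in B_x(r)} \bfP_x(\tau_y\le n)\,\bfP_y(\tau_z\le n)
\]
by the strong Markov property. By the upper half of Lemma~\ref{apx_lemma: random_walk_expected_hitting_size_lower_bound} (or directly by the second display of Lemma~\ref{lemma: cover_to_green}), $\sum_{z\in B_x(r)}\bfP_y(\tau_z\le n) \le C f(r)$ uniformly in $y$, using $B_x(r)\subseteq B_y(2r)$ and symmetry of hitting probabilities, $\bfP_y(\tau_z\le n)=\bfP_z(\tau_y\le n)$. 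Applying the same bound to the sum over $y$ gives $\bfE_x[N^2]\le 2C^2 f(r)^2$.

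\textbf{Conclusion.} Paley--Zygmund gives
\[
\bfP_x\bigl(N\ge \tfrac12 c_2\epsilon f(r)\bigr)\ge \frac{(c_2\epsilon f(r))^2}{4\cdot 2C^2 f(r)^2} =: c_0(Q,\epsilon)>0,
\]
which is \eqref{ineq: range_est_case1} with $c_1 = c_2/2$. Small $r$ are handled by adjusting constants, since $N\ge 1$ trivially whenever $x\in U$ (and otherwise $f(r)$ is bounded there).

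\textbf{Main obstacle.} The delicate point is the uniform upper bound $\sum_{z}\bfP_y(\tau_z\le n)\lesssim n/G_n$ in the recurrent-type cases ($d=2$, $\alpha\ge2$). The second display of Lemma~\ref{lemma: cover_to_green}, with $\ell_1=\ell_2=n$, gives the bound $G_{2n}(\cdot,\cdot)/G_n \le (2n+1)/G_n$, and Lemma~\ref{lemma: green_function_on_Z} (or Lemma~\ref{lemma: green_function_compare_n_and_Cn}) ensures $n/G_n \asymp f(r)$. This should close the argument, but it requires care that the constants remain uniform in $y$. One also has to check that the heat-kernel lower bound for times $k \in [n/2,n]$ applies to all $y$ in the box, which holds because $|y-x|\le r\lesssim r(k)$.
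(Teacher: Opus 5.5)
Your argument is correct and follows the same overall route as the paper's proof. The first-moment lower bound $\bfE_x[N]\gtrsim \epsilon f(r)$ comes from Lemma~\ref{lemma: cover_to_green} together with the heat-kernel lower bound, which is exactly Lemma~\ref{apx_lemma: random_walk_expected_hitting_size_lower_bound}, and the conclusion is Paley--Zygmund.

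The only real difference is how you bound the second moment. You expand $N^2$ over ordered pairs and apply the strong Markov property once: $\bfP_x(\tau_y\le\tau_z\le n)\le\bfP_x(\tau_y\le n)\,\bfP_y(\tau_z\le n)$. The paper instead proves submultiplicativity of the tails, $a(k_1+k_2)\le a(k_1)a(k_2)$, which gives exponential tails for $N$ (more than is needed here). Your pair expansion is cleaner and avoids the small off-by-one bookkeeping in the paper's tail argument.

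Both versions rest on the same bound $\sum_{z}\bfP_y(\tau_z\le n)\le (2n+1)/G_n$. This bound does not depend on $y$ at all, so the uniformity you flag as the main obstacle holds automatically.
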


\begin{proof}
By Lemma~\ref{apx_lemma: random_walk_expected_hitting_size_lower_bound}, we have the following lower bound on the expected size of $\widetilde{\mathrm{R}}(\ell(r))$:
\begin{align} \label{ineq: lower_bound_expected_new_range}
    \epsilon c' f(r) \le \bfE_x\left[\left|\widetilde{\mathrm{R}}\left(\ell(r)\right)\right| \right] 
    &= \sum_{y \in B_x(r) \setminus D} \bfP_x(\tau_y \le \ell(r)) \le C f(r),
\end{align}
where $c' = c'(Q, K)$ and $C = C(Q)$ are positive constants.

Let $b := C f(r)$. For $k \in \N^+$, define $a(k) := \max_{y \in B_x(r)} \bfP_y(|\widetilde{\mathrm{R}}(\ell(r))| \ge 2kb)$. Observe that $a(1) \le 1/2$ and, by strong Markov Property, we have $a(k_1 + k_2) \le a(k_1)a(k_2)$ for any $k_1, k_2 \in \N^+$. It then follows that $\bfE_x[|\widetilde{\mathrm{R}}(\ell(r))|^2] \le C_0 b^2$, where $C_0$ is some absolute constant. Then, the result follows from the Paley-Zygmund inequality: 
\begin{align}
    \bfP_x \left( \left|\widetilde{\mathrm{R}}(\ell(r))\right| \ge \frac{\epsilon c' f(r)}{2} \right) \ge \frac{\bfE_x[|\widetilde{\mathrm{R}}(\ell(r))|]^2}{4\bfE_x[|\widetilde{\mathrm{R}}(\ell(r))|^2]} \ge \frac{(\epsilon c' f(r))^2}{4C_0 b^2} = \frac{(\epsilon c')^2}{4C_0 C^2} =: c_0.
\end{align}

\end{proof}

\begin{remark}
    A more careful estimation on the first moment of $| \widetilde{\mathrm{R}}(\ell(r)) |$ would give us $c_0 = c_0(Q)$ without the denpendency on $\epsilon$ and $c_1 = c_2(Q) (1 - \frac{|D|}{|B_x(r)|})$. However, the current form suffices for our purposes.
\end{remark}

\begin{lemma} \label{lemma: one-step_Z_to_T}
    Let $Q \in \mathcal{H}_{\alpha, K}$ for some $\alpha, K > 0$. Then there exists $C, c > 0$ depending on $(K, \alpha, d)$ such that for any $L \in \N^+$ and any $x \in \T^d_L$, we have
    \begin{align}
        \frac{c}{|o - x|^{\alpha + d}} \le Q_L(o, x) \le \frac{C}{|o - x|^{\alpha + d}}.
    \end{align}
\end{lemma}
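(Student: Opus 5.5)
The plan is to write $Q_L(o,x)$ as a sum over lifts and compare each lift with its distance on the torus. Fix $x \in \T^d_L$ with $x \neq o$; the case $x=o$ is excluded since $|o-x|=0$. Let $\widecheck{x} \in [-L/2,L/2)^d$ be its canonical representative. Then
\[
Q_L(o,x)=\sum_{z\in \widecheck{x}+L\Z^d} Q(o,z).
\]
The key observation is that the torus distance satisfies $|o-x| = |\widecheck{x}|_\infty$, up to the boundary convention $-L/2$ versus $L/2$, which changes nothing. Moreover $|\widecheck{x}|_\infty = \min_{z\in \widecheck{x}+L\Z^d}|z|_\infty$.

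The lower bound is immediate from a single term. Since all terms are nonnegative,
\[
Q_L(o,x)\ge Q(o,\widecheck{x})\ge K^{-1}|\widecheck{x}|^{-(d+\alpha)}=K^{-1}|o-x|^{-(d+\alpha)}.
\]
So we may take $c=K^{-1}$.

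For the upper bound, write $z=\widecheck{x}+Lk$ with $k\in\Z^d$. The term $k=0$ contributes at most $K|o-x|^{-(d+\alpha)}$. For $k\neq 0$, pick a coordinate with $|k_i|=|k|_\infty$. Since $|\widecheck{x}_i|\le L/2$, we get
\[
|z|_\infty\ge |z_i| \ge L|k|_\infty - L/2 \ge L|k|_\infty/2.
\]
Hence
\[
\sum_{k\neq0}Q(o,\widecheck{x}+Lk)\le K\,2^{d+\alpha}L^{-(d+\alpha)}\sum_{k\neq 0}|k|_\infty^{-(d+\alpha)}.
\]
The last sum is finite because $\#\{k:|k|_\infty=j\}\le C_d j^{d-1}$ and $\alpha>0$. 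Call it $S(d,\alpha)$.

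It remains to absorb $L^{-(d+\alpha)}$ into $|o-x|^{-(d+\alpha)}$. Since $|o-x|\le L/2\le L$, we have $L^{-(d+\alpha)}\le |o-x|^{-(d+\alpha)}$. Therefore
\[
Q_L(o,x)\le K\bigl(1+2^{d+\alpha}S(d,\alpha)\bigr)|o-x|^{-(d+\alpha)},
\]
which gives the upper bound with $C=K(1+2^{d+\alpha}S(d,\alpha))$.

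There is no serious obstacle. The only points requiring care are the coordinatewise inequality $|z_i|\ge L|k_i|-L/2$, which uses the choice of representative in $[-L/2,L/2)^d$, and identifying the torus $\ell_\infty$-distance from the footnote definition with $|\widecheck{x}|_\infty$. Both follow from the fact that $\min\{|u_i|,L-|u_i|\}$ equals $|\widecheck{u}_i|$ for the centered representative. The constants depend only on $(K,\alpha,d)$ and are uniform in $L$, as required.
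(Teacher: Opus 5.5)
Your proof is correct and follows the same route as the paper: the lower bound comes from the single lift $Q(o,\widecheck{x})$ with $c=K^{-1}$, and the upper bound splits off the $k=0$ lift, bounds the remaining lifts by a constant times $L^{-(\alpha+d)}$, and absorbs this using $|o-x|\le L$. The only difference is that you actually carry out the tail estimate via $|\widecheck{x}+Lk|_\infty\ge L|k|_\infty/2$ and the shell count $\#\{k:|k|_\infty=j\}\le C_d j^{d-1}$, which the paper omits with a remark about comparing the sum to an integral.
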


\begin{proof}
    The lower bound follows directly from the definition of $Q_L$ and the lower bound in the definition of $\mathcal{H}_{\alpha, K}$ (see \eqref{def: H_alpha}). In particular, we set $c = 1/K$. For the upper bound, we have 
    \begin{align*}
        Q_L(o, x) = Q(o, \widecheck{x}) + \sum_{\substack{z \in \Z^d \\ z \neq 0}} Q(o, \widecheck{x} + Lz) \le \frac{K}{|o - \widecheck{x}|^{\alpha + d}} + \frac{C(K,\alpha,d)}{L^{\alpha + d}} \le \frac{K + C(K,\alpha,d)}{|o - x|^{\alpha + d}},
    \end{align*}
    where we used the fact that $|o - \widecheck{x}| = |o - x|$ and the upper bound in the definition of $\mathcal{H}_{\alpha, K}$ in the second inequality.
    And the bound for the second term in the first ineqaulity can be derived by comparing the sum with a integral. We omit the details here.
    By setting $C := K + C(K,\alpha,d)$, we complete the proof.
\end{proof}

\begin{lemma} \label{lemma: hitting_probability_upper_bound_on_Z}
For every $\ell > 0$, there exists $C = C(d, \alpha, K, \ell) > 0$ such that
\begin{align}
    \frac{C^{-1}}{|x - y|^{\alpha + d}} \le \bfP_x(\tau_y \le \ell) \le \frac{C}{|x - y|^{\alpha + d}},
\end{align}
for any $x, y \in \Z^d$ with $x \neq y$.
\end{lemma}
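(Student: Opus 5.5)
The plan is to bound $\bfP_x(\tau_y \le \ell)$ from both sides using only the one-step kernel bounds $K^{-1}|u-v|^{-(d+\alpha)} \le Q(u,v) \le K|u-v|^{-(d+\alpha)}$ and a union bound over the $\ell$ steps. Throughout, $\ell$ is a fixed integer, so constants may depend on $\ell$ freely. For $x \neq y$ we have $\tau_y \ge 1$ under $\bfP_x$.

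\textbf{Lower bound.} The walk may jump from $x$ to $y$ at its first step, so
\[
\bfP_x(\tau_y\le \ell)\ \ge\ \bfP_x(X_1=y)=Q(x,y)\ \ge\ K^{-1}|x-y|^{-(d+\alpha)},
\]
valid for every $\ell\ge 1$.

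\textbf{Upper bound.} First apply a union bound over the hitting step:
\[
\bfP_x(\tau_y\le \ell)\ \le\ \sum_{n=1}^{\ell} Q^n(x,y).
\]
It then suffices to show that for each fixed $n$ there is $C_n$ with $Q^n(o,z)\le C_n|z|^{-(d+\alpha)}$ for all $z\neq o$. We prove this by induction on $n$, the case $n=1$ being the hypothesis.

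For the inductive step, write $Q^{n+1}(o,z)=\sum_w Q^n(o,w)\,Q(w,z)$ and split the sum according to whether $|w|\ge |z|/2$ or $|w|<|z|/2$.
\begin{itemize}
\item If $|w|<|z|/2$, then $|z-w|\ge |z|/2$, so $Q(w,z)\le K2^{d+\alpha}|z|^{-(d+\alpha)}$. Since $\sum_w Q^n(o,w)\le 1$, this part contributes at most $K2^{d+\alpha}|z|^{-(d+\alpha)}$.
\item If $|w|\ge |z|/2$, the induction hypothesis gives $Q^n(o,w)\le C_n2^{d+\alpha}|z|^{-(d+\alpha)}$. Since $\sum_w Q(w,z)=1$ by symmetry of $Q$, this part contributes at most $C_n2^{d+\alpha}|z|^{-(d+\alpha)}$.
\end{itemize}
Adding the two parts, $C_{n+1}=2^{d+\alpha}(K+C_n)$ works.

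Summing over $n\le \ell$ yields the upper bound with constant $C=\sum_{n\le\ell}C_n$. Taking the maximum of this $C$ and $K$ gives the two-sided statement.

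The argument has no real obstacle. The only point needing care is that a walk might reach $y$ through several moderate jumps rather than one long jump. The split at $|w|=|z|/2$ handles this: along any path, at least one of the two pieces must cover distance at least $|z|/2$. The constants grow like $2^{(d+\alpha)\ell}$, which is acceptable because $\ell$ is fixed.
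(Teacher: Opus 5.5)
Your proof is correct and follows essentially the same route as the paper. The lower bound comes from the single jump $Q(x,y)$. The upper bound comes from a union bound over the hitting step together with an induction $Q^n(o,z)\le C_n|z|^{-(d+\alpha)}$, which splits the Chapman--Kolmogorov sum according to which piece covers distance $|z|/2$. The only cosmetic differences are that you peel off the last step ($Q^{n}Q$) rather than the first ($QQ^{n-1}$), and that you partition the sum instead of using the paper's overlapping union of two regions; both give the same recursion $C_{n+1}=2^{d+\alpha}(K+C_n)$.
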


\begin{proof}
    The lower bound is a direct consequence of $Q(x, y) \le \bfP_x(\tau_y \le \ell)$ and the assumption that $Q \in \mathcal{H}_{\alpha, K}$. 
    For the upper bound, we first prove the following claim inductively: for any $n \ge 1$, there exists a constant $C_n$ depending on $(\alpha, d, K, n)$ such that 
    \begin{align*}
        Q^n(x, y) \le \frac{C_n}{|x - y|^{\alpha + d}}.
    \end{align*}
    By the assumption that $Q \in \mathcal{H}_\alpha$, the upper bound trivially holds for $n = 1$ with $C_1 = K$. Assume the upper bound holds for $n = k-1$, then 
    \begin{align*}
        Q^k(x, y) 
        &= \sum_{z \in \Z^d} Q(x, z) Q^{k-1}(z, y) \\
        &\le \sum_{\substack{z \in \Z^d \text{ s.t. }\\ |x - z| \ge |x - y|/2 }} Q(x, z) Q^{k-1}(z, y) + \sum_{\substack{z \in \Z^d \text{ s.t. }\\ |y - z| \ge |x - y|/2 }} Q(x, z) Q^{k-1}(z, y) \\
        &\le \frac{2^{\alpha + d}K}{|x - y|^{\alpha + d}} + \frac{2^{\alpha + d}C_{k-1}}{|x - y|^{\alpha + d}},
    \end{align*}
    where to get the last line, we also used the fact that $Q$ is symmetric.
    Setting $C_k := 2^{\alpha + d}K + 2^{\alpha + d}C_{k-1}$ proves the upper bound.
    Therefore, we conlude the proof by observing that $\bfP_x(\tau_y \le \ell) \le \sum_{n = 1}^{\ell} Q^n(x, y)$.
\end{proof}

\section{Large Deviation Estimates for sums of Bernoulli Random Variables}

\begin{lemma} \label{lemma: large_deviation_Bern}
Let $\xi_1, ..., \xi_n$ be a sequence of i.i.d. $\Bern(p)$ random variable with $p \in (0, 1)$. Let $S_n := \sum_{i = 1}^n \xi_i$, then we have
\begin{align}
    &\text{ for } \forall \delta > 0, \hspace{2mm} \mathbf{P}\left(\frac{S_n}{n} \ge p(1 + \delta)\right) \le \exp\left(- \frac{np\delta \log(1+\delta)}{4}\right), \\
    &\text{ for } \forall \delta \in (0, 1), \hspace{2mm} \mathbf{P}\left(\frac{S_n}{n} \le p(1 - \delta)\right) \le \exp\left(- \frac{np\delta^2}{4}\right), \label{ineq: large_deviation_Bin_2} \\
    &\text{ for } \forall \delta > 0 \text{ and } k \in \N, \hspace{2mm} \mathbf{P}\left(\sup_{n \ge k} \frac{S_n}{n} \ge p(1 + \delta)\right) \le \frac{\exp\left(- \frac{kp\delta \log(1 + \delta)}{4}\right)}{1 - \exp\left(- \frac{p\delta \log(1 + \delta)}{4}\right)} \label{ineq: large_deviation_Bin_3} \\
    &\text{ for } \forall \delta \in (0, 1) \text{ and } k \in \N, \hspace{2mm} \mathbf{P}\left(\inf_{n \ge k} \frac{S_n}{n} \le p(1 - \delta)\right) \le 8 \delta^{-2} \exp\left(- \frac{kp\delta^2}{4}\right) \label{ineq: large_deviation_Bin_4}
\end{align}
\end{lemma}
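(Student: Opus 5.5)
The four bounds in Lemma~\ref{lemma: large_deviation_Bern} follow from standard Chernoff bounds: the first two are fixed-$n$ estimates, and the last two follow from them by a union bound over $n \ge k$. For the upper tail we apply Markov's inequality to $e^{tS_n}$ with $t = \log(1+\delta)$. This gives
\[
\mathbf{P}\bigl(S_n \ge np(1+\delta)\bigr) \le \exp\bigl(-np[(1+\delta)\log(1+\delta) - \delta]\bigr),
\]
using $\mathbf{E}e^{t\xi_1} = 1 + p(e^t - 1) \le \exp(p(e^t - 1))$. It then suffices to check the elementary inequality
\[
(1+\delta)\log(1+\delta) - \delta \ge \tfrac14 \delta\log(1+\delta), \qquad \delta > 0.
\]
This holds because $h(\delta) := (1+\delta)\log(1+\delta) - \delta - \tfrac14\delta\log(1+\delta)$ satisfies $h(0) = 0$ and $h'(\delta) = \tfrac34\log(1+\delta) - \tfrac{\delta}{4(1+\delta)} \ge 0$; the last inequality uses $\log(1+\delta) \ge \delta/(1+\delta)$. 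For the lower tail we take $t = -\log(1-\delta)$. Chernoff gives exponent $np[(1-\delta)\log(1-\delta) + \delta]$, and $(1-\delta)\log(1-\delta) + \delta \ge \delta^2/2 \ge \delta^2/4$ by the Taylor series. This proves \eqref{ineq: large_deviation_Bin_2}.

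For \eqref{ineq: large_deviation_Bin_3}, we note
\[
\Bigl\{\sup_{n\ge k} S_n/n \ge p(1+\delta)\Bigr\} = \bigcup_{n \ge k} \bigl\{S_n/n \ge p(1+\delta)\bigr\}.
\]
A union bound together with the first estimate gives a geometric series $\sum_{n\ge k} e^{-na}$ with $a = p\delta\log(1+\delta)/4$. Summing it gives exactly $e^{-ka}/(1-e^{-a})$, which is the claimed bound.

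For \eqref{ineq: large_deviation_Bin_4}, the same union bound with \eqref{ineq: large_deviation_Bin_2} gives $e^{-kb}/(1-e^{-b})$ with $b = p\delta^2/4$. The stated prefactor $8\delta^{-2}$ does not contain $p$, whereas $1/(1-e^{-b}) \approx 4/(p\delta^2)$ for small $b$. This is the one point needing care, and it is where I expect the main obstacle.

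I would first try replacing the plain union bound by a blocking (peeling) argument. Split $n \ge k$ into dyadic-type blocks $[k_j, k_{j+1})$, where $k_j$ grows geometrically, say $k_{j+1} = \lceil (1+\delta/4) k_j\rceil$. On each block, $S_n/n \le p(1-\delta)$ for some $n$ in the block forces $S_{k_{j+1}} \le p(1-\delta) k_{j+1}$, by monotonicity of $S_n$. Comparing with $k_j$ then requires $S_{k_j} \le p(1-\delta)(1+\delta/4)k_j \le p(1-\delta/2)k_j$. The number of blocks needed to double $n$ is about $4/\delta$, and each block has probability at most $e^{-k_j p\delta^2/16}$, which decays geometrically in $j$. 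This gives a bound of the form $C\delta^{-1}\exp(-c\,kp\delta^2)$, with constants that may differ from those stated.

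If the exact constants $8\delta^{-2}$ and $1/4$ are essential, I would use a maximal inequality instead. Apply Doob's inequality to the nonnegative supermartingale
\[
M_n = \exp\bigl(-tS_n + n\,p(1-e^{-t})\bigr)
\]
on the event $\{\inf_{n\ge k} S_n/n \le p(1-\delta)\}$, with $t$ chosen of order $\delta$. This handles all $n \ge k$ at once and avoids the $1/p$ loss. If needed, I would accept the weaker constant, since the application in Proposition~\ref{prop: BernPerc_internal_1} only needs $C\exp(-c\,kp)$ with $\delta = 1/2$ fixed.
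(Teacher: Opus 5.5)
Your arguments for the first three bounds are correct. They are the Chernoff-plus-union-bound argument the paper alludes to; the paper gives no proof, only a pointer to Fact B.1 of \cite{MR4068310}. The geometric series even reproduces the right-hand side of \eqref{ineq: large_deviation_Bin_3} exactly.

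For \eqref{ineq: large_deviation_Bin_4} your diagnosis is also right. With $b=p\delta^2/4$, a union bound only gives $e^{-kb}/(1-e^{-b})\le 8p^{-1}\delta^{-2}e^{-kp\delta^2/4}$, whereas the statement has no $p^{-1}$. The paper really does use the $p$-free form in Proposition~\ref{prop: BernPerc_internal_1}: the prefactor $32$ there is what makes $C$ absolute.

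As written, however, the proposal does not prove the stated inequality. The peeling route yields only $C\delta^{-1}e^{-kp\delta^2/16}$, whose exponent is worse than $kp\delta^2/4$; that is fine for the application but not for the lemma. Its monotonicity step is also backwards: $S_n\le p(1-\delta)n$ for some $n<k_{j+1}$ gives $S_{k_j}\le S_n\le p(1-\delta)(1+\delta/4)k_j$, not a bound on $S_{k_{j+1}}$.

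You should not hedge here, because the supermartingale route you mention closes the gap with room to spare. Take $t=-\log(1-\delta)$, so $1-e^{-t}=\delta$. Then $M_n=\exp(-tS_n+np\delta)$ is a nonnegative supermartingale with $M_0=1$, since $\mathbf{E}e^{-t\xi_1}=1-p\delta\le e^{-p\delta}$. On $\{S_n\le p(1-\delta)n\}$,
\[
\log M_n\ge np\bigl[\delta+(1-\delta)\log(1-\delta)\bigr]\ge np\delta^2/2,
\]
which is at least $kp\delta^2/2$ when $n\ge k$. Ville's maximal inequality $\mathbf{P}(\sup_n M_n\ge a)\le 1/a$ then gives
\[
\mathbf{P}\Bigl(\inf_{n\ge k}\frac{S_n}{n}\le p(1-\delta)\Bigr)\le e^{-kp\delta^2/2}\le 8\delta^{-2}e^{-kp\delta^2/4}.
\]
The same device with $t=\log(1+\delta)$ and $M_n=\exp(tS_n-np\delta)$ also gives \eqref{ineq: large_deviation_Bin_3} without the denominator.
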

We refer the reader to Fact B.1 in \cite{MR4068310} for a proof of the above lemma, which is an application of the Chernoff bound for Bernoulli random variables.

\section{Percolation}

We need the following result from Bernoulli percolation on $\Z^d$ (see Theorem 1.1 in \cite{MR1404543}).

\begin{theorem} \label{Thm: Perc_Distance}
Let $d \ge 2$ and $p > p_c^{\mathrm{site}}(d)$, where $p_c^{\mathrm{site}}(d)$ is the critical parameter of Bernoulli site percolation on $\Z^d$. Then, there exists a constant $\rho = \rho(p, d) \in [1, \infty)$ such that 
\begin{align*}
	&\limsup_{\|y\|_1 \rightarrow \infty} \frac{1}{\|y\|_1} \log \bP_p( o \leftrightarrow y, d(o, y) > \rho \|y\|_1) < 0, \\
    &\; \text{ and } \; \; \limsup_{\ell \rightarrow \infty} \frac{1}{\ell} \log \bP_p( o \leftrightarrow y, d(o, y) > \ell ) < 0,
\end{align*}
where $d(x, y)$ is the graph distance between $x$ and $y$ on the percolation clusters. 
\end{theorem}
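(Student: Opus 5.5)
The plan is to use a Pisztora-type coarse-graining argument (the route of Antal--Pisztora) to compare percolation distances with a highly supercritical dependent block percolation. Rather than proving the two displays separately, I would prove one uniform estimate: there exist $\rho<\infty$ and $c>0$ such that for all $y$ and all $n\ge \|y\|_1$,
\[
\bP_p\bigl(o\leftrightarrow y,\ d(o,y)>\rho n\bigr)\le e^{-cn}.
\]
Taking $n=\|y\|_1$ gives the first display. For the second display, given $\ell$, take $n=\max\{\|y\|_1,\ell/\rho\}$; since $\ell\le\rho n$, the event $d(o,y)>\ell$ is contained in $d(o,y)>\rho n$ only when $\ell=\rho n$, so I would instead apply the uniform estimate with $n=\ell/\rho$ whenever $\|y\|_1\le \ell/\rho$. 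If $\|y\|_1>\ell/\rho$, I would apply it with $n=\|y\|_1$ and add a separate exponential bound on $\bP_p(\ell< d(o,y)\le\rho\|y\|_1,\ o\leftrightarrow y)$. That bound should follow by splitting the geodesic at chemical distance $\ell$ and using the first estimate on the intermediate point.

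First, the block construction. Fix a large scale $N$ and tile $\Z^d$ into boxes of side $N$. Call a box \emph{good} if, in its concentric box of side $3N$, the following hold:
\begin{enumerate}
\item there is a unique open cluster crossing the box in every coordinate direction;
\item every open path of diameter at least $N/10$ inside the box is connected to this crossing cluster within the $3N$-box;
\item the crossing clusters of adjacent boxes are connected.
\end{enumerate}
Pisztora's slab-based renormalization (valid for all $p>p_c$ via Grimmett--Marstrand) gives $\bP_p(\text{box bad})\to0$ as $N\to\infty$. Goodness depends only on a bounded neighbourhood of the box, so by Liggett--Schonmann--Stacey the good-block field dominates Bernoulli site percolation of density $q(N)\to1$ on the coarse lattice. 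Inside a good box, any two points of the crossing cluster are joined by an open path of length at most $(3N)^d$.

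Second, the deterministic path construction. Suppose $o\leftrightarrow y$, and consider the coarse straight line of about $\|y\|_1/N$ boxes from the box of $o$ to the box of $y$. The bad boxes form $*$-connected clusters. The standard detour argument replaces each segment of the coarse line that meets a bad $*$-cluster $\mathcal{C}$ by a walk through good boxes of the outer $*$-boundary of $\mathcal{C}$, whose length is $O(|\mathcal{C}|)$. The endpoints need separate treatment. If the box of $o$ is bad, the cluster of $o$ must still exit the union of bad $*$-clusters around it, since $o\leftrightarrow y$ (or, for $y$ close to $o$, I would use the trivial bound by cluster size). At that exit the path enters a good box with diameter at least $N/10$, so by property 2 it joins the crossing cluster. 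The cost of this is at most $N^d$ times the size of the bad clusters around $o$, and similarly around $y$. Altogether,
\[
d(o,y)\le C N^d\Bigl(n/N+\sum_{\mathcal{C}}|\mathcal{C}|\Bigr),
\]
where the sum runs over the bad $*$-clusters meeting the coarse line or the two endpoint boxes.

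Third, the probabilistic estimate. Under the Bernoulli domination with $q$ close to $1$, a Peierls count over lattice animals shows that $\sum_{\mathcal{C}}|\mathcal{C}|$ over clusters meeting a fixed set of $m$ coarse sites exceeds $m$ with probability at most $e^{-c m}$. Here $m\approx n/N$, plus a constant number of endpoint boxes; when $\|y\|_1$ is small relative to $n$, the endpoint clusters being of size at least $n/N$ has probability $e^{-cn/N}$. Choosing $\rho=3CN^{d-1}$ then yields the uniform estimate.

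I expect the main obstacle to be the detour construction: showing rigorously that the outer boundary of a bad $*$-cluster consists of $*$-connected good boxes whose crossing clusters are genuinely linked. This needs property 3 along $*$-adjacency, which I would handle by defining goodness on $3N$-boxes so that overlapping neighbourhoods cover diagonal neighbours. The torus boundary (fillings of holes) is absent here on $\Z^d$, which simplifies matters.
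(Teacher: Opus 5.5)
Your coarse-graining route is the Antal--Pisztora argument, which the paper simply cites as Theorem 1.1 of \cite{MR1404543} without proof. However, two steps do not work as written. The first concerns the second display. You read it as uniform in $y$ and, for $\|y\|_1>\ell/\rho$, propose an exponential bound on $\bP_p(\ell<d(o,y)\le\rho\|y\|_1,\ o\leftrightarrow y)$ by splitting the geodesic. No such bound exists. Since $d(o,y)\ge\|y\|_1$ always, for $\|y\|_1>\ell$ this probability equals $\bP_p(o\leftrightarrow y)-\bP_p(o\leftrightarrow y,\ d(o,y)>\rho\|y\|_1)$. By Harris--FKG and uniqueness of the infinite cluster this is at least $\theta(p)^2-e^{-c\|y\|_1}$, where $\theta(p)=\bP_p(o\leftrightarrow\infty)$. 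Hence $\sup_y\bP_p(o\leftrightarrow y,\ d(o,y)>\ell)\ge\theta(p)^2$ for every $\ell$. So the second display can only mean fixed $y$, or uniformity over $\|y\|_1\le\ell/\rho$. In that reading your first sub-case, with $n=\ell/\rho$, already suffices, and the other sub-case should be deleted rather than proved.

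The second and more substantive problem is in the deterministic bound $d(o,y)\le CN^d(n/N+\sum_{\mathcal C}|\mathcal C|)$. It is not justified when the box of $o$ (or of $y$) lies in a bounded component of the complement of a bad $*$-cluster $\mathcal C$. This happens if the box of $o$ is good but enclosed, or if the exit of $o$'s cluster from its own bad $*$-cluster lands in one of that cluster's holes.
\begin{itemize}
\item From inside a hole the outer-boundary detour is unavailable.
\item The naive cost of getting out is $N^d$ times the filled volume $|\bar{\mathcal C}|$, which can be of order $|\mathcal C|^{d/(d-1)}$. Your Peierls count then only gives decay $\exp(-c\,n^{(d-1)/d})$, which proves neither display.
\item Your closing remark that hole-filling issues disappear on $\Z^d$ is therefore mistaken; only the torus-specific wrapping issue disappears.
\end{itemize}
The repair uses the open path supplied by $o\leftrightarrow y$:
\begin{itemize}
\item Take the path self-avoiding and cut it at its last visit to each hole of $\mathcal C$ it passes through.
\item Each piece between such a last visit and the next entry into a hole or the exterior lies in boxes of $\mathcal C$. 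These pieces are disjoint, so their total length is at most $N^d|\mathcal C|$.
\item Their endpoints lie in boxes adjacent to $\mathcal C$ but not in it, hence good. Such a piece leaves the surrounding $3N$-box, so it has diameter at least $N$ and joins the local crossing cluster by property 2.
\item Inside each hole, move between such boxes along the boxes $*$-adjacent to $\mathcal C$. These are all good and connected by a boundary-connectivity lemma of Deuschel--Pisztora/Tim\'ar type.
\end{itemize}
Each cluster that must be crossed then costs $O(N^d|\mathcal C|)$. Every such cluster meets the coarse line between the boxes of $o$ and $y$, so your Peierls estimate goes through with an exponential rate.
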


The next collection of results is about the ``giant component'' of supercritical percolation clusters on $d$-dimensional tori with $d \ge 2$. 
% The constants below satisfy that $C(d, p) \rightarrow 0$, $\beta(d, p) \rightarrow \infty$, and $c(p) \rightarrow 1$ as $p \rightarrow 1$.

\begin{theorem} \label{thm: percolation_reference}
    Let $d\ge2$. For $m\ge1$ and $p\in(0,1)$, consider Bernoulli site percolation with parameter $p$ on $\T_m^d$, and let $\mathrm{GC}=\mathrm{GC}(m,p)$ denote its largest open cluster, with ties broken arbitrarily. 
    \begin{itemize} 
    \item[(i)] For every $p > p^{\mathrm{site}}_c(d)$, there exist constants $C=C(d,p)>0$ and $c=c(p)>0$ such that the following holds for all large $m$. Set
    \[
    n(m):=\left\lceil C(\log m)^{1/(d-1)}\right\rceil .
    \]
    Then, with probability at least $1-m^{-1}$, every box of side length $n(m)$ contains at least $c \, n(m)^d$ vertices of $\mathrm{GC}$. Moreover, we can choose $c(p)$ such that $c(p) \rightarrow 1$ as $p \rightarrow 1$.
    \item[(ii)] There exists $p_o(d) \in (0,1)$ such that, for every $p\in(p_o(d),1)$, there exists $\beta=\beta(d)>0$ such that, for every $m\ge1$ and every $U\subseteq\T_m^d$ 
    \[
    \mathbb P_p\bigl(U\cap\mathrm{GC}=\varnothing\bigr)
    \le
    \exp\left(-\beta |U|^{(d-1)/d}\right) + e^{-\beta m} \, .
    \]
    \item[(iii)] For every $p>p_c^{\mathrm{site}}(d)$ and every $\gamma>1$, there exists $C'=C'(d,p)>0$ such that, upon setting
    \[
    r(m):=\left\lceil(\log m)^\gamma\right\rceil ,
    \]
    we have
    \[
    \mathbb P_p\left(
    \mathrm{diam}(\mathrm{GC}\cap B)
    \le C'r(m)
    \ \text{for every box $B$ of side length $r(m)$}
    \right)
    \longrightarrow 1
    \]
    as $m\to\infty$. Here $\mathrm{diam}$ is measured in the intrinsic graph metric of $\mathrm{GC}$ on the full torus $\T_m^d$; thus, connecting paths may leave $B$.
    \end{itemize}
\end{theorem}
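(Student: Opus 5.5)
The statement collects three standard facts about supercritical site percolation, and I would derive each from the corresponding $\Z^d$ theory. The common tool is the Pisztora–Penrose coarse-graining (``unique crossing cluster'') estimate. For $p>p_c^{\mathrm{site}}(d)$ and any $\delta>0$, a box $\Lambda$ of side $n$ has, with probability at least $1-\exp(-c_1 n^{d-1})$, a unique open cluster crossing it. This cluster contains at least $(1-\delta)\theta(p)n^d$ vertices of $\Lambda$, and every open cluster in $\Lambda$ of diameter at least $n/10$ is part of it. All events used are local to windows of side much smaller than $m$, so they can be transferred from $\Z^d$ to $\T_m^d$ by identifying a window with a subset of $\Z^d$.

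For (i), I would choose $C=C(d,p)$ so that $\exp(-c_1 n(m)^{d-1})\le m^{-d-2}$, and take a union bound over the at most $m^d$ boxes of side $n(m)$, including overlapping and half-shifted ones. On the good event, the crossing clusters of any two overlapping boxes intersect, by the ``all clusters of diameter $\ge n/10$ coincide'' clause. Chaining along $\T_m^d$ then shows that all these crossing clusters belong to a single open cluster. That cluster has at least $(1-\delta)\theta(p)m^d/2$ vertices. Any other cluster is contained in a single box, so the merged cluster is $\mathrm{GC}$ for large $m$. Every box therefore contains at least $c(p)n^d$ vertices of $\mathrm{GC}$, with $c(p)=(1-\delta)\theta(p)$, and $c(p)\to1$ as $p\to1$ because $\theta(p)\to1$.

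For (ii), I would use a Peierls argument. First, with probability at least $1-e^{-\beta m}$, the torus contains no closed $*$-connected set winding around it, and $\mathrm{GC}$ contains every vertex not enclosed by a finite closed $*$-connected contour. On this event, $U\cap\mathrm{GC}=\varnothing$ forces $U$ to be covered by the interiors of a family of disjoint closed $*$-connected contours $\Gamma_1,\dots,\Gamma_j$. By discrete isoperimetry, applied to interiors of at most half the volume, these satisfy $\sum_i|\Gamma_i|\ge c|U|^{(d-1)/d}$. The number of $*$-connected sets of size $k$ through a fixed vertex is at most $e^{c_2k}$, and the anchors can be taken in $U$ or adjacent to it. Summing over families with total size $\ge c|U|^{(d-1)/d}$ and weighting each by $(1-p)^{\sum|\Gamma_i|}$ yields the bound $\exp(-\beta|U|^{(d-1)/d})$ once $p>p_o(d)$ is close enough to $1$ that $e^{c_2}(1-p)$ is tiny. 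I expect the main obstacle to be the combinatorics: avoiding an extra $|U|$ factor from summing over families and choosing anchors. I would try to anchor each contour at the lexicographically smallest vertex of $U$ that it encloses, so that the sum factorizes.

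For (iii), I would let $r=r(m)$ and consider pairs $x,y\in\mathrm{GC}\cap B$. By (i), applied at scale $r$ (which is larger than $n(m)$ since $\gamma>1$), and with failure probability at most $m^d e^{-c_1 r^{d-1}}$, both $x$ and $y$ are joined to the unique crossing cluster of the box of side $3r$ around $B$ inside that box. In particular, $x$ and $y$ are connected within a window of side $O(r)\ll m$. Lifting this window to $\Z^d$, the second estimate of Theorem~\ref{Thm: Perc_Distance} gives $\P_p(x\leftrightarrow y,\ d(x,y)>C'r)\le e^{-c_3 r}$. A union bound over at most $m^d r^{2d}$ pairs succeeds because $e^{-c_3(\log m)^\gamma}$ decays faster than any power of $m$. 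The one delicate point is checking that the Antal–Pisztora estimate on $\Z^d$ applies to these local connections; this is where the ``stays in a window of side $O(r)$'' clause is needed.
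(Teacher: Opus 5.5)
\textbf{Gap in (i) for $d\ge 3$.} Your derivation of (i) breaks down when $d\ge 3$. You claim that, with probability $1-\exp(-c_1n^{d-1})$, every open cluster of $\Lambda$ with diameter at least $n/10$ lies in the crossing cluster. This is false. An open segment of length $n/10$ whose $O(n)$ lateral neighbours are all closed violates the clause at cost only $e^{-cn}$. Likewise, in $d\ge3$ a thin open ``cross'' wrapped in a closed sheath violates uniqueness of the crossing cluster at cost $e^{-cn}$.

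Statements about all clusters of large diameter hold only with probability $1-e^{-cn}$, which is the rate in Pisztora's coarse-graining. The surface-order bounds concern volume-type events, such as the size of the largest cluster in the box. With $n(m)=\lceil C(\log m)^{1/(d-1)}\rceil$ one has $m^de^{-cn(m)}\to\infty$ for $d\ge3$, so your union bound over boxes collapses. In fact, a first- and second-moment count shows that with high probability the torus contains isolated open segments of length $\asymp\log m\gg n(m)$. So your claim that every other cluster fits in a single box is false, and the chaining cannot be run at this scale. For $d=2$ one has $n^{d-1}=n$, and your argument is fine.

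The repair must be volume-based.
\begin{itemize}
\item Identify $\mathrm{GC}$ by your chaining at the larger scale $R=\lceil(\log m)^\gamma\rceil$ with $\gamma>1$, where $m^de^{-cR}\to0$.
\item At scale $n(m)$, use only that the largest open cluster inside each box has at least $(1-\delta)\theta(p)n^d$ vertices. This is a genuine surface-order lower deviation.
\item Combine this with the fact that every open cluster other than $\mathrm{GC}$ has at most $K(\log m)^{d/(d-1)}$ vertices. This follows from $\P_p(v\le|C(0)|<\infty)\le e^{-cv^{(d-1)/d}}$ applied locally, since such clusters have diameter below $R$.
\end{itemize}
Choosing $C$ with $(1-\delta)\theta(p)C^d>K$ then forces each box's large cluster into $\mathrm{GC}$. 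This is exactly what Theorem~4 of Penrose--Pisztora provides, and the paper simply cites it.

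\textbf{Parts (iii) and (ii).} Part (iii) survives. There the linear rate $e^{-cr}$ for your crossing-cluster event at scale $3r$ is enough (you again quote $e^{-c_1r^{d-1}}$), because $m^de^{-c(\log m)^\gamma}\to0$ for $\gamma>1$; this is why the statement needs $\gamma>1$. Your localization matches the paper's: a path of length at most $C'r$ stays in a window of side $O(r)$. Hence $\{x\leftrightarrow y\text{ inside the window},\ d_{\mathrm{window}}(x,y)>C'r\}$ is a local event contained in the Antal--Pisztora event on $\Z^d$. The paper obtains the local connection differently, from the one-arm bound and uniqueness of the infinite cluster. It also passes through an auxiliary point $z$ at distance $\asymp r$, so that only the displacement-based first estimate of Theorem~\ref{Thm: Perc_Distance} is needed. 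Your use of the second estimate requires it to hold uniformly over $\|x-y\|_1\le dr$.

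For (ii), your Peierls argument is a genuinely different and more elementary route than the paper's. The paper uses site--bond domination, the supercritical-sharpness bound $\P(S\not\leftrightarrow\infty)\le e^{-c|S|^{(d-1)/d}}$ on $\Z^d$, and a torus coupling. Your route is legitimate because the statement only asks for some $p_o(d)$. Two points need care.
\begin{itemize}
\item The anchors must be generated sequentially: the $i$-th anchor is the smallest vertex of $U$ not covered by the first $i-1$ fillings. Your convention gives exactly this once fillings are ordered by anchor. A free choice of the anchor set would cost entropy exponential in $|U|$, which no surface-order gain can absorb, so the issue is more than an extra factor $|U|$.
\item Identifying the unenclosed open set with $\mathrm{GC}$ up to probability $e^{-\beta m}$ needs an argument that the fillings cannot cover most of the torus. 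One option is a Peierls bound along each of the $m^{d-1}$ coordinate lines, where the anchor entropy $2^m$ is beaten by the contour cost $(C(1-p))^{cm}$.
\end{itemize}
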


\begin{remark}
  For Part~(ii), we could only find relevant references for the corresponding result (without the term $e^{-\beta m}$ on the right-hand side) about the infinite cluster of Bernoulli bond percolation on $\mathbb{Z}^d$. Hence we derive Part~(ii) from this corresponding result.  The term $e^{-\beta m}$ in Part~(ii) is an artifact of our proof.    We expect that the proof of the result about bond percolation on $\mathbb{Z}^d$ can be adapted to apply directly to the giant component of site percolation on $\mathbb{T}_m^d$ for all $p>p_c^{\mathrm{site}}(d)$.
Since our application allows $p$ to be arbitrarily close to 1, we state Part~(ii) in this weaker form.
\end{remark}

\begin{proof}[Proof of Theorem~\ref{thm: percolation_reference}]
Part (i) is a simple consequence of Theorem~4 of \cite{MR1372330}. 

To prove part~(ii), we first observe from \cite{MR1428500} that for every $q \in (0, 1)$, there exists $p = p(q) \ge q$ such that Bernoulli site percolation on $\mathbb Z^d$
(respectively, on $\mathbb T_m^d$) with parameter $p$ stochastically dominates Bernoulli bond percolation on $\mathbb Z^d$
(respectively, on $\mathbb T_m^d$) with parameter $q$. Hence, it suffices to prove part~(ii) for Bernoulli bond percolation.
Theorem~1.2 of \cite{diskin2026supercriticalsharpnesspercolation}, which concerns Bernoulli bond percolation on $\Z^d$, implies that for every $p > p_c^{\mathrm{bond}}(d)$, there exists $c > 0$ such that for every finite set of vertices $S$,
\begin{align*}
    \mathbb P_p^{\, \Z^d, \mathrm{bond}} \left( \left\{S \leftrightarrow \infty \right\}^c \right) \le \exp\left(-c |S|^{\frac{d-1}{d}}\right).
\end{align*}
In the notation $p(q)$ (from the above comment about domination), we choose $p_o(d):=p(q_o(d))$ where $q_o(d)$ is so that
$$\theta^{\mathrm{bond}}(q_o(d)) \ge 3/4.$$

Consider the standard coupling of bond percolation on $\Z^d$ and $\T^d_m$, identifying the torus with the box $B_m = [0,m)^d \cap \Z^d$. Let $a = \lceil (3/4)^{1/d} \rceil$, and let $B_{am}$ denote the box of side length $am$ centered at $(m/2, \ldots, m/2)$. 

By the Pigeonhole Principle, there exists a box $B$ of side length $am$ for which $|U \cap B| \ge |U|/2$. By translating $U$, if necessary, we may assume that $B = B_{am}$. We may also assume that $U \subseteq B_{am}$; otherwise, we replace $U$ with $U \cap B_{am}$. 

Define the events
\begin{align*}
    A_1 &:= \Big\{ U \cap C_\infty \neq \varnothing \Big\}, \quad A_2 := \left\{ |C_\infty \cap B_{am}| > \frac34 |B_{am}| \right\}, \\
    A_3 &:= 
    \left\{ 
    \begin{array}{c}
        \text{for every pair } x,y \in B_{am} \text{ such that} \\
        x \xleftrightarrow{\Z^d} y, \text{ we have } x \xleftrightarrow{B_m} y
    \end{array}    
    \right\}.
\end{align*}
We claim that
\[
    A_1 \cap A_2 \cap A_3 \subseteq \{ U \cap \mathrm{GC} \neq \varnothing \}.
\]
Indeed, because
\[
    \frac34|B_{am}| \ge \frac{9}{16}|B_m| > \frac12 |B_m|,
\]
on the event $A_2 \cap A_3$, the set $C_\infty \cap B_{am}$ is contained in a single connected component of the torus with more than $|B_m|/2$ vertices. This component is necessarily the giant component $\mathrm{GC}$. Since $U \subseteq B_{am}$, the event $A_1$ then implies that $U \cap \mathrm{GC} \neq \varnothing$.

Thus, by a union bound, for all $p > p_o(d)$,
\begin{align*}
    \mathbb P_p \left( \mathrm{GC} \cap U = \varnothing \right) \le \, &\bP_p \left( U \not\leftrightarrow\infty \right) + \bP_p \left( |C_\infty \cap B_{am}| \le \frac34 |B_{am}| \right) + \bP_p \left( \, A_3^c  \,\right) \\
    \le \, & \exp(-c_1 |U|^{\frac{d-1}{d}}) + \exp(-c_2 m^{d-1}) + (am)^{2d} \exp(-c_3 (1{-}a) m), \\
    \le \, & \exp\left(-\beta|U|^{\frac{d-1}{d}}\right) + C(d) \exp(-\frac{c_3}{2} (1{-}a) m)
    % \le \, & \exp(-c_1 |U|^{\frac{d-1}{d}}) + \exp(-c_2 |U|^{\frac{d-1}{d}}) + C(d) \exp(-\frac{c_3}{2} (1{-}a) |U|^{\frac{d-1}{d}}) \\
    % \le  \, & \exp\left(-\beta|U|^{\frac{d-1}{d}}\right)
\end{align*}
where $c_1, c_2, c_3, \beta > 0$ depend only on $d$ and $p$.
In the second inequality, we use the supercritical sharpness result discussed above for the first term, Theorem~6 of \cite{MR1372330} for the second term, and Theorem~\ref{Thm: Perc_Distance}, together with a union bound over all pairs of vertices in $B_{am}$, for the third term. In the third inequality, we use only the fact that $U \subset \T^d_m$ and hence that $|U| \le m^d$.
% In the fourth inequality, the assumption that $U$ is contained in a box of side length at most $m^{1/(d-1)}$ implies that $|U| \le m^{d/(d-1)}$.

We now prove part (iii). 
For $x\in\mathbb Z^d$ and $r\geq 1$, write
\[
    Q_r(x):=\{y\in\mathbb Z^d:\|x-y\|_\infty\leq r\},
\]
and let $d_{\Lambda}(x,y)$ denote the chemical distance between $x$ and $y$
using only open paths contained in $\Lambda$.
We use the standard supercritical one-arm estimate (see, for example, Theorem~8.21 in \cite{Grimmett1999})
\begin{equation}\label{eq:truncated-one-arm}
    \mathbb P_p^{\, \Z^d, \mathrm{site}}\bigl(
        x\leftrightarrow \partial Q_r(x),\ |C_x|<\infty
    \bigr)
    \leq C_1 e^{-c_1r},
\end{equation}
We also use the uniqueness of the infinite cluster on $\mathbb Z^d$.
By Theorem~\ref{Thm: Perc_Distance}, we have
\begin{equation}\label{eq:chemical-distance}
    \mathbb P_p^{\, \Z^d, \mathrm{site}}\bigl(
        x\leftrightarrow y,\,
        d(x,y)>\rho\|x-y\|_1
    \bigr)
    \leq C_2 e^{-c_2\|x-y\|_1}.
\end{equation}

Let $r = r(m) =\left\lceil(\log m)^\gamma\right\rceil$,
choose $L>1+4d\rho$, and set $C':=7d\rho$. We first claim that,
uniformly over all $x,y\in\mathbb Z^d$ satisfying
$\|x-y\|_\infty\leq r$,
\begin{align}
    \mathbb P_p^{\, \Z^d, \mathrm{site}}\Bigl(&
        x\leftrightarrow \partial Q_{Lr}(x),\,
        y\leftrightarrow \partial Q_{Lr}(x),\,
        d_{Q_{Lr}(x)}(x,y)>C'r
    \Bigr)
    \leq C e^{-cr}.
    \label{eq:local-distance}
\end{align}
Indeed, by \eqref{eq:truncated-one-arm}, except on an event of probability at most
$2C_1e^{-c_1r}$, both $x$ and $y$ belong to the infinite cluster. Choose a
vertex $z$ in this cluster such that $\|z-x\|_\infty=3r$.
By uniqueness of the infinite cluster, $y\leftrightarrow z$. Moreover, we have $3r \leq \|z-x\|_1\leq 3dr$ and $2r \leq \|z-y\|_1\leq 4dr$. Applying \eqref{eq:chemical-distance} and taking a union bound over the
$O(r^{d-1})$ possible choices of $z$, we find that, except on an event of
probability at most $C_3e^{-c_3r}$, the inequalities $d(x,z)\leq 3d\rho r$ and $d(y,z)\leq 4d\rho r$ hold.
Since $L>1+4d\rho$, both paths are contained in $Q_{Lr}(x)$. Hence
\[
    d_{Q_{Lr}(x)}(x,y)
    \leq d(x,z)+d(z,y)
    \leq 7d\rho r=C'r,
\]
which proves \eqref{eq:local-distance}.
We use Theorem~4 of \cite{MR1372330} again: for every $p > p_c^{\mathrm{site}}(d)$, there exists
$a=a(p)>0$ such that $\mathbb P_p \bigl(|\mathrm{GC}|\geq a m^d\bigr) \rightarrow 1$.
Since $r=o(m)$, on this event and for all sufficiently large $m$, we have $|\mathrm{GC}|>|Q_{Lr}(x)|$ for every $x\in\mathbb T_m^d$. Consequently, if
$x,y\in\mathrm{GC}$ and $\|x-y\|_\infty\leq r$, then both $x$ and $y$
are connected to the boundary of $Q_{Lr}(x)$ by paths contained in $Q_{Lr}(x)$.
Therefore, by \eqref{eq:local-distance},
\[
    \mathbb P_p\left(
        \exists\,x,y\in\mathrm{GC}:\,
        \|x-y\|_\infty\leq r,\,
        d(x,y)>C'r
    \right)
    \leq \mathbb P_p\bigl(|\mathrm{GC}| < a m^d\bigr)+C m^d r^d e^{-cr}.
\]
Since $\alpha>1$ and $r=\lceil(\log m)^\gamma\rceil$, the right-hand
side tends to zero.
\end{proof}

\section{Proof of Lemma~\ref{lemma: Delta_g_1/d-1}} \label{apx_section: proof_of_lemma_Delta_g_1/d-1}

In this section we prove Lemma~\ref{lemma: Delta_g_1/d-1}, which is restated as follows:

\begin{lemma}
Let $d\ge 2$ and $0<\alpha<d$.
Then
\[
    \Delta>
    \begin{cases}
        \dfrac{2}{d-1}, & \alpha\ge 2,\\[1ex]
        \dfrac{\alpha}{d-1}, & \alpha<2.
    \end{cases}
\]
\end{lemma}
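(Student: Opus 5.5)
The plan is to rewrite $\Delta^{-1}$ in terms of $x:=\alpha/d\in(0,1)$ and bound it by an elementary concavity inequality. Since $2d/(d+\alpha)=2/(1+x)$, we have
\[
\Delta^{-1}=\log_2\Bigl(\frac{2}{1+x}\Bigr)=1-\log_2(1+x).
\]
The function $x\mapsto\log_2(1+x)$ is strictly concave on $[0,1]$ and agrees with $x$ at $x=0$ and $x=1$. Hence $\log_2(1+x)>x$ for every $x\in(0,1)$, which gives the key strict bound
\[
0<\Delta^{-1}<1-\frac{\alpha}{d}.
\]
Both claimed inequalities are of the form $\Delta>a/(d-1)$ with $a>0$, that is, $\Delta^{-1}<(d-1)/a$. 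It therefore suffices to check $1-\alpha/d\le (d-1)/a$ in each case; strictness then comes from the previous display.

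\emph{Case $\alpha<2$.} We need $1-\alpha/d\le (d-1)/\alpha$, which is equivalent to $\alpha(d-\alpha)\le d(d-1)$. By AM--GM, $\alpha(d-\alpha)\le d^2/4$. Moreover $d^2/4\le d(d-1)$ is equivalent to $d\ge 4/3$, which holds since $d\ge2$.

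\emph{Case $\alpha\ge2$.} This case forces $d>\alpha\ge 2$. Since $\alpha\ge2$, we have $1-\alpha/d\le 1-2/d$, so it suffices to show $1-2/d\le (d-1)/2$. This is equivalent to $2d-4\le d^2-d$, i.e.\ $d^2-3d+4\ge0$. That quadratic has negative discriminant ($9-16<0$), so it holds for all $d$.

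The only step with any content is the strict inequality $\log_2(1+x)>x$ on $(0,1)$. It follows from strict concavity together with equality at the endpoints. The rest is routine algebra, so I expect no real obstacle.
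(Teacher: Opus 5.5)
Your proof is correct, and it takes a somewhat different route from the paper's.

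The paper uses only the trivial bound $\Delta^{-1}=\log_2(2d/(d+\alpha))<1$, together with a case split:
\begin{itemize}
\item For $\alpha\ge 2$, it notes that $d\ge 3$, so $1\le (d-1)/2$.
\item For $\alpha<2$ with $\alpha\le d-1$, it uses $1\le (d-1)/\alpha$.
\item In the one remaining case, $d=2$ and $1<\alpha<2$, the bound $\Delta^{-1}<1$ is not enough. The paper handles it numerically via $\Delta^{-1}=\log_2(4/(2+\alpha))<\log_2(4/3)<1/2<1/\alpha$.
\end{itemize}

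You instead prove the sharper uniform estimate $\Delta^{-1}<1-\alpha/d$, using strict concavity of $\log_2(1+x)$ against its chord on $[0,1]$. This reduces every case to a polynomial inequality, namely $\alpha(d-\alpha)\le d(d-1)$ or $d^2-3d+4\ge 0$. You need no split on $\alpha$ versus $d-1$ and no numerical estimate.

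The paper's argument is a bit more economical, since the crude bound suffices in almost every case. Yours is more systematic and gives an explicit margin. In your case $\alpha\ge 2$ the sharper bound is not actually needed: $d\ge 3$ already gives $\Delta^{-1}<1\le (d-1)/2$.
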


\begin{proof}
Set $\delta:=\Delta^{-1} =\log_2({2d}/{(d+\alpha)})$.
Since $0<\alpha<d$, we have $0<\delta<1$.

Suppose first that $\alpha\ge 2$. Since $\alpha<d$, we have $d\ge 3$,
and hence $\delta<1\le \frac{d-1}{2}$.
Therefore, $\Delta>2/(d-1)$.

Now suppose that $\alpha<2$. If $\alpha\le d-1$, then $\delta<1\le {(d-1)}/{\alpha}$,
and hence $\Delta>\alpha/(d-1)$. It remains to consider
$\alpha>d-1$. Since $d\ge2$ and $\alpha<2$, this forces $d=2$ and
$1<\alpha<2$. In this case,
\[
    \delta
    =\log_2\left(\frac{4}{2+\alpha}\right)
    <\log_2\left(\frac43\right)
    <\frac12
    <\frac1\alpha
    =\frac{d-1}{\alpha},
\]
where $\log_2(4/3)<1/2$ follows from $4/3<\sqrt2$. Thus,
$\Delta>\alpha/(d-1)$ also in this case.
\end{proof}

\bibliographystyle{plain} 
\bibliography{ref.bib} 

\end{document}